\newcommand{\bbold}{\mathbb}
\def \LL{\mathbb{L}}
\def\H{{\bbold H}}
\def\R { {\bbold R} }
\def\Q { {\bbold Q} }
\def\N { {\bbold N} }
\def\T { {\bbold T} }
\def\c {\operatorname{c}}
\def \ex{\operatorname{e}}
\renewcommand\epsilon{\varepsilon}
\def \<{\langle}
\def \>{\rangle}
\def \supp {\operatorname{supp}}
\def \((  {(\!(}
\def \)) {)\!)}
\def \k {{{\boldsymbol{k}}}}
\DeclareMathSymbol{\precequ}{\mathrel}{symbols}{"16}
\DeclareMathSymbol{\succequ}{\mathrel}{symbols}{"17}
\newtheorem{theorem}{Theorem}[section]
\newtheorem{lemma}[theorem]{Lemma}
\newtheorem{prop}[theorem]{Proposition}
\newtheorem{cor}[theorem]{Corollary}
\numberwithin{theorem}{section}
\numberwithin{equation}{section}
\theoremstyle{definition}
\theoremstyle{remark}
\numberwithin{theorem}{section}
\numberwithin{equation}{section}
\newcommand{\mfL}{\mathfrak{L}}
\def \fM {{\mathfrak M}}
\def \fN{{\mathfrak N}}
\def \fS{{\mathfrak S}}
\def \fd {{\mathfrak d}}
\def \fm {{\mathfrak m}}
\def \fn {{\mathfrak n}}
\def \fv {{\mathfrak v}}
\def \fg{{\mathfrak g}}
\def \fG{{\mathfrak G}}
\def \fp{{\mathfrak p}}
\def \fq{{\mathfrak q}}
\def \inv{\operatorname{inv}}
\let\oldi\i
\let\oldj\j
\renewcommand\i{\relax\ifmmode{\boldsymbol{i}}\else\oldi\fi}
\renewcommand\j{\relax\ifmmode{\boldsymbol{j}}\else\oldj\fi}
\renewcommand\leq{\leqslant}
\renewcommand\geq{\geqslant}
\renewcommand\preceq{\preccurlyeq}
\renewcommand\succeq{\succcurlyeq}
\renewcommand\le{\leq}
\renewcommand\ge{\geq}
\renewcommand\frak{\mathfrak}
\DeclareMathAlphabet{\mathbf}{OML}{cmm}{b}{it}
\DeclareFontFamily{U}{fsy}{}
\DeclareFontShape{U}{fsy}{m}{n}{<->s*[.9]psyr}{}
\DeclareSymbolFont{der@m}{U}{fsy}{m}{n}
\DeclareMathSymbol{\der}{\mathord}{der@m}{182}
\DeclareSymbolFont{der@m}{U}{fsy}{m}{n}
\DeclareMathSymbol{\derdelta}{\mathord}{der@m}{100}
\DeclareSymbolFont{imag@m}{OT1}{cmr}{m}{ui}
\DeclareMathSymbol{\imag}{\mathord}{imag@m}{105}
\DeclareFontFamily{OMS}{smallo}{}
\DeclareFontShape{OMS}{smallo}{m}{n}{<->s*[.65]cmsy10}{}
\DeclareSymbolFont{smallo@m}{OMS}{smallo}{m}{n}
\DeclareMathSymbol{\smallo}{\mathord}{smallo@m}{79}
\DeclareFontFamily{OMS}{largerdot}{}
\DeclareFontShape{OMS}{largerdot}{m}{n}{<->s*[.8]cmsy10}{}
\DeclareSymbolFont{largerdot@m}{OMS}{largerdot}{m}{n}
\DeclareMathSymbol{\largerdot}{\mathord}{largerdot@m}{15}
\DeclareMathSymbol{\llambda}{\mathord}{der@m}{108}
\DeclareMathSymbol{\rrho}{\mathord}{der@m}{114}
\def \upo{\upomega}
\newcommand{\equationqed}[1]{\[\pushQED{\qed}#1 \qedhere\popQED\]\let\qed\relax}
\newcommand{\alignqed}[1]{\begin{align*}\pushQED{\qed} #1 \qedhere\popQED\end{align*}\let\qed\relax}
\def \No{\text{{\bf No}}}
\newcommand{\mathe}{\mathrm{e}}
\newcommand{\tmop}[1]{\ensuremath{\operatorname{#1}}}
\begin{document}

\title{Logarithmic Hyperseries}


\author[van den Dries]{Lou van den Dries}
\address{Department of Mathematics\\
University of Illinois at Urbana-Cham\-paign\\
Urbana, IL 61801\\
U.S.A.}
\email{vddries@illinois.edu}

\author[van der Hoeven]{Joris van der Hoeven}
\address{CNRS\\ LIX\\ \'Ecole Polytechnique\\
91128 Palaiseau Cedex\\
France}
\email{vdhoeven@lix.polytechnique.fr}

\author[Kaplan]{Elliot Kaplan}
\address{Department of Mathematics\\
University of Illinois at Urbana-Cham\-paign\\
Urbana, IL 61801\\
U.S.A.}
\email{eakapla2@illinois.edu}

\begin{abstract}{We define the field $\LL$ of logarithmic hyperseries, construct
on $\LL$ natural operations of differentiation, integration, and composition, establish the basic properties of these operations,
and characterize these operations uniquely by such properties.}
\end{abstract} 

\maketitle

\section{Introduction}\label{intro}  

\noindent
The field of transseries $\mathbb{T}$ was introduced independently by Dahn and
G{\"o}ring~\cite{DG} in model theory and by {\'E}calle~\cite{E} in his proof of the Dulac
conjecture. Roughly speaking, transseries are constructed from
the real numbers and a variable $x>\mathbb{R}$ using the field
operations, exponentiation, and taking logarithms and infinite sums.  Here is an example of
a transseries:
\[ 7 \mathe^{\mathe^x + \mathe^{x / 2} + \mathe^{x / 3} + \cdots} +
   \frac{\mathe^{\mathe^x}}{\log \log x}  + \sqrt{2} + \frac{1}{x}
   + \frac{2}{x^2} + \frac{6}{x^3} + \frac{24}{x^4} + \cdots + 
   \frac{\mathe^{- x}}{x} + \frac{\mathe^{- x}}{x^2} + \cdots . \]
The sign of a transseries is defined to be the sign of its leading
coefficient: $\tmop{sign} 7 >0$ in our
example;  $\mathbb{T}$ is real closed for the
corresponding ordering. See \cite[Appendix A]{ADH1} for a detailed construction.
The field $\mathbb{T}$ can also be equipped with natural `calculus'
operations: differentiation, integration, composition, and
functional inversion. The theory of $\mathbb{T}$ as a
valued differential field was determined in~\cite{ADH1}. In particular, it was shown there
that this theory is model complete. Remarkably,~$\mathbb{T}$ also  satisfies the
intermediate value property for differential polynomials: this was first proven in~\cite{JvdH} for the ordered differential
subfield of $\mathbb{T}$ consisting of the grid-based transseries, and extends to $\mathbb{T}$ itself by model
completeness.


Transseries describe `regular' orders
of growth of real functions. Despite its remarkable closure
properties, however, $\mathbb{T}$ cannot account for all regular orders of growth. For instance, Kneser~\cite{K} constructs a real analytic function $\mathe_{\omega}$ that satisfies the functional equation $\mathe_{\omega}
(x + 1) = \exp \mathe_{\omega} (x)$ and that grows regularly---its germ at $+\infty$ lies in a Hardy field---but faster than any
iterated exponential.  Its functional inverse is infinitely large, but grows slower than any iterated logarithm.

\bigskip\noindent
Accordingly, we wish to enlarge the field $\T$ of transseries to
a  field $\H$ of hyperseries  with transfinite iterates $\mathe_{\alpha}$ and $\ell_{\alpha}$ of $\mathe^x$ and
$\log x$
for all ordinals $\alpha$, and with natural operations of exponentiation, differentiation, integration, and composition. These operations should extend the corresponding
operations on $\T$. In this paper we achieve this for the purely logarithmic part $\LL$ of the intended $\H$ by direct recursive constructions, and with
exponentiation replaced by taking logarithms.  
  We also indicate how the natural embedding of $\T_{\log}$ into the field $\No$ of surreal numbers extends naturally 
to an embedding of $\LL$ into $\No$. As indicated in \cite{ADH2}, this is part
of a plan to eventually construct a canonical exponential field isomorphism $\H\cong \No$ via which  $\No$ can be equipped with the `correct' derivation and composition. Realizing this plan would vindicate the idea that $\H$ covers all regular orders of growth at infinity, as $\No$ does in a different way. 

A first step in the above direction is due to Schmeling~\cite{ Schm01} and his thesis advisor van der Hoeven. They constructed a
field of hyperseries that contains $\mathe_{\alpha}$ and~$\ell_{\alpha}$ for
{$\alpha < \omega^{\omega}$}, but they did not construct a derivation or composition on it. 
The purely logarithmic part of it will be recovered here as the subfield
$\mathbb{L}_{< \omega^{\omega}}$ of our $\LL$. 

  On a related topic, van der Hoeven's thesis~\cite{vdH:PhD} (with more details in~\cite{Schm01}) shows how to extend the derivation and composition
  on $\mathbb{T}$ to larger fields of transseries that contain elements such
  as $\mathe^{\sqrt{x} + \sqrt{\log x} + \sqrt{\log \log x} + \cdots}$. The recent paper~\cite{BM2} by Berarducci and Mantova shows how such generalized transseries naturally act on
  positive infinitely large surreal numbers so as to be compatible with
  composition and with the derivation on $\No$ constructed in~\cite{BM}.  While this line of work has some connection
  to the present paper, it goes into another~direction.

In the rest of this introduction we give canonical and precise
descriptions of $\LL$ with its `calculus' operations  and state its main properties. To prove existence and uniqueness
of the operations having these properties is not easy, and makes up the bulk of this paper.  First we define $\LL$ as an increasing union of
Hahn fields over $\R$. Throughout we let $\alpha,\beta,\gamma$ range over ordinals, an ordinal is identified with the set of smaller ordinals, and
$\alpha+\beta$ denotes the ordinal sum, to be thought of as
$\alpha$ followed by $\beta$.
Moreover, $m,n$, sometimes subscripted, range over $\N=\{0,1,2,\dots\}=\omega$.  By convention, a {\em differential field\/} has characteristic $0$;
given its derivation $\der$ and an element $y$ in the field we  also denote $\der(y)$ by $y'$, and $y'/y$ by $y^\dagger$.

\subsection*{The monomial group $\mathfrak{L}$} We fix once and for all symbols $\ell_{\alpha}$, one for each $\alpha$, with  
$\ell_{\alpha}\ne \ell_{\beta}$ whenever $\alpha\ne \beta$. The intended 
meaning of $\ell_{\alpha}$ is as the $\alpha$th iterated logarithm
of $x:=\ell_0$ in $\LL$, and accordingly we refer to these $\ell_{\alpha}$ as {\em hyperlogarithms}. (The totality of hyperlogarithms is too large to be a set; it is a proper class.
We shall freely use classes rather than sets when necessary: our set theory here is von Neumann-G\"odel-Bernays set theory with Global Choice (NBG), a conservative extension of ZFC in which all proper classes are in bijective correspondence with the class of all ordinals. Those who find these matters unpalatable may read {\em ordinal\/} as meaning
{\em countable ordinal}. Everything goes through with that restriction.)  

An {\em exponent sequence\/} is a family $(r_{\beta})$ of real numbers $r_{\beta}$, with $\beta$ ranging over all ordinals, such that
for some $\alpha$ we have $r_{\beta}=0$ for all $\beta\ge \alpha$. To each exponent sequence $r=(r_{\beta})$ we associate the formal
monomial $$\ell^r\ :=\ \prod_{\beta} \ell_{\beta}^{r_{\beta}},$$ a {\em logarithmic hypermonomial}. We make the class of logarithmic hypermonomials into an abelian (multiplicatively written) group $\mathfrak{L}$ with the obvious group operation: for exponent sequences $r=(r_{\beta})$ and $s=(s_{\beta})$ with corresponding logarithmic hypermonomials $\ell^r\ :=\ \prod_{\beta} \ell_{\beta}^{r_{\beta}}$ and $\ell^s\ :=\ \prod_{\beta} \ell_{\beta}^{s_{\beta}}$ we set $r+s:=(r_{\beta}+s_{\beta})$ and
$$\ell^r\cdot \ell^s\ :=\ \ell^{r+s}\ =\ \prod_{\beta} \ell_{\beta}^{r_{\beta}+s_{\beta}}.$$
The identity of $\mathfrak{L}$ is $1:= \ell^0$ with $0$ denoting the exponent sequence $(r_\beta)$ with $r_{\beta}=0$ for all $\beta$.
We make $\mathfrak{L}$ into a totally ordered abelian group by $\ell^r \prec \ell^s$ iff $r$ is lexicographically less than $s$, that is, $r\ne s$ and
$r_{\beta} < s_{\beta}$ for the least $\beta$ with $r_{\beta} \ne s_{\beta}$.
We identify $\ell_\alpha$ with $\ell^r$ where $r_{\alpha}=1$ and $r_{\beta}=0$ for all $\beta\ne \alpha$;  so $\ell_{\alpha}\succ 1$.  In this introduction we let $\fm, \fn$ range over logarithmic hypermonomials. 
We make $\R$ act on $\mathfrak{L}$: for $\fm=\prod_{\beta} \ell_{\beta}^{r_{\beta}}$ and $t\in \R$ we set 
$$\fm^t\ :=\ \prod_{\beta} \ell_{\beta}^{tr_{\beta}}\in \mathfrak{L}.$$ 
Thus we have the subgroup $\fm^\R:=\{\fm^t:\ t\in \R\}$ of $\mathfrak{L}$.
For $\fm=\ell^r$ we define $$\sigma(\fm)\ :=\ \{\beta:\ r_{\beta}\ne 0\},$$
a {\em set\/}  of ordinals (not just a class); we think of it as the support of $\fm$.
The set $$\mathfrak{L}_{<\alpha}\ :=\ \{\fm:\ \sigma(\fm) \subseteq \alpha\}$$
underlies an ordered subgroup of $\mathfrak{L}$. Note that
$$\mathfrak{L}_{<0}\ =\ \{1\}, \qquad \mathfrak{L}_{<1}\ =\ \ell_0^{\R}, \quad \dots \quad,\ \mathfrak{L}_{<n+1}\ =\ \ell_0^\R\cdots \ell_n^{\R}.$$ 
Given reals $r(\beta)$ for $\beta< \alpha$ we let
$\prod_{\beta< \alpha}\ell_\beta^{r(\beta)}$ denote the logarithmic 
hypermonomial $\ell^r$ where $r_{\beta}=r(\beta)$ for $\beta< \alpha$ and
$r_{\beta}=0$ for $\beta\ge \alpha$.  

\subsection*{The Hahn fields $\LL_{<\alpha}$}
The monomial group $\mathfrak{L}_{<\alpha}$ yields the ordered Hahn field
$$ \mathbb{L}_{<\alpha}\ :=\ \R[[\mathfrak{L}_{<\alpha}]]$$
consisting of the well-based series over $\R$ with monomials in
$\mathfrak{L}_{<\alpha}$. In particular, $\mathbb{L}_{<0}=\R$ and  
$\mathbb{L}_{<1}=\R[[\ell_0^{\R}]]$.  For $\beta\le \alpha$, we have 
$\mathfrak{L}_{<\beta} \subseteq \mathfrak{L}_{<\alpha}$, as ordered groups,
and so  $\mathbb{L}_{<\beta}\subseteq \mathbb{L}_{<\alpha}$, as ordered and valued 
fields.
We also set  $$\mathfrak{L}_{\le\alpha}\ :=\ \mathfrak{L}_{<\alpha+1}, \qquad
\mathbb{L}_{\le \alpha}\ :=\ \mathbb{L}_{<\alpha+1}\ =\ \R[[\mathfrak{L}_{\le \alpha}]].$$
Now $\LL:=\ \bigcup_{\alpha}\LL_{<\alpha}$ is an ordered and valued field
extension of each $\LL_{<\alpha}$. It does not have an underlying set, but it has an underlying proper class. 
We shall use the notations and conventions introduced
in \cite[Section 3.1 and Appendix A]{ADH1} to discuss these Hahn fields and their union $\LL$. (Section~\ref{prelim} below includes a summary of that material.) Thus for $f\in \LL^\times$ we have its dominant monomial $\fd(f)\in \mathfrak{L}\subseteq \LL$, with $f=c\fd(f)(1+\epsilon)$ for unique $c\in \R^\times$ and $\epsilon\prec 1$ (and
$\fd(0):=0\in \LL$ by convention), and $\R$ is viewed as an ordered subfield of $\LL$ and $\mathfrak{L}$ as an ordered subgroup of $\LL^{>}$. 

\subsection*{The logarithmic field $\LL$}
We define the logarithm $\log \fm$ of $\fm=\ell^r$ by
$$\log \fm\ :=\ \sum_{\beta}r_{\beta}\ell_{\beta+1}\in \LL.$$
Thus $\log \ell_{\alpha}=\ell_{\alpha+1}$, $\log \fm\fn=\log \fm + \log \fn$, and $\log \fm^t=t\log \fm$ for real $t$.  
For $f\in \LL^{>}$ we have $f=c\fd(f)(1+\epsilon)$ with $c\in \R^{>}$
and $\epsilon\prec 1$, and we set
$$\log f\ :=\ \log \fd(f) + \log c + 
\sum_{n=1}^{\infty} \frac{(-1)^{n-1}}{n}\epsilon^n,$$
where $\log c$ is the usual real logarithm of $c$.  
The map $f\mapsto \log f: \LL^{>}\to \LL$ is a strictly increasing
morphism of  the multiplicative ordered group $\LL^{>}$ into the ordered additive group of $\LL$. Note that if $\alpha$ is an infinite limit ordinal, then
$\log \LL_{<\alpha}^{>}\subseteq \LL_{<\alpha}$. 

\subsection*{The derivation on $\LL$} The intended derivation is
`derivative with respect to $x$' where $x:= \ell_0$. This derivation should respect logarithms and commute with infinite sums. To {\em respect logarithms\/} will be interpreted to mean that the derivative of $\ell_\alpha$ is $\prod_{\beta<\alpha} \ell_{\beta}^{-1}$. 
(Recall in this connection that the usual derivative of the $n$-times iterated real logarithm function $\log_n$ is $\prod_{m<n} (\log_m)^{-1}$.)
These requirements determine the derivation uniquely:

\begin{prop}\label{der} There is a unique $\R$-linear
derivation $\der$ on $\LL$ such that:
\begin{enumerate}
\item[(i)] $\der \ell_{\alpha}= \prod_{\beta<\alpha} \ell_{\beta}^{-1}$ for all $\alpha$; 
\item[(ii)] for every set $I$ and summable family
$(f_i)_{i\in I}$ in $\LL$ the family $(\der f_i)$ is summable as well and $\der \sum_i f_i= \sum_i \der f_i$.
\end{enumerate}
\end{prop}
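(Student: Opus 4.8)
The plan is to build $\der$ by hand on the monomial class $\mathfrak L$, extend it to $\LL$ by strong linearity, and then read off the Leibniz rule and properties (i), (ii); uniqueness will come from running the same computation in reverse. Write $\ell_\beta^\dagger:=\prod_{\gamma\le\beta}\ell_\gamma^{-1}$, so that (i) is the instance $\der\ell_\alpha=\ell_\alpha\ell_\alpha^\dagger=\prod_{\beta<\alpha}\ell_\beta^{-1}$ of the formula I will impose on a general monomial $\fm=\ell^r$:
\[
\der\fm\ :=\ \fm\sum_{\beta\in\sigma(\fm)}r_\beta\,\ell_\beta^\dagger\ =\ \sum_{\beta\in\sigma(\fm)}r_\beta\,\fm\ell_\beta^\dagger .
\]
First I would check this is a legitimate element of $\LL$: for fixed $\fm$ and $\beta<\beta'$ in $\sigma(\fm)$ one has $\fm\ell_{\beta'}^\dagger\prec\fm\ell_\beta^\dagger$ (their quotient is $\prod_{\beta<\gamma\le\beta'}\ell_\gamma^{-1}\prec1$), so $\beta\mapsto\fm\ell_\beta^\dagger$ reverses the well-ordering of $\sigma(\fm)$. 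Hence $\supp\der\fm$ is well-based and $\der\fm\in\LL$, with $\der\fm\asymp\fm\ell_{\beta_0}^\dagger\prec\fm$ for $\beta_0=\min\sigma(\fm)$.

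The technical heart of existence is extending $\fm\mapsto\der\fm$ strongly linearly. For a well-based $S\subseteq\mathfrak L_{<\alpha}$ I must verify that $T:=\bigcup_{\fm\in S}\supp\der\fm$ is well-based and that each monomial lies in $\supp\der\fm$ for only finitely many $\fm\in S$; the standard criterion for strong linearity then makes $\der\big(\sum_\fm c_\fm\fm\big):=\sum_\fm c_\fm\der\fm$ a well-defined strongly $\R$-linear operator on $\LL$. Well-basedness of $T$ follows because $T\subseteq S\cdot U$ with $U:=\{\ell_\beta^\dagger:\beta<\alpha\}$, and $U$ is reverse-well-ordered, hence well-based, so the product $S\cdot U$ of two well-based sets is again well-based. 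For the finiteness condition, $\fm\ell_\beta^\dagger=\fp$ forces $\fm=\fp\prod_{\gamma\le\beta}\ell_\gamma$, and these monomials strictly increase with $\beta$; an infinite strictly increasing chain cannot sit inside the well-based $S$, so only finitely many $\beta$ (and hence finitely many pairs) occur.

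With $\der$ in hand, the Leibniz rule on monomials, $\der(\fm\fn)=\der\fm\cdot\fn+\fm\cdot\der\fn$, is immediate from $\fm\fn=\ell^{r+s}$ and the additivity $\sum_\beta(r_\beta+s_\beta)\ell_\beta^\dagger=\sum_\beta r_\beta\ell_\beta^\dagger+\sum_\beta s_\beta\ell_\beta^\dagger$; I would then promote it to arbitrary $f,g\in\LL$ by expanding $fg$ as the summable double family $\sum_{\fm,\fn}c_\fm d_\fn\fm\fn$ and invoking strong linearity. Property (i) is the case $\fm=\ell_\alpha$, and (ii) holds by the very construction. This settles existence.

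For uniqueness, let $\der$ be any operator satisfying (i) and (ii). Applying $\der$ to the defining strong sum $\log\fm=\sum_\beta r_\beta\ell_{\beta+1}$ and using (ii) together with (i) forces $\der(\log\fm)=\sum_\beta r_\beta\ell_\beta^\dagger$ for every monomial. The crux is then to establish the logarithmic-derivative identity $\der\fm=\fm\cdot\der(\log\fm)$ on all of $\mathfrak L$, after which (ii) pins down $\der$ on $\LL$ and matches it with the operator constructed above. By Leibniz this identity is clear for finite products of the $\ell_\alpha$ and their integer powers, so the real work is the real-exponent case $\ell_\alpha^r$; here I would argue from the relation $\der\ell_{\alpha+1}=\ell_\alpha^\dagger=\der\ell_\alpha/\ell_\alpha$ supplied by (i) and the derivation axioms, reducing agreement of two candidate derivations to their forced agreement on the hyperlogarithms. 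I expect this real-exponent step---reconciling the $\R$-power structure of $\mathfrak L$ with the derivation---to be the main obstacle, whereas the summability underlying existence, though central, is dispatched cleanly by the product-of-well-based-sets argument above.
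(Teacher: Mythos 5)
Your existence argument is correct and is essentially the paper's own: your formula $\der\fm:=\fm\sum_{\beta\in\sigma(\fm)}r_\beta\ell_\beta^\dagger$ is the paper's $\fm'$, and your two verifications (well-basedness of $\bigcup_{\fm\in S}\supp\der\fm$ via the product $S\cdot\{\ell_\beta^\dagger:\beta<\alpha\}$, and the finiteness condition via a strictly increasing chain in $S$) are exactly the content of Lemma~\ref{6things}(vi) combined with the operator-support Lemma~\ref{supp1}; the promotion of the Leibniz rule from monomials to $\LL$ by strong bilinearity also matches the paper.

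The genuine gap is in uniqueness, and it cannot be closed along the lines you propose. You correctly isolate the needed identity $\der\fm=\fm\cdot\der(\log\fm)$ for \emph{every} logarithmic hypermonomial, prove it via Leibniz for finite products of integer powers, and defer the real-exponent case; note that Leibniz also fails to reach infinitely supported monomials such as $\prod_{n<\omega}\ell_n^{-1}$. This deferred step is not merely hard: conditions (i), (ii) and the derivation axioms do not force it. Indeed, fix a $\Q$-linear map $\phi\colon\R\to\R$ with $\phi(1)=0$ and $\phi\neq 0$, and let $D$ be the strongly $\R$-linear map on $\LL$ determined (via Lemma~\ref{supp1}, operator support $\subseteq\{1\}$) by $D(\ell^r):=\phi(r_0)\,\ell^r$ for each exponent sequence $r=(r_\beta)$. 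Since $D(\ell^{r+s})=\phi(r_0+s_0)\ell^{r+s}=D(\ell^r)\ell^s+\ell^rD(\ell^s)$, the map $D$ satisfies the Leibniz rule on monomials, hence on all of $\LL$ by strong bilinearity; it is $\R$-linear, strongly additive, and kills every $\ell_\alpha$ (for $\fm=\ell_\alpha$ the exponent $r_0$ is $1$ if $\alpha=0$ and $0$ otherwise, and $\phi(0)=\phi(1)=0$). Then $\der+D\neq\der$ is again an $\R$-linear derivation satisfying (i) and (ii), so no argument ``reducing agreement of two candidate derivations to their forced agreement on the hyperlogarithms'' can exist. This is precisely why the paper's proof has a different shape: it never attempts to derive the values on general monomials from (i) and (ii), but prescribes $\der\fm:=\fm\fm^\dagger$ on \emph{all} of $\mathfrak{L}$, checks Leibniz and the support bound for this monomial map (Lemma~\ref{6things}), and then obtains existence \emph{and} uniqueness simultaneously from Lemma~\ref{supp1}, a strongly $\R$-linear map being determined by its restriction to the monomials. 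In other words, the uniqueness must be read relative to the full monomial prescription (equivalently, among derivations that in addition respect logarithms, $\der\log f=\der f/f$ for $f\in\LL^{>}$); your plan of forcing it from (i) and (ii) alone runs into the obstruction above.
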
  

\noindent
The summability of a family $(f_i)$ in $\LL$ indexed by a set $I$ as in (ii) means: for some $\alpha$ all $f_i$ are in $\LL_{<\alpha}=\R[[\mathfrak{L}_{<\alpha}]]$ and $\sum_i f_i$ exists in this Hahn field. For $\alpha=0$, condition (i) says $\der x =1$. It is easy to see that the derivation 
of Proposition~\ref{der} must also respect logarithms in the sense that $\der \log f\ =\ \der f/f$ for all $f\in \LL^>$.   
We establish Proposition~\ref{der} in Section~\ref{derantider}, where we show in addition that the derivation $\der$ of that proposition has the following properties: 

\begin{theorem}\label{ci} $\{f\in \LL:\ \der f =0\}=\R$, $(\LL,\der)$ is an $H$-field, and $\der \LL= \LL$.  
\end{theorem}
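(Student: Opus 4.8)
The plan is to derive all three assertions from a single computation of the dominant term of $\der f$ in terms of the dominant term of $f$, and then to treat surjectivity separately by constructing antiderivatives. First I would record the logarithmic derivative of a monomial. Since $\log$ is the additive homomorphism defined just before Proposition~\ref{der} and $\der\ell_{\beta+1}=\prod_{\gamma\le\beta}\ell_\gamma^{-1}$, applying property (ii) to $\log\fm=\sum_\beta r_\beta\ell_{\beta+1}$ gives, for $\fm=\ell^r$,
\[ \fm^\dagger\ =\ \sum_\beta r_\beta\,\ell_\beta^\dagger,\qquad \ell_\beta^\dagger\ :=\ \prod_{\gamma\le\beta}\ell_\gamma^{-1}\ =\ \der\ell_{\beta+1}. \]
The monomials $\ell_0^\dagger=\ell_0^{-1}\succ\ell_1^\dagger\succ\cdots$ are strictly $\prec$-decreasing and all $\prec 1$. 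Hence for $\fm\ne 1$, with $\beta_0:=\min\sigma(\fm)$, the dominant monomial of $\fm^\dagger$ is $\ell_{\beta_0}^\dagger$ with coefficient $r_{\beta_0}$ (and $r_{\beta_0}>0$ iff $\fm\succ 1$), so $\der\fm=\fm\fm^\dagger\ne 0$ and $\fd(\der\fm)=\fm\,\ell_{\beta_0}^\dagger$. One checks from this formula that $\der$ maps each $\LL_{<\alpha}$ into itself, so all leading-term computations occur in a fixed Hahn field.

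The key lemma is: for $0\ne f\in\LL$ with $\fd(f)=\fm\ne 1$ and $f=c\,\fm(1+\epsilon)$, $\epsilon\prec 1$, one has $\der f\ne 0$ with $\fd(\der f)=\fm\,\ell_{\beta_0}^\dagger$ and leading coefficient $c\,r_{\beta_0}$. Its heart is a monotonicity claim: for monomials $\fm\succ\fn$ with $\fm,\fn\ne 1$ we have $\fd(\der\fm)\succ\fd(\der\fn)$. I would prove this by a direct exponent computation: writing $\fm=\ell^r$, $\fn=\ell^s$, $p=\min\sigma(\fm)$, $q=\min\sigma(\fn)$, the quotient $\fd(\der\fm)/\fd(\der\fn)=(\fm/\fn)\,(\ell_p^\dagger/\ell_q^\dagger)$ is shown to have positive least nonzero exponent in each of the cases $p=q$, $p<q$, $p>q$; in the last case $\fm\succ\fn$ forces $s_q<0$ and the index $q$ survives as the leading one. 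Granting monotonicity, every monomial $\fn\prec\fm$ in $\supp f$ yields $\der\fn$ with $\fd(\der\fn)\prec\fm\,\ell_{\beta_0}^\dagger$, while the non-dominant part of $\der\fm$ lies $\prec\fm\,\ell_{\beta_0}^\dagger$; since $\der f=\sum_\fn c_\fn\der\fn$ by summability, the monomial $\fm\,\ell_{\beta_0}^\dagger$ is reached only by the $\fm$-term, so it is the unique largest monomial of $\der f$ and its coefficient $c\,r_{\beta_0}$ survives with no cancellation.

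From the lemma the first two assertions are quick. If $\der f=0$ with $f\ne 0$, then $\fd(f)=1$ (else the lemma gives $\der f\ne 0$); subtracting the real part $c$ of $f$, the element $f-c$ has dominant monomial $\prec 1$, so $f-c=0$ by the lemma, i.e.\ $f=c\in\R$. Thus $\{f:\der f=0\}=\R$. For the $H$-field property, the convex hull of $\R$ is the valuation ring $\{f:\fd(f)\preceq 1\}$, which equals $\R+\{f:f\prec 1\}$ by splitting off the coefficient of $1$, so that axiom holds automatically; and if $f$ exceeds every real then $\fd(f)=\fm\succ 1$ and $c>0$, whence $r_{\beta_0}>0$ and the leading coefficient $c\,r_{\beta_0}$ of $\der f$ is positive, giving $\der f>0$. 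Hence $(\LL,\der)$ is an $H$-field.

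The remaining and hardest assertion is $\der\LL=\LL$, which I would establish by constructing an integration operator, beginning with monomials. Given $\fn=\ell^s\ne 1$, set $\beta_0:=\min\{\gamma:s_\gamma\ne -1\}$ (which exists since $s$ is eventually $0$) and $\fm:=\fn\prod_{\gamma\le\beta_0}\ell_\gamma$; then $\min\sigma(\fm)=\beta_0$, the exponent of $\ell_{\beta_0}$ in $\fm$ is $s_{\beta_0}+1\ne 0$, and by the lemma $\der\!\big(\tfrac{1}{s_{\beta_0}+1}\fm\big)$ has dominant term $\fn$ with coefficient $1$. Thus the remainder $\fn-\der\!\big(\tfrac{1}{s_{\beta_0}+1}\fm\big)$ has dominant monomial $\prec\fn$, and iterating this leading-term correction transfinitely should produce an antiderivative of $\fn$; extending $\R$-linearly and summing over $\supp g$ would give $\int g$ with $\der\int g=g$ for every $g$. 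I expect the main obstacle to be exactly the convergence of this procedure: one must show that the monomials created by the successive corrections stay within a single $\LL_{<\beta}$ and form a well-based set, so that the transfinite sum is a bona fide element of $\LL$. Controlling these supports is the technical core, and is where the bulk of the integration section's work is spent.
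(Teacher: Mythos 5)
Your handling of the first two assertions is correct and essentially reproduces the paper's own proof. Your monotonicity claim is item (v) of Lemma~\ref{6things} ($\fm\prec\fn\ne 1\Rightarrow\fm'\prec\fn'$); the paper obtains it with less computation by writing $\fm=\fn\fv$ with $\fv\prec 1$, so that $\fm'=\fn'(\fv+\fv'/\fn^\dagger)$ and $\fv'\prec\fn^\dagger$ by item (iv), but your three-case exponent analysis proves the same statement. The deductions of $\{f\in\LL:\der f=0\}=\R$ and of the $H$-field axioms from the resulting leading-term formula for $\der f$ are then exactly those of Lemma~\ref{l4}.

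The genuine gap is in the third assertion, $\der\LL=\LL$, and it is precisely the step you defer: the convergence of the transfinite leading-term correction is not a technical afterthought but the entire mathematical content of this part of the theorem, and your proposal does not prove it. Your explicit assignment $\fn\mapsto\frac{1}{s_{\beta_0}+1}\fn\prod_{\gamma\le\beta_0}\ell_\gamma$ is, in substance, the paper's operator $T$ (there set up for the modified derivation $\derdelta=\frac{1}{\ell_\alpha'}\der_\alpha$, which stabilizes $\R[[\mathfrak{L}_{<\alpha}^{\ne 1}]]$), so your starting point is right. But the paper never runs an ordinal-indexed greedy recursion. It first notes that the underlying monomial map $\fm\mapsto\fn$ (with $\derdelta\fn\asymp\fm$) is an automorphism of the ordered set $\mathfrak{L}_{<\alpha}^{\ne 1}$ --- a point your sketch also needs and omits, since without order-preservation you cannot even extend $T$ strongly linearly via Lemma~\ref{vdh} and ``sum over $\supp g$'' --- and then replaces the iteration by an operator identity: $\derdelta\circ T=I+E$, where $E$ is strongly $\R$-linear with $E(g)\prec g$ for $g\ne 0$, so that Lemma~\ref{inverse} (already available in the preliminaries, and encapsulating exactly the well-basedness bookkeeping you flag as the obstacle) inverts $I+E$ as $\sum_n(-1)^nE^n$ and yields the strongly linear right inverse $\derdelta^{-1}=T\circ(I+E)^{-1}$. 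If you want to complete your argument with minimal change, this operator reformulation plus an appeal to Lemma~\ref{inverse} is the way to do it. Note finally that surjectivity cannot be had $\alpha$ by $\alpha$: the monomial $\ell_\alpha'=\prod_{\beta<\alpha}\ell_\beta^{-1}$ lies in $\LL_{<\alpha}$ but its antiderivative $\ell_\alpha$ does not (for limit $\alpha$ the value $v(\ell_\alpha')$ is a gap, Lemma~\ref{l5}); your construction handles this case correctly by producing $\ell_\alpha$, but it forces the conclusion to be about $\LL=\bigcup_\alpha\LL_{<\alpha}$ rather than about any fixed $\LL_{<\alpha}$, which is how the paper organizes the final step.
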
 

\noindent
Here $(\LL,\der)$ denotes the ordered field $\LL$ equipped with
the derivation $\der$. Recall from \cite[Chapter 10]{ADH1} that an $H$-field
is an ordered
differential field $K$ such that for the constant field $C$ of $K$ and all
$f\in K$ we have: if $f> C$, then $f'>0$, and, with $\mathcal{O}$ the convex hull of $C$ in $K$,  
if $f\in \mathcal{O}$, then $f=c+\epsilon$ for some $c\in C$ and $\epsilon\in K$
with $|\epsilon|< C^>$. Such an $H$-field $K$ is viewed as a 
{\em valued\/} field with valuation ring $\mathcal{O}$. 

\medskip\noindent
In the rest of this introduction $\LL$ is equipped with the above derivation $\der$. We also set $f':=\der f$, $f^{(n)}=\der^n f$ for $f\in \LL$ and introduce the distinguished integration
operator $f\mapsto \int f: \LL \to \LL$ that assigns to $f\in \LL$ the unique
$g\in \LL$ with $g'=f$ and $1\notin \supp g$; so the constant term of $\int f$ is $0$. For example, $\ell_{\alpha}=\int \prod_{\beta<\alpha} \ell_{\beta}^{-1}$. 

\subsection*{Composition} A good composition should reflect the composition of functions. To construct the `correct' composition on $\LL$ and show it has the desired properties takes considerable effort.
Let us define a {\em composition on $\LL$\/} to 
be an operation 
$$(f,g)\mapsto f\circ g\ :\ \LL\times \LL^{>\R} \to \LL$$ that has the following properties:
\begin{enumerate}
\item[(CL1)] for any $g\in \LL^{>\R}$ the map $f\mapsto f\circ g: \LL \to \LL$
is an $\R$-algebra endomorphism;
\item[(CL2)] $f\circ x=f$ for all $f\in \LL$ and $x\circ g=g$ for all $g\in \LL^{>\R}$;
\item[(CL3)] $\log(f\circ g)=(\log f)\circ g$ for all 
$f\in \LL^{>}$ and $g\in \LL^{>\R}$;
\item[(CL4)] for any summable family $(f_i)$ in $\LL$ and $g\in \LL^{>\R}$ the family
$(f_i\circ g)$ is summable and $(\sum_i f_i)\circ g=\sum_i f_i\circ g$;
\item[(CL5)] for all $f\in \LL$ and $g,h\in \LL^{>\R}$ we have $(f\circ g)\circ h=f\circ (g\circ h)$. 
\end{enumerate} 

\noindent
Note that (CL1) alone (and the fact that $\LL$ is real closed) gives that for fixed $g\in \LL^{>\R}$ the map $f\mapsto f\circ g: \LL\to \LL$ is an embedding of ordered fields sending $\LL^{>\R}$ into itself. Thus (CL3) and (CL5) make sense, assuming (CL1). 

\medskip\noindent
Thinking of $\ell_{\alpha}$ as the $\alpha$th iterated
logarithm of $\log x$ suggests 
$\ell_{\alpha}\circ \ell_{\beta}=\ell_{\beta+\alpha}$, but in view of $1+\omega=\omega$
this would give $\ell_\omega\circ \ell_1=\ell_{\omega}$ as a special case.
Since (CL2) gives $\ell_{\omega}\circ \ell_0=\ell_{\omega}$, this would be unreasonable, and in fact the composition we shall construct satisfies
$\ell_{\omega}\circ \ell_1=\ell_{\omega} -1$ instead. Our main result is the following characterization of this composition:

\begin{theorem}\label{uc} There is a unique composition $\circ$ on $\LL$
such that for all $f,g,h\in \LL$ with $g>\R$ and $g\succ h$ the sum $\sum_{n=0}^\infty \frac{f^{(n)}\circ g}{n!}h^n$ exists and
$$f\circ (g+h)\ =\ \sum_{n=0}^ \infty \frac{f^{(n)}\circ g}{n!}h^n\qquad (\text{Taylor expansion}),$$ 
and such that for all $\beta,\gamma$:
\begin{itemize}
\item $\ell_\gamma \circ \ell_{\omega^\beta}\ =\ \ell_{\omega^\beta+\gamma}$ if $\gamma < \omega^{\beta+1}$;
\item $\ell_{\omega^{\beta+1}} \circ \ell_{\omega^\beta}\ =\ \ell_{\omega^{\beta+1}}-1$;
\item the constant term of $\ell_{\omega^\gamma} \circ \ell_{\omega^\beta}$
is $0$ if $\gamma>\beta$ is a limit ordinal.
\end{itemize}
\end{theorem}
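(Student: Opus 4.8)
The plan is to prove uniqueness first, extracting the forced values of $f\circ g$ from the hypotheses, and then to turn the same recursion into a construction for existence; throughout, the organizing principle is a single transfinite induction on the complexity of the right argument $g$.

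For uniqueness, I would first reduce $f\circ g$ to the special values $\ell_\gamma\circ g$. Fix $g\in\LL^{>\R}$. By (CL1) the map $f\mapsto f\circ g$ is an $\R$-algebra endomorphism (in fact an ordered-field embedding, since $\LL$ is real closed), and by (CL4) it commutes with infinite sums; writing $f=\sum_{\fm}c_{\fm}\fm$ thus gives $f\circ g=\sum_{\fm}c_{\fm}(\fm\circ g)$, so it suffices to know $\fm\circ g$ for each monomial $\fm=\prod_\beta\ell_\beta^{r_\beta}$. Applying (CL3) together with $\log\fm=\sum_\beta r_\beta\ell_{\beta+1}$ and (CL1),(CL4) yields $\log(\fm\circ g)=\sum_\beta r_\beta(\ell_{\beta+1}\circ g)$; since each $\fm\circ g>0$ and $\log$ is injective on $\LL^{>}$, this determines $\fm\circ g$ from the family $(\ell_\gamma\circ g)_\gamma$. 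Everything therefore comes down to the values $\ell_\gamma\circ g$.

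Next I would reduce the index $\gamma$. For $\gamma=0$, (CL2) gives $\ell_0\circ g=g$; iterating (CL3) in the form $\ell_{\delta+1}\circ g=\log(\ell_\delta\circ g)$ peels off any finite tail, so it remains to treat limit $\gamma$. Writing such a $\gamma$ in Cantor normal form with leading term $\omega^\mu\cdot c$, I set $\gamma=\omega^\mu+\gamma'$ with $\gamma'<\omega^{\mu+1}$ and $\gamma'<\gamma$; the first displayed hyperlogarithm identity then reads $\ell_{\gamma'}\circ\ell_{\omega^\mu}=\ell_{\omega^\mu+\gamma'}=\ell_\gamma$, so (CL5) gives $\ell_\gamma\circ g=\ell_{\gamma'}\circ(\ell_{\omega^\mu}\circ g)$. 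By transfinite induction on $\gamma$ this reduces all left-compositions to the \emph{atoms} $\ell_{\omega^\mu}\circ g$. The crux is then to pin down these atoms for every $g\in\LL^{>\R}$. Here I would induct on a complexity measure of $g$ — the least $\alpha$ with $g\in\LL_{<\alpha}$ refined by the dominant-monomial data of $g$ — the induction hypothesis supplying the entire composition law for arguments of strictly lower complexity. Writing $g=g_0+h$ with $g_0>\R$ of lower complexity and $h\prec g_0$, the Taylor expansion identity reduces $\ell_{\omega^\mu}\circ g$ to $\sum_n\frac{(\ell_{\omega^\mu})^{(n)}\circ g_0}{n!}h^n$, and each $(\ell_{\omega^\mu})^{(n)}$ is a concrete element of $\LL$ computable from Proposition~\ref{der}, so the summands $(\ell_{\omega^\mu})^{(n)}\circ g_0$ fall under the induction hypothesis for $g_0$. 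The base case is $g$ a hyperlogarithmic monomial, where the three displayed identities apply directly: the second fixes the successor-level constant and the third fixes the constant term at limit levels, removing exactly the freedom that (CL1)–(CL5) leave open. The subtle point on this side is the well-foundedness and coherence of the recursion — that the different reduction routes to a given $\ell_\gamma\circ g$ all return the same value.

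For existence I would read these reductions as definitions: define the atoms $\ell_{\omega^\mu}\circ g$ by the induction above (the bullet identities at the monomial base, Taylor expansion at higher complexity), propagate to all $\ell_\gamma\circ g$ via (CL5) and the successor formula, and finally to all $f\circ g$ via the $\log$-and-sum formula of the first step. One must then check that this is well defined and satisfies (CL1)–(CL5), the Taylor expansion identity, and the summability it requires. I expect the genuinely hard part of the whole theorem to lie here: proving associativity (CL5) — the familiar obstruction for formal composition — interlocked with the coherence of the recursive definition and with the summability underlying Taylor expansion. These I would attack together by one transfinite induction matching the complexity measure above, using Theorem~\ref{ci} (that $(\LL,\der)$ is an $H$-field with $\der\LL=\LL$) to keep the $\der$-iterates and the supports occurring in the Taylor sums under control.
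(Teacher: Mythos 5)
There is a genuine gap in your uniqueness argument, and it propagates into your existence plan. Your transfinite recursion on the right argument does not bottom out at the hyperlogarithms $\ell_{\omega^\beta}$: decomposing $g=g_0+h$ with $h\prec g_0$ and applying Taylor expansion can only strip off \emph{sub-dominant} terms, so the terminal cases of your induction are all series of the form $g_0=c\fm$ with $c\in\R^{>}$ and $\fm=\prod_\beta\ell_\beta^{r_\beta}$ an arbitrary monomial --- for instance $2x$, $x^2$, $x\ell_1^{-1}$, or $2\ell_\omega$. The three bullet identities say nothing about $\ell_{\omega^\mu}\circ(2x)$ or $\ell_{\omega^\mu}\circ x^2$, and none of (CL1)--(CL5) reduces such a value further: (CL3) moves logarithms on the \emph{left} factor only, and Taylor expansion cannot relate $2x$ or $x^2$ to $x$, since $2x=x+x$ and $x^2=x+(x^2-x)$ violate the requirement $h\prec g$. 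So your claim that ``the base case is $g$ a hyperlogarithmic monomial, where the three displayed identities apply directly'' is where the proof fails: the base cases form a much larger class on which the hypotheses of the theorem give you nothing directly.

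The missing idea --- the crux of the paper's uniqueness proof in Section~\ref{fs} --- is the up-shift trick. For any $g>\R$ one has $\log_3(g)=\ell_{\lambda_g+3}+\epsilon$ with $\epsilon\prec 1$ (three logarithms turn any positive infinite series into a hyperlogarithm plus an infinitesimal, Lemma~\ref{l:SumOfLogs} and the remarks before it), and writing $f=f^{\uparrow_3}\circ\ell_3$, where $f^{\uparrow_3}$ is the preimage of $f$ under the bijection $\phi\mapsto\phi\circ\ell_3$ of $\LL_{[\omega,\alpha)}$ (available because $\circ$ has already been constructed), (CL5) and (CL2)--(CL3) give $f\ast g=f^{\uparrow_3}\ast\big(\ell_{\lambda_g+3}+\epsilon\big)$; only now does Taylor expansion around a \emph{hyperlogarithm} apply. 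Note that its summands $(f^{\uparrow_3})^{(n)}\ast\ell_{\lambda_g+3}$ have arbitrary left arguments, which is why the paper must first prove $f\ast\ell_{\omega^\beta}=f\circ\ell_{\omega^\beta}$ for \emph{all} $f$, via its main Lemma~\ref{tecr}: an induction on the subscript $\rho$ in $\ell_\rho\ast\ell_{\omega^\beta}$ that uses the chain rule (derived from Taylor expansion by a difference-quotient argument, Lemma~\ref{teocr}) together with the second and third bullets to pin down integration constants. Your plan to then obtain existence by reading the recursion as a definition inherits the same gap; the paper's existence proof is in fact a direct construction (exponential of a modified derivation for $\circ\,\ell_{\omega^\beta}$, multipliable products of iterated logarithms, Taylor deformations), with summability, the chain rule, and associativity established afterwards.
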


\noindent
We construct this composition in Sections~\ref{sec:comp1} and \ref{sec:comp2}, and use Sections~\ref{sec:pc} and ~\ref{teci} to prove the more subtle results about it: 
(CL5) (that is, associativity) and Taylor expansion.
In obtaining associativity we also establish the Chain Rule
(Proposition~\ref{l:FullChain}): $$(f\circ g)'\ =\ (f'\circ g)\cdot g' \text{ for all $f\in \LL$ and $g\in \LL^{>\R}$}.$$ 
All this concerns only the {\em existence\/} part of
Theorem~\ref{uc}. The {\em at most one\/} part is taken care of in the final Section~\ref{fs}. In the remainder of this introduction we let $\circ$ denote the composition on $\LL$ defined by Theorem~\ref{uc}. 

The $g\in \LL^{>\R}$ form a monoid under composition with $x$ as identity, and the invertible elements of
this monoid are the $g$ with $\min \sigma(\fd(g))=0$: Proposition~\ref{inversion}.

To construct our composition we  work inside Hahn fields $\LL_{<\alpha}$
where $\alpha=\omega^{\lambda}$ and $\lambda$ is an infinite limit ordinal,  and in fact, for such $\alpha$ we have
 $f\circ g\in \LL_{<\alpha}$ for $f,g\in \LL_{<\alpha}$ with $g>\R$; so the least $\alpha$ in this setting is $\omega^\omega$.

\medskip\noindent
Finally, we indicate in Section~\ref{fs} the natural ordered and valued field embedding of $\LL$ into $\No$ that is the identity on
$\R$, sends $x:= \ell_0$ to $\omega$, and respects logarithms and infinite sums: Proposition~\ref{embllno}. This is also a differential field
embedding where $\No$ is equipped with the derivation $\der_{\operatorname{BM}}$ constructed by Berarducci and Mantova~\cite{BM}.

\section{Preliminaries}\label{prelim}

\medskip\noindent
We summarize here some conventions, notations, and results concerning
monomial groups and Hahn fields and refer to \cite{vdH01} and
\cite[Section 3.1 and Appendix A]{ADH1} for proofs omitted here. We also consider some notions that are particularly useful in the
present paper and a planned sequel: multipliability, the support of linear operators on Hahn fields, Taylor deformations, and monomial groups with real powers. In addition we
include some miscellaneous facts needed later.

\subsection*{Monomial sets} 
A {\em monomial set\/} is a totally ordered set; we think of its elements as
monomials. 
Let $\fM$ be a monomial set and let $\fm, \fn$ range over elements of $\fM$.
Then $\fm \prec \fn$ indicates that $\fm$ is less than $\fn$ in the ordering
of $\fM$, and we use the notations $\fm\preceq \fn$, $\fm\succ \fn$, $\fm\succeq \fn$ likewise; for example, $\fm\preceq \fn\Leftrightarrow \fm\prec \fn$ or $\fm=\fn$. A set  $\mathfrak{S}\subseteq\fM$ is said to be {\em well-based\/} if it is well-ordered
in the reverse ordering, that is, there is no infinite strictly increasing sequence
$\fm_0 \prec \fm_1 \prec \fm_2 \prec \cdots$ in $\mathfrak{S}$. 

Let $\k$ be a field. Then $\k[[\fM]]$ consists of the formal series $f=\sum_{\fm} f_{\fm}\fm$ with coefficients $f_{\fm}\in \k$ whose support $$\supp f:=\{\fm:\ f_{\fm}\ne 0\}$$ is well-based. We construe $\k[[\fM]]$ as a vector space over $\k$ as suggested by the series notation and identify
$\fM$ with a subset of $\k[[\fM]]$ via $\fm\mapsto 1\fm$.

Let $(f_i)_{i\in I}$ be a family in $\k[[\fM]]$. We say that $(f_i)$ is
{\bf summable\/} if $\bigcup_i \supp f_i$ is well-based and
for each $\fm\in \fM$ there are only finitely many $i\in I$ such that 
$\fm\in \supp f_i$; in that case we define
its sum $\sum_i f_i$ to be the series $f\in \k[[\fM]]$ such that $f_{\fm}=\sum_i f_{i,\fm}$ for each $\fm\in \fM$. (This agrees with the usual notation for elements of $\k[[\fM]]$: for a series
$f=\sum_{\fm} f_{\fm}\fm\in \k[[\fM]]$ as above the family 
$(f_{\fm}\fm)$ is indeed summable with sum $f$; conversely, every summable family $(f_{\fm}\fm)$ with coefficients $f_{\fm}\in \k$ yields a
series $f=\sum_{\fm} f_{\fm}\fm\in \k[[\fM]]$.)  Instead of ``$(f_i)$ is summable'' we also say that $\sum_i f_i$ exists. Sometimes the following equivalence is useful: $(f_i)$ is not summable if and only if there is a sequence $(i_n)$ of distinct indices and an increasing sequence $(\fm_n)$
in $\fM$ with $\fm_n\in \supp(f_{i_n})$ for all $n$.  

The {\em dominant monomial\/} $\fd(f)\in \fM$ of a nonzero $f\in \k[[\fM]]$ is defined by $$\fd(f)\ :=\ \max \supp f.$$ We also set 
$\fd(0):= 0\in \k[[\fM]]$ and extend the ordering of $\fM$ to a total ordering on the disjoint union $\fM\cup\{0\}$ by $0\prec \fm$ for all $\fm\in \fM$. The binary relations $\prec$ and $\preceq$ on $\fM\cup\{0\}$ are extended to binary relations $\prec$ and $\preceq$ on $\k[[\fM]]$ as follows: 
$$f\prec g:\Leftrightarrow\ \fd(f)\prec \fd(g), \qquad f\preceq g:\Leftrightarrow\ \fd(f)\preceq \fd(g).$$ 
Let $\fN$ also be a monomial set and $\Phi: \k[[\fM]]\to \k[[\fN]]$ a map. 
We call $\Phi$ {\bf strongly additive\/} if it is additive and for every
summable family $(f_i)$ in $\k[[\fM]]$ the family $(\Phi(f_i))$ is summable in
$\k[[\fN]]$ and $\Phi(\sum_i f_i)=\sum_i \Phi(f_i)$. If $\Phi$ is strongly additive and $\Theta: \k[[\fM]]\to \k[[\fN]]$ is strongly additive, then so is $$\Phi+\Theta\ :\ \k[[\fM]]\to \k[[\fN]], \qquad f\mapsto \Phi(f)+\Theta(f).$$ 
If $\Phi$ is strongly additive, $\fG$ is a monomial set, and
$\Theta: \k[[\fG]]\to \k[[\fM]]$ is strongly additive, then so is
$\Phi\circ\Theta: \k[[\fG]]\to \k[[\fN]]$. 
We call $\Phi$ {\bf strongly $\k$-linear} if it is $\k$-linear and strongly additive; note that then
for any $f=\sum_{\fm} f_{\fm}\fm\in \k[[\fM]]$ the sum
$\sum_{\fm}f_{\fm}\Phi(\fm)$ exists in $\k[[\fN]]$ and equals $\Phi(f)$. 
Thus a strongly $\k$-linear map $\k[[\fM]]\to \k[[\fN]]$ is determined by
its restriction to $\fM$. The following converse is the ``totally ordered''
case of \cite[Proposition 3.5]{vdH01}:

\begin{lemma}\label{vdh} Let $\Phi: \fM\to \k[[\fN]]$ be such that for every well-based
$\mathfrak{S}\subseteq \fM$ the family $(\Phi(\fm))_{\fm\in \mathfrak{S}}$
is summable. Then $\Phi$ extends $($uniquely$)$ to a strongly $\k$-linear map
$\k[[\fM]]\to \k[[\fN]]$.
\end{lemma}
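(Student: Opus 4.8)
The plan is to first use uniqueness to force the definition, then verify that this definition yields a strongly $\k$-linear map. For uniqueness, suppose $\bar\Phi\colon\k[[\fM]]\to\k[[\fN]]$ is strongly $\k$-linear and extends $\Phi$. Given $f=\sum_{\fm}f_{\fm}\fm$, the family $(f_{\fm}\fm)$ is summable with sum $f$, so strong additivity and $\k$-linearity force $\bar\Phi(f)=\sum_{\fm}f_{\fm}\bar\Phi(\fm)=\sum_{\fm}f_{\fm}\Phi(\fm)$. This both proves uniqueness and dictates the definition $\bar\Phi(f):=\sum_{\fm}f_{\fm}\Phi(\fm)$. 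To see the right-hand side makes sense, note that $\supp f$ is well-based, so the hypothesis applied to $\mathfrak{S}=\supp f$ gives summability of $(\Phi(\fm))_{\fm\in\supp f}$; since $\supp(f_{\fm}\Phi(\fm))\subseteq\supp\Phi(\fm)$, scaling by the constants $f_{\fm}\in\k$ preserves both conditions in the definition of summability, so $(f_{\fm}\Phi(\fm))_{\fm\in\supp f}$ is summable and $\bar\Phi(f)\in\k[[\fN]]$ is well-defined. That $\bar\Phi(\fm)=\Phi(\fm)$ and that $\bar\Phi$ is homogeneous over $\k$ are then immediate from the coefficientwise description of infinite sums.

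The substantive step is strong additivity, of which ordinary additivity is the special case of a two-element index set. Let $(f_i)_{i\in I}$ be summable with sum $f$, and set $\mathfrak{S}:=\bigcup_i\supp f_i$, a well-based subset of $\fM$ that contains $\supp f$. First I would show $(\bar\Phi(f_i))_{i\in I}$ is summable: all its supports lie in $\bigcup_{\fm\in\mathfrak{S}}\supp\Phi(\fm)$, which is well-based by the hypothesis applied to $\mathfrak{S}$; and for a fixed $\fn\in\fN$, if $\fn\in\supp\bar\Phi(f_i)$ then $f_{i,\fm}\ne 0$ and $\fn\in\supp\Phi(\fm)$ for some $\fm$. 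Combining that only finitely many $\fm\in\mathfrak{S}$ satisfy $\fn\in\supp\Phi(\fm)$ with the fact that, for each such $\fm$, only finitely many $i$ have $f_{i,\fm}\ne 0$ (as $(f_i)$ is summable), we conclude that only finitely many $i$ contribute to the coefficient of $\fn$, so the family is summable. Then I would compute the sum coefficientwise: for fixed $\fn$ the coefficient of $\fn$ in $\sum_i\bar\Phi(f_i)$ equals $\sum_i\sum_{\fm}f_{i,\fm}\Phi(\fm)_{\fn}$, and the same finiteness analysis shows that only finitely many pairs $(i,\fm)$ give a nonzero summand. The double sum is therefore finite and may be reordered as $\sum_{\fm}\bigl(\sum_i f_{i,\fm}\bigr)\Phi(\fm)_{\fn}=\sum_{\fm}f_{\fm}\Phi(\fm)_{\fn}$, using $f_{\fm}=\sum_i f_{i,\fm}$; this is exactly the coefficient of $\fn$ in $\bar\Phi(f)$. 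Hence $\sum_i\bar\Phi(f_i)=\bar\Phi(f)$, and together with homogeneity $\bar\Phi$ is strongly $\k$-linear.

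The only genuinely delicate point is the finiteness bookkeeping: everything rests on showing that each target monomial $\fn\in\fN$ receives nonzero contributions from only finitely many pairs $(i,\fm)$, which is where the two summability hypotheses---that of $(\Phi(\fm))_{\fm\in\mathfrak{S}}$ and that of $(f_i)$---must be used together. Once this is established, all the rearrangements reduce to finite commutativity and the rest is routine. I would deliberately avoid invoking a general associativity-of-summation principle for summable families and instead argue directly at the level of coefficients, as above, so as to keep the proof self-contained and to rely only on the definition of summability recalled in this section.
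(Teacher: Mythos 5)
Your proof is correct: the definition $\bar\Phi(f):=\sum_{\fm}f_{\fm}\Phi(\fm)$ is indeed forced by strong $\k$-linearity, the hypothesis applied to $\mathfrak{S}=\supp f$ makes it well defined, and your finiteness bookkeeping for strong additivity is exactly right---for fixed $\fn\in\fN$, only finitely many $\fm\in\mathfrak{S}:=\bigcup_i\supp f_i$ have $\fn\in\supp\Phi(\fm)$ (by summability of $(\Phi(\fm))_{\fm\in\mathfrak{S}}$), and for each such $\fm$ only finitely many $i$ have $f_{i,\fm}\ne 0$ (by summability of $(f_i)$), so each coefficient of $\sum_i\bar\Phi(f_i)$ is a finite sum that may be regrouped into the corresponding coefficient of $\bar\Phi\big(\sum_i f_i\big)$. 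The comparison with the paper is, however, somewhat moot: the paper gives no proof of this lemma at all, but quotes it as the ``totally ordered'' case of \cite[Proposition 3.5]{vdH01}, where the result is established in a more general setting of not necessarily totally ordered monomial sets; moreover, your uniqueness step reproduces an observation the paper makes immediately before the lemma, namely that a strongly $\k$-linear map $\k[[\fM]]\to\k[[\fN]]$ is determined by its restriction to $\fM$. What your argument buys is self-containedness at the level of the definitions recalled in Section~\ref{prelim}---coefficientwise finiteness plus finite reordering, with no appeal to outside machinery---whereas the citation buys brevity and delegates the bookkeeping to a reference that proves a stronger statement.
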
 

\noindent
The next result on inverting strongly linear maps is almost the ``totally ordered'' case of \cite[Corollary 1.4]{ADH0}, which in turn follows from \cite[Theorems 6.1, 6.3]{vdH01}.   

\begin{lemma}\label{inverse} Let $\Phi: \k[[\fM]] \to \k[[\fM]]$ be a strongly $\k$-linear map with $\Phi(\fm)\prec \fm$ for all $\fm$. Let $I$ be the identity map on $\k[[\fM]]$. Then $I+\Phi: \k[[\fM]]\to \k[[\fM]]$ is bijective with strongly $\k$-linear inverse $(I+\Phi)^{-1}$ given by $(I+\Phi)^{-1}(f)=\sum_{n=0}^\infty(-1)^n\Phi^n(f)$, where the last sum always exists.
\end{lemma}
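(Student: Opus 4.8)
The plan is to produce the two-sided inverse explicitly as the Neumann series $\Psi:=\sum_{n=0}^\infty(-1)^n\Phi^n$ and to verify $(I+\Phi)\Psi=\Psi(I+\Phi)=I$ by a telescoping computation, once the series is shown to make sense. Since a strongly $\k$-linear map is determined by its restriction to $\fM$, the crux is entirely about summability of the iterates on monomials. Concretely, I would first set $\Psi(\fm):=\sum_{n=0}^\infty(-1)^n\Phi^n(\fm)$ for $\fm\in\fM$, provided this sum exists, and then invoke Lemma~\ref{vdh} to extend $\Psi$ to a strongly $\k$-linear map $\k[[\fM]]\to\k[[\fM]]$.

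The key step --- and the main obstacle --- is the following strong summability statement: for every well-based $\mathfrak S\subseteq\fM$ the family $\big(\Phi^n(\fm)\big)_{(\fm,n)\in\mathfrak S\times\N}$ is summable. This single statement does all the work at once. Taking $\mathfrak S=\{\fm\}$ a singleton shows that $\Psi(\fm)$ exists; and by the standard associativity of well-based summation (grouping the doubly-indexed family first over $n$ for each fixed $\fm$) it shows that for every well-based $\mathfrak S$ the family $(\Psi(\fm))_{\fm\in\mathfrak S}$ is summable, which is exactly the hypothesis of Lemma~\ref{vdh}. Thus strong summability simultaneously yields the existence of the series and the strong $\k$-linearity of $\Psi$. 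The content of this statement is essentially the ``totally ordered'' case of \cite[Corollary 1.4]{ADH0}, which rests on \cite[Theorems 6.1, 6.3]{vdH01}; to prove it from scratch I would argue by contradiction using the non-summability criterion recalled above: a failure produces distinct pairs $(\fm_k,n_k)\in\mathfrak S\times\N$ together with an increasing sequence $(\fn_k)$ in $\fM$ with $\fn_k\in\supp\Phi^{n_k}(\fm_k)$. If the $n_k$ took a fixed value $n$ infinitely often this would contradict the strong $\k$-linearity of $\Phi^n$ (which already makes $(\Phi^n(\fm))_{\fm\in\mathfrak S}$ summable), so one may assume $n_k\to\infty$. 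Each $\fn_k$ is then the endpoint of a strictly $\prec$-decreasing $\Phi$-chain $\fm_k=\fm_{k,0}\succ\fm_{k,1}\succ\cdots\succ\fm_{k,n_k}=\fn_k$ with $\fm_{k,i+1}\in\supp\Phi(\fm_{k,i})$; I expect the contradiction to come from a Higman/Nash-Williams-type analysis of such decreasing chains over the well-based set generated by $\mathfrak S$, forcing an increasing subpattern that violates $\Phi(\fm)\prec\fm$. This combinatorial core is exactly where the real difficulty lies, and is what the cited theorems encapsulate.

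Granting strong summability, the rest is routine. For arbitrary $f=\sum_\fm f_\fm\fm$, strong linearity of $\Psi$ and of each $\Phi^n$ gives $\Psi(f)=\sum_\fm f_\fm\sum_n(-1)^n\Phi^n(\fm)=\sum_n(-1)^n\Phi^n(f)$, so $\Psi$ is indeed the asserted series. Finally, because composites and sums of strongly $\k$-linear maps are strongly $\k$-linear, both $(I+\Phi)\Psi$ and $\Psi(I+\Phi)$ are strongly $\k$-linear and it suffices to check them on a monomial $\fm$. Using strong additivity of $\Phi$ to push it through the sum, $\Phi\Psi(\fm)=\sum_n(-1)^n\Phi^{n+1}(\fm)$, whence $(I+\Phi)\Psi(\fm)=\fm+\sum_{n\ge1}\big((-1)^n+(-1)^{n-1}\big)\Phi^n(\fm)=\fm$; the computation for $\Psi(I+\Phi)(\fm)$ is identical after expanding $\Psi(\Phi\fm)=\sum_n(-1)^n\Phi^{n+1}(\fm)$. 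Hence $I+\Phi$ is bijective with strongly $\k$-linear inverse $\Psi=\sum_{n\ge0}(-1)^n\Phi^n$, as claimed.
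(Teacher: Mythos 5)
Your proposal is sound in architecture and, at bottom, rests on exactly the same external input as the paper: the paper's entire proof consists of the observation that for infinite $\k$ the lemma follows directly from \cite[Corollary 1.4]{ADH0} (which in turn rests on \cite[Theorems 6.1, 6.3]{vdH01}), together with a scalar-extension trick for finite $\k$. Your reduction --- strong summability of the doubly indexed family $\big(\Phi^n(\fm)\big)_{(\fm,n)\in\mathfrak{S}\times\N}$, then Lemma~\ref{vdh}, regrouping, and the telescoping identities checked on monomials --- is a correct unpacking of what that corollary delivers, and the routine steps (existence of $\Psi$, the identity $\Psi(f)=\sum_n(-1)^n\Phi^n(f)$ for general $f$, and $(I+\Phi)\Psi=\Psi(I+\Phi)=I$) are carried out soundly. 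So, read as ``cite the literature for the summability core,'' your proof is the paper's proof with the black box opened.

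Two caveats. First, your from-scratch sketch of that core does not close: after disposing of the bounded-$n_k$ case and exhibiting the decreasing $\Phi$-chains, the ``Higman/Nash-Williams-type analysis'' you expect to finish the argument is precisely the nontrivial content of \cite[Theorems 6.1, 6.3]{vdH01}, and nothing in the sketch substitutes for it. Note that the hypothesis $\Phi(\fm)\prec\fm$ only gives $\supp\Phi\subseteq\fM^{\prec 1}$, which need not be well-based, so the easy route via Lemma~\ref{invsupp} is unavailable and some such combinatorial work is genuinely required. Second, you never address finite $\k$, and the paper's case split is not decorative: \cite[Corollary 1.4]{ADH0} is invoked only for infinite $\k$, and the paper obtains the finite case by extending to an infinite field $K\supseteq\k$, using Lemma~\ref{vdh} to extend $\Phi$ to a strongly $K$-linear map on $K[[\fM]]$, inverting there, and noting that the inverse maps $\k[[\fM]]$ into itself since each $\Phi^n(f)$ has coefficients in $\k$. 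Since the lemma is stated for arbitrary $\k$, your proof as written covers only the infinite case --- a minor gap (the paper itself remarks the finite case is included only for completeness), but it is the one step where the paper does something you don't.
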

\begin{proof} For infinite $\k$ this is clear from \cite[Corollary 1.4]{ADH0}. For finite $\k$ we reduce to the previous case by extending $\k$ to an infinite field $K$ and using
Lemma~\ref{vdh} to extend $\Phi$ to a strongly $K$-linear map $K[[\fM]]\to K[[\fM]]$. \end{proof} 

\noindent
We only include the case of finite $\k$ for the sake of completeness, since
the results above only get applied in later sections of this paper for $\k$ of characteristic $0$.

\subsection*{Monomial groups and Hahn fields}
A {\em monomial group\/} is a monomial set $\fM$ equipped with a
 (multiplicatively written) group operation $\fM\times \fM\to \fM$ that makes $\fM$ into an ordered commutative group. Let $\fM$ be a monomial group. 
We indicate its identity by $1$ (or $1_{\fM}$ if we wish to specify $\fM$).
For sets $\mathfrak{S}_1, \mathfrak{S}_2\subseteq \fM$ we set
$$\mathfrak{S}_1\mathfrak{S}_2\ :=\ \{\fm\fn:\ \fm\in \mathfrak{S}_1,\ \fn\in \mathfrak{S}_2\},$$
and recall that if $\mathfrak{S}_1, \mathfrak{S}_2$ are well-based, then
so is $\mathfrak{S}_1\mathfrak{S}_2$, and for every $\fg\in\mathfrak{S}_1\mathfrak{S}_2$ there are only finitely many pairs $(\fm, \fn)\in \mathfrak{S}_1\times \mathfrak{S}_2$ with $\fg=\fm\fn$. 
For $\mathfrak{S}\subseteq \fM$ we define $\mathfrak{S}^n\subseteq \fM$ by recursion on $n$ 
by $\mathfrak{S}^0=\{1\}$, $\mathfrak{S}^{n+1}=\mathfrak{S}^n\mathfrak{S}$, and we also set $\mathfrak{S}^{\infty}:= \bigcup_n \mathfrak{S}^n$, the submonoid of $\fM$ generated by $\mathfrak{S}$. Recall Neumann's Lemma: if $\mathfrak{S}\subseteq \fM^{\preceq 1}$ is well-based, then so is $\mathfrak{S}^{\infty}$; if $\mathfrak{S}\subseteq \fM^{\prec 1}$ and
$\fg\in \mathfrak{S}^{\infty}$, then there are only finitely many tuples $(n,\fm_1,\dots, \fm_n)$ with $\fm_1,\dots, \fm_n\in \mathfrak{S}$ and $\fg=\fm_1\cdots\fm_n$.

Let $\k$ be a field. Recall from \cite[Section 3.1]{ADH1} how $\k[[\fM]]$ is then construed as a field extension of $\k$ with $\fM$ a subgroup of its multiplicative group.

 \begin{cor}\label{addsumm} Suppose $(\epsilon_i)_{i\in I}$ is a summable family in $\k[[\fM]]^{\prec 1}=\k[[\fM^{\prec 1}]]$. Then the family 
$(\epsilon_i^n)_{i\in I, n\ge 1}$ is summable, and so is the family 
$(\sum_{n=1}^\infty c_{in}\epsilon^n)_{i\in I}$ for any family $(c_{in})_{i\in I, n\ge 1}$ of coefficients in $\k$.
\end{cor}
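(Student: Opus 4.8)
The plan is to reduce both assertions to Neumann's Lemma applied to the single set
$$\mathfrak{S}\ :=\ \bigcup_{i\in I}\supp \epsilon_i\ \subseteq\ \fM^{\prec 1},$$
which is well-based since $(\epsilon_i)$ is summable and each $\epsilon_i\in \k[[\fM^{\prec 1}]]$. As $\mathfrak{S}\subseteq \fM^{\prec 1}$, Neumann's Lemma tells us that $\mathfrak{S}^{\infty}$ is well-based and that each $\fg\in\mathfrak{S}^{\infty}$ has only finitely many factorizations $\fg=\fm_1\cdots\fm_n$ with $n\ge 0$ and $\fm_1,\dots,\fm_n\in\mathfrak{S}$.

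First I would prove summability of $(\epsilon_i^n)_{i\in I,\,n\ge 1}$ by verifying the two defining conditions. The support condition is immediate: $\supp(\epsilon_i^n)\subseteq(\supp\epsilon_i)^n\subseteq\mathfrak{S}^{\infty}$, so $\bigcup_{i,n}\supp(\epsilon_i^n)$ is well-based. For the finiteness condition, fix $\fg\in\fM$, and assume $\fg\in\mathfrak{S}^{\infty}$ (otherwise no pair $(i,n)$ contributes). By Neumann's Lemma the finitely many factorizations of $\fg$ over $\mathfrak{S}$ involve only exponents from some finite set $N\subseteq\N$ and only monomials from some finite set $T\subseteq\mathfrak{S}$. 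Now suppose $\fg\in\supp(\epsilon_i^n)$ for some $(i,n)$ with $n\ge 1$. Then $\fg=\fm_1\cdots\fm_n$ with $\fm_1,\dots,\fm_n\in\supp\epsilon_i$; this is one of the listed factorizations, so $n\in N$ and $\fm_1\in T\cap\supp\epsilon_i$, whence $\supp\epsilon_i\cap T\ne\emptyset$. Since $T$ is finite and $(\epsilon_i)$ is summable, only finitely many $i$ have $\supp\epsilon_i\cap T\ne\emptyset$; call this finite index set $J$. Every contributing pair then lies in $J\times N$, which is finite.

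The second assertion follows from the first. For fixed $i$ the family $(c_{in}\epsilon_i^n)_{n\ge 1}$ is summable---its supports lie in the well-based $\mathfrak{S}^{\infty}$, and for each $\fg$ only finitely many $n$ satisfy $\fg\in\supp(\epsilon_i^n)$ by the factorization finiteness above---so $\delta_i:=\sum_{n\ge 1}c_{in}\epsilon_i^n$ exists in $\k[[\fM]]$. To see that $(\delta_i)_{i\in I}$ is summable, note $\supp\delta_i\subseteq\mathfrak{S}^{\infty}$, so the supports are jointly well-based; and if $\fg\in\supp\delta_i$, then its coefficient $\sum_n c_{in}(\epsilon_i^n)_{\fg}$ is nonzero, so $\fg\in\supp(\epsilon_i^n)$ for some $n\ge 1$, and by the summability of $(\epsilon_i^n)_{i,n}$ just proved only finitely many such $i$ occur.

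I expect the one delicate point to be the simultaneous control of the index $i$ and the exponent $n$ in the finiteness condition for the first assertion. Neumann's Lemma governs the factorizations inside the fixed set $\mathfrak{S}$, bounding $n$ and pinning the factors to the finite set $T$; the hypothesis that $(\epsilon_i)$ is summable is then precisely what upgrades ``$\supp\epsilon_i$ meets $T$'' to ``$i$ ranges over a finite set.'' The remainder is routine bookkeeping with supports.
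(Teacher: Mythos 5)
Your proof is correct and follows the same route as the paper, which simply records that the first part is an easy consequence of Neumann's Lemma and that the second part follows from the first. Your write-up just supplies the bookkeeping details (the finite sets $N$, $T$, and the use of summability of $(\epsilon_i)$ to bound the index set) that the paper leaves to the reader.
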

\begin{proof} The first part is an easy consequence of Neumann's Lemma, and the second part follows from the first part.
\end{proof}

\noindent
We shall often use the following result whose proof is routine: 

\begin{lemma}\label{prfam} Suppose $(f_i)$ and $(g_j)$ are summable families in $\k[[\fM]]$. Then $(f_ig_j)$ is summable and $\sum_{i,j} f_ig_j=(\sum_i f_i)(\sum_j g_j)$. 
\end{lemma}

\noindent
Thus for $f\in \k[[\fM]]$ the map $g\mapsto fg: \k[[\fM]]\to \k[[\fM]]$ is strongly $\k$-linear. Given also a monomial group $\fN$ we have:

\begin{cor}\label{corprfam} Let $\Phi: \k[[\fM]]\to \k[[\fN]]$ be
 strongly additive, $(f_n)$ a summable family in $\k[[\fM]$ and
$\varepsilon\in \k[[\fN]]^{\prec 1}$. Then $\sum_n \Phi(f_n)\varepsilon^n$ exists. 
\end{cor}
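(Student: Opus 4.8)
The plan is to reduce this to the product lemma for summable families, Lemma~\ref{prfam}, together with the summability of powers of $\varepsilon$, Corollary~\ref{addsumm}. First, set $g_n:=\Phi(f_n)$. Since $\Phi$ is strongly additive and $(f_n)$ is summable, the family $(g_n)$ is summable in $\k[[\fN]]$. Next, because $\varepsilon\prec 1$, Corollary~\ref{addsumm} (applied to the one-element index family consisting of $\varepsilon$, in $\k[[\fN]]$) shows that $(\varepsilon^n)_{n\ge 1}$ is summable in $\k[[\fN]]$.

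Now I would apply Lemma~\ref{prfam} to the two summable families $(g_i)_i$ and $(\varepsilon^j)_{j\ge 1}$ in $\k[[\fN]]$: it yields that the doubly-indexed family $(g_i\varepsilon^j)_{(i,j)}$ is summable. The family I actually care about is the diagonal $(g_n\varepsilon^n)_{n\ge 1}$, which is a subfamily of this summable family, indexed by the subset $\{(n,n)\}$ of the full index set. Passing from a summable family to a subfamily preserves summability: the union of supports only shrinks, so it remains well-based, and for each $\fm\in\fN$ the number of diagonal indices $(n,n)$ with $\fm\in\supp(g_n\varepsilon^n)$ is bounded by the number of all indices $(i,j)$ with $\fm\in\supp(g_i\varepsilon^j)$, hence finite. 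Thus $(g_n\varepsilon^n)_{n\ge 1}$ is summable, and adjoining the single term $g_0\varepsilon^0=g_0$ keeps it so; this is exactly the assertion that $\sum_n\Phi(f_n)\varepsilon^n$ exists.

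The argument is essentially immediate from the two cited results, so I do not expect a real obstacle; the only point that must be spelled out is the elementary observation that a subfamily of a summable family---here the diagonal of the product family---is again summable, which is read straight off the definition of summability given in the preliminaries.
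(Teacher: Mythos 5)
Your proof is correct and follows exactly the route the paper takes: the paper's own (one-line) proof likewise invokes Lemma~\ref{prfam} together with the summability of $\big(\Phi(f_n)\big)$ (from strong additivity) and of $(\varepsilon^n)$ (from Neumann's Lemma via Corollary~\ref{addsumm}). Your only addition is to spell out the elementary step that the diagonal subfamily of the summable product family is again summable, which the paper leaves implicit.
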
 
\begin{proof} Use Lemma~\ref{prfam} and the summability of $\big(\Phi(f_n)\big)$
and $(\varepsilon^n)$. 
\end{proof}

\noindent
We call $\k[[\fM]]$ a Hahn field over $\k$; it 
is a valued field with valuation ring $$\mathcal{O}\ =\ \{f\in \k[[\fM]]:\ f\preceq 1\}$$ and maximal ideal $\smallo=\{f\in \k[[\fM]]:\ f\prec 1\}$ of $\mathcal{O}$. For the corresponding valuation $v$ on $\k[[\fM]]$ and $f,g\in \k[[\fM]]$ we have
$$ f\preceq g\Leftrightarrow v(f)\ge v(g), \quad f\prec g\Leftrightarrow v(f) > v(g).$$
For $f\in \k[[\fM]]$ we have the decomposition $f=f_{\succ} + f_1 + f_{\prec}$ where $f_{\succ}:=\sum_{\fm\succ 1} f_{\fm}\fm$ is the {\em purely infinite part of $f$} and $f_{\prec}:=\sum_{\fm\prec 1} f_{\fm}\fm$ is the infinitesimal part of $f$. We also set $f_{\preceq 1}:= f_1+f_{\prec 1}$. 

If $\k$ is given as an ordered field (for example when $\k=\R$), then we equip $\k[[\fM]]$ with the field ordering such that $f>0\Leftrightarrow f_{\fd(f)}>0$ (for $f\in \k[[\fM]]^{\ne}$) and refer to the resulting ordered field extension $\k[[\fM]]$ of $\k$ as an {\em ordered Hahn field}.

\subsection*{Substitution in ordinary power series} Let $\k$ be a field and 
$\fM$ a monomial group. Let $t=(t_1,\dots, t_n)$ be a tuple of distinct variables and let
$$F\ =\ F(t)\ =\ \sum_{\nu}c_{\nu}t^{\nu}\in \k[[t]]\ :=\ \k[[t_1,\dots, t_n]]$$
be a formal power series over $\k$, the sum ranging over all $\nu=(\nu_1,\dots, \nu_n)\in \N^n$, and
$c_{\nu}\in \k$, $t^{\nu}:= t_1^{\nu_1}\cdots t_n^{\nu_n}$. For any tuple $\varepsilon=(\varepsilon_1,\dots, \varepsilon_n)$ of
elements of $\smallo=\k[[\fM]]^{\prec 1}$ the family $(c_{\nu}\epsilon^{\nu})$ is  summable, where $\varepsilon^{\nu}:= \varepsilon_1^{\nu_1}\cdots \varepsilon_n^{\nu_n}$ (Neumann's Lemma). Put
$$F(\varepsilon)\ :=\ \ \sum_{\nu}c_{\nu}\varepsilon^{\nu}\in \mathcal{O}\ =\  \k[[\fM]]^{\preceq 1}\ =\ \k[[\fM^{\preceq 1}]].$$
Fixing $\varepsilon$ and varying $F$ we obtain a $\k$-algebra morphism
$$F \mapsto F(\varepsilon)\ :\ \k[[t]] \to \k[[\fM]].$$ 
{\em In the rest of this subsection we assume that $\k$ characteristic $0$ and identify $\Q$ with a subfield of $\k$ in the usual way}.
Then we have the formal power series
$$ \exp(t)\ :=\ \sum_{i=0}^\infty t^i/i!\in \mathbb{Q}[[t]], \qquad \log (1+t)\ :=\ \sum_{j=1}^\infty (-1)^{j-1}t^j/j\in \mathbb{Q}[[t]]$$
in a single variable $t$. In $\mathbb{Q}[[t_1,t_2]]\subseteq \k[[t_1,t_2]]$  we have the identities
$$\exp(t_1+t_2)\ =\ \exp(t_1)\exp(t_2), \quad \log(1+t_1+t_2+t_1t_2)\ =\ \log(1+t_1)+\log(1+t_2).$$
Also
$\log\big(\exp(t)\big) =t$ and 
$\exp\big(\log(1+t))= 1+t$ 
in $\mathbb{Q}[[t]]\subseteq \k[[t]]$. Substituting elements of $\k[[\fM]]^{\prec 1}$ in these identities yields that 
$$h\ \mapsto\ \exp(h)\ =\ \sum_{i=0}^\infty h^i/i!\ :\ \k[[\fM]]^{\prec 1} \to 1+\k[[\fM]]^{\prec 1},$$
is an isomorphism of the additive subgroup $\k[[\fM]]^{\prec 1}$ of $\k[[\fM]]$
onto the multiplicative subgroup $1+\k[[\fM]]^{\prec 1}$ of $\k[[\fM]]^\times$, with inverse
$$1+\varepsilon\ \mapsto\ \log (1+\varepsilon)\ =\ \sum_{j=1}^\infty (-1)^{j-1}\varepsilon^j/j\ :\ 1+\k[[\fM]]^{\prec 1}\to \k[[\fM]]^{\prec 1}.$$

\begin{cor}\label{summlogexp} Let $(\epsilon_i)_{i\in I}$ be a family in $\k[[\fM]]^{\prec 1}$. Then
$$(\epsilon_i) \text{ is summable}\  \Longleftrightarrow\ \big(\log(1+ \epsilon_i)\big) \text{ is summable}.$$
\end{cor}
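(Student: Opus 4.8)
The plan is to derive both implications directly from Corollary~\ref{addsumm}, exploiting that $\log(1+t)$ and $\exp(t)-1$ are formal power series over $\Q\subseteq\k$ with zero constant term, and that they are mutually inverse once we substitute infinitesimal series. Set $\eta_i:=\log(1+\epsilon_i)$; since the map $1+\varepsilon\mapsto\log(1+\varepsilon)$ sends $1+\k[[\fM]]^{\prec1}$ into $\k[[\fM]]^{\prec1}$, each $\eta_i$ again lies in $\k[[\fM]]^{\prec1}$, so the statement is visibly symmetric in $(\epsilon_i)$ and $(\eta_i)$, and it suffices to run essentially the same argument twice.

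For the forward implication, I would assume $(\epsilon_i)$ is summable and apply Corollary~\ref{addsumm} with coefficients $c_{in}:=(-1)^{n-1}/n\in\Q\subseteq\k$. The corollary then yields that $\big(\sum_{n\ge1}c_{in}\epsilon_i^n\big)_i=\big(\log(1+\epsilon_i)\big)_i$ is summable, which is exactly the desired conclusion.

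For the reverse implication, I would assume $(\eta_i)=\big(\log(1+\epsilon_i)\big)$ is summable and apply Corollary~\ref{addsumm} to the summable family $(\eta_i)$ in $\k[[\fM]]^{\prec1}$ with coefficients $c_{in}:=1/n!$. This gives that $\big(\sum_{n\ge1}\eta_i^n/n!\big)_i=\big(\exp(\eta_i)-1\big)_i$ is summable; by the identity $\exp(\log(1+t))=1+t$ (applied after substituting $\epsilon_i\in\k[[\fM]]^{\prec1}$) we have $\exp(\eta_i)-1=\epsilon_i$, so $(\epsilon_i)$ is summable.

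I do not expect a genuine obstacle here: all the real content has already been absorbed into Corollary~\ref{addsumm} (which packages Neumann's Lemma) and into the power-series identities $\exp(\log(1+t))=1+t$ and $\log(\exp(t))=t$ recorded just above. The only points requiring care are bookkeeping: checking that the coefficients $(-1)^{n-1}/n$ and $1/n!$ lie in $\k$ (immediate in characteristic $0$), and noting that $\eta_i\prec1$ so that Corollary~\ref{addsumm} can legitimately be re-applied in the reverse direction. A more hands-on alternative would compare supports via $\supp\eta_i\subseteq(\supp\epsilon_i)^\infty$ and $\supp\epsilon_i\subseteq(\supp\eta_i)^\infty$ together with the non-summability criterion, but this seems strictly more laborious than the two-line argument above.
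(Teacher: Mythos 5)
Your proposal is correct and is essentially the paper's own proof: the forward direction is the special case of Corollary~\ref{addsumm} with $c_{in}=(-1)^{n-1}/n$, and the reverse direction applies that corollary to the summable family $\big(\log(1+\epsilon_i)\big)$ with $c_{in}=1/n!$, using the identity $-1+\exp\big(\log(1+\epsilon_i)\big)=\epsilon_i$, exactly as the paper does. Your write-up merely spells out the bookkeeping (that $\log(1+\epsilon_i)\prec 1$ and that the coefficients lie in $\k$) that the paper leaves implicit.
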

\begin{proof} The direction $\Rightarrow$ is a special case of Corollary~\ref{addsumm}. For $\Leftarrow$, apply that corollary to
the case $c_{in}:=1/n!$ using $-1+\exp(\log(1+ \epsilon_i))=\epsilon_i$.
\end{proof}

\subsection*{Multipliability}  Let $\k$ be a field of characteristic $0$ and $\fM$ a monomial group. Let $(\epsilon_i)_{i\in I}$ be a family of elements in $\k[[\fM]]^{\prec 1}$.  We declare $E$ to range over the finite subsets of $I$ and would like to define 
$\prod_i (1+\epsilon_i)$ as the
sum over all $E$ of the products $\prod_{i\in E} \epsilon_i$. This would require the family
$\big(\prod_{i\in E} \epsilon_i)_E$ to be summable, and thus in particular its subfamily $(\epsilon_i)_{i\in I}$ to be
summable. By Corollary~\ref{summlogexp} the summability of $(\epsilon_i)_{i}$ is equivalent to that of $\big(\log(1+\epsilon_i)\big)_i$. Moreover: 

\begin{lemma}\label{multip} Suppose $(\epsilon_i)_{i\in I}$ is summable.  Then the family
$\big(\prod_{i\in E} \epsilon_i)_E$ is also summable and $\exp\big(\sum_i \log(1+\epsilon_i)\big)=\sum_E\prod_{i\in E} \epsilon_i$. 
\end{lemma}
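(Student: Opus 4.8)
The plan is to treat the two assertions---summability of $\big(\prod_{i\in E}\epsilon_i\big)_E$ and the displayed identity---separately, writing $\lambda_i:=\log(1+\epsilon_i)$ throughout. By Corollary~\ref{summlogexp} the family $(\lambda_i)$ is summable; since each $\lambda_i\in\k[[\fM]]^{\prec1}$, its sum $s:=\sum_i\lambda_i$ again lies in $\k[[\fM]]^{\prec1}$, so $\exp(s)$ is defined. For the summability of $\big(\prod_{i\in E}\epsilon_i\big)_E$ I would first set $\mathfrak S:=\bigcup_i\supp\epsilon_i\subseteq\fM^{\prec1}$, which is well-based; then $\supp\prod_{i\in E}\epsilon_i\subseteq\mathfrak S^{|E|}\subseteq\mathfrak S^\infty$, and $\mathfrak S^\infty$ is well-based by Neumann's Lemma, so the union of the supports is well-based. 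For the finiteness condition, fix $\fm\in\fM$: if $\fm\in\supp\prod_{i\in E}\epsilon_i$ then $\fm=\prod_{i\in E}\fn_i$ with each $\fn_i\in\supp\epsilon_i\subseteq\fM^{\prec1}$, and by the refined Neumann's Lemma there are only finitely many factorizations of $\fm$ into $\prec1$-monomials of $\mathfrak S$; collecting the finitely many monomials that occur into a finite set $\mathfrak F\subseteq\mathfrak S$, every contributing $E$ forces $\supp\epsilon_i\cap\mathfrak F\neq\emptyset$ for all $i\in E$. As $(\epsilon_i)$ is summable, the index set $J:=\{i:\supp\epsilon_i\cap\mathfrak F\neq\emptyset\}$ is finite, whence $E\subseteq J$ for every contributing $E$, so only finitely many $E$ contribute.

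For the identity I would argue coefficient by coefficient, comparing both sides to finite sub-products. For a finite $F\subseteq I$, ordinary distributivity gives $\prod_{i\in F}(1+\epsilon_i)=\sum_{E\subseteq F}\prod_{i\in E}\epsilon_i$, while iterating the two-variable identity $\exp(a+b)=\exp(a)\exp(b)$ (already available from the power-series substitution) gives $\prod_{i\in F}(1+\epsilon_i)=\prod_{i\in F}\exp(\lambda_i)=\exp\big(\sum_{i\in F}\lambda_i\big)$. So it suffices to show that, for each fixed $\fm$, the $\fm$-coefficients of both sides of the desired identity agree with those of this finite expression once $F$ is taken large enough. On the product side this is immediate: for $F\supseteq J$ (with $J=J(\fm)$ as above) no $E\not\subseteq F$ contributes to $\fm$, so $\big(\sum_E\prod_{i\in E}\epsilon_i\big)_\fm=\big(\sum_{E\subseteq F}\prod_{i\in E}\epsilon_i\big)_\fm$.

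The main obstacle is the exponential side: I must show $\big(\exp(s)\big)_\fm=\big(\exp(\sum_{i\in F}\lambda_i)\big)_\fm$ for all sufficiently large finite $F$. Writing $s=s_F+r_F$ with $s_F:=\sum_{i\in F}\lambda_i$ and $r_F:=\sum_{i\notin F}\lambda_i$, the homomorphism property gives $\exp(s)=\exp(s_F)\,(1+u_F)$ with $u_F:=\exp(r_F)-1\in\k[[\fM]]^{\prec1}$, so that $\big(\exp(s)-\exp(s_F)\big)_\fm=\sum_{\fp\fq=\fm}\big(\exp(s_F)\big)_\fp\,(u_F)_\fq$. Setting $\mathfrak T:=\bigcup_i\supp\lambda_i\subseteq\fM^{\prec1}$, one has $\supp\exp(s_F)\subseteq\{1\}\cup\mathfrak T^\infty$ and $\supp u_F\subseteq\mathfrak T^\infty$ for every $F$, so by Neumann's Lemma there are only finitely many factorizations $\fm=\fp\fq$ with $\fp\in\{1\}\cup\mathfrak T^\infty$ and $\fq\in\mathfrak T^\infty$, and these are independent of $F$. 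For each of the finitely many monomials $\fq$ so arising, $(u_F)_\fq$ is determined by the finitely many coefficients $(r_F)_\fn$ with $\fn$ ranging over the (finitely many, again by Neumann) monomials occurring in factorizations of $\fq$; and each such $(r_F)_\fn=\sum_{i\notin F}(\lambda_i)_\fn$ vanishes as soon as $F$ contains the finite set $\{i:\fn\in\supp\lambda_i\}$. Choosing $F$ finite, $\supseteq J(\fm)$, and large enough to annihilate all these finitely many coefficients then yields $\big(\exp(s)\big)_\fm=\big(\exp(s_F)\big)_\fm=\big(\sum_E\prod_{i\in E}\epsilon_i\big)_\fm$. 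As $\fm$ was arbitrary, the two series coincide. The crux throughout is Neumann's Lemma, which both guarantees well-basedness and, through the finiteness of factorizations, converts the infinitary rearrangements into finite ones.
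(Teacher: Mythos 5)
Your proof is correct and follows essentially the same route as the paper: summability of $\big(\prod_{i\in E}\epsilon_i\big)_E$ via Neumann's Lemma applied to $\big(\bigcup_i\supp\epsilon_i\big)^\infty$, and the identity by reduction to finite index sets. The paper's proof merely sketches this ("holds for finite $I$, and then follows easily for arbitrary $I$ using similar reasoning"); your coefficient-by-coefficient comparison, including the decomposition $\exp(s)=\exp(s_F)(1+u_F)$ on the exponential side, is a faithful and complete filling-in of those details.
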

\begin{proof} The summability of $\big(\prod_{i\in E} \epsilon_i)_E$ follows from Neumann's Lemma: use that for $|E|=n$ we have $\supp \prod_{i\in E} \epsilon_i \subseteq (\bigcup_{i\in I} \supp\epsilon_i)^n$. Next, the desired identity holds  for
finite $I$, and then follows easily for arbitrary $I$ using similar reasoning as needed for summability of $\big(\prod_{i\in E} \epsilon_i)_E$.
\end{proof} 

\noindent
Accordingly we say that the family $(1+\epsilon_i)$ is {\bf multipliable\/} if $(\epsilon_i)$ is summable (equivalently,  $\big(\log(1+\epsilon_i)\big)_i$ is summable), and
in that case we set 
$$\prod_i(1+\epsilon_i)\ :=\ \sum_E\prod_{i\in E} \epsilon_i\ =\ 1+\sum_{E\ne \emptyset}\prod_{i\in E}\epsilon_i\  \in\ 1+\k[[\fM]]^{\prec 1},$$
with  $\log \prod_i(1+\epsilon_i)=\sum_i \log(1+\epsilon_i)$.
 Instead of calling $(1+\epsilon_i)$ multipliable we also say that  $\prod_i(1+\epsilon_i)$ exists.
The basic facts about these infinite products follow easily from corresponding
facts about infinite sums by taking logarithms.

\subsection*{A useful identity} It is routine to check that for any elements $g_1, g_2, g_3,\dots$ in a~field $K$ of characteristic $0$ we have an identity
$$\log \big(1+\sum_{n=1}^\infty\frac{g_n}{n!}t^n\big)\ =\ \sum_{n=1}^{\infty}\frac{L_n(g_1,\dots, g_n)}{n!}t^n$$
in the ring $K[[t]]$ of formal power series over $K$, where the
$L_n\in \Q[X_1,\dots, X_n]$ are polynomials independent of the sequence $g_1, g_2, g_3,\dots$. The $L_n$ are the {\em logarithmic polynomials\/}
from \cite[p. 140]{C}, but we don't need further details given there about them. In the later subsection on Taylor deformations we shall use the following:

\begin{lemma}\label{polyid} Let $K$ be a differential field, $y\in K^\times$, and $n\ge 1$. Then
$$ (y^\dagger)^{(n-1)}\ =\ L_n\left(\frac{y'}{y},\dots, \frac{y^{(n)}}{y}\right) .$$
\end{lemma}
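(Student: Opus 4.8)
The plan is to move the purely formal identity defining the logarithmic polynomials $L_n$ into the differential setting by means of a Taylor expansion operator. Extend $\der$ to the power series ring $K[[t]]$ by letting it act coefficientwise with $\der t = 0$, and write $\partial_t$ for the $K$-linear derivation of $K[[t]]$ determined by $\partial_t t = 1$. Define the \emph{Taylor operator}
\[
T\colon K\longrightarrow K[[t]],\qquad T(f)\ :=\ \sum_{n\ge 0}\frac{f^{(n)}}{n!}\,t^n .
\]

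First I would record two structural properties of $T$. Expanding $T(f)T(g)$ and collecting the coefficient of $t^k$, the generalized Leibniz rule gives $\tfrac{1}{k!}\sum_{i+j=k}\binom{k}{i}f^{(i)}g^{(j)}=(fg)^{(k)}/k!$, so $T$ is a ring homomorphism with $T(1)=1$. A reindexing of the series shows
\[
\partial_t T(f)\ =\ \sum_{m\ge 0}\frac{f^{(m+1)}}{m!}\,t^m\ =\ \der T(f)\ =\ T(f'),
\]
so that $T$ intertwines $\der$ with $\partial_t$ and, in particular, $\der T(f)=\partial_t T(f)$ for every $f\in K$.

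Now fix $y\in K^\times$ and set $g_n:=y^{(n)}/y$, so that $T(y)/y=1+\sum_{n\ge 1}\tfrac{g_n}{n!}t^n$ has constant term $1$. The defining identity of the logarithmic polynomials then reads
\[
\log\!\big(T(y)/y\big)\ =\ \sum_{n\ge 1}\frac{L_n(g_1,\dots,g_n)}{n!}\,t^n
\]
in $K[[t]]$. Applying $\partial_t$ to the left-hand side, the formal chain rule (using that $y$ is a $\partial_t$-constant), the property $\partial_t T(y)=\der T(y)=T(y')$, and the ring homomorphism property give
\[
\partial_t\log\!\big(T(y)/y\big)\ =\ \frac{\partial_t T(y)}{T(y)}\ =\ \frac{T(y')}{T(y)}\ =\ T\!\big(y^\dagger\big)\ =\ \sum_{m\ge 0}\frac{(y^\dagger)^{(m)}}{m!}\,t^m,
\]
while differentiating the right-hand side and reindexing yields $\sum_{m\ge 0}\tfrac{L_{m+1}(g_1,\dots,g_{m+1})}{m!}t^m$. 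Comparing the coefficient of $t^{n-1}$ gives $(y^\dagger)^{(n-1)}=L_n(g_1,\dots,g_n)$, which is the claim.

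I do not expect a serious obstacle here: the whole point is that, through $T$, the derivation $\der$ is converted into the formal derivative $\partial_t$, so the differential assertion collapses onto the power-series identity that defines the $L_n$. The only steps requiring attention are the verification that $T$ is a differential ring homomorphism and the two reindexings, both of which are routine.
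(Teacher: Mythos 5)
Your proof is correct, and it follows a genuinely different route from the paper's. The paper first reduces to the case of a differentially transcendental $y$ (after clearing denominators the claimed identity is a differential polynomial identity over $\Q$, so it suffices to check it at one differentially transcendental element), and then settles that case analytically: it takes a positive, differentially transcendental real analytic function $f$ and compares two computations of the convergent Taylor series of $\log f$ at a point $a$, one using $(\log f)^{(n)}=(f^\dagger)^{(n-1)}$ and one using the defining identity of the $L_n$ applied to the Taylor series of $\frac{f-f(a)}{f(a)}$. You replace both steps by a single algebraic device: the Taylor morphism $T\colon K\to K[[t]]$, which is a ring homomorphism intertwining $\der$ with $\partial_t$, lets you carry out the same comparison formally inside $K[[t]]$ for an arbitrary $y\in K^\times$, so neither the transcendence reduction nor any appeal to real analysis is needed. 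Your route is essentially the ``more formal'' proof the paper alludes to after the lemma, except that you never need the Bell-polynomial description of the $L_n$ whose details the paper wanted to avoid: the defining identity alone suffices once it is transported through $T$. Two small points to make explicit in a final write-up: $K$ has characteristic $0$ by the paper's standing convention on differential fields (this is what legitimizes the factorials and the logarithm series), and applying $\partial_t$ term by term to the $t$-adically convergent series $\log\big(T(y)/y\big)$ uses the $t$-adic continuity of $\partial_t$ --- both routine.
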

\begin{proof} If these identities hold for some $y$ that is differentially transcendental (over $\Q$), then they hold for all $y$ as in the lemma. Take a real analytic function $f: I \to \R$ on a nonempty open interval $I\subseteq \R$ such that $f$ is differentially transcendental and everywhere positive. (Thus
$f$ lies in the differential fraction field of the differential domain of real analytic functions on $I$.) For $a\in I$ the Taylor series of
$f$ at $a$ is the formal series $\sum_n \frac{1}{n!}f^{(n)}(a)t^n\in \R[[t]]$. Likewise, the Taylor series of $\log f$ at $a$ is 
$$ \sum_{n=0}^\infty \frac{1}{n!}(\log f)^{(n)}(a)t^n\ =\ \log f(a) + \sum_{n=1}^\infty \frac{1}{n!} (f^\dagger)^{(n-1)}(a)t^n.$$
Now $f=f(a)\cdot\big(1+\frac{f-f(a)}{f(a)}\big)$, so $\log f=\log f(a) + 
\log\big(1+\frac{f-f(a)}{f(a)}\big)$, the Taylor series
of $\frac{f-f(a)}{f(a)}$ at $a$ is $\sum_{n=1}^\infty \frac{1}{n!}
\frac{f^{(n)}(a)}{f(a)}t^n$, so the Taylor series of $\log f$ at $a$ also equals 
$$\log f(a) + \sum_{n=1}^\infty \frac{1}{n!}L_n\big(\frac{f'(a)}{f(a)},\dots, \frac{f^{(n)}(a)}{f(a)}\big)t^n.$$
This yields $(f^\dagger)^{(n-1)}(a)=L_n\big(\frac{f'(a)}{f(a)},\dots, \frac{f^{(n)}(a)}{f(a)}\big)$ for all $a\in I$, that is,
$(f^\dagger)^{(n-1)}=L_n\big(\frac{f'}{f},\dots, \frac{f^{(n)}}{f}\big)$,
which gives the desired result.
\end{proof}

\noindent
This lemma can also be proved more formally by expressing the $L_n$ in terms of the Bell polynomials as in \cite[p.140]{C}, but the details would take up considerable space.

\subsection*{The support of a linear operator} This notion will play a role similar to that of the norm of a linear operator on a 
Banach space. Let $\k$ be a field, $\fM$ a monomial group, $\fG$ a subset of $\fM$, and let a map $S: \fG \to \k[[\fM]]$ be given. Then we define the (operator) support of $S$, denoted by $\supp S$, to be the smallest set $\mathfrak{S}\subseteq \fM$ such that
$\supp S(\fg)\subseteq \mathfrak{S}\fg$ for all $\fg\in \mathfrak G$. 
The proof of the next lemma is routine.

\begin{lemma}\label{supp1} Suppose $\supp S$ is well-based. Then $S$ extends uniquely to
a strongly $\k$-linear operator $\k[[\fG]]\to \k[[\fM]]$. Denoting this extension
also by $S$, we have $\supp S(f)\subseteq (\supp S)(\supp f)$ for all $f\in \k[[\fG]]$.  \end{lemma}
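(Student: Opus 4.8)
The plan is to obtain the extension from Lemma~\ref{vdh}, applied with $\fG$ as source monomial set and $\fM$ as target, to the given map $S\colon \fG\to\k[[\fM]]$. The hypothesis of that lemma requires that for every well-based $\mathfrak{T}\subseteq\fG$ the family $\big(S(\fg)\big)_{\fg\in\mathfrak{T}}$ be summable, and verifying this is the whole content of the first assertion; uniqueness of the extension is then immediate, since a strongly $\k$-linear map is determined by its restriction to the monomial set.

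Write $\mathfrak{S}:=\supp S$, which is well-based by hypothesis, and recall that by definition $\supp S(\fg)\subseteq\mathfrak{S}\fg$ for all $\fg\in\fG$. To check summability of $\big(S(\fg)\big)_{\fg\in\mathfrak{T}}$ I would argue as follows. First, $\bigcup_{\fg\in\mathfrak{T}}\supp S(\fg)\subseteq\mathfrak{S}\mathfrak{T}$, and $\mathfrak{S}\mathfrak{T}$ is well-based because $\mathfrak{S}$ and $\mathfrak{T}$ are well-based subsets of the monomial group $\fM$; hence the union on the left is well-based. Second, fix $\fn\in\fM$: if $\fn\in\supp S(\fg)$ for some $\fg\in\mathfrak{T}$, then $\fn=\fm\fg$ with $\fm\in\mathfrak{S}$, so $\fn\in\mathfrak{S}\mathfrak{T}$, and by the finiteness-of-factorizations property of products of well-based sets recalled in the subsection on monomial groups only finitely many pairs $(\fm,\fg)\in\mathfrak{S}\times\mathfrak{T}$ satisfy $\fn=\fm\fg$; in particular only finitely many $\fg\in\mathfrak{T}$ have $\fn\in\supp S(\fg)$. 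These two points give summability, so Lemma~\ref{vdh} applies and yields the unique strongly $\k$-linear extension.

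For the support bound, I would use strong $\k$-linearity to write, for $f=\sum_{\fg}f_{\fg}\fg\in\k[[\fG]]$, the identity $S(f)=\sum_{\fg\in\supp f}f_{\fg}\,S(\fg)$. Taking supports of both sides gives
\[
\supp S(f)\ \subseteq\ \bigcup_{\fg\in\supp f}\supp S(\fg)\ \subseteq\ \bigcup_{\fg\in\supp f}\mathfrak{S}\fg\ =\ \mathfrak{S}(\supp f)\ =\ (\supp S)(\supp f),
\]
as desired.

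I do not expect a genuine obstacle here: every step rests on the elementary facts about well-based subsets of a monomial group (closure under products and finiteness of factorizations) recalled earlier, together with Lemma~\ref{vdh}. The only point requiring mild care is the bookkeeping that translates the definition of the operator support $\supp S$ into the inclusion $\supp S(\fg)\subseteq(\supp S)\fg$ and matches the two summability conditions of Lemma~\ref{vdh} to the two well-basedness facts.
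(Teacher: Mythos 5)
Your proof is correct, and it is precisely the ``routine'' argument the paper has in mind when it omits the proof of Lemma~\ref{supp1}: reduce to Lemma~\ref{vdh} via the two well-basedness facts for products $\mathfrak{S}_1\mathfrak{S}_2$ of well-based sets (closure and finiteness of factorizations), then read off the support bound from strong $\k$-linearity. Nothing is missing.
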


\noindent
For a strongly $\k$-linear map $T: \k[[\fG]]\to \k[[\fM]]$ we define
$\supp T$ as the support of its restriction to $\fG$. 
If $\mathfrak G=\fM$ and $\supp S$ is well-based, then we have for each $n$ the strongly $\k$-linear operator $S^n: \k[[\fM]]\to \k[[\fM]]$ with $\supp S^n\subseteq (\supp S)^n$. Simple applications of Neumann's Lemma give:

\begin{lemma}\label{supp2} Suppose $\mathfrak G=\fM$ and $\supp S$ is well-based. Let $h\in \k[[\fM]]$ be such that 
$(\supp S)(\supp h)\prec 1$, and let $(s_n)$ be any sequence in $\k$. Then
$\sum_{n=0}^\infty s_nS^n(\fm)h^n$ exists for all $\fm$, and the 
map $P\ :\ \fM\to \k[[\fM]]$ given by 
$P(\fm):=\sum_{n=0}^\infty s_nS^n(\fm)h^n$ 
has well-based support $\supp P\subseteq \big((\supp S)(\supp h)\big)^\infty$.
\end{lemma}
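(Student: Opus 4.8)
The plan is to reduce everything to two support estimates together with the two halves of Neumann's Lemma. Write $\mathfrak{W} := \supp S$ and $\mathfrak{H} := \supp h$; both are well-based (the former by hypothesis, the latter because $h\in\k[[\fM]]$), and by assumption $\mathfrak{W}\mathfrak{H}\subseteq\fM^{\prec 1}$. Since $\mathfrak{W}\mathfrak{H}$ is a product of well-based sets, it is well-based, and as it lies in $\fM^{\prec 1}$, Neumann's Lemma gives that $(\mathfrak{W}\mathfrak{H})^\infty$ is well-based. This well-based set will control everything.

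First I would bound the support of each term. By the displayed remark preceding the lemma, $\supp S^n\subseteq\mathfrak{W}^n$, so Lemma~\ref{supp1} applied to the strongly $\k$-linear operator $S^n$ gives $\supp S^n(\fm)\subseteq\mathfrak{W}^n\fm$; combining with $\supp h^n\subseteq\mathfrak{H}^n$ and the submultiplicativity of support under products yields
$$\supp\big(s_nS^n(\fm)h^n\big)\ \subseteq\ (\mathfrak{W}^n\fm)\,\mathfrak{H}^n\ =\ \mathfrak{W}^n\mathfrak{H}^n\,\fm\ =\ (\mathfrak{W}\mathfrak{H})^n\,\fm,$$
the last equality by commutativity of $\fM$ (and multiplying by the scalar $s_n$ never enlarges support). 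Taking the union over $n$ gives $\bigcup_n\supp\big(s_nS^n(\fm)h^n\big)\subseteq(\mathfrak{W}\mathfrak{H})^\infty\fm$, which is a translate of a well-based set, hence well-based. That is the first of the two conditions for summability.

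For the second (local finiteness) condition I must show that for each $\fn$ only finitely many $n$ satisfy $\fn\in\supp\big(s_nS^n(\fm)h^n\big)$. If $\fn$ lies in this support, then by the display $\fn\fm^{-1}\in(\mathfrak{W}\mathfrak{H})^n$, i.e. $\fn\fm^{-1}$ factors as a product of exactly $n$ elements of $\mathfrak{W}\mathfrak{H}$; factorizations coming from distinct $n$ have distinct lengths and so are distinct tuples. The second half of Neumann's Lemma, applied to $\mathfrak{W}\mathfrak{H}\subseteq\fM^{\prec 1}$ and $\fn\fm^{-1}\in(\mathfrak{W}\mathfrak{H})^\infty$, provides only finitely many such tuples in all, whence $\fn\fm^{-1}\in(\mathfrak{W}\mathfrak{H})^n$ for only finitely many $n$. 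This establishes summability, so $P(\fm)=\sum_{n=0}^\infty s_nS^n(\fm)h^n$ exists for every $\fm$.

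Finally, the operator-support bound is immediate: the display shows $\supp P(\fm)\subseteq\bigcup_n\supp\big(s_nS^n(\fm)h^n\big)\subseteq(\mathfrak{W}\mathfrak{H})^\infty\fm$ for every $\fm$, so by the minimality built into the definition of $\supp P$ we get $\supp P\subseteq(\mathfrak{W}\mathfrak{H})^\infty=\big((\supp S)(\supp h)\big)^\infty$, which is well-based by the opening remark. I expect no serious obstacle beyond careful bookkeeping; the one point worth flagging is that well-basedness of the union alone does not give summability, so the argument genuinely needs \emph{both} halves of Neumann's Lemma---the first for the well-based support bound and the second for the local-finiteness condition.
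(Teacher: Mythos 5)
Your proof is correct and is exactly the argument the paper intends: the paper states this lemma with no written proof beyond the phrase ``Simple applications of Neumann's Lemma give:'', and your two uses of Neumann's Lemma (well-basedness of $\big((\supp S)(\supp h)\big)^\infty$ for the support bound, finiteness of factorizations for local finiteness), together with the bound $\supp S^n\subseteq(\supp S)^n$ and Lemma~\ref{supp1}, are precisely those applications. Nothing to change.
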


\begin{lemma}\label{supp3} If $T: \k[[\fG]]\to \k[[\fM]]$ is $\k$-linear,
$\mathfrak{S}\subseteq \fM$ is well-based, and $\supp T(f) \subseteq \mathfrak{S}\cdot \supp f$ for all $f\in \k[[\fG]]$, then $T$ is strongly $\k$-linear.
\end{lemma}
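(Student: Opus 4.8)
The plan is to fix a summable family $(f_i)_{i\in I}$ in $\k[[\fG]]$ and verify, in turn, the two requirements in the definition of summability for the image family $(T(f_i))$, and then to check the sum identity coefficient by coefficient. Recall $\fG\subseteq\fM$, so all supports in play lie in $\fM$. Write $\mathfrak{T}:=\bigcup_i\supp f_i$, a well-based subset of $\fM$; then $\mathfrak{S}\cdot\mathfrak{T}$ is again well-based by the facts recalled in the subsection on monomial groups, and every $\fg\in\mathfrak{S}\cdot\mathfrak{T}$ admits only finitely many factorizations $\fg=\mathfrak{s}\fm$ with $\mathfrak{s}\in\mathfrak{S}$ and $\fm\in\mathfrak{T}$.

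First I would establish summability of $(T(f_i))$. The hypothesis gives $\supp T(f_i)\subseteq\mathfrak{S}\cdot\supp f_i\subseteq\mathfrak{S}\cdot\mathfrak{T}$, so $\bigcup_i\supp T(f_i)$ is well-based. For pointwise finiteness, fix $\fn\in\fM$ and set $M_\fn:=\{\fm\in\mathfrak{T}:\fn\fm^{-1}\in\mathfrak{S}\}$; this is finite, since its elements correspond to factorizations of $\fn$ in $\mathfrak{S}\cdot\mathfrak{T}$. If $\fn\in\supp T(f_i)$, then $\fn=\mathfrak{s}\fm$ with $\mathfrak{s}\in\mathfrak{S}$ and $\fm\in\supp f_i\subseteq\mathfrak{T}$, whence $\fm\in M_\fn$; as each $\fm\in M_\fn$ lies in only finitely many $\supp f_i$ by summability of $(f_i)$, only finitely many $i$ satisfy $\fn\in\supp T(f_i)$. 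Thus $(T(f_i))$ is summable.

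The main point---and the step I expect to be the real obstacle, since $T$ is assumed merely $\k$-linear and not yet strongly additive---is the identity $T(\sum_i f_i)=\sum_i T(f_i)$, which I would prove one coefficient at a time. Fix $\fn$ and put $E_\fn:=\{i:\supp f_i\cap M_\fn\ne\emptyset\}$, a finite set by the previous paragraph. Set $f:=\sum_i f_i$ and $f_{E_\fn}:=\sum_{i\in E_\fn}f_i$, a finite sum to which the $\k$-linearity of $T$ applies directly. The tail $f-f_{E_\fn}=\sum_{i\notin E_\fn}f_i$ has support contained in $\bigcup_{i\notin E_\fn}\supp f_i$, which is disjoint from $M_\fn$ by the choice of $E_\fn$. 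Consequently $\fn\notin\supp T(f-f_{E_\fn})$: any factorization $\fn=\mathfrak{s}\fm$ with $\mathfrak{s}\in\mathfrak{S}$ and $\fm\in\supp(f-f_{E_\fn})$ would force $\fm\in M_\fn$, a contradiction. Hence the $\fn$-coefficient of $T(f-f_{E_\fn})$ vanishes, and by $\k$-linearity $(T(f))_\fn=(T(f_{E_\fn}))_\fn=\sum_{i\in E_\fn}(T(f_i))_\fn$.

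Finally I would note that the same factorization argument shows $(T(f_i))_\fn=0$ for every $i\notin E_\fn$, so $\sum_{i\in E_\fn}(T(f_i))_\fn=\sum_i(T(f_i))_\fn$, the right-hand side being a finite sum by the summability already proved. Combining these, $(T(\sum_i f_i))_\fn=(\sum_i T(f_i))_\fn$ for every $\fn$, which yields the desired equality and hence strong $\k$-linearity of $T$. The whole argument rests on the finiteness of factorizations in the well-based set $\mathfrak{S}\cdot\mathfrak{T}$: this localizes the infinite family, for each fixed monomial, to a finite relevant subfamily, and it is precisely this localization that upgrades finite additivity together with the support bound to strong additivity.
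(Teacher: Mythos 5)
Your proof is correct. The paper states this lemma without proof (it is one of the routine preliminaries), and your argument---using the finiteness of factorizations $\fg=\mathfrak{s}\fm$ in the product of two well-based subsets of $\fM$ to localize each coefficient of $T\big(\sum_i f_i\big)$ and of $\sum_i T(f_i)$ to a common finite subfamily, where plain $\k$-linearity applies---is precisely the intended routine argument, carried out completely and without gaps.
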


\noindent
Thus with the hypothesis and notation of Lemma~\ref{supp2} the sum
$\sum_{n=0}^\infty s_nS^n(f)h^n$ exists for all $f\in \k[[\fM]]$ and the
map $T\ :\ \k[[\fM]]\to \k[[\fM]]$ given by
$$  T(f)\ :=\ \sum_{n=0}^\infty s_nS^n(f)h^n $$
is the unique strongly $\k$-linear operator $\k[[\fM]]\to \k[[\fM]]$
that extends $P$. Moreover, $\supp T(f)\subseteq \big((\supp S)(\supp h)\big)^\infty\cdot \supp f$ for $f\in \k[[\fM]]$. In the next lemma, an easy variant of Lemma~\ref{inverse}, we
let $I$ be the identity map on $\k[[\fM]]$.  

\begin{lemma}\label{invsupp} Suppose $D: \k[[\fM]]\to \k[[\fM]]$ is strongly $\k$-linear and
$\supp D$ is well-based and $\supp D\prec 1$. Then $I+D: \k[[\fM]]\to \k[[\fM]]$ is bijective with
strongly $\k$-linear inverse $(I+D)^{-1}= I +E$, where $E: \k[[\fM]]\to \k[[\fM]]$ is strongly $\k$-linear, $\supp E\subseteq \bigcup_{n=1}^\infty (\supp D)^n\prec 1$, and
$E(f)=\sum_{n=1}^\infty (-1)^nD^n(f)$ for $f\in \k[[\fM]]$.  
\end{lemma}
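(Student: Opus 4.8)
The plan is to reduce everything to Lemma~\ref{inverse} by converting the operator-support hypothesis into the pointwise condition $D(\fm)\prec\fm$. First I would use that $\supp D$ is well-based: Lemma~\ref{supp1} then gives $\supp D(f)\subseteq(\supp D)(\supp f)$ for all $f$, so in particular $\supp D(\fm)\subseteq(\supp D)\fm$ for each monomial $\fm$. Since $\supp D\prec 1$, every monomial in $(\supp D)\fm$ is $\prec\fm$, and hence $D(\fm)\prec\fm$. Thus $D$ meets the hypothesis of Lemma~\ref{inverse} (with $\Phi:=D$), which yields at once that $I+D$ is bijective with strongly $\k$-linear inverse $(I+D)^{-1}(f)=\sum_{n=0}^\infty(-1)^nD^n(f)$, each such sum existing.

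Next I would split off the $n=0$ term. With $D^0=I$, this gives $(I+D)^{-1}=I+E$ where $E(f):=\sum_{n=1}^\infty(-1)^nD^n(f)$. Since $(I+D)^{-1}$ and $-I$ are both strongly $\k$-linear, so is their sum $E=(I+D)^{-1}-I$, by the closure of strongly additive maps under addition recorded in the Preliminaries.

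It remains to bound $\supp E$. Set $\mathfrak{S}:=\bigcup_{n=1}^\infty(\supp D)^n$. Because $\supp D\subseteq\fM^{\prec 1}$ is well-based, Neumann's Lemma shows $(\supp D)^\infty$ is well-based, hence so is its subset $\mathfrak{S}$; and since each $(\supp D)^n$ with $n\ge 1$ consists of products of $n$ monomials $\prec 1$, we have $\mathfrak{S}\prec 1$. Using $\supp D^n\subseteq(\supp D)^n$ (recorded just before Lemma~\ref{supp2}), for every $\fm$ we get $\supp D^n(\fm)\subseteq(\supp D)^n\fm$, so $\supp E(\fm)\subseteq\mathfrak{S}\fm$. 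By the definition of operator support as the smallest such set, this is exactly $\supp E\subseteq\mathfrak{S}\prec 1$, completing the proof.

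There is no genuinely hard step: the entire content is the translation between the operator-support hypothesis and the monomial-wise condition of Lemma~\ref{inverse}, together with routine Neumann's-Lemma bookkeeping. The one point needing a little care is the well-basedness of $\mathfrak{S}$, without which the sums defining $E(\fm)$ would not be controlled and the support statement would carry no force.
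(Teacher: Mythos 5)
Your proof is correct and is exactly the argument the paper intends: the paper states this lemma without proof as ``an easy variant of Lemma~\ref{inverse},'' and your reduction---converting $\supp D\prec 1$ into $D(\fm)\prec\fm$, applying Lemma~\ref{inverse}, splitting off the $n=0$ term, and bounding $\supp E$ via $\supp D^n\subseteq(\supp D)^n$ and Neumann's Lemma---is precisely that variant, with the routine details filled in.
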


\subsection*{Taylor Deformations} Let $\k$ be a field of characteristic $0$ and $\fM$ a subgroup of the monomial group $\fN$, so $\k[[\fM]]$ is a subfield of $\k[[\fN]]$. Let there be given a $\k$-linear derivation
$\der$ on $\k[[\fM]]$ with well-based support $\supp \der\prec 1$
and a strongly $\k$-linear
field embedding $\Phi: \k[[\fM]]\to \k[[\fN]]$. 
Let $\varepsilon\in \k[[\fN]]^{\prec 1}$. Then for $f\in \k[[\fM]]$ the sum 
$\sum_n \frac{\der^n f}{n!}$ exists in $\k[[\fM]]$ by the remark following Lemma~\ref{supp3}, hence
$$\sum_{n=0}^\infty \frac{\Phi(\der^n f)}{n!}\varepsilon^n\ =\ \Phi(f) + \Phi(\der f)\varepsilon + \frac{\Phi(\der^2 f)}{2}\varepsilon^2 + \cdots$$
exists in $\k[[\fN]]$ by Corollary~\ref{corprfam}. This yields a $\k$-linear (Taylor) map
$$T\ :\ \k[[\fM]]\to \k[[\fN]], \qquad T(f)\ :=\ \sum_{n=0}^\infty \frac{\Phi(\der^n f)}{n!}\varepsilon^n.$$
For $\varepsilon =0$ we have $T=\Phi$; in general we view $T$ as a {\em deformation\/} of $\Phi$. 


\begin{lemma}\label{tay1} $T: \k[[\fM]]\to \k[[\fN]]$ is a strongly $\k$-linear field embedding.
\end{lemma}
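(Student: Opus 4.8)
The plan is to verify three things about the already-$\k$-linear map $T$: that it is \emph{strongly} additive (hence strongly $\k$-linear), that it is multiplicative with $T(1)=1$ (hence a ring homomorphism), and finally that it is injective. Injectivity will be automatic: a ring homomorphism out of the field $\k[[\fM]]$ with $T(1)=1\neq 0$ has trivial kernel. That $T$ is $\k$-linear is immediate from its defining formula, and existence of each sum $T(f)$ is already granted by Corollary~\ref{corprfam}. One caveat guides the whole argument: a strongly $\k$-linear field embedding need \emph{not} have well-based operator support (for instance the embedding induced by $\fm\mapsto\fm^2$ on a suitable monomial group), so I will not try to bound $\supp T$ globally through Lemma~\ref{supp3}; instead I check strong additivity directly, family by family, invoking strong additivity of $\Phi$ only on well-based sets.

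For strong additivity, let $(f_i)_{i\in I}$ be summable in $\k[[\fM]]$ with sum $f$, and consider the doubly-indexed family $\big(\tfrac1{n!}\Phi(\der^n f_i)\varepsilon^n\big)_{(i,n)}$ in $\k[[\fN]]$. First I establish its summability. Put $\mathfrak F:=\bigcup_i\supp f_i$, which is well-based. By Lemma~\ref{supp1}, $\supp\der^n f_i\subseteq(\supp\der)^n\,\supp f_i$, so $\mathfrak W:=\bigcup_{i,n}\supp\der^n f_i\subseteq(\supp\der)^\infty\mathfrak F$ is well-based by Neumann's Lemma (using $\supp\der\prec1$) and stability of well-basedness under products. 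Applying strong additivity of $\Phi$ to the summable monomial family $(\fm)_{\fm\in\mathfrak W}$ shows that $\mathfrak W':=\bigcup_{\fm\in\mathfrak W}\supp\Phi(\fm)$ is well-based and that each monomial of $\fN$ lies in only finitely many $\supp\Phi(\fm)$; in particular $\supp\Phi(\der^n f_i)\subseteq\mathfrak W'$ for all $i,n$. Hence each $\supp\big(\tfrac1{n!}\Phi(\der^n f_i)\varepsilon^n\big)$ lies in the well-based set $\mathfrak W'(\supp\varepsilon)^\infty$. Local finiteness — that each $\fn\in\fN$ meets only finitely many of these supports — follows by combining the finite-fiber property of the product $\mathfrak W'\cdot(\supp\varepsilon)^\infty$, Neumann's Lemma (which bounds the $n$ attached to a given factor from $(\supp\varepsilon)^\infty$), the finite-fiber statement for $(\Phi(\fm))_{\fm\in\mathfrak W}$, and, for each fixed $n$, the summability of $(\der^n f_i)_i$. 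With the double family summable, I regroup: summing first over $n$ yields $\sum_i T(f_i)$, while summing first over $i$ and pulling the sum through the strongly additive maps $\der^n$ and $\Phi$ yields $\sum_n\tfrac1{n!}\Phi(\der^n f)\varepsilon^n=T(f)$. Thus $(T(f_i))$ is summable with sum $T(f)$.

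Multiplicativity is a formal Taylor computation. Since $\der$ is a derivation, the Leibniz rule gives $\der^n(fg)=\sum_{k=0}^n\binom nk\der^k f\,\der^{n-k}g$, and since $\Phi$ is a ring homomorphism, $\Phi(\der^k f\,\der^{n-k}g)=\Phi(\der^k f)\Phi(\der^{n-k}g)$. The families $\big(\tfrac1{k!}\Phi(\der^k f)\varepsilon^k\big)_k$ and $\big(\tfrac1{m!}\Phi(\der^m g)\varepsilon^m\big)_m$ are summable (they define $T(f)$ and $T(g)$), so by Lemma~\ref{prfam} their product family is summable with sum $T(f)T(g)$; grouping the product family according to $n=k+m$ recovers exactly $\sum_n\tfrac1{n!}\Phi(\der^n(fg))\varepsilon^n=T(fg)$. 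As $\der1=0$ we get $T(1)=\Phi(1)=1$, so $T$ is a unital ring homomorphism, and by the kernel argument above it is the desired strongly $\k$-linear field embedding.

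The main obstacle is the summability bookkeeping of the second paragraph, namely the local finiteness of the doubly-indexed family. The subtlety is precisely that $\Phi$ may fail to have well-based support, so the estimate must route through the well-based auxiliary set $\mathfrak W$ and the monomial family $(\Phi(\fm))_{\fm\in\mathfrak W}$ rather than through a global bound on $\supp\Phi$; the role of the hypothesis $\varepsilon\prec 1$ is to supply, via Neumann's Lemma, the descent that forces the fibers to be finite. Multiplicativity, $T(1)=1$, and injectivity are then routine.
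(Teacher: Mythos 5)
Your proof is correct and takes essentially the same route as the paper's: multiplicativity and $T(1)=1$ via the Leibniz rule and Lemma~\ref{prfam} (which the paper dismisses as routine to check), and strong additivity by establishing summability of the doubly-indexed family $\big(\tfrac{1}{n!}\Phi(\der^n f_i)\varepsilon^n\big)_{i,n}$ and then regrouping terms. Your explicit well-based sets $\mathfrak{W}$ and $\mathfrak{W}'$ simply fill in the bookkeeping that the paper compresses into ``as is easily verified'' and ``use Lemma~\ref{prfam} and regroup terms.''
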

\begin{proof} It is routine to check that $T(1)=1$ and $T(fg)=T(f)T(g)$
for $f,g\in \k[[\fM]]$. Let the family $(f_i)$ in $\k[[\fM]]$ be summable. Then so is
the family $(\der^n f_i)_{i,n}$ as is easily verified. Hence $\sum_{i,n}\frac{\Phi(\der^n f_i)}{n!}$ exists.  To derive 
that $\sum_i T(f_i)$ exists and equals $T(\sum_i f_i)$, 
use Lemma~\ref{prfam} and regroup terms. 
\end{proof}

\noindent
To show that suitable logarithm maps on $\k[[\fN]]$ that commute with $\Phi$ also commute with its deformation $T$ we assume
for the next lemma that $\k$ is an {\em ordered\/} field (so $\k[[\fM]]$ and $\k[[\fN]]$ are ordered Hahn fields over $\k$), and that $\Phi$ is an embedding of {\em ordered\/} and {\em valued\/} fields. 
In addition
we assume that $\k[[\fN]]$ is equipped with a map $\log :\k[[\fN]]^{>} \to \k[[\fN]]$ such that $\log(1+h)=\sum_{n=1}^\infty \frac{(-1)^{n-1}}{n}h^n$
for $h\in \k[[\fN]]^{\prec 1}$, $\log(fg) = \log(f)+\log(g)$ for $f,g\in \k[[\fN]]^{>}$, and 
$\log \k[[\fM]]^{>} \subseteq \k[[\fM]]$. 

\begin{lemma}\label{l:EqualLogs}
Suppose $f \in \R[[\fM]]^{>}$, $(\log f)'=f^\dagger$, and $\log \Phi(f) = \Phi(\log f)$. Then $$\log T(f)\ =\ T(\log f).$$
\end{lemma}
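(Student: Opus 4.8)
The plan is to reduce everything to the single power-series identity from the subsection \emph{A useful identity}, applied through the substitution $t\mapsto\varepsilon$, and to feed in Lemma~\ref{polyid} to recognize the iterated derivatives of $\log f$. The conceptual observation driving the proof is that the quotient $T(f)/\Phi(f)$ has exactly the shape $1+\sum_{n\ge1}\frac{g_n}{n!}t^n$ occurring in that identity.

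First I would record the factorization of $T(f)$. Setting $g_n:=\Phi(\der^n f)/\Phi(f)\in\k[[\fN]]$ for $n\ge1$, the definition of $T$ gives $T(f)=\Phi(f)\,(1+\eta)$ with $\eta:=\sum_{n\ge1}\frac{g_n}{n!}\varepsilon^n$. Since $\supp\der\prec1$ we have $\der^n f\prec f$ for $n\ge1$, so $\Phi$ being a valued embedding yields $g_n\prec1$, and as $\varepsilon\prec1$ each summand of $\eta$ has support $\prec1$; hence $\eta\in\k[[\fN]]^{\prec1}$. Because $\Phi(f)\in\k[[\fN]]^{>}$ (as $\Phi$ is an ordered embedding and $f>0$) and $1+\eta\in\k[[\fN]]^{>}$, multiplicativity of $\log$ on $\k[[\fN]]^{>}$ together with the power-series formula for $\log(1+\cdot)$ gives $\log T(f)=\log\Phi(f)+\log(1+\eta)$.

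Next I would compute $T(\log f)$ term by term. For $n\ge1$ the hypothesis $(\log f)'=f^\dagger$ gives $\der^n\log f=(f^\dagger)^{(n-1)}$, which by Lemma~\ref{polyid} equals $L_n(f'/f,\dots,f^{(n)}/f)$. Applying the field embedding $\Phi$ (which fixes $\Q$ and hence the rational coefficients of $L_n$, and respects quotients) turns this into $L_n(g_1,\dots,g_n)$, so that $T(\log f)=\Phi(\log f)+\sum_{n\ge1}\frac{L_n(g_1,\dots,g_n)}{n!}\varepsilon^n$. On the other hand, the useful identity reads $\log\bigl(1+\sum_{n\ge1}\frac{g_n}{n!}t^n\bigr)=\sum_{n\ge1}\frac{L_n(g_1,\dots,g_n)}{n!}t^n$ in $\k[[\fN]][[t]]$; substituting $t=\varepsilon$ sends its left side to $\log(1+\eta)$ and its right side to $\sum_{n\ge1}\frac{L_n(g_1,\dots,g_n)}{n!}\varepsilon^n$. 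Hence $\log(1+\eta)=T(\log f)-\Phi(\log f)$.

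Combining the two computations and invoking the remaining hypothesis $\log\Phi(f)=\Phi(\log f)$ gives $\log T(f)=\log\Phi(f)+\log(1+\eta)=\Phi(\log f)+\bigl(T(\log f)-\Phi(\log f)\bigr)=T(\log f)$, as desired. The step I expect to be most delicate is the substitution $t\mapsto\varepsilon$ in the useful identity: one must check that evaluation of power series with coefficients in $\k[[\fN]]$ (rather than in $\k$) at $t=\varepsilon$ is legitimate on the two series in question, which comes down to the summability already guaranteed by $\eta\prec1$ and by the existence of $T(\log f)$, while the identification of the formal $\log(1+\cdot)$ with the genuine target logarithm on $1+\k[[\fN]]^{\prec1}$ again relies on $\eta\prec1$.
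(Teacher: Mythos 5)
Your proposal is correct and follows essentially the same route as the paper's proof: factor $T(f)=\Phi(f)(1+\eta)$ with $\eta=\sum_{n\ge1}\frac{1}{n!}\Phi\bigl(f^{(n)}/f\bigr)\varepsilon^n\prec1$, expand $\log(1+\eta)$ via the logarithmic-polynomial identity, and use Lemma~\ref{polyid} together with $(\log f)'=f^\dagger$ and $\log\Phi(f)=\Phi(\log f)$ to identify the result with $T(\log f)$. The only cosmetic difference is that you compute $T(\log f)$ first and match it against $\log(1+\eta)$, whereas the paper transforms $\log T(f)$ directly, pushing $\Phi$ inside $L_n$; the ingredients and their roles are identical.
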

\begin{proof} From $T(f) = \Phi(f)\cdot \left(1+\sum_{n=1}^\infty \frac{1}{n!}\Phi\left(\frac{f^{(n)}}{f}\right)\varepsilon^n\right)$ we obtain
\[
\log T(f)\ =\ \log \Phi(f) + \log\left(1+\sum_{n=1}^\infty \frac{1}{n!}\Phi\left(\frac{f^{(n)}}{f}\right)\varepsilon^n\right).
\]
Using $\log \Phi(f) = \Phi(\log f)$ and $f^{(n)}\preceq f$ for all $n$, this yields
\begin{align*}
\log T(f)\ &=\  \Phi(\log f)+ \sum_{n=1}^\infty \frac{1}{n!}L_n\left(\Phi\left(\frac{f'}{f}\right),\ldots,\Phi\left(\frac{f^{(n)}}{f}\right)\right)\epsilon^n\\
&=\  \Phi(\log f)+ \sum_{n=1}^\infty \frac{1}{n!}\Phi\left(L_n\left(\frac{f'}{f},\ldots,\frac{f^{(n)}}{f}\right)\right)\epsilon^n.
\end{align*}
Lemma~\ref{polyid} gives 
$L_n\left(\frac{f'}{f},\ldots,\frac{f^{(n)}}{f}\right) = (\log f)^{(n)}$
for $n\ge 1$, so
\[
\log T(f)\ =\  \Phi(\log f)+ \sum_{n=1}^\infty \frac{\Phi\left(\log(f)^{(n)}\right)}{n!}\epsilon^n\ =\ T(\log f). \qedhere
\]
\end{proof}

\noindent
Suppose next that $\der$ comes with a strongly
$\k$-linear extension to a derivation on $\k[[\fN]]$, also denoted by $\der$, and
that the embedding $\Phi$ obeys a `chain rule' in the sense that we are given an element $\phi\in \k[[\fN]]$ such that $\der\Phi(f)=\Phi(\der f)\cdot \phi$ for all $f\in \k[[\fM]]$. Then a routine computation yields also a chain rule for $T$:

\begin{lemma}\label{tay2} $\der(Tf)\ =\ T(\der f)\cdot(\phi+\der \varepsilon)$ for all $f\in \k[[\fM]]$.
\end{lemma}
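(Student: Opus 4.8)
The plan is to differentiate the defining series term by term and then apply the Leibniz rule to each summand, turning the $\Phi(\der^n f)$-factors into $\Phi(\der^{n+1}f)\cdot\phi$ via the chain-rule hypothesis and the $\varepsilon^n$-factors into $n\varepsilon^{n-1}\der\varepsilon$ via the power rule. First I would note that the family $\big(\tfrac1{n!}\Phi(\der^n f)\varepsilon^n\big)_n$ is summable---this is precisely what makes $T(f)$ exist---so that strong additivity of the extended derivation $\der$ on $\k[[\fN]]$ permits differentiating inside the sum:
$$\der(Tf)\ =\ \sum_{n=0}^\infty \frac1{n!}\,\der\big(\Phi(\der^n f)\,\varepsilon^n\big).$$
Applying Leibniz to each term, together with $\der\Phi(\der^n f)=\Phi(\der^{n+1}f)\cdot\phi$ (the chain rule with $g=\der^n f$) and $\der(\varepsilon^n)=n\varepsilon^{n-1}\der\varepsilon$, each summand becomes $\tfrac1{n!}\Phi(\der^{n+1}f)\phi\varepsilon^n+\tfrac1{n!}\Phi(\der^n f)\,n\varepsilon^{n-1}\der\varepsilon$.

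Next I would split the resulting series into its two natural pieces and evaluate each. The first piece is $\phi\sum_{n\ge0}\tfrac1{n!}\Phi(\der^{n+1}f)\varepsilon^n$; since $\der^{n+1}f=\der^n(\der f)$, the inner sum is exactly $T(\der f)$, so this piece equals $\phi\cdot T(\der f)$. In the second piece the coefficient $n/n!=1/(n-1)!$ annihilates the $n=0$ term, and reindexing by $m=n-1$ yields $\der\varepsilon\sum_{m\ge0}\tfrac1{m!}\Phi(\der^{m+1}f)\varepsilon^m=\der\varepsilon\cdot T(\der f)$. Adding the two pieces gives $\der(Tf)=\phi\cdot T(\der f)+\der\varepsilon\cdot T(\der f)=T(\der f)\cdot(\phi+\der\varepsilon)$, the claimed identity.

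The hard part, such as it is, will be the summability bookkeeping that legitimizes splitting the single differentiated series into two and reindexing one of them. This is handled by Lemma~\ref{prfam} and Corollary~\ref{corprfam}: the family $\big(\tfrac1{n!}\Phi(\der^{n+1}f)\varepsilon^n\big)_n$ is summable because $T(\der f)$ exists and multiplication by the fixed factor $\phi$ is strongly $\k$-linear, and likewise the reindexed family with multiplier $\der\varepsilon$ is summable; since each of the two pieces is separately summable, the combined family is summable and its sum splits as the sum of the two. Granting this, the argument is purely formal, which is why the lemma is labeled a routine computation.
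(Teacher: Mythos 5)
Your proof is correct and is exactly the ``routine computation'' that the paper alludes to but does not write out: differentiate the defining series term by term (justified by strong additivity of the extended $\der$), apply Leibniz together with the chain-rule hypothesis $\der\Phi(\der^n f)=\Phi(\der^{n+1}f)\cdot\phi$ and the power rule for $\varepsilon^n$, then split and reindex, with the summability of each piece coming from the existence of $T(\der f)$ and Lemma~\ref{prfam}. Nothing is missing.
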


\subsection*{Monomial groups with real powers} Let the monomial group $\fM$ have real powers, that is, it is equipped with an operation
$(s,\fm)\mapsto \fm^s\ :\ \R\times \fM\to \fM$ such that for all $s,t\in \R$ and all $\fm, \fn$ we have 
$$\fm^1\ =\ \fm,\quad \fm^{s+t}\ =\ \fm^s\fm^t,\quad (\fm^s)^t\ =\ \fm^{st},\quad (\fm\fn)^s\ =\ \fm^s\fn^s\quad \text{ (so $\fm^0=1$)}.$$ 
Then we extend this operation to a power operation 
$$(s,f)\mapsto f^s\ :\  \R\times \R[[\fM]]^{>} \to \R[[\fM]]^{>}$$ as follows: first, if $f=1+\epsilon$ with
$\epsilon\prec 1$, then we set 
$$f^s\ :=\ \exp(s\log f)\ =\ \sum_{n=0}^\infty\binom{s}{n} \epsilon^n\ \in\ 1+\R[[\fM]]^{\prec 1},$$
and for $f=c\fd(f)(1+\epsilon)$ ($c\in \R^{>}$, $\epsilon\prec 1$), we set
$$f^s\ :=\ c^s\fd(f)^s(1+\epsilon)^s\ \in\ \R[[\fM]]^{>}$$
where $c^s$ has the usual value in $\R^{>}$. It is easy to verify that
then for all $s,t\in \R$ and $f,g\in \R[[\fM]]^{>}$ we have
$$ f^0\ =\ 1,\quad f^1\ =\ f,\quad f^{s+t}=f^sf^t,\quad (f^s)^t=f^{st},\quad (fg)^s=f^sg^s.$$
In the introduction we introduced the $\frak{L}_{<\alpha}$ as monomial groups with real powers, and also defined a logarithm map on $\LL^{>}$. Note that the definitions given there lead to $\log f^t=t\log f$ for $f\in \LL^{>}$ and $t\in \R$.

\subsection*{A useful well-ordering} Let $\N^\N$ be lexicographically ordered and
consider the set $D$ of all sequences $(d_0, d_1, d_2,\dots)\in \N^\N$ such that $d_0\ge d_1\ge d_2 \ge \cdots$ and $d_n=0$ for all
sufficiently large $n$. 

\begin{lemma}\label{well} $D$ is a well-ordered subset of $\N^{\N}$.
\end{lemma}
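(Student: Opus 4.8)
The plan is to show that $D$ contains no infinite strictly descending sequence for the lexicographic order; since this order is total on $\N^\N$, hence on $D$, the absence of such a sequence is equivalent to $D$ being well-ordered. The first and decisive observation is that along any descending sequence the zeroth coordinate cannot increase: if $\delta=(d_n)$ and $\delta'=(d_n')$ lie in $\N^\N$ with $\delta>\delta'$ lexicographically, then comparison at the least coordinate where they differ gives $d_0\ge d_0'$ (with equality when they first differ at a later coordinate). Combined with the defining inequalities $d_0\ge d_1\ge\cdots$, this confines any descending sequence to a bounded region: if $\delta^{(0)}>\delta^{(1)}>\cdots$ is an infinite strictly descending sequence in $D$ and $c:=d_0^{(0)}$, then $d_0^{(i)}\le c$ for all $i$, and hence $d_n^{(i)}\le d_0^{(i)}\le c$ for all $n$ and $i$. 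Thus the entire sequence lies in the subset $D_{\le c}:=\{(d_n)\in D:\ d_0\le c\}$ of sequences bounded by $c$.

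It therefore suffices to prove that each $D_{\le c}$ is well-ordered, and for this I would exhibit an explicit order isomorphism $D_{\le c}\cong(\N^c,\text{lex})$. Every element of $D_{\le c}$ is weakly decreasing, eventually $0$, and takes values in $\{0,1,\dots,c\}$, so it is uniquely of the form consisting of $m_c$ copies of $c$, followed by $m_{c-1}$ copies of $c-1$, and so on down to $m_1$ copies of $1$, and then $0$'s, with each multiplicity $m_j\in\N$ finite precisely because the sequence is eventually $0$. The assignment $(d_n)\mapsto(m_c,m_{c-1},\dots,m_1)$ is a bijection $D_{\le c}\to\N^c$, and a direct check shows it is order-preserving for the lexicographic orders, with $m_c$ the most significant entry: if two such sequences first disagree in the number of copies of the largest value $j$ on which they differ, then the one with more copies of $j$ is the lexicographically larger sequence, since at the first position beyond the common block it still shows $j$ while the other has already dropped below $j$. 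As $(\N^c,\text{lex})$ is well-ordered (of order type $\omega^c$; this follows by an easy induction on $c$ using the same ``zeroth coordinate is nonincreasing'' argument, where now the tail of a descending chain genuinely lands in $\N^{c-1}$), so is $D_{\le c}$.

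Combining the two steps finishes the proof: an infinite strictly descending sequence in $D$ would lie entirely in some $D_{\le c}$, contradicting the well-ordering of $D_{\le c}$; hence no such sequence exists and $D$ is well-ordered.

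I expect the main subtlety to be resisting the temptation of a naive coordinatewise diagonalization, which does not terminate (one stabilizes coordinate $0$, then coordinate $1$, and so on without ever closing the argument). The crucial move that avoids this is recognizing that the zeroth coordinate is nonincreasing along a descending chain, which---together with the weak monotonicity of the sequences---traps the whole chain in a bounded set $D_{\le c}$ that can be identified with $(\N^c,\text{lex})$. The only place where the eventual vanishing hypothesis is genuinely used is in making this identification, since it is what guarantees the multiplicities $m_j$ are finite; without it one could have tails of the form $j,j,j,\dots$ that are not captured by a tuple in $\N^c$. Verifying the direction of the order isomorphism (more copies of the top value corresponding to a lexicographically larger sequence) is the one spot that warrants a careful, if routine, check.
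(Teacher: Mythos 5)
Your proof is correct, but it follows a genuinely different route from the paper's. The paper's argument is a two-line appeal to ordinal arithmetic: it maps $(d_0,\dots,d_m,0,0,\dots)$ to the ordinal $\omega^{d_0}+\omega^{d_1}+\cdots+\omega^{d_m}$ and observes that this is an injective, order-preserving map of $D$ into the class of ordinals, which is well-ordered; the monotonicity of the exponents is exactly what makes the comparison of such sums behave lexicographically. Your argument avoids ordinals entirely: you first use the observation that the zeroth coordinate is nonincreasing along a lexicographically descending chain (together with $d_n\le d_0$) to trap any would-be infinite descending chain inside $D_{\le c}$, then identify $D_{\le c}$ order-isomorphically with $(\N^c,\mathrm{lex})$ via the multiplicity encoding, and finish by the standard induction showing finite lexicographic powers of $\N$ are well-ordered. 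The two proofs are secretly the same computation seen from different sides: your multiplicity tuple $(m_c,\dots,m_1)$ is precisely the list of Cantor-normal-form coefficients of the paper's ordinal $\omega^{d_0}+\cdots+\omega^{d_m}$, and your induction re-proves that the ordinals below $\omega^{c+1}$ (equivalently, $\N^c$ with $m_c$ most significant) are well-ordered. What each buys: the paper's version is essentially free in context, since the whole paper already works inside NBG with ordinals and transfinite recursion, so citing well-ordering of ordinals costs nothing; yours is self-contained and more elementary, requiring no ordinal arithmetic, at the price of the bounding step and the careful verification of the direction of the order isomorphism --- which you correctly identify as the one point needing care.
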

\begin{proof} Consider the map that assigns to any sequence
$(d_0, d_1, d_2,\dots)\in D$ the ordinal $\omega^{d_0} + \omega^{d_1} + \cdots + \omega^{d_m}$ if $m$ is such that $d_m\ne 0$ and $d_n=0$ for all $n>m$, and assigns to the sequence $(0,0,0,\dots)$ the ordinal $0$.  
Observe that this map
is injective and order preserving.  
\end{proof} 

\noindent
Next, let $D_{\infty}$ be the larger set of all sequences $(d_0, d_1, d_2,\dots)\in \N^\N$ such that $$d_0\ \ge\ d_1\ \ge\  d_2\ \ge\ \cdots.$$

\begin{cor}\label{morewell} $D_{\infty}$ is a well-ordered subset of $\N^{\N}$.
\end{cor}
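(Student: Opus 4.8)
The goal is to show that the totally ordered set $D_{\infty}$ is well-ordered, and I would do this by proving the equivalent statement that every nonempty subset $X\subseteq D_{\infty}$ has a least element. The single elementary fact I will use repeatedly is that a weakly decreasing sequence of natural numbers is eventually constant (it is bounded below by $0$); thus every $d\in D_{\infty}$ has a well-defined eventual value $c(d):=\lim_j d_j=\min_j d_j$, and the same holds for any weakly decreasing sequence I produce along the way.

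Given a nonempty $X\subseteq D_{\infty}$, I would build a candidate least element by minimizing one coordinate at a time. Put $X_{-1}:=X$, and recursively set $a_j:=\min\{d_j:d\in X_{j-1}\}$ and $X_j:=\{d\in X_{j-1}:d_j=a_j\}$ for $j\ge 0$. Each $X_j$ is a nonempty subset of $X$ and $X_{-1}\supseteq X_0\supseteq X_1\supseteq\cdots$. For $d\in X_j$ we have $d_j=a_j$, so $a_{j+1}\le d_{j+1}\le d_j=a_j$, using that $d$ is weakly decreasing; hence $a:=(a_0,a_1,\dots)$ is itself weakly decreasing, i.e. $a\in D_{\infty}$, and so $a_j=c$ for some fixed $c$ and all $j\ge M$.

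It remains to show $a\in X$; granting this, $a$ is the least element of $X$, since any $d\in X$ with $d\ne a$ first differs from $a$ at some index $j$, where $d\in X_{j-1}$ gives $a_j\le d_j$ and hence $a_j<d_j$, so $a<d$. To get $a\in X$ I would prove $X_M=\{a\}$, and this is the step where the weakly-decreasing hypothesis is indispensable. Fix $d\in X_M$; then $d_i=a_i$ for $i\le M$, in particular $d_M=a_M=c$. I claim by induction on $t\ge 0$ that $X_{M+t}=X_M$ and $d_{M+t}=c$ for every $d\in X_M$. Indeed, given the claim at stage $t$, any $d\in X_M$ satisfies $d_{M+t+1}\le d_{M+t}=c$ by weak decrease, while $a_{M+t+1}=c$ forces $\min\{e_{M+t+1}:e\in X_{M+t}\}=c$ and so $d_{M+t+1}\ge c$; thus $d_{M+t+1}=c=a_{M+t+1}$, giving $X_{M+t+1}=X_{M+t}=X_M$. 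Consequently every $d\in X_M$ equals $a$, and since $X_M\ne\emptyset$ we conclude $a\in X$.

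The step I expect to be the main obstacle is exactly this termination, i.e. that the infimum $a$ actually lies in $X$: for arbitrary sequences the lexicographic order on $\N^{\N}$ is not well-ordered and $a$ need not belong to $X$, and what saves the argument is that weak decrease pins the tail of every surviving sequence to the constant value $c$ beyond the stabilization index $M$. Alternatively, one can reduce to Lemma~\ref{well}: replacing $X$ by $\{d\in X:d\le d^{*}\}$ for a chosen $d^{*}\in X$ (which has the same least element, if any) confines attention to weakly decreasing sequences all of whose entries are at most $N:=d^{*}_0$, and one checks by induction on $N$ that these form a well-ordered set---splitting off the maximal initial block of entries equal to $N$ identifies this set, apart from its top element $(N,N,N,\dots)$, with a lexicographic product of $\N$ with the $N-1$ case.
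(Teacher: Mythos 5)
Your proof is correct, and it takes a genuinely different route from the paper's. The paper reduces Corollary~\ref{morewell} to Lemma~\ref{well}: a strictly decreasing sequence in $D_{\infty}$ lies in some $D_m:=\{d\in D_{\infty}:\ d_0\le m\}$, and $D_m$ is the disjoint union of the finitely many sets $D_{m,i}$ ($0\le i\le m$) of sequences with eventual value $i$, each of which embeds order-preservingly into $D$ (subtract $i$ from every entry) and hence is well-ordered by Lemma~\ref{well} --- a lemma that itself rests on an ordinal-valued order embedding. You instead prove directly that every nonempty $X\subseteq D_{\infty}$ has a least element, by coordinatewise minimization; the crux, which you handle correctly, is termination: the sequence $a$ of successive minima is itself weakly decreasing, hence constant equal to some $c$ from an index $M$ on, and this stabilization together with weak decrease of the surviving sequences forces $X_M=\{a\}$, so the infimum $a$ actually belongs to $X$. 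Your argument buys self-containedness and elementarity: it needs neither Lemma~\ref{well} nor ordinals, only the fact that a weakly decreasing sequence of naturals is eventually constant, and it yields the least-element formulation of well-ordering directly (which implies the no-strictly-decreasing-sequence formulation without any appeal to choice). The paper's argument buys brevity, since Lemma~\ref{well} is already in hand. (Your alternative sketch at the end, truncating below some $d^{*}\in X$ and inducting on $d^{*}_0$, is closer in spirit to the paper's reduction, though the paper partitions by eventual value rather than peeling off initial blocks.)
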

\begin{proof} Any strictly decreasing infinite sequence in $D_{\infty}$
would be a sequence in $D_m:=\{(d_0, d_1,d_2,\dots)\in D_{\infty}:\ d_0\le m\}$ for some $m$, so it is enough to show that $D_m$ is well-ordered.
Now $D_m$ is the disjoint union of its subsets $D_{m,i}$, $i=0,\dots,m$,
where $D_{m,i}$ consists of the $(d_0, d_1,\dots)\in D_m$ with $d_n=i$
for all sufficiently large $n$, and it follows easily from Lemma~\ref{well} that each of the sets $D_{m,i}$ is well-ordered.
\end{proof}


\subsection*{Some more notation} For ordinals $\alpha< \gamma$ we let
$\mathfrak{L}_{[\alpha, \gamma)}$ be the convex subgroup of $\mathfrak{L}_{<\gamma}$ whose elements are the hypermonomials
$\prod_{\alpha\le \beta < \gamma}\ell_{\beta}^{r_{\beta}}$. This gives the
Hahn subfield $\LL_{[\alpha,\gamma)}:=\R[[\mathfrak{L}_{[\alpha, \gamma)}]]$ of $\LL_{<\gamma}$.  
Note that
$$\mathfrak{L}_{<\gamma}\ =\ \mathfrak{L}_{[\alpha, \gamma)}\cdot \mathfrak{L}_{<\alpha}\ \text{ with }\ \mathfrak{L}_{[\alpha, \gamma)}\cap \mathfrak{L}_{<\alpha}\ =\ \{1\}.$$ As in \cite[p. 713]{ADH1} this yields an identification of ordered fields
$$\LL_{<\gamma}\ =\ \LL_{[\alpha, \gamma)}[[\mathfrak{L}_{<\alpha}]],$$ that we shall use for certain $\alpha < \gamma$.

\section{Differentiating and Integrating in $\LL$}\label{derantider}

\noindent
In the Introduction we defined the Hahn fields
$\LL_{<\alpha}=\R[[\mathfrak{L}_{<\alpha}]]$ over $\R$ and their union $\LL$.
Using the preliminary section it is easy to verify the results stated in the Introduction up to 
(but not including) the subsection on the derivation of $\LL$.
Towards Proposition~\ref{der} we shall 
construct for every $\alpha$ a strongly $\R$-linear derivation $\der_{\alpha}$ on $\LL_{<\alpha}$; the derivation
$\der$ on $\LL$ will be the common extension of these $\der_{\alpha}$. The main work in this section is then to show that
$\der\LL=\LL$. 

\subsection*{The derivation}  We set
$$\ell_{\alpha}'\ :=\ \prod_{\beta<\alpha} \ell_{\beta}^{-1}\in\mathfrak{L}_{<\alpha},\qquad  \ell_{\alpha}^\dagger\ :=\  \prod_{\beta\le \alpha} \ell_{\beta}^{-1}\ =\ 
\ell_{\alpha+1}'
\in \mathfrak{L}_{\le\alpha}.$$
Note that $\ell_{\alpha}'\preceq 1$ and $\ell_{\alpha}^\dagger\preceq x^{-1}\prec 1$, and that for $\alpha > \beta$ we have
$$ \ell_{\alpha}'\ \prec\ \ell_{\beta}'\  \text{ in }\ \mathfrak{L}_{<\alpha}, \qquad 
 \ell_{\alpha}^\dagger \prec\ \ell_{\beta}^\dagger\  \text{ in }\  \mathfrak{L}_{\le\alpha}.$$
Next, we extend the above to any logarithmic hypermonomial $\fm=\ell^r$ by
$$\fm^\dagger\ :=\ \sum_{\beta} r_{\beta}\ell_{\beta}^\dagger, \qquad
   \fm'\  :=\ \fm \fm^\dagger\ =\ \sum_{\beta} r_{\beta}\fm\ell_{\beta}^\dagger.$$
Thus if $\fm\in \mathfrak{L}_{<\alpha}$, then $\fm^\dagger , \fm'\in \mathbb{L}_{<\alpha}$. As in the Introduction we let $\fm, \fn$ range over $\mathfrak{L}=\bigcup_{\alpha}\mathfrak{L}_{<\alpha}$. 

\begin{lemma}\label{6things} The following hold for all $\fm,\fn$: \begin{enumerate}
\item[(i)] $(\fm\fn)^\dagger=\fm^\dagger + \fn^\dagger$, and $(\fm^t)^\dagger=t\fm^\dagger$ for $t\in \R$;
\item [(ii)]$(\fm\fn)'=\fm'\fn + \fm\fn'$;
\item[(iii)] $\fm \ne 1\ \Rightarrow\ \fm', \fm^\dagger\ne 0$;
\item[(iv)] $\fm\prec 1,\ \fn\ne 1\ \Rightarrow\ \fm'\prec \fn^\dagger$;
\item[(v)] $\fm\prec \fn\ne 1\ \Rightarrow\ \fm'\prec \fn'$;
\item[(vi)]  $\fm\in \mathfrak{L}_{<\alpha}\ \Rightarrow\ \supp \fm'\  \subseteq\  \{\ell_{\beta}^{\dagger}:\ \beta< \alpha\}\fm$.
\end{enumerate}
\end{lemma}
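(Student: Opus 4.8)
The plan is to treat the six items roughly in the stated order, using the earlier ones to prove the later ones, with the only real content lying in~(iv).

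First, (i) and (ii) are immediate from the definitions. Writing $\fm=\ell^r$ and $\fn=\ell^s$, we have $\fm\fn=\ell^{r+s}$ and $\fm^t=\ell^{tr}$, so (i) follows from the linearity of the assignment $r\mapsto \sum_\beta r_\beta\ell_\beta^\dagger$. Then (ii) follows by expanding $(\fm\fn)'=\fm\fn\,(\fm\fn)^\dagger=\fm\fn(\fm^\dagger+\fn^\dagger)$ via (i) and regrouping as $(\fm\fm^\dagger)\fn+\fm(\fn\fn^\dagger)$. For (iii), I would note that the monomials $\ell_\beta^\dagger$ are pairwise distinct (indeed $\beta\mapsto\ell_\beta^\dagger$ is strictly $\prec$-decreasing), so $\fm^\dagger=\sum_\beta r_\beta\ell_\beta^\dagger$ is a series whose support is exactly $\{\ell_\beta^\dagger:\beta\in\sigma(\fm)\}$; if $\fm\neq 1$ this set is nonempty, whence $\fm^\dagger\neq 0$ and $\fm'=\fm\fm^\dagger\neq 0$ since $\fm$ is a monomial. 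Likewise (vi) is read straight off $\fm'=\sum_\beta r_\beta\,\fm\ell_\beta^\dagger$: its support is contained in $\{\ell_\beta^\dagger:\beta\in\sigma(\fm)\}\fm$, and $\sigma(\fm)\subseteq\alpha$ whenever $\fm\in\mathfrak{L}_{<\alpha}$.

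The computation underlying the order-theoretic items is that for $\fm\neq 1$ with $\beta_0:=\min\sigma(\fm)$, the strict $\prec$-monotonicity of $\beta\mapsto\ell_\beta^\dagger$ gives $\fd(\fm^\dagger)=\ell_{\beta_0}^\dagger$ and hence $\fd(\fm')=\fm\ell_{\beta_0}^\dagger$. For (iv) it then suffices to prove the subclaim that $\fm\ell_{\beta_0}^\dagger\prec\ell_\gamma^\dagger$ for every ordinal $\gamma$, since for $\fn\neq 1$ we have $\fd(\fn^\dagger)=\ell_{\gamma_0}^\dagger$ with $\gamma_0:=\min\sigma(\fn)$, so the subclaim at $\gamma=\gamma_0$ yields $\fm'\prec\fn^\dagger$. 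This subclaim is the one genuinely combinatorial point, and I expect it to be the main obstacle: it is a lexicographic comparison of the exponent sequence $u$ of $\fm\ell_{\beta_0}^\dagger$ (namely $u_\delta=r_\delta-1$ for $\delta\le\beta_0$ and $u_\delta=r_\delta$ for $\delta>\beta_0$, with $r_{\beta_0}<0$ because $\fm\prec 1$) against that of $\ell_\gamma^\dagger$ (namely $-1$ for $\delta\le\gamma$ and $0$ for $\delta>\gamma$). A short case split on $\beta_0\le\gamma$ versus $\beta_0>\gamma$ locates the first index at which the two sequences differ and shows $u$ is lexicographically smaller in each case.

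Finally, (v) reduces to (iv) and (ii). Given $\fm\prec\fn\neq 1$: if $\fm=1$ then $\fm'=0\prec\fn'$ by (iii); otherwise set $\fp:=\fm\fn^{-1}\prec 1$, so $\fm=\fn\fp$ and (ii) gives $\fm'=\fn'\fp+\fn\fp'$. The first summand satisfies $\fn'\fp\prec\fn'$, since $\fp\prec 1$ and $\fn'\neq 0$ force $\fd(\fn'\fp)=\fd(\fn')\fp\prec\fd(\fn')$; the second satisfies $\fn\fp'\prec\fn\fn^\dagger=\fn'$, by applying (iv) to $\fp\prec 1$ to get $\fp'\prec\fn^\dagger$ and multiplying by $\fn$. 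As both summands are $\prec\fn'$, so is their sum, giving $\fm'\prec\fn'$.
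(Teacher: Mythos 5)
Your proposal is correct and follows essentially the same route as the paper: the key step (iv) is proved by computing $\fd(\fm')=\fm\,\ell_{\beta_0}^\dagger$ and $\fd(\fn^\dagger)=\ell_{\gamma_0}^\dagger$ and doing a lexicographic case split on $\beta_0$ versus $\gamma_0$ (using $r_{\beta_0}<0$), and (v) is deduced from (ii) and (iv) via the factorization $\fm=\fn\fp$ with $\fp\prec 1$, exactly as in the paper. The remaining items (i), (ii), (iii), (vi), which the paper dismisses as routine, are verified correctly, so there is nothing to fix.
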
 
\begin{proof} This is mostly routine, and we only prove here (iv) and (v).
So assume $\fm\prec 1$ and $\fn\ne 1$. For $\beta=\min \sigma(\fm)$  we have $\fm=\ell^{r_{\beta}}_{\beta}\prod_{\beta< \rho<\alpha} \ell_{\rho}^{r_{\rho}}$, $r_{\beta}<0$, so
$$\fd(\fm')\ =\ \fm\ell_{\beta}^\dagger\ =\ \big(\prod_{\rho<\beta}\ell_{\rho}^{-1}\big)\cdot \ell^{r_{\beta}-1}_{\beta}\cdot \prod_{\beta< \rho<\alpha} \ell_{\rho}^{r_{\rho}}.$$
Also, for $\gamma=\min \sigma(\fn)$ we have $\fd(\fn^\dagger)=\prod_{\rho\le \gamma} \ell_{\rho}^{-1}$. By distinguishing the cases $\gamma< \beta$ and $\gamma\ge \beta$ and recalling that $r_{\beta}<0$ we get $\fd(\fm') \prec \fd(\fn^\dagger)$, so $\fm'\prec \fn^\dagger$. 

As to (v), assume $\fm\prec \fn\ne 1$. Then $\fm=\fn\fv$ with $\fv\prec 1$, so
$\fm'=\fn'(\fv + \fv'/\fn^\dagger)$. It remains to note that 
$\fv'\prec \fn^\dagger$ by (iv). 
\end{proof}

\noindent 
Item (vi) and Lemma~\ref{supp1} yield a unique strongly $\R$-linear derivation $\der_{\alpha}$ on $\LL_{<\alpha}$ such that
$\der_{\alpha}(\fm)=\fm'$ for all $\fm\in \mathfrak{L}_{<\alpha}$.
Note that (vi) and that lemma also gives
$$\supp \der_{\alpha}(f)\ \subseteq\ 
\{\ell_{\beta}^{\dagger}:\ \beta< \alpha\}\cdot \supp f\quad \text{ for }\ f\in \LL_{<\alpha}$$
and that $\{\ell_{\beta}^{\dagger}:\ \beta< \alpha\}$ is a well-based subset of $\mathfrak{L}_{<\alpha}$ with largest element $\ell_0^\dagger=x^{-1}$. 
In particular, $\supp \der_{\alpha}=\{\ell_{\beta}^{\dagger}:\ \beta< \alpha\}\preceq x^{-1}$,  so $\supp \der_{\alpha}$ is well-based.

 It is clear that for $\alpha > \beta$ the derivation $\der_{\alpha}$ extends $\der_{\beta}$.
Thus we have a common extension of the $\der_{\alpha}$ to a derivation $\der$ on $\LL$. This is the derivation
of Proposition~\ref{der}, which is thereby established. We set $f':= \der f$ and $f^{(n)}:= \der^n f$ for $f\in \LL$ and $g^\dagger:= g'/g$ for $g\in \LL^\times$; this creates no notational conflict, since for $f=\fm$ or
$g=\fm$ this agrees with the previously defined $\fm'$ and $\fm^\dagger$. 
It is also easy to check that $\fm^\dagger=(\log \fm)'$ and
$(1+\epsilon)^\dagger=[\log(1+\epsilon)]'$ for $\epsilon\in \LL^{\prec 1}$, from which it follows that $g^\dagger=(\log |g|)'$ for $g\in \LL^{\ne}$ and
$(f^t)^\dagger=tf^\dagger$ for $f\in \LL^{>}$. 

Below we consider $\mathbb{L}_{<\alpha}$ as a differential 
field with derivation $\der_{\alpha}$, and also as an ordered and valued field. For the rest of this section we assume familiarity with the basic facts on $H$-fields and their asymptotic couples from \cite{ADH1}. 

\begin{lemma}\label{l4}  $\mathbb{L}_{<\alpha}$ is an $H$-field with constant field $\R$.
\end{lemma}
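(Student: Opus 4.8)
The plan is to verify directly that $\mathbb{L}_{<\alpha}$, equipped with the derivation $\der_\alpha$ and the ordered-field structure inherited as an ordered Hahn field over $\R$, satisfies the two defining conditions of an $H$-field, together with the claim that its constant field is exactly $\R$. Since $\mathbb{L}_{<\alpha}=\R[[\mathfrak{L}_{<\alpha}]]$ is an ordered Hahn field with valuation ring $\mathcal O$ and maximal ideal $\smallo$, the second $H$-field axiom (that every $f\in\mathcal O$ is of the form $c+\epsilon$ with $c\in\R$ and $|\epsilon|<\R^>$) is immediate from the Hahn-field decomposition $f=f_1+f_\prec$: the constant term $f_1\in\R$ plays the role of $c$ and $\epsilon=f_\prec\prec 1$ is infinitesimal. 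So the real content is the first axiom: $f>\R\Rightarrow f'>0$, plus the computation of the constants.

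I would begin with the constant field. If $f\in\mathbb{L}_{<\alpha}$ with $f\notin\R$, write $f=\sum_\fm f_\fm\fm$ and let $\fm_0:=\fd(f-f_1)$ be the dominant monomial of its non-constant part, so $\fm_0\ne 1$. Using the support bound $\supp\der_\alpha(g)\subseteq\{\ell_\beta^\dagger:\beta<\alpha\}\cdot\supp g$ together with Lemma~\ref{6things}(v), which gives $\fm\prec\fn\ne 1\Rightarrow\fm'\prec\fn'$, I expect that $\fd(f')=\fd(\fm_0')=\fm_0\ell_{\gamma}^\dagger$ where $\gamma=\min\sigma(\fm_0)$, with leading coefficient a nonzero real multiple of $f_{\fm_0}$. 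In particular $f'\ne 0$, which shows the constant field is contained in $\R$; the reverse inclusion holds since $\der_\alpha$ is $\R$-linear with $\der_\alpha(1)=0$. This dominant-monomial analysis is the technical heart of the argument.

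For the sign axiom, suppose $f>\R$, so in particular $f>0$ and $f\succ 1$, meaning $\fm_0=\fd(f)\succ 1$ and $f_{\fm_0}>0$. The key point is to show $\fd(f')=\fd(\fm_0')$ with a \emph{positive} leading coefficient. Here I would invoke the explicit formula from the proof of Lemma~\ref{6things}: for $\fm_0=\ell^r\succ 1$ with $\gamma=\min\sigma(\fm_0)$ one has $\fm_0'=\fm_0\fm_0^\dagger$ and $\fd(\fm_0')=\fm_0\ell_\gamma^\dagger$, and since $\fm_0\succ 1$ forces $r_\gamma>0$ (by the lexicographic ordering on exponent sequences), the coefficient of $\fd(\fm_0')$ in $\fm_0'$ equals $r_\gamma>0$. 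Combining with Lemma~\ref{6things}(v) to control the lower monomials of $f$, the dominant monomial of $f'$ is $f_{\fm_0}\,r_\gamma$ times a monomial, and this coefficient is positive; hence $f'>0$ by the Hahn-field ordering.

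The main obstacle I anticipate is the bookkeeping needed to show that no cancellation occurs and that $\fm_0$ genuinely controls $\fd(f')$ — that is, that for every $\fm\in\supp f$ with $\fm\ne 1$ we have $\fm'\preceq\fm_0'$, with equality in dominant monomial only for the contribution forced by $\fm_0$. This is exactly what Lemma~\ref{6things}(v) is designed for, but one must be careful about monomials $\fm\prec 1$ in $\supp f$ (for which $f\succ 1$ already guarantees $\fm\prec\fm_0$) and about the constant term $f_1$, whose derivative vanishes. Once the uniformity of the dominant-monomial computation is pinned down via (v) and the support bound, both the constant-field computation and the sign axiom follow by reading off the sign of the leading coefficient, and the $H$-field verification is complete.
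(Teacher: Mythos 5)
Your proposal is correct and follows essentially the same route as the paper's proof: both rest on the observation that $\fm\succ 1$ gives $\fm'>0$ (via $r_\gamma>0$ for $\gamma=\min\sigma(\fm)$), use Lemma~\ref{6things}(v) to ensure the dominant monomial of $f$ (resp.\ of $f-f_1$) controls $\fd(f')$ so that no cancellation occurs, and handle the constant field by discarding the constant term. The paper merely states these steps more tersely, and, like you, treats the second $H$-field axiom and the inclusion $\R\subseteq\{f:\der f=0\}$ as immediate from the Hahn-field decomposition.
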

\begin{proof} Note that if $\fm\succ 1$, then $\fm'>0$. Let
$f\in \mathbb{L}_{<\alpha}$. 

Suppose $f>0$ and $f\succ 1$. Then $\fd(f)\succ 1$, so $\fd(f)'>0$. Also
$\fd(f)'\succ \fm'$ for all $\fm\in \supp(f)\setminus \{\fd(f)\}$ by
Lemma~\ref{6things}(v), and thus $f'>0$.  

Next, assume $f\notin \R$; we claim that $f'\ne 0$. 
By subtracting a real number from $f$ we arrange $1\notin \supp(f)$. Then
the same item (v) yields $f'\ne 0$.
\end{proof}

\noindent
We make the additive group $\Gamma$ of exponent sequences into an ordered abelian group by $r < s :\Leftrightarrow \ell^r \prec \ell^s$. We define the valuation $v: \LL^\times \to \Gamma$
by $v(f)=-r$ if $\fd(f)=\ell^r$; thus $v(f) > v(g)\Leftrightarrow f\prec g$ for all $f,g\in \LL$. We have 
$$\Gamma_{<\alpha}\ :=\ v(\LL_{<\alpha}^{\times})\ =\ \{r\in \Gamma:\ r_{\beta}=0  \text{ for all }\beta\ge \alpha\}.$$

\medskip\noindent
\noindent
Note that if $\beta< \alpha$, then $\mathbb{L}_{<\beta}$ is an $H$-subfield of
$\mathbb{L}_{<\alpha}$. Next we consider the asymptotic couple 
$(\Gamma_{<\alpha}, \psi_{<\alpha})$ of $\mathbb{L}_{<\alpha}$. We have an order-preserving
bijection 
$$\beta \mapsto  v(\ell_{\beta}^\dagger)\ :\ \alpha \to \Psi_{<\alpha}$$
from $\alpha$ onto the $\Psi$-set $\Psi_{<\alpha}$ of $\mathbb{L}_{<\alpha}$. 
In particular, if $\alpha\ne 0$, then 
$\Psi_{<\alpha}$ has least element $v(\ell_0^{-1})$, and this element is
positive and is the unique fixed point of $\psi$. 

\begin{lemma}\label{l5} If $\alpha=\beta+1$, then 
$v(\ell_{\alpha}')=v(\ell_{\beta}^\dagger )=\max \Psi_{<\alpha}>0$. If $\alpha\ne 0$ is a limit ordinal, then $v(\ell_{\alpha}')>0$ is a gap in $\mathbb{L}_{<\alpha}$.
\end{lemma}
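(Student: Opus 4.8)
The plan is to dispatch the successor case by direct computation, and to treat the limit case by checking, against the definition of a gap in the $H$-asymptotic couple $(\Gamma_{<\alpha},\psi_{<\alpha})$ from \cite{ADH1}, that $v(\ell_\alpha')$ lies strictly above $\Psi_{<\alpha}$ and strictly below $\gamma':=\gamma+\psi(\gamma)$ for every $\gamma\in\Gamma_{<\alpha}^{>}$.

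\emph{Successor case.} If $\alpha=\beta+1$, then $\{\rho:\rho<\alpha\}=\{\rho:\rho\le\beta\}$, so straight from the definitions $\ell_\alpha'=\prod_{\rho<\alpha}\ell_\rho^{-1}=\prod_{\rho\le\beta}\ell_\rho^{-1}=\ell_\beta^\dagger$, and hence $v(\ell_\alpha')=v(\ell_\beta^\dagger)$. Since $\rho\mapsto v(\ell_\rho^\dagger)$ is the order-preserving bijection $\alpha\to\Psi_{<\alpha}$ and $\beta$ is the largest element of $\alpha=\beta+1$, this value is $\max\Psi_{<\alpha}$; and $\ell_\beta^\dagger\preceq x^{-1}\prec 1$ gives $v(\ell_\beta^\dagger)>0$.

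\emph{Limit case.} Assume $\alpha\ne 0$ is a limit ordinal. From $\ell_\alpha'=\prod_{\rho<\alpha}\ell_\rho^{-1}\prec 1$ we get $v(\ell_\alpha')>0$ at once. For the gap, I first check $\Psi_{<\alpha}<v(\ell_\alpha')$: fixing $\beta<\alpha$, the limit hypothesis yields $\beta+1<\alpha$, so $\ell_\alpha'/\ell_\beta^\dagger=\prod_{\beta<\rho<\alpha}\ell_\rho^{-1}$ runs over a nonempty index set with least ordinal $\beta+1$ and is therefore $\prec 1$; thus $\ell_\alpha'\prec\ell_\beta^\dagger$, i.e.\ $v(\ell_\beta^\dagger)<v(\ell_\alpha')$, and letting $\beta$ vary gives $\Psi_{<\alpha}<v(\ell_\alpha')$. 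Next I check $v(\ell_\alpha')<\gamma'$ for each $\gamma\in\Gamma_{<\alpha}^{>}$. Writing $\gamma=v(\fm)$ with $\fm\in\mathfrak{L}_{<\alpha}$ a monomial (legitimate since $v$ and $\psi$ depend only on the dominant monomial, and $\gamma>0$ forces $\fm\prec 1$), we have $\gamma'=v(\fm)+v(\fm^\dagger)=v(\fm\fm^\dagger)=v(\fm')$, so it suffices to prove $\fm'\prec\ell_\alpha'$. With $\beta_0:=\min\sigma(\fm)<\alpha$, the computation in the proof of Lemma~\ref{6things} gives $\fd(\fm')=\fm\,\ell_{\beta_0}^\dagger$, and the exponent of $\ell_{\beta_0}$ in $\fm$ is negative because $\fm\prec 1$ and $\beta_0=\min\sigma(\fm)$. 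Then $\fd(\fm')\,(\ell_\alpha')^{-1}=\fm\,\ell_{\beta_0}^\dagger\prod_{\rho<\alpha}\ell_\rho$ has exponent sequence vanishing at every $\rho<\beta_0$ and equal to that negative exponent at $\rho=\beta_0$, hence is $\prec 1$; so $\fd(\fm')\prec\ell_\alpha'$ and $\fm'\prec\ell_\alpha'$, giving $v(\ell_\alpha')<\gamma'$. The same computation run for $\gamma<0$ gives $\gamma'<v(\ell_\alpha')$, so in fact $v(\ell_\alpha')\notin(\Gamma_{<\alpha}^{\ne})'$; together with $\Psi_{<\alpha}<v(\ell_\alpha')$ this identifies $v(\ell_\alpha')$ as a gap under any of the standard formulations.

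The routine part is the successor case and the positivity statements; the real content is the second inequality in the limit case, namely that the derivative of every infinitesimal monomial strictly dominates $\ell_\alpha'$. The main obstacle is organizing this as a single clean comparison: once one has $\fd(\fm')=\fm\,\ell_{\min\sigma(\fm)}^\dagger$, everything collapses to reading off the sign of the exponent at the index $\beta_0=\min\sigma(\fm)$, and the limit hypothesis enters exactly through $\beta_0+1<\alpha$ (equivalently, through $\Psi_{<\alpha}$ having no largest element), which is what separates this from the grounded successor case.
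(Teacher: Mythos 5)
Your proof is correct and follows essentially the same route as the paper: the successor case via the order-preserving bijection $\beta\mapsto v(\ell_\beta^\dagger)\colon\alpha\to\Psi_{<\alpha}$, and the limit case by verifying the defining inequalities of a gap, $\Psi_{<\alpha}< v(\ell_\alpha')<(\Gamma_{<\alpha}^{>})'$. The only difference is one of detail: the paper asserts these inequalities as routine, while you carry out the monomial computations (via $\fd(\fm')=\fm\,\ell_{\min\sigma(\fm)}^\dagger$) that justify them.
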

\begin{proof} The first claim follows from the above order-preserving bijection
$\alpha\to \Psi_{<\alpha}$. Suppose $\alpha\ne 0$ is a limit. Then in 
$\Gamma_{<\alpha}$,
$$\Psi_{<\alpha}\ <\  v(\ell_{\alpha}')\ <\  (\Gamma_{<\alpha}^{>})',$$
so  $v(\ell_{\alpha}')>0$ is indeed a gap in $\mathbb{L}_{<\alpha}$.
\end{proof}

\subsection*{Integration} {\em In this subsection $\fm$ and $\fn$ range over $\mathfrak{L}_{<\alpha}$}. We use the
modified derivation $\derdelta:= \frac{1}{\ell_{\alpha}'}\der_{\alpha}$
on $\mathbb{L}_{< \alpha}$, which is strongly $\R$-linear. It follows from Lemma~\ref{l5} that for the
$\Psi$-set $\Psi_{\derdelta}$ of the asymptotic couple of the $H$-field 
$(\mathbb{L}_{< \alpha}, \derdelta)$ we have $\max \Psi_{\derdelta} =0$ if $\alpha$ is
a successor ordinal, and $\sup \Psi_{\derdelta} =0\notin \Psi$, otherwise.
Thus
$\derdelta$ is small, but $\derdelta(\fm) \succeq \fm$ for $\fm\ne 1$. Moreover:

\begin{lemma} If $\fm\succ 1$, then $\supp \derdelta(\fm)\succ 1$. If $\fm\prec 1$, then $\supp \derdelta(\fm)\prec 1$. 
\end{lemma}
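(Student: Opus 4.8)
The plan is to compute $\supp\derdelta(\fm)$ explicitly and then read off the position of each of its monomials relative to $1$ from the leading (smallest-index) exponent. Write $\fm=\ell^r$ with $\fm\ne 1$, and set $\beta_0:=\min\sigma(\fm)$, so that $r_{\beta_0}\ne 0$ and $\fm\succ 1$ iff $r_{\beta_0}>0$, while $\fm\prec 1$ iff $r_{\beta_0}<0$. From $\der_\alpha(\fm)=\fm'=\sum_{\beta\in\sigma(\fm)} r_\beta\,\fm\,\ell_\beta^\dagger$ I get
$$\derdelta(\fm)\ =\ \frac{1}{\ell_\alpha'}\der_\alpha(\fm)\ =\ \sum_{\beta\in\sigma(\fm)} r_\beta\,\fm\,\frac{\ell_\beta^\dagger}{\ell_\alpha'},$$
so every monomial in $\supp\derdelta(\fm)$ has the form $\fm\cdot(\ell_\beta^\dagger/\ell_\alpha')$ for some $\beta\in\sigma(\fm)$.

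First I would evaluate the correction monomial $\ell_\beta^\dagger/\ell_\alpha'$. Since $\ell_\beta^\dagger=\prod_{\rho\le\beta}\ell_\rho^{-1}$ and $\ell_\alpha'=\prod_{\rho<\alpha}\ell_\rho^{-1}$ and $\beta<\alpha$, the exponents cancel at every $\rho\le\beta$, leaving
$$\frac{\ell_\beta^\dagger}{\ell_\alpha'}\ =\ \prod_{\beta<\rho<\alpha}\ell_\rho\ \succeq\ 1,$$
a monomial whose support lies entirely among indices $\rho>\beta$. Thus $\supp\derdelta(\fm)\subseteq\{\fm\prod_{\beta<\rho<\alpha}\ell_\rho:\beta\in\sigma(\fm)\}$.

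The key observation is then that multiplying $\fm$ by $\prod_{\beta<\rho<\alpha}\ell_\rho$ only alters exponents at indices $\rho>\beta$. Since $\beta\in\sigma(\fm)$ forces $\beta\ge\beta_0$, all these indices satisfy $\rho>\beta_0$; hence the exponent of the product at $\beta_0$ is unchanged and still equals $r_{\beta_0}$, while no index below $\beta_0$ is affected either, each such starting at exponent $0$. Therefore $\beta_0$ remains the least index carrying a nonzero exponent in $\fm\cdot(\ell_\beta^\dagger/\ell_\alpha')$, and the position of this monomial relative to $1$ is governed by $\sgn(r_{\beta_0})$, exactly as for $\fm$ itself.

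This settles both claims simultaneously: if $\fm\succ 1$, then $r_{\beta_0}>0$ and each monomial in $\supp\derdelta(\fm)$ is $\succ 1$, whereas if $\fm\prec 1$, then $r_{\beta_0}<0$ and each such monomial is $\prec 1$. I expect the $\fm\prec 1$ case to be the only delicate point: there $\fm\cdot(\ell_\beta^\dagger/\ell_\alpha')$ is the product of something $\prec 1$ with something $\succeq 1$, so the conclusion is \emph{not} forced by monotonicity of multiplication alone and genuinely relies on the fact that the correction $\prod_{\beta<\rho<\alpha}\ell_\rho$ touches only indices strictly above $\beta_0$, leaving the sign-determining leading exponent intact.
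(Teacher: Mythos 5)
Your proposal is correct and follows essentially the same route as the paper's own proof: compute $\derdelta(\fm)=\sum_{\gamma\in\sigma(\fm)}r_\gamma\,\fm\prod_{\gamma<\rho<\alpha}\ell_\rho$ and observe that each such monomial has the same least support index $\min\sigma(\fm)$ with the same (sign-determining) exponent $r_{\min\sigma(\fm)}$ as $\fm$ itself. Your closing remark correctly identifies why the $\fm\prec 1$ case cannot be handled by monotonicity of multiplication alone, which is precisely the point the paper's one-line conclusion encapsulates.
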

\begin{proof} Let $\fm=\ell^r$. Then 
$$\derdelta(\fm)\ =\ \big(\prod_{\beta<\alpha}\ell_{\beta}\big)\cdot \big(\sum_{\gamma\in \sigma(\fm)}r_{\gamma}\fm\ell_{\gamma}^\dagger\big)\ =\ 
\sum_{\gamma\in \sigma(\fm)}r_{\gamma}\big(\fm \prod_{\gamma< \beta<\alpha} \ell_{\beta}).$$
It remains to note that for $\gamma\in \sigma(\fm)$ and $\fn :=\fm\prod\limits_{\gamma<\beta<\alpha}\ell_\beta$ we have $\min\sigma(\fn) =  \min \sigma (\fm)$ and $r_{\min\sigma(\fn)} = r_{\min \sigma(\fm)}$. 
\end{proof}

\noindent
Thus $\derdelta$ maps $\R[[\mathfrak{L}_{<\alpha}^{\ne 1}]]$ into itself. 
For $\xi\ne 0$ in the asymptotic couple of $(\mathbb{L}_{< \alpha}, \derdelta)$ we set $\xi^\dagger:=\psi_{\derdelta}(\xi)\le 0$ and
$\xi':= \xi + \xi^\dagger$, so $\xi^\dagger=o(\xi)$ by \cite[Lemma 9.2.10(iv)]{ADH1}, hence $(\xi-\xi^\dagger)^\dagger=\xi^\dagger$,
and thus $(\xi-\xi^\dagger)'=\xi$. It follows that for any $\fm\ne 1$
there is a unique $\fn\ne 1$ with
$\derdelta(\fn)\asymp \fm$, namely $\fn=\fd(\frac{\fm}{\derdelta(\fm)/\fm})$,
and the map that assigns to any $\fm\ne 1$ the unique $\fn\ne 1$
with $\derdelta(\fn)\asymp \fm$ is an automorphism of the ordered set
$\mathfrak{L}_{<\alpha}^{\ne 1}$. We define
$T\ :\  \mathfrak{L}_{<\alpha}^{\ne 1}\ \to\ \R^\times\cdot \big( \mathfrak{L}_{<\alpha}^{\ne 1}\big)$ by
$$ T(\fm)\ :=\ \c\fn,\ \text{ with  $c\in \R^\times$ and $\fn\in\mathfrak{L}_{<\alpha}^{\ne 1}$ such that }\c\derdelta(\fn)\sim \fm.$$
Using Lemma~\ref{vdh} we note that $T$ extends uniquely to a strongly $\R$-linear bijection $\R[[\mathfrak{L}_{<\alpha}^{\ne 1}]]\to \R[[\mathfrak{L}_{<\alpha}^{\ne 1}]]$, also denoted by $T$, with a strongly $\R$-linear inverse $T^{-1}$. 
By virtue of the definition of $T$ we have for nonzero $g\in  \R[[\mathfrak{L}_{<\alpha}^{\ne}]]$:
$$\derdelta(Tg)\ =\ g+ E(g) \text{ with }E(g)\prec g.$$
This determines the strongly $\R$-linear selfmap $E:= \derdelta\circ T -I$ on 
$\R[[\mathfrak{L}_{<\alpha}^{\ne 1}]]$, where $I$ is the identity
on $\R[[\mathfrak{L}_{<\alpha}^{\ne}]]$. Since $E(g)\prec g$ for all 
nonzero $g\in  \R[[\mathfrak{L}_{<\alpha}^{\ne 1}]]$, it follows from Lemma~\ref{inverse} that
the strongly $\R$-linear selfmap $I+E$ on $\R[[\mathfrak{L}_{<\alpha}^{\ne 1}]]$
is bijective with strongly $\R$-linear inverse
$(I+E)^{-1}$. From $\derdelta\circ T= I + E$ we get
$\derdelta\circ T\circ(I+E)^{-1}=I$, that is, 
$\derdelta^{-1}:=T\circ(I+E)^{-1}$ is a strongly $\R$-linear 
right inverse to $\derdelta$ on  $\R[[\mathfrak{L}_{<\alpha}^{\ne 1}]]$.  
 In terms of the original derivation $\der$, this yields a distinguished 
strongly $\R$-linear bijective integration operator 
\[  \int\  \colon\  \R[[\mathfrak{L}_{<\alpha}\setminus\{\ell_{\alpha}'\}]]\ \to\ \R[[\mathfrak{L}_{<\alpha}^{\ne 1}]],\qquad
  \int f\  :=\ \derdelta^{-1}\big(f/\ell_{\alpha}'\big).\]
We call it an integration operator because $\der\big(\int f\big)=f$ for 
$f\in\R[[\mathfrak{L}_{<\alpha}\setminus\{\ell_{\alpha}'\}]]$. 

\subsection*{Integration, continued} The domain of the above integration operator depends on $\alpha$, but it assigns to each $f$ in its domain the unique $g\in \LL$ with $g'=f$ and $1\notin \supp g$. It follows that these operators for the various $\alpha$ have a common extension to an operator
$\int: \LL \to \LL$ that assigns to each $f\in \LL$ the unique $g\in \LL$ with $g'=f$ and $1\notin \supp g$. Thus we have now fully established
Theorem~\ref{ci} and the rest of the subsection ``The derivation on $\LL$'' in the Introduction. Note also that $\int$ maps $\LL_{<\alpha}$ into
$\LL_{<\alpha} + \R\ell_{\alpha}$, more precisely, bijectively onto
$\R[[\mathfrak{L}_{<\alpha}^{\ne 1}]] +\R\ell_{\alpha}$.  

The remainder of this section will not be used, but relates the above to
material in ~\cite{ADH1}. We assume now that $\alpha$ is an infinite limit ordinal, and set $$\LL^{\cup}_{<\alpha}\ :=\ \bigcup_{\beta<\alpha}\LL_{<\beta}.$$ 
We saw that $\LL_{<\alpha}$ is not closed under $\int$, but we now observe that 
its $H$-subfield $\LL^{\cup}_{<\alpha}$ is closed under $\int$ and is the union of its chain of spherically complete $H$-subfields $\LL_{<\beta}$
with $\beta<\alpha$, and if such $\beta$ is a successor ordinal, then
$\LL_{<\beta}$ is grounded. Thus by \cite[Corollary 11.7.15, Theorem 15.0.1]{ADH1}:

\begin{cor} The $H$-field $\LL^{\cup}_{<\alpha}$ is $\upo$-free and newtonian.
\end{cor}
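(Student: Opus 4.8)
The plan is to verify the hypotheses of the two cited results and then invoke them: \cite[Corollary 11.7.15]{ADH1} will give $\upo$-freeness and \cite[Theorem 15.0.1]{ADH1} will give newtonianity. The ingredients to check are exactly the three facts stated in the prose preceding the corollary: that $\LL^{\cup}_{<\alpha}$ is closed under $\int$, that it is the union of the increasing chain of spherically complete $H$-subfields $\LL_{<\beta}$ (for $\beta<\alpha$), and that each $\LL_{<\beta}$ with $\beta$ a successor is grounded.

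For closure under $\int$ I take $f\in\LL^{\cup}_{<\alpha}$, so $f\in\LL_{<\beta}$ for some $\beta<\alpha$; since $\alpha$ is a limit ordinal, $\beta+1<\alpha$. By the integration operator constructed above, $\int f\in\R[[\mathfrak{L}_{<\beta}^{\ne 1}]]+\R\ell_{\beta}\subseteq\LL_{<\beta+1}\subseteq\LL^{\cup}_{<\alpha}$. Hence $\LL^{\cup}_{<\alpha}$ is closed under $\int$, and in particular has asymptotic integration.

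For the remaining ingredients: each $\LL_{<\beta}=\R[[\mathfrak{L}_{<\beta}]]$ is a Hahn field over $\R$ and therefore maximal, that is, spherically complete as a valued field, and by Lemma~\ref{l4} the $\LL_{<\beta}$ form an increasing chain of $H$-subfields with union $\LL^{\cup}_{<\alpha}$. Groundedness of the successor pieces is Lemma~\ref{l5}: for $\beta=\gamma+1<\alpha$ the set $\Psi_{<\beta}$ has largest element $v(\ell_{\gamma}^{\dagger})=v(\ell_{\beta}')$, so $\LL_{<\beta}$ is grounded. As $\gamma$ runs through the ordinals below $\alpha$ these maxima $v(\ell_{\gamma}^{\dagger})$ are strictly increasing and cofinal in the $\Psi$-set of $\LL^{\cup}_{<\alpha}$, which has no largest element because $\alpha$ is a limit; this cofinally-grounded configuration is precisely what the $\upo$-freeness criterion requires.

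With all hypotheses in place I would conclude $\upo$-freeness from \cite[Corollary 11.7.15]{ADH1} and newtonianity from \cite[Theorem 15.0.1]{ADH1}, the latter applied to the structure of $\LL^{\cup}_{<\alpha}$ as a union of spherically complete grounded $H$-subfields with asymptotic integration. The one step deserving genuine care is the closure under $\int$: it relies essentially on $\alpha$ being a limit, so that the antiderivative $\ell_{\beta}$ of $\ell_{\beta}'$, the one monomial by which integration can carry an element out of $\LL_{<\beta}$, already lies in $\LL^{\cup}_{<\alpha}$. Everything else is a matter of matching the standard maximality of Hahn fields and the asymptotic couples computed in Lemmas~\ref{l4} and~\ref{l5} to the hypotheses of the imported theorems.
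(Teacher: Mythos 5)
Your proof is correct and takes essentially the same route as the paper: the paper's own proof consists of exactly the three observations you verify (closure of $\LL^{\cup}_{<\alpha}$ under $\int$, which uses that $\alpha$ is a limit so $\int f\in\LL_{<\beta+1}\subseteq\LL^{\cup}_{<\alpha}$; spherical completeness of the Hahn fields $\LL_{<\beta}$; groundedness at successor stages via Lemma~\ref{l5}), followed by citing \cite[Corollary 11.7.15, Theorem 15.0.1]{ADH1}. You supply more detail than the paper, which states these facts as remarks without proof, but the decomposition and the imported results are identical.
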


\noindent
Of course the $H$-field $\LL$ is likewise $\upo$-free and newtonian. As to the case $\alpha=\omega$, we recall from \cite[Appendix A]{ADH1} that $\T$ has distinguished elements $\ell_n$. We have a unique field embedding
$\LL^{\cup}_{<\omega}\to \T$ that is the identity on $\R$, sends $\ell_n^r\in \LL^{\cup}_{<\omega}$ to $\ell_n^r\in \T$ for all $n$ and all $r\in \R$, and respects infinite sums. This embedding also respects the natural logarithm maps on
the multiplicative groups of positive elements of $\LL^{\cup}_{<\omega}$ and $\T$, and the natural derivations on these fields. The image of this embedding is the $H$-subfield 
$\T_{\log}$ of $\T$; we identify $\LL^{\cup}_{<\omega}$ with $\T_{\log}$ via this embedding.  

\section{Preliminaries on Composition}\label{comp}

\noindent In this section $\fN$ is a monomial group with real powers. We fix $\R[[\fN]]$ as an ambient Hahn field equipped with its natural ordering and valuation. Let $\fM$ be a power closed monomial subgroup of $\fN$ with a distinguished element $x\in \fM^{\succ 1}$. Then we have the (ordered valued) Hahn subfield $K := \R[[\fM]]$ of $\R[[\fN]]$.
We let $h$ range over the elements of $\R[[\fN]]^{>\R}$.  

\subsection*{Composition on Hahn Fields}
A \textbf{$K$-composition with $h$} is a map $$f \mapsto f\circ h\ :\ K \to \R[[\fN]]$$ 
such that the following conditions are satisfied: 
\begin{enumerate}
\item $1\circ h=1$ and $x\circ h=h$;
\item\label{i:CompProducts} for all $\fm_1, \fm_2 \in \fM$, $(\fm_1\fm_2) \circ h = (\fm_1 \circ h)\cdot(\fm_2 \circ h)$;
\item\label{i:CompSums} for all $f\in K$,  $(\fm \circ h)_{\fm \in \supp(f)}$ is summable and $\sum_\fm f_\fm (\fm \circ h) = f \circ h$.
\end{enumerate}

\begin{lemma}
\label{l:CompIsLinear}
Any $K$-composition with $h$ is an ordered field embedding $K \to \R[[\fN]]$
and is strongly $\R$-linear.
\end{lemma}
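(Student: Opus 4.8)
The plan is to verify that a $K$-composition with $h$, call it $C\colon f\mapsto f\circ h$, is additive and multiplicative, sends $1$ to $1$, and is strongly $\R$-linear, and then deduce that it is an ordered field embedding. The conditions (1)--(3) already encode multiplicativity on monomials and strong additivity, so most of the work is bookkeeping to promote these to the full field. First I would establish that $C$ is strongly additive. Condition (3) says precisely that $C$ agrees with the strongly $\R$-linear map determined by $\fm\mapsto \fm\circ h$ on $\fM$ via Lemma~\ref{vdh}; more carefully, I would check the hypothesis of Lemma~\ref{vdh} by observing that for well-based $\mathfrak S\subseteq\fM$ the family $(\fm\circ h)_{\fm\in\mathfrak S}$ is summable (this is the content of (3) applied to series supported on $\mathfrak S$), so $\fm\mapsto\fm\circ h$ extends uniquely to a strongly $\R$-linear map $K\to\R[[\fN]]$, and (3) identifies this extension with $C$. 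Hence $C$ is strongly $\R$-linear, and in particular additive and $\R$-linear.

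Next I would verify multiplicativity, that is, $C(fg)=C(f)C(g)$ for all $f,g\in K$. Writing $f=\sum_\fm f_\fm\fm$ and $g=\sum_\fn g_\fn\fn$, I would expand using Lemma~\ref{prfam}: the family $(f_\fm g_\fn\,\fm\fn)$ is summable with sum $fg$, and since $C$ is strongly additive it commutes with this sum, giving $C(fg)=\sum_{\fm,\fn} f_\fm g_\fn\,C(\fm\fn)$. By condition (2) this equals $\sum_{\fm,\fn} f_\fm g_\fn\,(\fm\circ h)(\fn\circ h)$, which by Lemma~\ref{prfam} again (now applied to the two summable families $(f_\fm(\fm\circ h))$ and $(g_\fn(\fn\circ h))$, each summable because $C$ is strongly $\R$-linear) factors as $\big(\sum_\fm f_\fm(\fm\circ h)\big)\big(\sum_\fn g_\fn(\fn\circ h)\big)=C(f)C(g)$. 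Together with $C(1)=1\circ h=1$ from condition (1), this shows $C$ is a unital $\R$-algebra homomorphism.

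It remains to see that $C$ is an ordered field embedding. Since $K$ is a field, the kernel of the ring homomorphism $C$ is an ideal, and $C(1)=1\neq 0$ forces the kernel to be $(0)$, so $C$ is injective and hence a field embedding. For the ordering, because $K$ is real closed (or at least because every positive element of $K$ is a square), any nonzero square $f^2$ maps to $C(f)^2>0$, so $C$ sends positive elements to positive elements; as $C$ is an injective field homomorphism, this makes it an ordered field embedding. I expect the only genuinely delicate step to be the double application of Lemma~\ref{prfam} in the multiplicativity argument, where one must be careful that the rearrangement of the doubly-indexed summable family $(f_\fm g_\fn\,(\fm\circ h)(\fn\circ h))$ is legitimate; the summability needed there follows from strong $\R$-linearity of $C$ applied to $f$ and to $g$ separately, together with Lemma~\ref{prfam}, so no new summability input is required beyond conditions (1)--(3).
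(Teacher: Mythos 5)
Your proposal is correct and follows essentially the same route as the paper: strong $\R$-linearity via Lemma~\ref{vdh} together with clause (3), multiplicativity via condition (2) and a double application of Lemma~\ref{prfam}, and order preservation from the fact that positive elements are squares in the real closed field $K$ (the paper invokes real closedness for exactly this purpose). The only difference is cosmetic — you establish strong linearity first and deduce additivity from it, whereas the paper checks additivity and multiplicativity directly and records strong linearity last — but the ingredients and the logical content are identical.
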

\begin{proof} Let a $K$-composition with $h$ be given.
Since $K$ and $\R[[\fN]]$ are real closed fields, the map $f\mapsto f\circ h: K \to \R[[\fN]]$ will be an ordered field embedding if it is a ring morphism. 
Let $f,g\in K$. Then 
\begin{align*}
(f+g)\circ h\ &=\ \left(\sum_\fm (f_\fm+g_\fm) \fm\right)\circ h\ =\  \sum_\fm (f_\fm+g_\fm)(\fm\circ h) \\
&=\ \sum_\fm (f_\fm (\fm\circ h) +g_\fm(\fm\circ h))\ =\ \sum_\fm f_\fm (\fm \circ h) + \sum_\fm g_\fm (\fm \circ h)\\
&=\ (f\circ h) + (g\circ h).
\end{align*}
Similarly, using Lemma~\ref{prfam}, 
\begin{align*}
(fg)\circ h\ &=\ \sum_\fm \left(\sum_{\fm_1\fm_2=\fm} f_{\fm_1}g_{\fm_2}\right)\fm\circ h\ =\ \sum_\fm \left(\sum_{\fm_1\fm_2=\fm} f_{\fm_1}(\fm_1\circ h)g_{\fm_2}(\fm_2\circ h)\right)\\ 
&=\ \left(\sum_{\fm_1}f_{\fm_1}(\fm_1\circ h)\right)\left( \sum_{\fm_2}g_{\fm_2}(\fm_2\circ h)\right)\ =\ (f\circ h) (g\circ h).
\end{align*}
Strong linearity follows from Lemma~\ref{vdh} and clause (3) above.
\end{proof}

\noindent
Here are some consequences of Lemma~\ref{l:CompIsLinear} for a $K$-composition $f\mapsto f\circ h$ with $h$:  $\fm \circ h > 0$ for
$\fm\in \fM$,  so $(\fm \circ h)^t$ is defined for all real $t$, and
for $f, f_1, f_2\in K$, 
$$f>\R\ \Leftrightarrow\ f \circ h > \R, \qquad f_1 \preceq f_2\ \Leftrightarrow\ f_1 \circ h \preceq f_2 \circ h.$$
Thus for $f\prec 1$ in $K$ we have $f\circ h\prec 1$ in $\R[[\fN]]$, and
$$\exp(f)\circ h=\exp(f\circ h), \qquad \big(\log(1+f)\big)\circ h= \log\big((1+f)\circ h\big).$$

\begin{lemma}\label{powercomp} Let a $K$-composition with $h$ be given such that $\fm^t \circ h = (\fm\circ h)^t$ for all $\fm \in \fM$ and $t\in \R$. Then $f^t \circ h = (f \circ h)^t$ for all $f \in K^{>}$ and $t\in \R$. 
\end{lemma}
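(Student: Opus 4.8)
The plan is to reduce the identity to the three elementary cases from which the power operation on a Hahn field with real powers is built up: positive real constants, monomials, and series of the form $1+\epsilon$ with $\epsilon\prec 1$. Write $f=c\,\fd(f)(1+\epsilon)$ with $c\in\R^{>}$ and $\epsilon\prec 1$, as in the subsection on monomial groups with real powers. Since a $K$-composition with $h$ is a ring morphism fixing $\R$ pointwise and preserving $\prec$ (Lemma~\ref{l:CompIsLinear} and the consequences drawn from it), the three factors of $f\circ h$ are positive, and since the power operation on $\R[[\fN]]^{>}$ is multiplicative, both $f^t\circ h$ and $(f\circ h)^t$ split as products of three corresponding factors. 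I would then verify the identity factor by factor.

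For the real factor, composition fixes $\R$, so $c^t\circ h=c^t$. For the monomial factor, $\fd(f)^t\circ h=(\fd(f)\circ h)^t$ is exactly the hypothesis $\fm^t\circ h=(\fm\circ h)^t$ with $\fm=\fd(f)$. The only step requiring genuine work is the factor $1+\epsilon$. Here I would expand $(1+\epsilon)^t=\sum_{n=0}^\infty\binom{t}{n}\epsilon^n$ from the definition of the power operation, use that $\epsilon\circ h\prec 1$, and invoke strong $\R$-linearity to pass composition through the sum:
\[
(1+\epsilon)^t\circ h\ =\ \sum_{n=0}^\infty\binom{t}{n}(\epsilon^n\circ h)\ =\ \sum_{n=0}^\infty\binom{t}{n}(\epsilon\circ h)^n\ =\ \bigl((1+\epsilon)\circ h\bigr)^t,
\]
where the middle equality is multiplicativity of composition and the last is the definition of the power operation applied to $1+\epsilon\circ h$. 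Reassembling the three factors yields $f^t\circ h=(f\circ h)^t$.

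The main obstacle is precisely this last case: one must confirm that composition commutes with the infinite binomial series, which relies on strong additivity (a summable family is sent to a summable family with its sum preserved), not merely on $\R$-linearity. Everything else is formal bookkeeping with the product decomposition.
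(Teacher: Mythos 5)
Your proposal is correct and follows essentially the same route as the paper's own proof: decompose $f=c\,\fm(1+\epsilon)$, use multiplicativity of composition and of powers to split into the three factors, apply the hypothesis to the monomial factor, and pass composition through the binomial series $\sum_n\binom{t}{n}\epsilon^n$ by strong $\R$-linearity together with $\epsilon^n\circ h=(\epsilon\circ h)^n$. No gaps.
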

\begin{proof} Let $f\in K^{>}$. Then $f = c\fm(1+\epsilon)$ where $c \in \R^{>}$, $\fm \in \fM$ and $\epsilon \in K^{\prec 1}$, so
\begin{align*} 
f^t \circ h\ &=\ (c\fm(1+\epsilon))^t \circ h\ =\ c^t (\fm^t \circ h)\left(\left(\sum_n \binom{t}{n}\epsilon^n\right)\circ h \right)\\
&=\ c^t (\fm\circ h)^t \sum_n \binom{t}{n}\left(\epsilon\circ h\right)^n\ =\ (f \circ h)^t \qquad (t\in \R). \qedhere
\end{align*} 
\end{proof}

\noindent
A \textbf{$K$-composition} is a map $\circ:K \times \R[[\fN]]^{>\R}\to\R[[\fN]]$ such that for all $h$ the map $f \mapsto f\circ h :K \to \R[[\fN]]$ is a $K$-composition with $h$.

\begin{lemma}\label{l:ReductionToMonomials}
Let $\circ$ be a $K$-composition, and let $g \in K^{>\R}$, $h\in \R[[\fN]]^{>\R}$ be such that $\fm\circ g\in K$ and $(\fm \circ g) \circ h= \fm \circ(g \circ h)$ for all $\fm \in \fM$. Then $f\circ g\in K$ and
$(f \circ g) \circ h = f \circ(g\circ h)$ for all $f \in K$.
\end{lemma}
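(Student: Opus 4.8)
The plan is to reduce everything from the monomials $\fm \in \fM$ to arbitrary $f \in K$ by exploiting strong $\R$-linearity. Recall from Lemma~\ref{l:CompIsLinear} that, since $\circ$ is a $K$-composition, each of the maps $\cdot \circ g$, $\cdot \circ h$, and (once it is shown to be available) $\cdot \circ (g \circ h)$ from $K$ to $\R[[\fN]]$ is strongly $\R$-linear, hence commutes with summable sums. The three displayed hypotheses on monomials will then propagate to all of $K$ by applying these strongly additive maps to the canonical expansion $f = \sum_\fm f_\fm \fm$.

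First I would check that $f \circ g \in K$. Writing $f = \sum_\fm f_\fm \fm$ with well-based support in $\fM$, clause~(3) of the definition of a $K$-composition with $g$ gives that $(\fm \circ g)_{\fm \in \supp f}$ is summable with $f \circ g = \sum_\fm f_\fm (\fm \circ g)$. Since each $\fm \circ g$ lies in $K = \R[[\fM]]$ by hypothesis, the well-based set $\bigcup_\fm \supp(\fm \circ g)$ is contained in $\fM$, so the family $(f_\fm(\fm \circ g))_\fm$ is already summable inside $K$, its sum there agreeing with its sum in $\R[[\fN]]$; thus $f \circ g \in K$. I would also note that $g \circ h \in \R[[\fN]]^{>\R}$: as $g > \R$, the equivalence $g > \R \Leftrightarrow g \circ h > \R$ recorded after Lemma~\ref{l:CompIsLinear} applies, so $f \circ (g \circ h)$ is defined for every $f \in K$.

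Finally I would compute both sides. Applying the strongly additive map $\cdot \circ h$ to the summable representation $f \circ g = \sum_\fm f_\fm (\fm \circ g)$ of the element $f \circ g \in K$ yields
\[
(f \circ g) \circ h \ =\ \sum_\fm f_\fm \big((\fm \circ g) \circ h\big),
\]
while strong additivity of $\cdot \circ (g \circ h)$ applied to $f = \sum_\fm f_\fm \fm$ yields
\[
f \circ (g \circ h)\ =\ \sum_\fm f_\fm \big(\fm \circ (g \circ h)\big).
\]
By the hypothesis $(\fm \circ g) \circ h = \fm \circ (g \circ h)$ these two summable families coincide term by term, so their sums agree and $(f \circ g) \circ h = f \circ (g \circ h)$.

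There is no serious obstacle here; the whole argument is bookkeeping with strong linearity. The one delicate point is the very first step—confirming that the sum defining $f \circ g$ lands in the subfield $K$ rather than merely in $\R[[\fN]]$—since this is precisely what licenses subsequently feeding $f \circ g$ into the strongly additive map $\cdot \circ h$ (whose domain is $K$) and thereby justifies the rearrangement in the first display.
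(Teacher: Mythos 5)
Your proof is correct and follows essentially the same route as the paper's: expand $f=\sum_\fm f_\fm\fm$, use clause~(3) and strong $\R$-linearity of $\cdot\circ h$ and $\cdot\circ(g\circ h)$ to push the sums through, and match terms via the monomial hypothesis. The paper's version is just a terser rendering of this argument; your extra care about $f\circ g$ landing in $K$ and about $g\circ h>\R$ fills in exactly the details the paper labels ``clear.''
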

\begin{proof} Let $f\in K$. It is clear that then
$f\circ g=\sum_{\fm}f_{\fm}(\fm\circ g)\in K$, and
\[
(f \circ g) \circ h\ =\ \sum_\fm f_\fm(\fm\circ g) \circ h\ =\ \sum_\fm f_\fm\fm\circ(g \circ h)\ =\ f\circ(g\circ h).
\qedhere\]
\end{proof}

\noindent 
We now consider the case $\fM=\fN$, and define
a \textbf{composition on $K$} for $K=\R[[\fN]]$ to be a map $\circ: K \times K^{>\R} \to  K$ such that:
\begin{enumerate}
\item $\circ$ is a $K$-composition;
\item $\fm^t \circ g  = (\fm \circ g)^t$ for all $\fm\in \fN$, $t\in \R$, and $g\in K^{>\R}$;
\item $(\fm \circ g)\circ h = \fm \circ (g \circ h)$ for all 
$g,h \in K^{>\R}$.
\end{enumerate}
Thus given a composition $\circ$ on $K$ it follows from Lemmas~\ref{powercomp} and ~\ref{l:ReductionToMonomials} that clauses (2) and (3) hold in a more general form: $f^t \circ g = (f \circ g)^t$ for all $f \in K^{>}$, $t\in \R$, and $g\in K^{>\R}$, and $(f \circ g) \circ h = f \circ(g\circ h)$ for all $f \in K$ and $g,h\in K^{>\R}$.

\subsection*{Taylor Expansion} Let there be given a 
$K$-composition $f\mapsto f\circ h: K \to \R[[\fN]]$ with $h$ and
an $\R$-linear derivation $\der$ on $K$ with well-based support $\supp \der\prec 1$ and $\der x=1$, and an element $\varepsilon\in \R[[\fN]]^{\prec 1}$. Then we set $g:= h+\varepsilon$ and `deform' the above $K$-composition with $h$ to a $K$-composition $f\mapsto f\circ g: K \to \R[[\fN]]$
with $g$ as follows:  with $\Phi$ the above $K$-composition with $h$ we
apply the subsection on Taylor deformations in Section~\ref{prelim} 
to obtain a strongly
$\R$-linear operator 
$$\circ_{g}\ :\  K\ \to\ \R[[\fN]],\qquad  f\mapsto \sum_{n=0}^\infty \frac{\der^n(f)\circ h}{n!}\varepsilon^n.$$
We think of $\circ_g$ as composition with $g$ on the right, which explains the notation. In this subsection we set $f\circ g:=\circ_g(f)$ for $f\in K$.
Note that $1\circ g=1$ and $x\circ g=g$. Then by Lemma~\ref{tay1}: 
   
\begin{lemma}\label{taexp1} The map $f\mapsto f\circ g: K\to \R[[\fN]]$ is a $K$-composition with $g$. 
\end{lemma}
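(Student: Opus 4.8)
The plan is to observe that the operator $\circ_g$ is literally an instance of the Taylor deformation $T$ studied earlier, and then read off the three defining clauses of a $K$-composition with $g$ from the conclusion of Lemma~\ref{tay1}. Concretely, I would take $\k=\R$, the monomial group of $K$ as the $\fM$ of the Taylor-deformation subsection inside the ambient $\fN$, the derivation $\der$ as given (it is $\R$-linear with well-based support $\supp\der\prec 1$), and $\Phi$ the $K$-composition $f\mapsto f\circ h$, which by Lemma~\ref{l:CompIsLinear} is a strongly $\R$-linear ordered field embedding $K\to\R[[\fN]]$. Since $\Phi(\der^n f)=\der^n(f)\circ h$, the resulting map $T(f)=\sum_{n}\frac{\Phi(\der^n f)}{n!}\varepsilon^n$ is exactly $\circ_g(f)$. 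Thus Lemma~\ref{tay1} applies verbatim and tells us that $\circ_g\colon K\to\R[[\fN]]$ is a strongly $\R$-linear field embedding.

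From this single fact all three clauses follow. For clause (1), $1\circ g=T(1)=1$ because $T$ is a field embedding, while $x\circ g=g$ was already noted (it uses $\der x=1$, which collapses the Taylor sum to $x\circ h+(1\circ h)\varepsilon=h+\varepsilon=g$). For clause (2), the ring-morphism property of $T$ gives $(\fm_1\fm_2)\circ g=(\fm_1\circ g)(\fm_2\circ g)$ for all $\fm_1,\fm_2$ in the monomial group, which is more than clause (2) asks. For clause (3), strong $\R$-linearity says precisely that for $f=\sum_\fm f_\fm\fm\in K$ the family $\big(f_\fm(\fm\circ g)\big)_\fm$ is summable with sum $T(f)=f\circ g$; since summability of a family is unaffected by scaling each term by the nonzero constant $f_\fm$, this is equivalent to the summability of $(\fm\circ g)_{\fm\in\supp f}$ demanded in clause (3). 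One should also record at the outset that $g=h+\varepsilon\in\R[[\fN]]^{>\R}$, since $h\succ 1$ with positive leading coefficient dominates $\varepsilon\prec 1$.

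I expect no real obstacle here: the substantive work---summability of the Taylor series and the verification that $T$ respects products and infinite sums---is entirely contained in Lemma~\ref{tay1} (and behind it Lemma~\ref{vdh} and the summability corollaries). The only points needing a sentence each are the bookkeeping identification $\circ_g=T$, the special computation $x\circ g=g$ coming from $\der x=1$, and the harmless passage between summability of $\big(f_\fm(\fm\circ g)\big)$ and of $(\fm\circ g)$.
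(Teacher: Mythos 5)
Your proposal is correct and takes essentially the same route as the paper: the paper obtains this lemma precisely by identifying $\circ_g$ with the Taylor deformation $T$ of the $K$-composition $\Phi$ with $h$ and invoking Lemma~\ref{tay1}, having noted $1\circ g=1$ and $x\circ g=g$ just beforehand. Your extra bookkeeping (the $\der x=1$ computation collapsing the Taylor sum, the rescaling argument for clause (3), and the check that $g=h+\varepsilon>\R$) only makes explicit what the paper leaves implicit.
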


\noindent
Assume in addition that $\der$ extends to a strongly $\k$-linear derivation on $\k[[\fN]]$, denoted also by $\der$. Then by Lemma~\ref{tay2} the `chain rule' is inherited:  

\begin{lemma}\label{taexp2} If $\der(f\circ h)=\big((\der f)\circ h\big)\cdot \der h$ for all $f\in \k[[\fM]]$, then $\der(f\circ g)=\big((\der f)\circ g\big)\cdot \der g$ for all $f\in \k[[\fM]]$.
\end{lemma}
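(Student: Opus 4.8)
The plan is to read off Lemma~\ref{taexp2} as an immediate instance of the chain rule for Taylor deformations, Lemma~\ref{tay2}. The key observation is that the deformed composition $\circ_g$ introduced in this subsection \emph{is} the abstract Taylor deformation $T$ from the preliminary section: it is built from the strongly $\R$-linear field embedding $\Phi := (\,\cdot\,)\circ h\colon K\to\R[[\fN]]$ (strongly $\R$-linear by Lemma~\ref{l:CompIsLinear}), the $\R$-linear derivation $\der$ on $K$ with well-based $\supp\der\prec 1$, and the infinitesimal $\varepsilon$, via $T(f)=\sum_{n=0}^\infty \frac{\Phi(\der^n f)}{n!}\varepsilon^n$. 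Thus $f\circ g=T(f)$ for all $f\in K$, and Lemma~\ref{taexp1} already recorded that this $T$ is a $K$-composition with $g=h+\varepsilon$.

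First I would match the hypotheses of Lemma~\ref{tay2}. That lemma requires a strongly $\k$-linear extension of $\der$ to $\k[[\fN]]$, which is exactly the standing assumption of Lemma~\ref{taexp2}, and it requires that $\Phi$ obey a chain rule $\der\Phi(f)=\Phi(\der f)\cdot\phi$ for some fixed $\phi\in\R[[\fN]]$. The hypothesis of the present lemma, $\der(f\circ h)=\big((\der f)\circ h\big)\cdot\der h$, is precisely this chain rule with the choice $\phi:=\der h$. With these identifications in place, Lemma~\ref{tay2} yields $\der(Tf)=T(\der f)\cdot(\phi+\der\varepsilon)$ for all $f\in\k[[\fM]]$, that is, $\der(f\circ g)=\big((\der f)\circ g\big)\cdot(\der h+\der\varepsilon)$.

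To finish, I would use additivity of $\der$ together with $g=h+\varepsilon$ to rewrite $\der h+\der\varepsilon=\der(h+\varepsilon)=\der g$, giving $\der(f\circ g)=\big((\der f)\circ g\big)\cdot\der g$, as desired. There is essentially no new computation: everything reduces to identifying $\circ_g$ with $T$ and reading off the already-established Lemma~\ref{tay2}. The only point requiring any care—and it is a genuinely minor one—is keeping straight the two incarnations of $\der$, namely the given derivation on $K$ and its assumed extension to $\R[[\fN]]$, since both enter through the terms $\der h$, $\der\varepsilon$, and $\der(f\circ g)$; but since the extension is stipulated to agree with $\der$ on $K$ and to be strongly $\k$-linear, this causes no difficulty.
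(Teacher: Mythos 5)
Your proof is correct and is exactly the paper's argument: the paper proves Lemma~\ref{taexp2} by simply invoking Lemma~\ref{tay2}, and your identification of $\circ_g$ with the Taylor deformation $T$ of $\Phi=(\,\cdot\,)\circ h$, the choice $\phi:=\der h$, and the final rewriting $\der h+\der\varepsilon=\der g$ are precisely the details that citation leaves implicit.
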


\subsection*{Revisiting multipliability} Let $(f_i)_{i \in I}$ be a family in $\LL^>$, where $I$ is a set. We call $(f_i)$
 \emph{multipliable} if the family $(\log(f_i))$ is summable. Note that if $f_i=1+\epsilon_i$, with all
$\epsilon_i\in \R[[\mathfrak{L}_{<\alpha}]]^{\prec 1}$ for a fixed $\alpha$, then this agrees with $(1+\epsilon_i)$
being multipliable as defined
in Section~\ref{prelim}.  In general we have $\alpha$ such that $f_i=c_i \fm_i(1+\epsilon_i)$, $c_i\in \R^{>}$, $\fm_i\in \mathfrak{L}_{<\alpha}$, $\epsilon_i\in \R[[\mathfrak{L}_{<\alpha}]]^{\prec 1}$ for all $i$. Then 
$(f_i)$ is multipliable if and only if $(1+\epsilon_i)$ is multipliable, $c_i=1$ for all but finitely many $i$, and, with
$\fm_i=\prod_{\beta<\alpha}\ell_{\beta}^{r_{\beta i}}$,  there are for every $\beta<\alpha$ only finitely many $i$ with
$r_{\beta i}\ne 0$.

 If $(f_i)$ is multipliable, then so is $(f_i^{r_i})$ for any family $(r_i)$ of real numbers. Suppose the family $(f_i)$ in $\LL_{<\alpha}^{>}$ is multipliable. Then 
\begin{equation}\label{e:FormOfLog}
\sum_{i}\log(f_i)\ =\ \sum_{\beta<\alpha} s_\beta \ell_{\beta+1}+c+\epsilon
\end{equation} 
where the $s_\beta$ and $c$ are real numbers and $\epsilon\in \LL_{< \alpha}^{\prec 1}$. Thus we may define
\[
\prod_{i \in I}f_i\ :=\ \left(\prod_{\beta<\alpha}\ell_\beta^{s_\beta}\right)\ex^c\sum_{n=0}^\infty\frac{1}{n!}\epsilon^n\ \in \LL_{<\alpha}^{>}.
\]
Then $\log \prod_{i}f_i = \sum_i \log f_i$, so $\left(\prod_{i}f_i\right)^\dagger = \sum_{i}f_i^\dagger$ and $\prod_i f_i^t=\left(\prod_i f_i\right)^t$ for $t\in \R$. If also the family $(g_i)$ in $\LL_{<\alpha}^{>}$ is multipliable, then $(f_ig_i)$ is multipliable, and
$$\prod_i f_i\cdot \prod_i g_i\ =\ \prod_i f_ig_i.$$ Any family $(g_j)_{j=1,\dots,n}$ in $\LL^{>}$
is multipliable with $\prod_j g_j=g_{1}\cdots g_{n}$. Also, for any family
$(r_{\beta})_{\beta<\alpha}$ of real numbers the family $(\ell_{\beta}^{r_{\beta}})_{\beta<\alpha}$ is multipliable, and $\prod_{\beta< \alpha}\ell_{\beta}^{r_{\beta}}$ is the logarithmic hypermonomial that we expressed
this way earlier. Retracing the definitions gives:

\begin{lemma}\label{fdprod} Suppose the family $(f_i)$ in $\LL_{<\alpha}^{>}$ is multipliable. Then the family $(\fd(f_i))$ is multipliable as well and $\fd(\prod_i f_i)=\prod_i \fd(f_i)$.
\end{lemma}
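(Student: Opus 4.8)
The plan is to exploit the explicit decomposition of each $f_i$ together with the definition of infinite products via logarithms. First I would fix $\alpha$ and write $f_i=c_i\fm_i(1+\epsilon_i)$ with $c_i\in\R^{>}$, $\fm_i=\fd(f_i)\in\mathfrak{L}_{<\alpha}$ and $\epsilon_i\in\R[[\mathfrak{L}_{<\alpha}]]^{\prec 1}$, and set $\fm_i=\prod_{\beta<\alpha}\ell_\beta^{r_{\beta i}}$. Multipliability of $(\fd(f_i))=(\fm_i)$ is then immediate from the characterization of multipliability recalled just before the lemma: viewing each $\fm_i$ as $1\cdot\fm_i\cdot(1+0)$, the three conditions there collapse to the single requirement that for each $\beta<\alpha$ only finitely many $i$ satisfy $r_{\beta i}\ne 0$, which is part of the hypothesis that $(f_i)$ is multipliable.

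Next I would evaluate both products through their defining logarithms. Since each $\ell_{\beta+1}\succ 1$, the series $\log\fm_i=\sum_\beta r_{\beta i}\ell_{\beta+1}$ is purely infinite, so summing over $i$ gives $\sum_i\log\fm_i=\sum_{\beta<\alpha}s'_\beta\ell_{\beta+1}$ with $s'_\beta=\sum_i r_{\beta i}$ a finite real sum, whence $\prod_i\fm_i=\prod_{\beta<\alpha}\ell_\beta^{s'_\beta}$ straight from the definition. On the other hand, writing $\sum_i\log f_i=\sum_{\beta<\alpha}s_\beta\ell_{\beta+1}+c+\epsilon$ as in \eqref{e:FormOfLog}, the definition of $\prod_i f_i$ together with $\sum_n\epsilon^n/n!=\exp(\epsilon)\in 1+\LL^{\prec 1}$ and the multiplicativity of $\fd$ yields $\fd(\prod_i f_i)=\prod_{\beta<\alpha}\ell_\beta^{s_\beta}$, since the factors $\ex^c$ and $\exp(\epsilon)$ both have dominant monomial $1$.

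It then remains to identify $s_\beta$ with $s'_\beta$. The key observation is that $\log f_i=\log c_i+\log\fm_i+\log(1+\epsilon_i)$ splits into a real constant, the purely infinite part $\log\fm_i$, and the infinitesimal part $\log(1+\epsilon_i)$; hence the purely infinite part of $\log f_i$ is exactly $\log\fd(f_i)=\sum_\beta r_{\beta i}\ell_{\beta+1}$. Since the projection $g\mapsto g_{\succ}$ onto the purely infinite part is strongly $\R$-linear, it commutes with the summable sum $\sum_i$, giving $(\sum_i\log f_i)_{\succ}=\sum_i\log\fm_i=\sum_{\beta<\alpha}s'_\beta\ell_{\beta+1}$. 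Comparing this with the purely infinite part $\sum_{\beta<\alpha}s_\beta\ell_{\beta+1}$ of the right-hand side of \eqref{e:FormOfLog} forces $s_\beta=s'_\beta$ for all $\beta<\alpha$, and therefore $\fd(\prod_i f_i)=\prod_{\beta<\alpha}\ell_\beta^{s_\beta}=\prod_i\fm_i=\prod_i\fd(f_i)$.

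The only point requiring care, and the step I would treat as the main (admittedly minor) obstacle, is the interchange of $\sum_i$ with the purely-infinite-part projection. This is justified by strong additivity of that projection, which holds because it merely restricts supports to the set of monomials $\succ 1$, leaving summability intact. Everything else is bookkeeping with the definitions of $\log$ and $\prod$.
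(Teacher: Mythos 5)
Your proof is correct and takes the same route the paper intends: the paper gives no written argument beyond the phrase ``retracing the definitions,'' and your write-up---decomposing $f_i=c_i\,\fd(f_i)(1+\epsilon_i)$, taking logarithms, and matching purely infinite parts via the strongly $\R$-linear projection $g\mapsto g_{\succ}$---is exactly that retracing, with the one step needing care (interchanging $\sum_i$ with the projection) correctly justified.
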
    

\noindent
We define the function $\log_n:\LL^{>\R} \to \LL^{>\R}$ by recursion on $n$: $$\log_0(g)\ :=\ g, \qquad \log_{n+1}(g)\ :=\ \log( \log_n(g)).$$ Thus $\log_n$ maps $\LL_{<\alpha}^{>\R}$ into itself if $\alpha$ is an infinite limit ordinal. For $g\in \LL^{>\R}$ and $\lambda:=\min \sigma(\fd g)$ we have $\fd(\log g)=\ell_{\lambda +1}$,
and an easy induction on $n$ gives
\[
\log_n(g)_\succ\ =\ \ell_{\lambda+n}\ \text{ for } n \geq 2,\ \qquad  \log_n(g)_\succeq\ =\ \ell_{\lambda+n}\ \text{ for } n \geq 3.
\]
Here is a useful lemma regarding the functions $\log_n$:

\begin{lemma}
\label{l:SumOfLogs}
Let $g \in \LL^{>\R}$. Then the family $(\log_n(g))_{n}$ is multipliable.
\end{lemma}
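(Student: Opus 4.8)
The plan is to unwind the definition of multipliability, reduce to a summability statement about the infinitesimal remainders of the iterated logarithms $\log_n(g)$, and control their supports by a non-increasing ``staircase'' of hyperlogarithms governed by the well-ordering of Lemma~\ref{well}.

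First I would fix an infinite limit ordinal $\alpha$ with $g\in\LL_{<\alpha}^{>\R}$; since $\log\LL_{<\alpha}^{>}\subseteq\LL_{<\alpha}$, all the $\log_n(g)$ lie in $\LL_{<\alpha}$. By definition $(\log_n(g))_n$ is multipliable exactly when the family $(\log(\log_n(g)))_n=(\log_{n+1}(g))_n$ is summable, i.e.\ when $(\log_m(g))_{m\ge 1}$ is summable; dropping the two initial terms, it suffices to treat $(\log_m(g))_{m\ge 3}$. Writing $\lambda:=\min\sigma(\fd g)$, the identities stated just before the lemma give $\log_m(g)=\ell_{\lambda+m}+\delta_m$ for $m\ge 3$, with $\delta_m\in\LL_{<\alpha}^{\prec 1}$. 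The purely infinite parts $\ell_{\lambda+m}$ form a strictly $\prec$-decreasing sequence of distinct monomials, hence a summable family, so everything reduces to the summability of $(\delta_m)_{m\ge 3}$. The governing recursion is $\delta_{m+1}=\log(1+\delta_m\ell_{\lambda+m}^{-1})$, read off from $\log_m(g)=\ell_{\lambda+m}(1+\delta_m\ell_{\lambda+m}^{-1})$ and $\log\ell_{\lambda+m}=\ell_{\lambda+m+1}$.

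Next I would control $\bigcup_{m\ge 3}\supp\delta_m$. Expanding $\log(1+\cdot)$ and using $\supp(\delta_m\ell_{\lambda+m}^{-1})=(\supp\delta_m)\ell_{\lambda+m}^{-1}$, an induction on $m$ shows that every monomial of $\delta_m$ factors as $\fq\fp$, where $\fq$ lies in $\fS^{\infty}$ for the fixed well-based set $\fS:=\supp\delta_3$, and the accumulated logarithmic part is $\fp=\prod_{k=3}^{m-1}\ell_{\lambda+k}^{-b_k}$ with integer multiplicities satisfying $b_3\ge b_4\ge\cdots\ge b_{m-1}\ge 1$. The inductive point is that each step appends one new factor $\ell_{\lambda+m}^{-1}$ at the top of the staircase while preserving the non-increasing, everywhere-positive pattern (sums of non-increasing sequences are non-increasing, and each earlier multiplicity is at least the newly created one). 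Letting $\fP$ denote the set of all such $\fp$, the assignment $\fp\mapsto(b_3,b_4,\dots)$ is an anti-isomorphism of $(\fP,\prec)$ onto a subset of the well-ordered set $D$ of Lemma~\ref{well} (cf.\ also Corollary~\ref{morewell}), so $\fP$ is well-based; as $\fS^{\infty}$ is well-based by Neumann's Lemma, the product $\fS^{\infty}\cdot\fP$ is well-based and contains $\bigcup_{m}\supp\delta_m$.

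Finally, by the summability criterion recalled in Section~\ref{prelim}, it remains to rule out a fixed monomial $\fn$ lying in $\supp\delta_m$ for infinitely many $m$; indeed a weakly increasing ``bad sequence'' of monomials in a well-based union must stabilize to such an $\fn$. This is the main obstacle. The leverage is that the staircase of \emph{every} monomial of $\delta_m$ reaches level $\lambda+m-1$ with multiplicity $\ge 1$ at each of $\lambda+3,\dots,\lambda+m-1$: choosing $\gamma$ with $\sigma(\fn)\subseteq\gamma$, for large $m$ the factor $\fq=\fn\fp^{-1}\in\fS^{\infty}$ is then forced to carry a positive exponent $b_k$ at every level $\lambda+k\in[\gamma,\lambda+m-1]$, i.e.\ to possess ever-longer runs of consecutive positive exponents at arbitrarily high levels. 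Showing that the well-based set $\fS^{\infty}$ cannot sustain such configurations for infinitely many $m$---using that the infinite set $\{\fq\}\subseteq\fS^{\infty}$ has a $\prec$-maximum, which bounds the staircase multiplicities, together with the fact that $\fS=\supp\delta_3$ is itself the support of a logarithm---is the delicate step. Once it is in place, the summability criterion yields that $(\delta_m)$, hence $(\log_m(g))_{m\ge 1}$, is summable, so $(\log_n(g))_n$ is multipliable.
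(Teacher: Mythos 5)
Your reduction to the summability of the remainders $\delta_m=\log_m(g)-\ell_{\lambda+m}$, the recursion $\delta_{m+1}=\log\!\big(1+\delta_m\ell_{\lambda+m}^{-1}\big)$, the staircase decomposition of $\supp\delta_m$ inside $\fS^\infty\cdot\mathfrak{P}$ (where $\fS=\supp\delta_3$ and $\mathfrak{P}$ is your set of staircases), and the well-basedness of this set via Neumann's Lemma and Lemma~\ref{well} are all correct, and up to inessential shifts of index this is exactly the paper's argument (the paper works with $\epsilon_n=\log_n(g)-\ell_{\lambda+n}$ from $n=2$ on, and additionally records that the number of $\supp\epsilon_2$-factors dominates the first staircase exponent, a refinement you do not need). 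The genuine gap is exactly where you write ``the delicate step'': the second half of the summability criterion---that a fixed monomial $\fn$ can lie in $\supp\delta_m$ for only finitely many $m$---is never proved. Worse, the route you sketch would not work as stated: well-basedness of $\fS$ and $\fS^\infty$ alone does \emph{not} forbid elements carrying arbitrarily long runs of positive exponents at arbitrarily high levels (for instance $\{\ell_0^{-k}\,\ell_{\lambda+3}\ell_{\lambda+4}\cdots\ell_{\lambda+k}:\ k\ge 3\}$ is a strictly decreasing, hence well-based, subset of $\mathfrak{L}^{\prec 1}$ with exactly such runs), and no usable extra structure of $\supp\delta_3$ ``as the support of a logarithm'' is developed in the paper or needed.

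The missing step in fact has a two-line proof from material already recalled in Section~\ref{prelim}: if $\mathfrak{S}_1,\mathfrak{S}_2$ are well-based, then every $\fg\in\mathfrak{S}_1\mathfrak{S}_2$ admits only finitely many factorizations $\fg=\fm\fn$ with $(\fm,\fn)\in\mathfrak{S}_1\times\mathfrak{S}_2$. Your staircase factor $\fp$ determines $m$, since its exponents are $\le -1$ exactly at the levels $\lambda+3,\dots,\lambda+m-1$; so distinct values of $m$ with $\fn\in\supp\delta_m$ produce distinct pairs $(\fq,\fp)\in\fS^\infty\times\mathfrak{P}$ with $\fq\fp=\fn$, and by finite factorization there are only finitely many such pairs, hence only finitely many such $m$. (Equivalently: $\fp\mapsto\fn\fp^{-1}$ reverses the ordering, so the set of admissible $\fp$ is both well-based, being a subset of $\mathfrak{P}$, and well-ordered, being the order-reversed preimage of a subset of the well-based set $\fS^\infty$; hence it is finite.) This is precisely how the paper's proof concludes---the parenthetical ``(disjoint)'' in its definition of $\mathfrak{S}=\bigcup_{n\ge 2}\mathfrak{S}_n$ encodes the same observation that the staircase determines $n$.
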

\begin{proof}
By the above remarks, we have for $n \geq 2$ that $\log_n(g) = \ell_{\lambda+n}+\epsilon_n$ where $\lambda=\lambda_g$ and $\epsilon_n \preceq 1$. Thus, for $\big(\log_n(g)\big)$ to be multipliable, it suffices that $(\epsilon_n)_{n \geq 2}$ is summable. For $n\geq 2$, we have
\begin{align*}
\log_{n+1}(g)\ &=\  \log(\ell_{\lambda+n} + \epsilon_n)\ =\ \log\left(\ell_{\lambda+n}\big(1+\frac{\epsilon_n}{\ell_{\lambda+n}}\big)\right)\\
 &=\ \ell_{\lambda +n+1} +\sum_{i =1}^\infty \frac{(-1)^{i-1}}{i}\left(\frac{\epsilon_n}{\ell_{\lambda+n}}\right)^i,\ \text{ so }\\
\epsilon_{n+1}\ &=\ \sum_{i =1}^\infty \frac{(-1)^{i-1}}{i}\left(\frac{\epsilon_n}{\ell_{\lambda+n}}\right)^i. 
\end{align*} Using this equality for $\epsilon_{n+1}$, a
straightforward induction on $n$ shows that every $\fm\in \supp \epsilon_n$ with $n\ge 2$ is of the form $$\fn_1\cdots\fn_{d_1}\ell_{\lambda+2}^{-d_2}\cdots \ell_{\lambda+n-1}^{-d_{n-1}}\ \in (\supp \epsilon_2)^{\infty}\cdot\mathfrak{S}_n,$$ where 
$\fn_1,\dots, \fn_{d_1}\in \supp \epsilon_2$, $d_1,\dots, d_{n-1}\in \N^{\ge 1}$,
$d_1\ge d_2\ge \cdots \ge d_{n-1}$, and 
$$ \mathfrak{S}_n\ :=\ \left\{\prod_{2\leq j < n}\ell_{\lambda+j}^{-d_j}:\  d_2,\dots, d_{n-1} \in \N^{\ge 1},\ d_2\ge d_3\ge \cdots \ge d_{n-1}\right\}
$$
The set $(\supp \epsilon_2)^\infty$ is well-based by Neumann's Lemma.
The (disjoint) union $\mathfrak{S} :=\ \bigcup_{n\ge 2}\mathfrak{S}_n$ is well-based by Lemma~\ref{well}. Thus the family $(\epsilon_n)_{n\geq2}$ is summable.
\end{proof}

\begin{lemma}\label{lemuc} Let $\circ$ be a composition on $\LL$ as defined in the Introduction and let $f,g\in \LL$, $f>0$, $g>\R$. Then 
$f^t\circ g=(f\circ g)^t$ for $t\in \R$. If in addition the family $(f_i)$
in $\LL^{>}$ is multipliable, then  the family $(f_i\circ g)$ is multipliable, and $$(\prod_i f_i)\circ g\ =\ \prod_i (f_i\circ g).$$
\end{lemma}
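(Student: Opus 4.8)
The plan is to reduce every assertion to the injectivity of $\log$ on $\LL^{>}$ together with the two structural properties of $\circ$ that govern logarithms and summation, namely (CL3) and (CL4). Since $f\mapsto\log f:\LL^{>}\to\LL$ is strictly increasing, it is injective; hence to prove an identity $u=v$ between positive elements of $\LL$ it suffices to check $\log u=\log v$. Throughout I use that for fixed $g\in\LL^{>\R}$ the map $f\mapsto f\circ g$ is an ordered field embedding (by (CL1) and real closedness of $\LL$), so it is $\R$-linear and preserves positivity; in particular $f\circ g>0$ whenever $f>0$, so all the compositions and powers below are defined.

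For the power identity, fix $f\in\LL^{>}$ and $t\in\R$. Both $f^t\circ g$ and $(f\circ g)^t$ are positive, so I compare their logarithms. Applying (CL3) to $f^t>0$, the identity $\log f^t=t\log f$ (established for $\LL$ in the subsection on monomial groups with real powers), and $\R$-linearity of $f\mapsto f\circ g$, I would compute
\[\log(f^t\circ g)\ =\ (\log f^t)\circ g\ =\ t\bigl((\log f)\circ g\bigr).\]
On the other side, $\log\bigl((f\circ g)^t\bigr)=t\log(f\circ g)=t\bigl((\log f)\circ g\bigr)$, again by (CL3). The two logarithms agree, so injectivity gives $f^t\circ g=(f\circ g)^t$.

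For the multipliability claim, recall that a family $(f_i)$ in $\LL^{>}$ is multipliable exactly when $(\log f_i)$ is summable. Assuming this, I apply (CL4) to the summable family $(\log f_i)$: it follows that $\bigl((\log f_i)\circ g\bigr)$ is summable and that $\bigl(\sum_i\log f_i\bigr)\circ g=\sum_i\bigl((\log f_i)\circ g\bigr)$. But $(\log f_i)\circ g=\log(f_i\circ g)$ by (CL3), so $\bigl(\log(f_i\circ g)\bigr)$ is summable, i.e.\ $(f_i\circ g)$ is multipliable. For the product identity I again compare logarithms of the two positive elements $\bigl(\prod_i f_i\bigr)\circ g$ and $\prod_i(f_i\circ g)$, using $\log\prod_i f_i=\sum_i\log f_i$, then (CL3) and (CL4):
\[\log\Bigl(\bigl(\textstyle\prod_i f_i\bigr)\circ g\Bigr)\ =\ \bigl(\textstyle\sum_i\log f_i\bigr)\circ g\ =\ \textstyle\sum_i\bigl((\log f_i)\circ g\bigr)\ =\ \textstyle\sum_i\log(f_i\circ g)\ =\ \log\textstyle\prod_i(f_i\circ g),\]
and injectivity of $\log$ yields $\bigl(\prod_i f_i\bigr)\circ g=\prod_i(f_i\circ g)$.

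There is little genuine difficulty here: the whole argument is a transfer of the additive facts (CL3) and (CL4) through the logarithm--exponential correspondence. The only points needing care are bookkeeping ones: confirming that (CL3) may legitimately be invoked with $f^t$ and with $\prod_i f_i$ in place of $f$, which is fine since both are positive, and recalling that multipliability is \emph{defined} via summability of logarithms, so that (CL4)---a statement about sums---is precisely the right tool and must be applied to the family $(\log f_i)$ rather than to $(f_i)$ itself. Thus the step most likely to mislead is simply keeping straight which family one feeds to (CL4).
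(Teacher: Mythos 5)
Your proof is correct and takes essentially the same route as the paper's: take logarithms, use (CL1) ($\R$-linearity) and (CL3) for the power identity, and additionally (CL4) for the multipliability claim, concluding via injectivity of $\log$ on $\LL^{>}$. The paper's own proof is just a terser version of this, with the second part dismissed as "follows likewise by taking logarithms"; your write-up fills in exactly those details.
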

\begin{proof} By (CL1) and (CL3) we have 
$$\log(f^t\circ g)\ =\ (\log f^t)\circ g\ =\ (t\log f)\circ g\ =\ t\log (f\circ g)\ =\ \log [(f\circ g)^t],$$
so $f^t\circ g=(f\circ g)^t$. The second part follows likewise by taking logarithms.
\end{proof}

\section{Composing with Hyperlogarithms}\label{sec:comp1}
\noindent Recall that our goal is to construct a `good' composition operation on $\LL$. In this section we only compose on the right with  hyperlogarithms. 

We fix an ordinal $\alpha = \omega^\lambda$ where $\lambda$ is an infinite limit ordinal. Then $\xi+\eta < \alpha$ for all ordinals $\xi, \eta < \alpha$. We work in the Hahn field $\LL_{<\alpha}=\R[[\mathfrak{L}_{<\alpha}]]$ over $\R$. Let $\beta< \lambda$ be given and set $\mu:= \omega^{\beta +1}< \alpha$, so $\omega^\beta+\gamma<\mu$ for $\gamma<\mu$. 

\subsection*{Composing with $\ell_{\omega^\beta}$}
We shall use the modified derivation $\derdelta := \frac{1}{\ell_\mu'}\der_\alpha$ on $\LL_{<\alpha}$. Note that $\frac{1}{\ell_\mu'}=\prod_{\rho < \mu} \ell_{\rho}$. Hence for $\fm \in \mfL_{[\mu,\alpha)}$ we have
\[
\supp(\derdelta\fm)\ \subseteq\ \big\{\prod_{\mu \leq \rho \leq \gamma}\ell_\rho^{-1}:\ \mu\le \gamma<\alpha\big\}\cdot \fm.
\]
Thus the strongly $\R$-linear operator $\derdelta$ on $\LL_{<\alpha}$ maps
$\LL_{[\mu,  \alpha)}$ into itself. 

To explain and motivate the role of $\derdelta$ in defining $f\circ \ell_{\omega^\beta}$ for
$f\in \LL_{[\mu,\alpha)}$ we include the following remark; it is important for understanding what is going on, but is of a purely heuristic nature and can be skipped.

\medskip\noindent
{\bf Remark.} The composition $\circ$ on $\LL$ to be constructed will be such that the map $f\mapsto f\circ \ell_{\mu}: \LL_{[\mu,\alpha)}\to \LL_{[\mu,\alpha)}$ is bijective, which gives an inverse map 
$$f\mapsto f^{\uparrow_\mu}\ :\  \LL_{[\mu,\alpha)}\to \LL_{[\mu,\alpha)}, \qquad f^{\uparrow_\mu}\circ \ell_{\mu}=f\ 
\text{ for }\ f\in \LL_{[\mu,\alpha)}.$$
We also want $\circ$ to obey the Chain Rule and admit Taylor expansion, and
to satisfy $\ell_{\mu}\circ \ell_{\omega^\beta} =\ell_{\mu}-1$. Then for $f\in \LL_{[\mu,\alpha)}$ we have $(f^{\uparrow_\mu}\circ \ell_{\mu})'=\big((f^{\uparrow_\mu})'\circ \ell_{\mu}\big)\cdot \ell_{\mu}'=f'$, so $(f^{\uparrow_\mu})'\circ \ell_{\mu}=f'/\ell_{\mu}'=\derdelta(f)$, and thus by induction on $n$,
$$(f^{\uparrow_\mu})^{(n)}\circ \ell_{\mu}\ =\ \derdelta^n(f).$$
Using Taylor expansion this leads for such $f$ to
\begin{align*} f\circ \ell_{\omega^{\beta}}\ &=\ (f^{\uparrow_\mu}\circ \ell_{\mu})\circ \ell_{\omega^{\beta}}\ =\ f^{\uparrow_\mu}\circ (\ell_{\mu}\circ \ell_{\omega^{\beta}})\ =\ f^{\uparrow_\mu}\circ (\ell_{\mu}-1)\\ &=\ 
\sum_{n=0}^\infty \frac{(f^{\uparrow_\mu})^{(n)}\circ \ell_{\mu}}{n!}(-1)^n\ =\ \sum_{n=0}^\infty \frac{(-1)^n}{n!}\derdelta^n(f).
\end{align*}

After this remark we now resume the formal exposition. The restriction of $\derdelta$ to an operator
on $\LL_{[\mu, \alpha)}$ has
support contained in the well-based set 
$$\mathfrak{S}\ :=\ \big\{\prod_{\mu \leq \rho \leq \gamma}\ell_\rho^{-1}:\ \mu\le \gamma<\alpha\big\}\ \prec\ 1.$$ 
Thus by Lemma~\ref{supp2} and the remark following Lemma~\ref{supp3} the sum 
$\sum_{n=0}^\infty \frac{(-1)^n}{n!}\derdelta^n f$ exists in $\LL_{[\mu, \alpha)}$ for all $f\in \LL_{[\mu, \alpha)}$, and we have a strongly
$\R$-linear operator
\begin{equation}
\label{e:PartialTaylor} f\mapsto f \circ \ell_{\omega^\beta}\ :\ \LL_{[\mu,\alpha)} \to \LL_{[\mu,\alpha)}, \qquad
f \circ \ell_{\omega^\beta}\ :=\ \sum_{n=0}^\infty \frac{(-1)^n}{n!}\derdelta^n f,
\end{equation}
with $\supp(f\circ \ell_{\omega^\beta})\subseteq \mathfrak{S}^\infty\cdot \supp f$ for $f\in \LL_{[\mu, \alpha)}$. A routine computation gives
\begin{equation}
\label{e:HyperlogCom} 
(fg) \circ \ell_{\omega^\beta}\ =\ 
(f\circ \ell_{\omega^\beta})\cdot(g\circ \ell_{\omega^\beta}) \qquad (f,g\in \LL_{[\mu,  \alpha)}). 
\end{equation}

\begin{lemma}\label{car} We have $f\sim f\circ \ell_{\omega^\beta}$ for nonzero $f\in\LL_{[\mu,\alpha)}$, and the map $f\mapsto f\circ \ell_{\omega^\beta}$ is an automorphism of the field $\LL_{[\mu,\alpha)}$.
\end{lemma}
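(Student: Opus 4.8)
The plan is to package the definition \eqref{e:PartialTaylor} into a single operator $C\colon\LL_{[\mu,\alpha)}\to\LL_{[\mu,\alpha)}$ given by $C(f):=f\circ\ell_{\omega^\beta}=\sum_{n=0}^\infty\frac{(-1)^n}{n!}\derdelta^n f$, and to read off the two assertions from its structure. It is already recorded that $C$ is strongly $\R$-linear, and \eqref{e:HyperlogCom} says $C(fg)=C(f)C(g)$; since $\derdelta 1=0$ we have $C(1)=1$, so $C$ is an $\R$-algebra endomorphism of $\LL_{[\mu,\alpha)}$, and in particular injective as a ring homomorphism from a field. Thus it remains to prove that $C$ is surjective and that $C(f)\sim f$ for nonzero $f$.

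Both follow by isolating the higher-order part $D:=C-I=\sum_{n=1}^\infty\frac{(-1)^n}{n!}\derdelta^n$, which is strongly $\R$-linear as a difference of strongly $\R$-linear maps. Since $\supp\derdelta\subseteq\mathfrak{S}$ we have $\supp\derdelta^n\subseteq\mathfrak{S}^n$, whence $\supp D\subseteq\bigcup_{n\ge1}\mathfrak{S}^n$. As $\mathfrak{S}\prec1$, every monomial in $\mathfrak{S}^n$ with $n\ge1$ is $\prec1$, and $\bigcup_{n\ge1}\mathfrak{S}^n$ is a subset of $\mathfrak{S}^\infty$, which is well-based by Neumann's Lemma; hence $\supp D$ is well-based and $\supp D\prec1$. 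Lemma~\ref{invsupp} then applies to $I+D=C$ and yields that $C$ is bijective with a strongly $\R$-linear inverse. A bijective ring homomorphism of the field $\LL_{[\mu,\alpha)}$ is a field automorphism, which settles the second assertion.

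For the asymptotic relation I would use $\supp D\prec1$ directly: for nonzero $f$ with $D(f)\ne0$, Lemma~\ref{supp1} gives $\supp D(f)\subseteq(\supp D)(\supp f)$, and since the largest element $\mathfrak{s}$ of the well-based set $\supp D$ satisfies $\mathfrak{s}\prec1$, every monomial occurring in $D(f)$ is $\preceq\mathfrak{s}\,\fd(f)\prec\fd(f)$. Hence $D(f)\prec f$, so $C(f)=f+D(f)\sim f$. The only step needing real attention is the verification that $\supp D$ is well-based and $\prec1$, so that Lemma~\ref{invsupp} is applicable; this rests entirely on $\mathfrak{S}\prec1$ and Neumann's Lemma, after which surjectivity, multiplicativity from \eqref{e:HyperlogCom}, and the asymptotic estimate combine with no further computation.
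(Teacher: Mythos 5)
Your proof is correct and follows essentially the same route as the paper's: isolate the higher-order part $D=\sum_{n\ge 1}\frac{(-1)^n}{n!}\derdelta^n$, show its operator support lies in $\bigcup_{n\ge 1}\mathfrak{S}^n\prec 1$ (well-based by Neumann's Lemma), deduce $D(f)\prec f$ and hence $f\circ\ell_{\omega^\beta}\sim f$, and get bijectivity from the inversion lemma. The only differences are cosmetic: you invoke Lemma~\ref{invsupp} where the paper cites Lemma~\ref{inverse} (the former is stated in the paper as an easy variant of the latter), and you spell out the multiplicativity/unit details that the paper leaves to (\ref{e:HyperlogCom}).
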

\begin{proof} Let $f$ range over $\LL_{[\mu, \alpha)}$ and set $\Phi(f)=\sum_{n=1}^\infty \frac{(-1)^n}{n!}\derdelta^n f$.
Then the map $\Phi: \LL_{[\mu,\alpha)} \to \LL_{[\mu,\alpha)}$ is $\R$-linear with well-based support 
$\supp \Phi\subseteq \bigcup_{n=1}^\infty \mathfrak{S}^n\prec 1$, and thus 
$\Phi(f)\prec f$ if $f\ne 0$. Now $f\circ \ell_{\omega^\beta}=f+\Phi(f)$, so
$f\sim f\circ\ell_{\omega^\beta}$ for $f\ne 0$, and $f\mapsto f\circ \ell_{\omega^\beta}: \LL_{[\mu,\alpha)}\to \LL_{[\mu,\alpha)}$ is bijective by Lemma~\ref{inverse}.  
\end{proof}  

\noindent
{\bf Remark.}  It is natural to denote the operator 
$$f\ \mapsto\  f\circ \ell_{\omega^\beta}\ =\ \left[ \sum_{n=0}^\infty \frac{(-1)^n}{n!}\derdelta^n\right](f)$$
on $\LL_{[\mu,\alpha)}$ by $\ex^{-\derdelta}$. More generally, any $s\in \R$ yields an operator 
$$\ex^{s\derdelta}\ :\ \LL_{[\mu,\alpha)}\to \LL_{[\mu,\alpha)}, \quad f\mapsto  \sum_{n=0}^\infty \frac{s^n}{n!}\derdelta^n f,$$
and $\ex^{s\derdelta}\circ \ex^{t\derdelta}=\ex^{(s+t)\derdelta}$ for $s,t\in \R$, so we have a group $\ex^{\R\derdelta}$ of such operators.

\medskip
Next we define for a monomial $\prod_{\gamma<\mu}\ell_\gamma^{r_\gamma}\in \mfL_{<\mu}$, 
\begin{equation}
\label{e:BigMonomials}
\left(\prod_{\gamma<\mu}\ell_\gamma^{r_\gamma}\right)\circ \ell_{\omega^\beta}\ :=\  \prod_{\gamma<\mu}\ell_{\omega^\beta  +\gamma}^{r_\gamma}\in \mfL_{<\mu}.
\end{equation}
Note that $\fm\mapsto \fm\circ \ell_{\omega^\beta}: \mfL_{<\mu}\to \mfL_{<\mu}$ is an embedding of ordered groups, and that this map is contractive: $\fm\circ\ell_{\omega^\beta}\succ \fm$ if $\fm\in \mfL_{<\mu}^{\prec 1}$, and $\fm\circ\ell_{\omega^\beta}\prec \fm$ if $\fm\in \mfL_{<\mu}^{\succ 1}$.

\medskip\noindent Finally, using $\LL_{<\alpha}=\LL_{[\mu,\alpha)}[[\mfL_{<\mu}]]$ and representing $f \in \LL_{<\alpha}$ as
\[
f\ =\ \sum_{\fm \in \mfL_{<\mu}}f_{[\fm]}\fm
\]
where all $f_{[\fm]}\in \LL_{[\mu,\alpha)}$ and $\{\fm \in \mfL_{<\mu}:f_\fm \neq 0\}$ is well-based, we note that $$\sum_{\fm \in \mfL_{<\mu}}\left(f_{[\fm]}\circ \ell_{\omega^\beta}\right)\left(\fm\circ \ell_{\omega^\beta}\right)$$
exists in the Hahn field $\LL_{[\mu,\alpha)}[[\mfL_{<\mu}]]$ over 
$\LL_{[\mu,\alpha)}$, since all $f_{[\fm]} \circ \ell_{\omega^\beta} \in \LL_{[\mu,\alpha)}$ and $\fm \circ \ell_{\omega^\beta} \prec \fn\circ \ell_{\omega^\beta} $ for all $\fm\prec\fn$ in $\mfL_{<\mu}$.  Thus we may define the operation
\begin{equation}
\label{e:HyperlogComp} f\mapsto f\circ \ell_{\omega^\beta}\ :\ \LL_{<\alpha}\to \LL_{<\alpha}, \qquad
f \circ \ell_{\omega^\beta}\ :=\ \sum_{\fm \in \mfL_{<\mu}}\left(f_{[\fm]}\circ \ell_{\omega^\beta}\right)\left(\fm\circ \ell_{\omega^\beta}\right).
\end{equation}
We do not create here a conflict of notation: if $f\in \LL_{[\mu, \alpha)}$ or $f\in \mathfrak{L}_{<\mu}$, then this agrees with the previously defined
$f\circ \ell_{\omega^\beta}$. In particular, $\ell_0\circ \ell_{\omega^\beta}=\ell_{\omega^\beta}$.

\begin{lemma}
\label{l:PartialSpecialComp}
The map $f \mapsto f \circ \ell_{\omega^\beta}:\LL_{<\alpha} \to \LL_{<\alpha}$ is an $\LL_{<\alpha}$-composition with $\ell_{\omega^\beta}$,
where we take $x:=\ell_0$ as the distinguished element of $\mfL_{<\alpha}^{\succ 1}$. 
\end{lemma}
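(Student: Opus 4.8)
The plan is to verify directly the three defining conditions of an $\LL_{<\alpha}$-composition with $\ell_{\omega^\beta}$ from the subsection ``Composition on Hahn Fields'', taking $\fM=\fN=\mfL_{<\alpha}$, $K=\LL_{<\alpha}$, and $x=\ell_0$. Condition (1) is immediate: $1\in\LL_{[\mu,\alpha)}$ gives $1\circ\ell_{\omega^\beta}=1$ from \eqref{e:PartialTaylor}, while $\ell_0\circ\ell_{\omega^\beta}=\ell_{\omega^\beta}$ was already noted after \eqref{e:HyperlogComp}. For condition (2) I would write each monomial $\fm_i\in\mfL_{<\alpha}$ uniquely as $\fm_i=\fu_i\fv_i$ with $\fu_i\in\mfL_{[\mu,\alpha)}$ and $\fv_i\in\mfL_{<\mu}$; by the ``no conflict of notation'' remark after \eqref{e:HyperlogComp} one has $\fm_i\circ\ell_{\omega^\beta}=(\fu_i\circ\ell_{\omega^\beta})(\fv_i\circ\ell_{\omega^\beta})$. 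Multiplicativity then reduces, factor by factor, to the product rule \eqref{e:HyperlogCom} for the partial composition on $\LL_{[\mu,\alpha)}$ (for the $\mfL_{[\mu,\alpha)}$-parts) and to the fact, recorded after \eqref{e:BigMonomials}, that $\fv\mapsto\fv\circ\ell_{\omega^\beta}$ is a group embedding of $\mfL_{<\mu}$ (for the $\mfL_{<\mu}$-parts); commutativity of $\mfL_{<\alpha}$ finishes it.

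The substance of the lemma is condition (3): for each $f\in\LL_{<\alpha}$ the family $(\fn\circ\ell_{\omega^\beta})_{\fn\in\supp f}$ is summable and $\sum_\fn f_\fn(\fn\circ\ell_{\omega^\beta})$ equals the series $f\circ\ell_{\omega^\beta}$ defined in \eqref{e:HyperlogComp}. I would first establish summability. Using the iterated Hahn field identification $\LL_{<\alpha}=\LL_{[\mu,\alpha)}[[\mfL_{<\mu}]]$, decompose each $\fn\in\supp f$ uniquely as $\fn=\fu\fm$ with $\fu\in\mfL_{[\mu,\alpha)}$ and $\fm\in\mfL_{<\mu}$; the set $B$ of occurring $\mfL_{<\mu}$-parts is well-based, and for each $\fm\in B$ the fiber $\supp f_{[\fm]}=\{\fu:\fu\fm\in\supp f\}$ is a well-based subset of $\mfL_{[\mu,\alpha)}$. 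Since $\supp(\fn\circ\ell_{\omega^\beta})=\supp(\fu\circ\ell_{\omega^\beta})\cdot(\fm\circ\ell_{\omega^\beta})\subseteq(\mathfrak{S}^\infty\fu)(\fm\circ\ell_{\omega^\beta})$, every monomial in $\bigcup_\fn\supp(\fn\circ\ell_{\omega^\beta})$ has $\mfL_{<\mu}$-part equal to $\fm\circ\ell_{\omega^\beta}$ for a unique $\fm\in B$ (injectivity of the embedding) and $\mfL_{[\mu,\alpha)}$-part in the well-based set $\supp(f_{[\fm]}\circ\ell_{\omega^\beta})\subseteq\mathfrak{S}^\infty\cdot\supp f_{[\fm]}$. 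Well-basedness of the union then follows from the iterated Hahn field structure, since a subset of $\mfL_{<\alpha}$ whose $\mfL_{<\mu}$-projection is well-based and all of whose fibers over $\mfL_{[\mu,\alpha)}$ are well-based is itself well-based ($\mfL_{<\mu}$ being the dominant factor). For local finiteness, a fixed monomial decomposed as $\fq\fr$ forces $\fr=\fm\circ\ell_{\omega^\beta}$, hence a unique $\fm$, and then $\fq\in\supp(\fu\circ\ell_{\omega^\beta})$ for only finitely many $\fu\in\supp f_{[\fm]}$ by the summability built into strong $\R$-linearity of the partial composition on $\LL_{[\mu,\alpha)}$.

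Once summability is secured, I would obtain the identity by regrouping the summable family along the decomposition $\fn=\fu\fm$, which by associativity of summation gives
\[
\sum_{\fn\in\supp f} f_\fn\,(\fn\circ\ell_{\omega^\beta})\ =\ \sum_{\fm\in B}\Big(\sum_{\fu} f_{\fu\fm}\,(\fu\circ\ell_{\omega^\beta})\Big)(\fm\circ\ell_{\omega^\beta})\ =\ \sum_{\fm\in B}\big(f_{[\fm]}\circ\ell_{\omega^\beta}\big)(\fm\circ\ell_{\omega^\beta}),
\]
where the inner sum is evaluated by strong $\R$-linearity of the partial composition applied to $f_{[\fm]}=\sum_{\fu}f_{\fu\fm}\fu$, and the right-hand side is exactly \eqref{e:HyperlogComp}. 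The main obstacle is the summability step of condition (3): all of the group-theoretic bookkeeping lives there, and it rests on controlling the two monomial factors simultaneously—the $\mfL_{[\mu,\alpha)}$-parts through the operator support $\mathfrak{S}^\infty$ of the partial composition and the $\mfL_{<\mu}$-parts through the order-embedding \eqref{e:BigMonomials}—and then invoking the iterated Hahn field identification to conclude well-basedness of the combined support. Conditions (1) and (2) are routine bookkeeping by comparison.
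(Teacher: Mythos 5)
Your proof is correct and takes essentially the same route as the paper: clause (2) via the decomposition of each monomial into its $\mfL_{<\mu}$-part and $\mfL_{[\mu,\alpha)}$-part, handled respectively by the group embedding (\ref{e:BigMonomials}) and the product rule (\ref{e:HyperlogCom}), and clause (3) from the strong $\R$-linearity of (\ref{e:PartialTaylor}) together with the iterated Hahn field identification $\LL_{<\alpha}=\LL_{[\mu,\alpha)}[[\mfL_{<\mu}]]$ underlying (\ref{e:HyperlogComp}). The only difference is that the paper dismisses clause (3) as following ``easily,'' whereas you spell out the well-basedness and local-finiteness bookkeeping in detail.
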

\begin{proof}
To see that clause (2) in the definition of ``$K$-composition with $h$'' holds, let $\fm, \fn\in \mathfrak{L}_{<\alpha}$. Then $\fm = \fm_{< \mu}\fm_{\geq \mu}$, $\fn = \fn_{< \mu}\fn_{\geq \mu}$, $\fm_{< \mu}, \fn_{<\mu} \in \mfL_{<\mu},\ \fm_{\geq \mu}, \fn_{\ge \mu} \in \mfL_{[\mu, \alpha)}$. Using (\ref{e:BigMonomials}), $(\fm_{<\mu}\fn_{<\mu})\circ \ell_{\omega^\beta} = (\fm_{<\mu}\circ \ell_{\omega^\beta})(\fn_{<\mu}\circ \ell_{\omega^\beta})$, and by (\ref{e:HyperlogCom}), \begin{align*}
(\fm_{\geq \mu}\fn_{\geq \mu}) \circ \ell_{\omega^\beta}\ =\  (\fm_{\geq \mu}\circ \ell_{\omega^\beta})(\fn_{\geq \mu} \circ \ell_{\omega^\beta}).
\end{align*} 
Now using also (\ref{e:HyperlogComp}), we have
\[
(\fm\fn) \circ \ell_{\omega^\beta}\ =\ \big((\fm_{<\mu}\fn_{<\mu})\circ \ell_{\omega^\beta}\big)\cdot \big((\fm_{\geq \mu}\fn_{\geq \mu})\circ \ell_{\omega^\beta}\big)\ =\  (\fm\circ \ell_{\omega^\beta})(\fn\circ \ell_{\omega^\beta}).
\]
That clause (3) is satisfied follows easily from the strong linearity of the map in (\ref{e:PartialTaylor}) and the existence of the sums in (\ref{e:HyperlogComp}). 
\end{proof}

\begin{cor}\label{corineq} Let $0\ne f\in \LL_{<\alpha}^{\preceq 1}$. Then 
$f\circ \ell_{\omega^{\beta}}\prec \ell_{\mu}^{-n}$ for all $n$, or $f\circ \ell_{\omega^{\beta}}\sim f$.
\end{cor}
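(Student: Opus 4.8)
The plan is to read off both alternatives from the dominant monomial of $f\circ\ell_{\omega^\beta}$, working throughout in the identification $\LL_{<\alpha}=\LL_{[\mu,\alpha)}[[\mfL_{<\mu}]]$ that underlies the definition (\ref{e:HyperlogComp}). Write $f=\sum_{\fm\in\mfL_{<\mu}}f_{[\fm]}\fm$ with all $f_{[\fm]}\in\LL_{[\mu,\alpha)}$ and $\{\fm:f_{[\fm]}\ne 0\}$ well-based, and let $\fm_0$ be the largest $\fm\in\mfL_{<\mu}$ with $f_{[\fm]}\ne 0$. Since in $\mfL_{<\alpha}$ the lexicographic order compares exponents at indices $<\mu$ first, the outer factor dominates, so that $\fd(f)=\fd(f_{[\fm_0]})\cdot\fm_0$, with the same leading coefficient as $f_{[\fm_0]}$.

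First I would compute $\fd(f\circ\ell_{\omega^\beta})$. By (\ref{e:HyperlogComp}) we have $f\circ\ell_{\omega^\beta}=\sum_{\fm}(f_{[\fm]}\circ\ell_{\omega^\beta})(\fm\circ\ell_{\omega^\beta})$, where each $f_{[\fm]}\circ\ell_{\omega^\beta}\in\LL_{[\mu,\alpha)}$ and $\fm\circ\ell_{\omega^\beta}\in\mfL_{<\mu}$. As $\fm\mapsto\fm\circ\ell_{\omega^\beta}$ is an order-preserving embedding of $\mfL_{<\mu}$ by (\ref{e:BigMonomials}), distinct outer monomials stay distinct and the largest is $\fm_0\circ\ell_{\omega^\beta}$; moreover $f_{[\fm_0]}\circ\ell_{\omega^\beta}\sim f_{[\fm_0]}$ by Lemma~\ref{car}, so this coefficient is nonzero with $\fd(f_{[\fm_0]}\circ\ell_{\omega^\beta})=\fd(f_{[\fm_0]})$. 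The outer factor again dominates, giving
\[
\fd(f\circ\ell_{\omega^\beta})\ =\ \fd(f_{[\fm_0]})\cdot(\fm_0\circ\ell_{\omega^\beta}),
\]
once more with the same leading coefficient as $f_{[\fm_0]}$, hence as $f$.

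The dichotomy then falls out according to whether $\fm_0=1$. If $\fm_0=1$, then $\fm_0\circ\ell_{\omega^\beta}=1$, so $f\circ\ell_{\omega^\beta}$ and $f$ share the dominant monomial $\fd(f_{[\fm_0]})$ and the same leading coefficient, i.e.\ $f\circ\ell_{\omega^\beta}\sim f$. If $\fm_0\ne 1$, set $\gamma_0:=\min\sigma(\fm_0)<\mu$ and let $r_{\gamma_0}$ be the exponent of $\fm_0$ at $\gamma_0$. The key ordinal fact is that $\omega^\beta+\gamma>\gamma$ for every $\gamma<\mu=\omega^{\beta+1}$, so the shift $\gamma\mapsto\omega^\beta+\gamma$ carries $\gamma_0$ to an index $\omega^\beta+\gamma_0$ that is still $<\mu$ but strictly larger than $\gamma_0$; this is the least index in $\supp(\fm_0\circ\ell_{\omega^\beta})$, and since $\fd(f_{[\fm_0]})$ only contributes at indices $\ge\mu$, it is the least index in $\supp\fd(f\circ\ell_{\omega^\beta})$, carrying the exponent $r_{\gamma_0}$.

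The remaining point, and the only place the hypothesis $f\preceq 1$ is used, is the sign of $r_{\gamma_0}$. Since $\fm_0\ne 1$ forces $\fd(f)=\fd(f_{[\fm_0]})\fm_0\ne 1$, from $f\preceq 1$ we get $\fd(f)\prec 1$; as $\gamma_0$ is the least index in $\supp\fd(f)$ (the factor $\fd(f_{[\fm_0]})$ living at indices $\ge\mu>\gamma_0$), this forces $r_{\gamma_0}<0$. Comparing exponent sequences with $\ell_\mu^{-n}$, whose support is $\{\mu\}$, the least index where they differ is $\omega^\beta+\gamma_0<\mu$, where $\fd(f\circ\ell_{\omega^\beta})$ has the negative entry $r_{\gamma_0}$ and $\ell_\mu^{-n}$ has $0$; hence $f\circ\ell_{\omega^\beta}\prec\ell_\mu^{-n}$ for every $n$. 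I expect the main obstacle to be purely organizational: keeping the two index ranges $[0,\mu)$ and $[\mu,\alpha)$ cleanly separated through the composition and invoking the ordinal identity $\omega^\beta+\gamma>\gamma$ for $\gamma<\mu$ at the right moment; once this bookkeeping is in place, the sign computation and the final comparison are immediate.
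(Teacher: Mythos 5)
Your proof is correct and follows essentially the same route as the paper's: decompose $f$ in $\LL_{[\mu,\alpha)}[[\mfL_{<\mu}]]$, take the maximal outer monomial $\fm_0$, and split according to $\fm_0=1$ (giving $f\circ\ell_{\omega^\beta}\sim f$ via Lemma~\ref{car}) or $\fm_0\prec 1$ (giving $f\circ\ell_{\omega^\beta}\prec\ell_\mu^{-n}$ for all $n$). The paper states this dichotomy in two lines and leaves the lexicographic comparison with $\ell_\mu^{-n}$ implicit; you have simply written out those details, correctly.
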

\begin{proof} With $f= \sum_{\fm \in \mfL_{<\mu}}f_\fm\fm$ as above,
set $\fd:=\max\{\fm\in \mfL_{<\mu}:\ f_{\fm}\ne 0\}$. Then
either $\fd\prec 1$ and $f\circ \ell_{\omega^{\beta}}\prec \ell_{\mu}^{-n}$ for all $n$, or $\fd=1$ and $f\circ \ell_{\omega^{\beta}}\sim f$ by Lemma~\ref{car}.
\end{proof}

\begin{lemma}\label{lemsp} Let $\nu\le \mu$. Then $\LL_{[\nu,\alpha)}\circ \ell_{\omega^\beta}= \LL_{[\omega^\beta+\nu,\alpha)}$.
\end{lemma}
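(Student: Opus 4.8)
The plan is to read the operation $f\mapsto f\circ\ell_{\omega^\beta}$ off the block decomposition $\LL_{<\alpha}=\LL_{[\mu,\alpha)}[[\mfL_{<\mu}]]$ underlying (\ref{e:HyperlogComp}): on the coefficients in $\LL_{[\mu,\alpha)}$ the map acts as the automorphism of Lemma~\ref{car}, while on the outer monomials $\fm\in\mfL_{<\mu}$ it acts as the order-group embedding (\ref{e:BigMonomials}). I would first note that a monomial of $\LL_{<\alpha}$ lies in $\mfL_{[\nu,\alpha)}$ exactly when its $\mfL_{<\mu}$-component lies in $\mfL_{[\nu,\mu)}$; hence, writing $f=\sum_{\fm\in\mfL_{<\mu}}f_{[\fm]}\fm$ with $f_{[\fm]}\in\LL_{[\mu,\alpha)}$, membership $f\in\LL_{[\nu,\alpha)}$ is equivalent to $f_{[\fm]}=0$ for all $\fm\notin\mfL_{[\nu,\mu)}$, and likewise $\LL_{[\omega^\beta+\nu,\alpha)}$ is cut out by vanishing of the coefficients outside $\mfL_{[\omega^\beta+\nu,\mu)}$. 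This reduces the lemma to tracking where $\mfL_{[\nu,\mu)}$ is sent by (\ref{e:BigMonomials}).

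Next I would record the needed ordinal arithmetic. Since $\mu=\omega^{\beta+1}=\omega^\beta\cdot\omega$, we have $\omega^\beta+\mu=\omega^\beta\cdot(1+\omega)=\mu$, so $\gamma\mapsto\omega^\beta+\gamma$ is strictly increasing with $\sup_{\gamma<\mu}(\omega^\beta+\gamma)=\mu$; it therefore restricts to an order isomorphism $[\nu,\mu)\to[\omega^\beta+\nu,\mu)$ (both intervals empty when $\nu=\mu$). Reindexing exponent sequences through this isomorphism, (\ref{e:BigMonomials}) shows that $\fm\mapsto\fm\circ\ell_{\omega^\beta}$ restricts to an isomorphism of ordered groups $\mfL_{[\nu,\mu)}\to\mfL_{[\omega^\beta+\nu,\mu)}$; in particular it carries well-based sets to well-based sets in both directions.

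For the inclusion $\subseteq$, take $f\in\LL_{[\nu,\alpha)}$, so $f=\sum_{\fm\in\mfL_{[\nu,\mu)}}f_{[\fm]}\fm$ with $f_{[\fm]}\in\LL_{[\mu,\alpha)}$, and apply (\ref{e:HyperlogComp}):
\[
f\circ\ell_{\omega^\beta}\ =\ \sum_{\fm\in\mfL_{[\nu,\mu)}}\big(f_{[\fm]}\circ\ell_{\omega^\beta}\big)\big(\fm\circ\ell_{\omega^\beta}\big),
\]
where $f_{[\fm]}\circ\ell_{\omega^\beta}\in\LL_{[\mu,\alpha)}$ by Lemma~\ref{car} and $\fm\circ\ell_{\omega^\beta}\in\mfL_{[\omega^\beta+\nu,\mu)}$ by the previous paragraph, so every term, and hence the sum, lies in $\LL_{[\omega^\beta+\nu,\alpha)}$. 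For the reverse inclusion I would invert both component maps: given $g=\sum_{\fn\in\mfL_{[\omega^\beta+\nu,\mu)}}g_{[\fn]}\fn\in\LL_{[\omega^\beta+\nu,\alpha)}$, let $\fm\in\mfL_{[\nu,\mu)}$ be the unique monomial with $\fm\circ\ell_{\omega^\beta}=\fn$ and let $f_{[\fm]}\in\LL_{[\mu,\alpha)}$ be the preimage of $g_{[\fn]}$ under the automorphism of Lemma~\ref{car}; then $f:=\sum_\fm f_{[\fm]}\fm$ satisfies $f\circ\ell_{\omega^\beta}=g$ by (\ref{e:HyperlogComp}). The only genuine point to check is that this $f$ is a legitimate element of $\LL_{[\nu,\alpha)}$, i.e. that $\{\fm:f_{[\fm]}\neq0\}$ is well-based; but this set is the order-isomorphic preimage of the well-based set $\{\fn:g_{[\fn]}\neq0\}$, and that is precisely what the order-isomorphism property of (\ref{e:BigMonomials})---resting on $\omega^\beta+\mu=\mu$---supplies. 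I therefore expect no obstacle beyond this bookkeeping, the case $\nu=\mu$ being just Lemma~\ref{car} together with $\omega^\beta+\mu=\mu$.
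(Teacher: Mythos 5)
Your proof is correct and follows essentially the same route as the paper's: both use the decomposition $\LL_{[\nu,\alpha)}=\LL_{[\mu,\alpha)}[[\mathfrak{L}_{[\nu,\mu)}]]$, the automorphism of Lemma~\ref{car} on the coefficients, and the order-group isomorphism $\mathfrak{L}_{[\nu,\mu)}\to\mathfrak{L}_{[\omega^\beta+\nu,\mu)}$ coming from (\ref{e:BigMonomials}) and $\omega^\beta+\mu=\mu$. The only difference is that you spell out the reverse inclusion (preimage construction and well-basedness of its support) which the paper leaves implicit in the identification $\LL_{[\omega^\beta+\nu,\alpha)}=\LL_{[\mu,\alpha)}[[\mathfrak{L}_{[\omega^\beta+\nu,\mu)}]]$.
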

\begin{proof} For $\nu=\mu$ this follows from Lemma~\ref{car} and
$\omega^\beta+\mu=\mu$. Let $\nu < \mu$. Then $\LL_{[\nu,\alpha)}=\LL_{[\mu,\alpha)}[[\mathfrak{L}_{[\nu, \mu)}]]$. Accordingly, for $f\in \LL_{[\nu,\alpha)}$ we have $f=\sum_{\fm} f_{[\fm]}\fm$ with $\fm$ ranging over $\mathfrak{L}_{[\nu,\mu)}$ and all $f_{[\fm]}\in \LL_{[\mu,\alpha)}$.
Then $$f\circ \ell_{\omega^\beta}\ =\ \sum_{\fm} (f_{[\fm]}\circ \ell_{\omega^\beta})(\fm\circ \ell_{\omega^\beta}).$$
Now $\LL_{[\mu,\alpha)}\circ \ell_{\omega^\beta}=\LL_{[\mu,\alpha)}$, and
$\mathfrak{L}_{[\nu,\mu)}\circ \ell_{\omega^\beta}= \mathfrak{L}_{[\omega^\beta+\nu,\omega^\beta+\mu)}=\mathfrak{L}_{[\omega^\beta+\nu,\mu)}$ 
by (\ref{e:BigMonomials}). It remains to note that $\LL_{[\omega^\beta+\nu,\alpha)}=\LL_{[\mu,\alpha)}[[\mathfrak{L}_{[\omega^\beta+\nu, \mu)}]]$.  
\end{proof} 

\noindent
For $\nu=0$ this gives $\LL_{<\alpha}\circ \ell_{\omega^\beta}=\LL_{[\omega^\beta,\alpha)}$.

\begin{lemma}\label{l:PartialChain}
$
(f \circ \ell_{\omega^\beta})' =  (f'\circ \ell_{\omega^\beta})\cdot \ell_{\omega^\beta}'\ $ for $f \in \LL_{<\alpha}$.
\end{lemma}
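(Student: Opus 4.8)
The plan is to show that both sides of the asserted identity define strongly $\R$-linear operators $\LL_{<\alpha}\to\LL_{<\alpha}$, and then to check their equality on monomials. The map $f\mapsto (f\circ\ell_{\omega^\beta})'$ is strongly $\R$-linear because $f\mapsto f\circ\ell_{\omega^\beta}$ is strongly $\R$-linear by Lemmas~\ref{l:CompIsLinear} and~\ref{l:PartialSpecialComp} and $\der_\alpha$ is strongly $\R$-linear; the map $f\mapsto (f'\circ\ell_{\omega^\beta})\cdot\ell_{\omega^\beta}'$ is strongly $\R$-linear for the same reasons together with Lemma~\ref{prfam}. Since a strongly $\R$-linear map is determined by its values on monomials, it suffices to prove $(\fm\circ\ell_{\omega^\beta})' = (\fm'\circ\ell_{\omega^\beta})\cdot\ell_{\omega^\beta}'$ for all $\fm\in\mfL_{<\alpha}$.

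Next I would observe that the chain-rule identity is multiplicative: if it holds for $f$ and for $g$ in $\LL_{<\alpha}$, then it holds for $fg$. Indeed, using the Leibniz rule for $\der_\alpha$ and the multiplicativity $(fg)\circ\ell_{\omega^\beta} = (f\circ\ell_{\omega^\beta})(g\circ\ell_{\omega^\beta})$ from Lemma~\ref{l:PartialSpecialComp}, both $((fg)\circ\ell_{\omega^\beta})'$ and $((fg)'\circ\ell_{\omega^\beta})\cdot\ell_{\omega^\beta}'$ expand into the form $[\,\cdot\,](g\circ\ell_{\omega^\beta}) + (f\circ\ell_{\omega^\beta})[\,\cdot\,]$, where the two bracketed factors are exactly the two sides of the identity for $f$ and for $g$; so equality for $f$ and $g$ forces equality for $fg$. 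Since every $\fm\in\mfL_{<\alpha}$ factors as $\fm = \fm_{<\mu}\fm_{\geq\mu}$ with $\fm_{<\mu}\in\mfL_{<\mu}$ and $\fm_{\geq\mu}\in\mfL_{[\mu,\alpha)}\subseteq\LL_{[\mu,\alpha)}$, this reduces the claim to the two cases $f\in\LL_{[\mu,\alpha)}$ and $\fm\in\mfL_{<\mu}$.

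For $f\in\LL_{[\mu,\alpha)}$ I would use $\der_\alpha = \ell_\mu'\,\derdelta$ together with the fact that on $\LL_{[\mu,\alpha)}$ the operator $f\mapsto f\circ\ell_{\omega^\beta}$ equals $\sum_{n}\frac{(-1)^n}{n!}\derdelta^n$ by (\ref{e:PartialTaylor}), which commutes with $\derdelta$. This gives
\[
(f\circ\ell_{\omega^\beta})'\ =\ \der_\alpha(f\circ\ell_{\omega^\beta})\ =\ \ell_\mu'\,\derdelta(f\circ\ell_{\omega^\beta})\ =\ \ell_\mu'\cdot\big((\derdelta f)\circ\ell_{\omega^\beta}\big).
\]
On the other hand $f' = \ell_\mu'\cdot\derdelta f$ and multiplicativity of composition yield $f'\circ\ell_{\omega^\beta} = (\ell_\mu'\circ\ell_{\omega^\beta})\cdot\big((\derdelta f)\circ\ell_{\omega^\beta}\big)$, so for such $f$ the identity reduces to the single monomial identity
\[
\ell_\mu'\ =\ (\ell_\mu'\circ\ell_{\omega^\beta})\cdot\ell_{\omega^\beta}'.
\]

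Finally, for a monomial $\fm = \prod_{\gamma<\mu}\ell_\gamma^{r_\gamma}\in\mfL_{<\mu}$ I would compare logarithmic derivatives: from $(\fm\circ\ell_{\omega^\beta})^\dagger = \sum_\gamma r_\gamma\,\ell_{\omega^\beta+\gamma}^\dagger$ and $\fm'\circ\ell_{\omega^\beta} = (\fm\circ\ell_{\omega^\beta})\sum_\gamma r_\gamma\big(\ell_\gamma^\dagger\circ\ell_{\omega^\beta}\big)$, using (\ref{e:BigMonomials}) and strong linearity, equality of the two sides amounts, term by term, to
\[
\Big(\prod_{\rho\le\gamma}\ell_{\omega^\beta+\rho}^{-1}\Big)\cdot\ell_{\omega^\beta}'\ =\ \prod_{\rho\le\omega^\beta+\gamma}\ell_\rho^{-1}\ =\ \ell_{\omega^\beta+\gamma}^\dagger.
\]
This identity and the one displayed above are both instances of the same piece of ordinal arithmetic, which I expect to be the one genuinely load-bearing step and which is exactly where the choice $\mu=\omega^{\beta+1}$ matters: since $\omega^\beta+\mu=\mu$, the map $\rho\mapsto\omega^\beta+\rho$ is an order isomorphism of $[0,\mu)$ onto $[\omega^\beta,\mu)$, so that $\{\omega^\beta+\rho:\rho<\mu\}\sqcup[0,\omega^\beta) = [0,\mu)$ and $\{\omega^\beta+\rho:\rho\le\gamma\}\sqcup[0,\omega^\beta) = [0,\omega^\beta+\gamma]$; combining the corresponding products of $\ell_\sigma^{-1}$ yields $\ell_\mu'$ and $\ell_{\omega^\beta+\gamma}^\dagger$ respectively. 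With these two monomial identities established, multiplicativity and strong linearity close the argument.
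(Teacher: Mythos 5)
Your proof is correct and follows essentially the same route as the paper's: both rest on the decomposition $\LL_{<\alpha}=\LL_{[\mu,\alpha)}[[\mfL_{<\mu}]]$, handle monomials in $\mfL_{<\mu}$ via (\ref{e:BigMonomials}) together with the ordinal identity $\prod_{\rho\le\omega^\beta+\gamma}\ell_\rho^{-1}=\bigl(\prod_{\rho<\omega^\beta}\ell_\rho^{-1}\bigr)\prod_{\rho\le\gamma}\ell_{\omega^\beta+\rho}^{-1}$, handle $\LL_{[\mu,\alpha)}$ via the $\derdelta$-Taylor formula (\ref{e:PartialTaylor}) and $\der_\alpha=\ell_\mu'\derdelta$, and glue the two cases with the Leibniz rule and strong linearity. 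The differences (reducing to monomials first and invoking multiplicativity of the identity, and isolating $\ell_\mu'=(\ell_\mu'\circ\ell_{\omega^\beta})\cdot\ell_{\omega^\beta}'$ as a separate step rather than computing it inline) are purely organizational.
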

\begin{proof}
For a monomial $\fm = \prod_{\gamma<\mu}\ell_\gamma^{r_\gamma}\in \mfL_{<\mu}$, we have by (\ref{e:BigMonomials}):
\begin{align*}
(\fm \circ \ell_{\omega^\beta})'\ &=\ \left(\prod_{\gamma<\mu}\ell_{\omega^\beta  +\gamma}^{r_\gamma}\right)'\ =\ (\fm \circ \ell_{\omega^\beta})\sum_{\gamma < \mu} r_\gamma \ell_{\omega^\beta+\gamma}^\dagger\\
&=\ (\fm \circ \ell_{\omega^\beta}) \sum_{\gamma<\mu}r_\gamma \left(\prod_{\rho\leq \omega^\beta+\gamma}\ell_\rho^{-1}\right)\\
&=\ (\fm \circ \ell_{\omega^\beta}) \sum_{\gamma<\mu}r_\gamma \left(\prod_{\rho<\omega^\beta}\ell_\rho^{-1}\right)\left(\prod_{\rho\leq \gamma}\ell_{\omega^\beta+\rho}^{-1}\right)\\
&=\ \left(\prod_{\rho<\omega^\beta}\ell_\rho^{-1}\right)(\fm \circ \ell_{\omega^\beta}) \sum_{\gamma<\mu}r_\gamma \left(\prod_{\rho\leq \gamma}\ell_\rho^{-1}\right)\circ \ell_{\omega^\beta}\\
&=\ \left(\prod_{\rho<\omega^\beta}\ell_\rho^{-1}\right) \sum_{\gamma<\mu}r_\gamma (\fm \circ \ell_{\omega^\beta})(\ell_\gamma^\dagger\circ \ell_{\omega^\beta})\ 
=\  \ell_{\omega^\beta}'\left(\fm' \circ \ell_{\omega^\beta}\right).
\end{align*}
For $g \in \LL_{\geq \mu,<\alpha}$ we have  $g \circ \ell_{\omega^\beta} = \sum_{n=0}^\infty \frac{(-1)^n}{n!}(\derdelta^n g)$ by (\ref{e:PartialTaylor}), so
\begin{align*}
(g \circ \ell_{\omega^\beta})'\  &=\  \sum_{n=0}^\infty \frac{(-1)^n}{n!}(\derdelta^n g)'\ =\ \ell_{\mu}'\sum_{n=0}^\infty \frac{(-1)^n}{n!}(\derdelta^{n+1} g)\\
&=\  \ell_{\mu}' \sum_{n=0}^\infty \frac{(-1)^n}{n!}\left(\derdelta^{n} \left(\frac{g'}{\ell_\mu'}\right)\right)\ =\  \ell_{\mu}' \left(\left(\frac{g'}{\ell_\mu'}\right)\circ \ell_{\omega^\beta}\right)\\
&=\ \left(\prod_{\rho<\mu}\ell_\rho^{-1} \right) \left(\prod_{\rho<\mu}\ell_{\omega^\beta+\rho} \right)(g'\circ\ell_{\omega^\beta}) = \left(\prod_{\rho<\omega^\beta}\ell_\rho^{-1}\right)(g'\circ \ell_{\omega^\beta})\\
 &=\ \ell_{\omega^\beta}'\cdot (g'\circ \ell_{\omega^\beta}).
\end{align*}
Finally, for $f \in \LL_{<\alpha}$ we have $f = \sum_{\fm \in \mfL_{<\mu}}f_\fm\fm$ where all $f_\fm\in \LL_{\geq \mu,<\alpha}$, so
\begin{align*}
(f\circ \ell_{\omega^\beta})'\ &=\  \sum_{\fm \in \mfL_{<\mu}}\big((f_\fm\circ\ell_{\omega^\beta})( \fm\circ \ell_{\omega^\beta})\big)'\\
&=\ \sum_{\fm \in \mfL_{<\mu}}(f_\fm\circ\ell_{\omega^\beta})'( \fm\circ \ell_{\omega^\beta})+(f_\fm\circ\ell_{\omega^\beta})( \fm\circ \ell_{\omega^\beta})'\\
&=\  \ell_{\omega^\beta}'\cdot \sum_{\fm \in \mfL_{<\mu}} (f_\fm'\circ\ell_{\omega^\beta})( \fm\circ \ell_{\omega^\beta})+(f_\fm\circ\ell_{\omega^\beta})( \fm'\circ \ell_{\omega^\beta})\\
&=\  \ell_{\omega^\beta}'\cdot\sum_{\fm \in \mfL_{<\mu}}\big((f_\fm \fm)'\circ\ell_{\omega^\beta}\big)\ =\ \ell_{\omega^\beta}'\cdot (f' \circ \ell_{\omega^\beta}).\qedhere
\end{align*}
\end{proof}

\noindent
Note that by (\ref{e:BigMonomials}) we have $\ell_{\gamma}\circ \ell_{\omega^\beta}=\ell_{\omega^\beta +\gamma}$ for $\gamma<\mu$. The next lemma gives more information
about $\ell_{\gamma}\circ \ell_{\omega^\beta}$ for $\gamma\ge \mu$. 

\begin{lemma}\label{comgam} We have $\ell_{\mu}\circ \ell_{\omega^{\beta}}=\ell_{\mu}-1$. If
$\mu <\gamma<\alpha$, then 
$$\qquad\qquad\qquad\quad \ell_{\gamma}\circ \ell_{\omega^\beta}\ =\ \ell_{\gamma}-\epsilon_{\gamma}, \quad \text{ with } 0 < \epsilon_{\gamma}\preceq \ell_{\mu}^{-1}\prec \ell_{\gamma}^{-1}\prec 1.$$
\end{lemma}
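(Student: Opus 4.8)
The plan is to read both composites directly off the Taylor-type formula (\ref{e:PartialTaylor}). Since $\ell_\mu\in\mathfrak{L}_{[\mu,\alpha)}$ and $\ell_\gamma\in\mathfrak{L}_{[\mu,\alpha)}$ for every $\gamma$ with $\mu<\gamma<\alpha$, we may apply $f\circ\ell_{\omega^\beta}=\sum_{n=0}^\infty\frac{(-1)^n}{n!}\derdelta^n f$ to each, and everything reduces to understanding the iterates of $\derdelta=\frac{1}{\ell_\mu'}\der_\alpha$, using $\frac{1}{\ell_\mu'}=\prod_{\rho<\mu}\ell_\rho$. For the first identity I would simply compute $\derdelta\ell_\mu=\frac{1}{\ell_\mu'}\der_\alpha\ell_\mu=\frac{1}{\ell_\mu'}\ell_\mu'=1$, so that $\derdelta^2\ell_\mu=\derdelta(1)=0$ and all higher iterates vanish; the series then collapses to $\ell_\mu\circ\ell_{\omega^\beta}=\ell_\mu-1$.

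For the second assertion I would first compute the single monomial $\derdelta\ell_\gamma=\frac{1}{\ell_\mu'}\ell_\gamma'=\prod_{\rho<\mu}\ell_\rho\cdot\prod_{\rho<\gamma}\ell_\rho^{-1}=\prod_{\mu\le\rho<\gamma}\ell_\rho^{-1}=:\fn$, which has coefficient $1$ and satisfies $\fn\preceq\ell_\mu^{-1}$ (with equality exactly when $\gamma=\mu+1$). Writing $\ell_\gamma\circ\ell_{\omega^\beta}=\ell_\gamma-\epsilon_\gamma$ with $\epsilon_\gamma=\sum_{n\ge1}\frac{(-1)^{n-1}}{n!}\derdelta^n\ell_\gamma$, the task becomes to show $\fd(\epsilon_\gamma)=\fn$ with positive coefficient. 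The key estimate is the operator-support bound $\supp\derdelta\subseteq\mathfrak{S}$ with $\max\mathfrak{S}=\ell_\mu^{-1}$ recorded just before Lemma~\ref{car}, which via Lemmas~\ref{supp1} and~\ref{supp2} yields $\supp(\derdelta^n\ell_\gamma)\subseteq\mathfrak{S}^n\ell_\gamma$. For $n\ge2$ the largest candidate monomial is $\ell_\mu^{-n}\ell_\gamma$, which lies strictly below $\fn$ in the lexicographic order because at index $\mu$ its exponent is $-n<-1$; hence $\supp(\derdelta^n\ell_\gamma)\prec\fn$ for $n\ge2$. Consequently $\fn$ enters $\epsilon_\gamma$ only through the $n=1$ term, with coefficient $+1$, so $\epsilon_\gamma\sim\fn$.

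From $\epsilon_\gamma\sim\fn$ I would conclude $\epsilon_\gamma>0$ and $\epsilon_\gamma\preceq\ell_\mu^{-1}$, while the remaining comparisons $\ell_\mu^{-1}\prec\ell_\gamma^{-1}\prec1$ are immediate from the lexicographic ordering (at index $\mu$ the monomial $\ell_\mu^{-1}$ has exponent $-1$ whereas $\ell_\gamma^{-1}$ has $0$). The only genuinely delicate point is verifying that the higher iterates $\derdelta^n\ell_\gamma$ cannot climb back up to $\fn$, and I expect this to be the crux of the argument; the cleanest route is the support bound above, which avoids computing each iterate explicitly (an alternative direct induction showing $\fd(\derdelta^n\ell_\gamma)=\fn\ell_\mu^{-(n-1)}$ would also work but is less economical).
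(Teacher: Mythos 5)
Your proof is correct and follows essentially the same route as the paper: both read $\ell_\mu\circ\ell_{\omega^\beta}$ and $\ell_\gamma\circ\ell_{\omega^\beta}$ off formula (\ref{e:PartialTaylor}), compute $\derdelta\ell_\mu=1$ and $\derdelta\ell_\gamma=\prod_{\mu\le\rho<\gamma}\ell_\rho^{-1}$, and use the operator-support bound $\supp\derdelta\preceq\ell_\mu^{-1}$ to show the iterates $\derdelta^n\ell_\gamma$ with $n\ge 2$ cannot reach the dominant monomial of the $n=1$ term. The only (cosmetic) difference is the bookkeeping of that tail estimate: the paper bounds $\derdelta^n\ell_\gamma\preceq\ell_\mu^{-2}$ for $n\ge2$, while you bound $\supp(\derdelta^n\ell_\gamma)\preceq\ell_\mu^{-n}\ell_\gamma\prec\fn$, which is if anything slightly more explicit about why the higher terms lie strictly below $\fn$.
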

\begin{proof} Let $\mu\le \gamma <\alpha$. Then $\ell_{\gamma}\in \mathfrak{L}_{[\mu,\alpha)}$ and $\derdelta \ell_{\gamma}=
\prod_{\mu\leq \rho < \gamma}\ell_\rho^{-1}$, so
$\derdelta \ell_{\mu}=1$ and 
$\derdelta \ell_\gamma\preceq \ell_{\mu}^{-1}\prec 1$ if $\gamma>\mu$.
The derivation $\derdelta$ on $\LL_{[\mu,\alpha)}$ has support 
$\preceq \ell_{\mu}^{-1}$, so 
$\derdelta^n\ell_{\gamma}\preceq \ell_{\mu}^{-2}$ if $n\ge 2$ and $\gamma>\mu$.
Therefore $\ell_{\gamma}\circ \ell_{\omega^\beta}=\ell_{\gamma}-\derdelta\ell_{\gamma} +\frac{1}{2}\derdelta^2\ell_{\gamma} -\cdots$
is as described in the lemma.
\end{proof}

\subsection*{Composing with arbitrary hyperlogarithms} {\em In this subsection we assume that $\gamma<\alpha$}. We have
\begin{equation}\label{e:LongCantor}
\gamma\ =\ \omega^{\beta_1}+\omega^{\beta_2}+\cdots+ \omega^{\beta_k}\
  \qquad(k\in \N)
\end{equation}
where $\lambda>\beta_1\geq\beta_2\geq\ldots\geq\beta_k$; this is essentially the Cantor normal form of $\gamma$, but we allow the exponents to be repeated and require all coefficients to be 1. For $f \in \LL_{<\alpha}$, we set
\begin{equation}\label{circLongCantor}
f \circ \ell_\gamma\ :=\ \left(\left(\cdots \left(\left(f\circ \ell_{\omega^{\beta_k}}\right)\circ\ell_{\omega^{\beta_{k-1}}}\right)\circ\cdots\right)\circ \ell_{\omega^{\beta_2}}\right)\circ\ell_{\omega^{\beta_1}}.
\end{equation}
For $\gamma=0$ (so $k=0$), this means $f\circ \ell_0:= f$, by convention.
\noindent
Several of the results below are proved by induction on the length $k$ of the representation in (\ref{e:LongCantor}), using also $\ell_\nu\circ \ell_{\omega^\beta}=\ell_{\omega^\beta+\nu}$ for $\nu <\omega^{\beta+1}$, 
which holds by definition according to (\ref{e:BigMonomials}). For example,
recalling that $x:=\ell_0$, such an induction easily gives $x\circ \ell_{\gamma}=\ell_{\gamma}$.
As a consequence of Lemma~\ref{l:PartialSpecialComp} we obtain in this way:

\begin{cor}
\label{c:HyperlogComp}
The map $f \mapsto f \circ \ell_{\gamma}:\LL_{<\alpha} \to \LL_{<\alpha}$ is an $\LL_{<\alpha}$-composition with $\ell_{\gamma}$.
\end{cor}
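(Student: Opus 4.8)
The plan is to induct on the length $k$ of the Cantor-type representation \eqref{e:LongCantor} of $\gamma$, the base case $k=0$ being trivial: there $\gamma=0$ and $f\circ\ell_0=f$ is the identity on $\LL_{<\alpha}$, which is an $\LL_{<\alpha}$-composition with $\ell_0=x$. For the inductive step I would peel off the \emph{outermost} (last-applied) factor in \eqref{circLongCantor}. Writing $\gamma=\omega^{\beta_1}+\gamma''$ with $\gamma'':=\omega^{\beta_2}+\cdots+\omega^{\beta_k}$, the exponents $\lambda>\beta_2\geq\cdots\geq\beta_k$ again satisfy the hypotheses of \eqref{e:LongCantor}, so $\gamma''$ has a representation of length $k-1$ and the inductive hypothesis applies to it. Unfolding \eqref{circLongCantor} shows that the operator $A\colon f\mapsto f\circ\ell_{\gamma''}$ is followed by $B\colon g\mapsto g\circ\ell_{\omega^{\beta_1}}$, so that $f\circ\ell_\gamma=B(A(f))$, i.e.\ the map in question is the operator composite $B\circ A$.

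By the inductive hypothesis $A$ is an $\LL_{<\alpha}$-composition with $\ell_{\gamma''}$, and by Lemma~\ref{l:PartialSpecialComp} $B$ is an $\LL_{<\alpha}$-composition with $\ell_{\omega^{\beta_1}}$; hence by Lemma~\ref{l:CompIsLinear} both $A$ and $B$ are strongly $\R$-linear field embeddings $\LL_{<\alpha}\to\LL_{<\alpha}$. I would then verify the three defining clauses of ``$\LL_{<\alpha}$-composition with $\ell_\gamma$'' for $B\circ A$. Clause (2) and the first half of clause (1) are immediate, since a composite of ring morphisms is a ring morphism: $(B\circ A)(1)=1$ and $(B\circ A)(\fm_1\fm_2)=(B\circ A)(\fm_1)\cdot(B\circ A)(\fm_2)$. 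Clause (3) follows from the fact, recorded in Section~\ref{prelim}, that a composite of strongly additive maps is strongly additive; thus $B\circ A$ is strongly $\R$-linear, and for such a map the family $\big((B\circ A)(\fm)\big)_{\fm\in\supp f}$ is summable with $(B\circ A)(f)=\sum_\fm f_\fm\,(B\circ A)(\fm)$.

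The one step carrying genuine content is the remaining half of clause (1), namely $(B\circ A)(x)=\ell_\gamma$. Since $A(x)=x\circ\ell_{\gamma''}=\ell_{\gamma''}$, this reduces to the identity $\ell_{\gamma''}\circ\ell_{\omega^{\beta_1}}=\ell_\gamma$, where $\ell_{\gamma''}$ is a single hypermonomial. Formula \eqref{e:BigMonomials} gives $\ell_{\gamma''}\circ\ell_{\omega^{\beta_1}}=\ell_{\omega^{\beta_1}+\gamma''}=\ell_\gamma$ \emph{provided} $\gamma''<\omega^{\beta_1+1}$, and I expect this ordinal inequality to be the crux of the argument: it is what makes the clean monomial formula applicable and, ultimately, what justifies defining $f\circ\ell_\gamma$ through the Cantor normal form at all. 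It holds because $\beta_1\geq\beta_2\geq\cdots\geq\beta_k$ with $k<\omega$ force $\gamma''\leq\omega^{\beta_2}\cdot(k-1)<\omega^{\beta_2+1}\leq\omega^{\beta_1+1}$. With this in hand the induction closes, and everything else is bookkeeping inherited from Lemmas~\ref{l:PartialSpecialComp} and~\ref{l:CompIsLinear}.
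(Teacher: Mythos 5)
Your proof is correct and is essentially the paper's own (very terse) argument: induct on the length $k$ of the representation \eqref{e:LongCantor}, apply Lemma~\ref{l:PartialSpecialComp} at the outermost factor and the inductive hypothesis to $\gamma''$, with clauses (2)--(3) inherited from composing ring morphisms/strongly additive maps, and with the only substantive point being $x\circ\ell_\gamma=\ell_{\gamma''}\circ\ell_{\omega^{\beta_1}}=\ell_{\omega^{\beta_1}+\gamma''}=\ell_\gamma$ via \eqref{e:BigMonomials} and the ordinal inequality $\gamma''<\omega^{\beta_1+1}$. The only cosmetic remark is that for $k=1$ your inequality chain involving $\beta_2$ degenerates to the trivial $\gamma''=0<\omega^{\beta_1+1}$, which does not affect the argument.
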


\noindent
We use Lemma~\ref{l:PartialChain} likewise to obtain:

\begin{cor}\label{c:HyperlogChain}
$
(f \circ \ell_\gamma)' =  (f'\circ \ell_\gamma)\cdot \ell_\gamma'\ $ for $f \in \LL_{<\alpha}$.
\end{cor}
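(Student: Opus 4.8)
The plan is to induct on the length $k$ of the representation (\ref{e:LongCantor}) of $\gamma$, exactly as Corollary~\ref{c:HyperlogComp} was obtained from Lemma~\ref{l:PartialSpecialComp}, with Lemma~\ref{l:PartialChain} supplying the one-step chain rule. For the base case $k=0$ we have $\gamma=0$, so $f\circ \ell_0=f$ and $\ell_0'=\prod_{\beta<0}\ell_\beta^{-1}=1=x'$, whence both sides equal $f'$.

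For the inductive step, write $\gamma=\omega^{\beta_1}+\gamma''$ where $\gamma''=\omega^{\beta_2}+\cdots+\omega^{\beta_k}$ has length $k-1$ (and $\gamma''\le\gamma<\alpha$), and peel off the \emph{outermost} composition: by the recursive definition (\ref{circLongCantor}), $f\circ \ell_\gamma=(f\circ \ell_{\gamma''})\circ \ell_{\omega^{\beta_1}}$. Setting $g:=f\circ \ell_{\gamma''}\in \LL_{<\alpha}$, Lemma~\ref{l:PartialChain} gives $(g\circ \ell_{\omega^{\beta_1}})'=(g'\circ \ell_{\omega^{\beta_1}})\cdot \ell_{\omega^{\beta_1}}'$, while the inductive hypothesis gives $g'=(f'\circ \ell_{\gamma''})\cdot \ell_{\gamma''}'$. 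Since composition with $\ell_{\omega^{\beta_1}}$ is a ring homomorphism (Lemma~\ref{l:PartialSpecialComp} together with Lemma~\ref{l:CompIsLinear}), substituting and distributing yields
\begin{equation*}
(f\circ \ell_\gamma)'\ =\ \big((f'\circ \ell_{\gamma''})\circ \ell_{\omega^{\beta_1}}\big)\cdot \big(\ell_{\gamma''}'\circ \ell_{\omega^{\beta_1}}\big)\cdot \ell_{\omega^{\beta_1}}'.
\end{equation*}

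The first factor equals $f'\circ \ell_\gamma$, again by unwinding the definition (\ref{circLongCantor}) for $f'$ (no associativity is needed here --- this is literally how $f'\circ \ell_\gamma$ is defined). It therefore remains to show that the last two factors multiply to $\ell_\gamma'$, that is, $(\ell_{\gamma''}'\circ \ell_{\omega^{\beta_1}})\cdot \ell_{\omega^{\beta_1}}'=\ell_\gamma'$. Here is the one point requiring care: since $\beta_1\ge\beta_2\ge\cdots\ge\beta_k$, each $\omega^{\beta_j}\le\omega^{\beta_1}$ and so $\gamma''\le \omega^{\beta_1}\cdot(k-1)<\omega^{\beta_1+1}$; thus $\ell_{\gamma''}\in \mathfrak{L}_{<\mu}$ with $\mu=\omega^{\beta_1+1}$, and (\ref{e:BigMonomials}) gives $\ell_{\gamma''}\circ \ell_{\omega^{\beta_1}}=\ell_{\omega^{\beta_1}+\gamma''}=\ell_\gamma$. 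Applying Lemma~\ref{l:PartialChain} to the \emph{monomial} $\ell_{\gamma''}$ (with $\beta=\beta_1$) then yields $\ell_\gamma'=(\ell_{\gamma''}\circ \ell_{\omega^{\beta_1}})'=(\ell_{\gamma''}'\circ \ell_{\omega^{\beta_1}})\cdot \ell_{\omega^{\beta_1}}'$, which is exactly the required identity. Combining the two displays gives $(f\circ \ell_\gamma)'=(f'\circ \ell_\gamma)\cdot \ell_\gamma'$, completing the induction.

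The main (mild) obstacle is organizational rather than computational: one should peel off the outermost factor $\ell_{\omega^{\beta_1}}$ rather than the innermost $\ell_{\omega^{\beta_k}}$, because then the leftover monomial factor is settled by the \emph{single} identity $\ell_{\gamma''}\circ \ell_{\omega^{\beta_1}}=\ell_\gamma$ of (\ref{e:BigMonomials}), whereas peeling from the inside would require a separate induction establishing $\ell_{\omega^{\beta_k}}\circ \ell_{\gamma'}=\ell_\gamma$. Recognizing that $(\ell_{\gamma''}'\circ \ell_{\omega^{\beta_1}})\cdot \ell_{\omega^{\beta_1}}'$ is itself an instance of Lemma~\ref{l:PartialChain} (for the monomial $\ell_{\gamma''}$) is the observation that closes the loop.
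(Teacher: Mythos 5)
Your proof is correct and is essentially the paper's own argument: the paper obtains Corollary~\ref{c:HyperlogChain} by precisely this induction on the length $k$ of the representation (\ref{e:LongCantor}), applying Lemma~\ref{l:PartialChain} at each step together with the identity $\ell_\nu\circ\ell_{\omega^{\beta}}=\ell_{\omega^{\beta}+\nu}$ for $\nu<\omega^{\beta+1}$ from (\ref{e:BigMonomials}), and merely leaves to the reader the details you have written out. Your choice to peel off the outermost factor $\ell_{\omega^{\beta_1}}$, so that the leftover identity is a single instance of (\ref{e:BigMonomials}) rather than a separate induction, is exactly the intended organization.
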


\begin{cor}\label{circadd} Suppose $\gamma\ne 0$ and $\nu< \omega^{\beta_k +1}$. Then 
\[\ell_{\nu}\circ \ell_{\gamma}\ =\ \ell_{\gamma+\nu}, \qquad  
(f\circ \ell_{\nu})\circ \ell_{\gamma}\ =\ f\circ \ell_{\gamma+\nu}\ \text{ for }\ f\in \LL_{<\alpha}.\]
\end{cor}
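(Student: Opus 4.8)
The statement to prove is Corollary~\ref{circadd}: assuming $\gamma \neq 0$ and $\nu < \omega^{\beta_k+1}$, we have $\ell_\nu \circ \ell_\gamma = \ell_{\gamma+\nu}$ and $(f \circ \ell_\nu)\circ \ell_\gamma = f \circ \ell_{\gamma+\nu}$ for all $f \in \LL_{<\alpha}$. My plan is to induct on the length $k$ of the Cantor-type representation $\gamma = \omega^{\beta_1}+\cdots+\omega^{\beta_k}$ from (\ref{e:LongCantor}), exactly as announced before Corollary~\ref{c:HyperlogComp}. The base case $k=1$ (so $\gamma = \omega^{\beta_1}$, i.e.\ $\gamma = \omega^\beta$ with $\beta = \beta_1$) reduces both claimed identities to the single-hyperlogarithm facts already in hand.

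**Base case.** Let $k=1$, so $\gamma = \omega^\beta$ and the hypothesis reads $\nu < \omega^{\beta+1}$. The first identity $\ell_\nu \circ \ell_{\omega^\beta} = \ell_{\omega^\beta + \nu}$ is precisely (\ref{e:BigMonomials}) (valid since $\nu < \omega^{\beta+1} = \mu$, so $\ell_\nu \in \mfL_{<\mu}$), and this was already noted in the text just before Lemma~\ref{comgam}. For the second identity, note that $f \circ \ell_\nu$ is defined via (\ref{circLongCantor}) by composing on the right with the single hyperlogarithm $\ell_{\omega^\beta}$ iterated according to the Cantor form of $\nu$; then $(f\circ \ell_\nu)\circ \ell_{\omega^\beta}$ is by definition the result of appending one more factor $\circ\,\ell_{\omega^\beta}$ to that iterated composition. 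But the Cantor-type representation of $\gamma + \nu = \omega^\beta + \nu$ is obtained by listing the exponents of $\nu$ followed by the single exponent $\beta$ (here we use $\nu < \omega^{\beta+1}$, which guarantees every exponent appearing in $\nu$ is $\leq \beta$, so the concatenated exponent sequence is still weakly decreasing and is a legitimate representation of $\omega^\beta + \nu$). Hence by definition (\ref{circLongCantor}) the right-hand side $f \circ \ell_{\omega^\beta + \nu}$ is exactly that same iterated composition, giving the identity.

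**Inductive step.** Suppose $k \geq 2$ and write $\gamma = \gamma' + \omega^{\beta_k}$ with $\gamma' = \omega^{\beta_1}+\cdots+\omega^{\beta_{k-1}}$, so that by (\ref{circLongCantor}), $g \circ \ell_\gamma = (g \circ \ell_{\omega^{\beta_k}}) \circ \ell_{\gamma'}$ for any $g \in \LL_{<\alpha}$. Applying this with $g = \ell_\nu$ and using the base case to rewrite $\ell_\nu \circ \ell_{\omega^{\beta_k}} = \ell_{\omega^{\beta_k}+\nu}$ (legitimate since $\nu < \omega^{\beta_k+1}$), we get $\ell_\nu \circ \ell_\gamma = \ell_{\omega^{\beta_k}+\nu} \circ \ell_{\gamma'}$. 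Now $\omega^{\beta_k}+\nu < \omega^{\beta_k+1} \leq \omega^{\beta_{k-1}+1}$, so the inductive hypothesis applied to $\gamma'$ (whose last exponent is $\beta_{k-1} \geq \beta_k$) with the value $\omega^{\beta_k}+\nu$ in place of $\nu$ yields $\ell_{\omega^{\beta_k}+\nu}\circ \ell_{\gamma'} = \ell_{\gamma' + \omega^{\beta_k}+\nu} = \ell_{\gamma+\nu}$, using associativity of ordinal addition. This proves the first identity. The second identity follows by the identical reduction applied to $f \circ \ell_\nu$ in place of $\ell_\nu$: $(f\circ\ell_\nu)\circ\ell_\gamma = ((f\circ\ell_\nu)\circ\ell_{\omega^{\beta_k}})\circ\ell_{\gamma'}$, then the base case rewrites the inner composition as $f \circ \ell_{\omega^{\beta_k}+\nu}$, and the inductive hypothesis finishes as before.

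**The main obstacle.** The only genuinely delicate point is bookkeeping with the Cantor-type representations and the ordinal inequality $\nu < \omega^{\beta_k+1}$: one must check that concatenating exponent sequences really produces a valid representation of the ordinal sum (weakly decreasing exponents, all $< \lambda$) and that the hypothesis on $\nu$ propagates correctly so the base case and inductive hypothesis apply at each stage. The algebraic content is otherwise forced by the definition (\ref{circLongCantor}) together with associativity $\gamma' + (\omega^{\beta_k}+\nu) = (\gamma'+\omega^{\beta_k})+\nu$; everything reduces to that definitional unwinding rather than to any new estimate.
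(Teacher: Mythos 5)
Your proof is correct and follows essentially the same route the paper intends: the text preceding Corollary~\ref{c:HyperlogComp} explicitly prescribes induction on the length $k$ of the representation (\ref{e:LongCantor}), using $\ell_\nu\circ\ell_{\omega^\beta}=\ell_{\omega^\beta+\nu}$ for $\nu<\omega^{\beta+1}$ from (\ref{e:BigMonomials}) together with the definitional unwinding of (\ref{circLongCantor}), which is exactly your argument. The bookkeeping you make explicit (concatenation of Cantor-type representations and the inequality $\omega^{\beta_k}+\nu<\omega^{\beta_{k-1}+1}$) is precisely the content the paper leaves to the reader.
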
 

\noindent
For later use we also record the following variants:

\begin{lemma}\label{upmon} Let $\gamma\ne 0$ and $\nu\le \omega^{\beta_k+1}$. Then for $\fm=\prod_{\rho<\nu} \ell_{\rho}^{r_{\rho}}\in \mathfrak{L}_{<\nu}$ we have
$\fm\circ \ell_{\gamma}=\prod_{\rho<\nu} \ell_{\gamma+\rho}^{r_{\rho}}\in \mathfrak{L}_{[\gamma, \gamma+\nu)}$, and so the map
$$\fm \mapsto \fm\circ \ell_{\gamma}\ :\  \mathfrak{L}_{<\nu}\to \mathfrak{L}_{[\gamma, \gamma+\nu)}$$ is an isomorphism of ordered groups. In particular, we have an isomorphism $$\fm\mapsto \fm\circ \ell_{\gamma}\ :\ \mathfrak{L}_{<\omega} \to \mathfrak{L}_{[\gamma, \gamma+\omega)}$$ of ordered groups.  
\end{lemma}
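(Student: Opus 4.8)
The plan is to reduce the whole statement to the single-monomial case and then invoke multiplicativity of composition with $\ell_\gamma$. The heart of the matter is the following claim, which I would prove by induction on the length $k$ of the Cantor-type expansion (\ref{e:LongCantor}) of $\gamma$: \emph{for every ordinal $\sigma<\omega^{\beta_k+1}$ and every $r\in\R$ we have $\ell_\sigma^r\circ\ell_\gamma=\ell_{\gamma+\sigma}^r$.} Granting this, the lemma follows quickly. Since $\fm=\prod_{\rho<\nu}\ell_\rho^{r_\rho}$ is a logarithmic hypermonomial it has finite support, so it is a \emph{finite} product of the monomials $\ell_\rho^{r_\rho}$; composition with $\ell_\gamma$ is multiplicative on $\mathfrak{L}_{<\alpha}$ by the multiplicativity clause in the definition of an $\LL_{<\alpha}$-composition with $\ell_\gamma$ (Corollary~\ref{c:HyperlogComp}). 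Each factor has $\rho<\nu\le\omega^{\beta_k+1}$, so the claim gives $\ell_\rho^{r_\rho}\circ\ell_\gamma=\ell_{\gamma+\rho}^{r_\rho}$, and hence $\fm\circ\ell_\gamma=\prod_{\rho<\nu}\ell_{\gamma+\rho}^{r_\rho}$. The support of this monomial is contained in $[\gamma,\gamma+\nu)$, so it lies in $\mathfrak{L}_{[\gamma,\gamma+\nu)}$.

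For the induction, the base case $k=1$ is immediate: here $\gamma=\omega^{\beta_1}$ and $\sigma<\omega^{\beta_1+1}=\mu$, so $\ell_\sigma^r\in\mathfrak{L}_{<\mu}$ and (\ref{e:BigMonomials}) gives $\ell_\sigma^r\circ\ell_{\omega^{\beta_1}}=\ell_{\omega^{\beta_1}+\sigma}^r=\ell_{\gamma+\sigma}^r$ directly. For $k>1$, write $\gamma=\gamma''+\omega^{\beta_k}$ with $\gamma''=\omega^{\beta_1}+\cdots+\omega^{\beta_{k-1}}\ne0$. Unwinding (\ref{circLongCantor}) yields $f\circ\ell_\gamma=(f\circ\ell_{\omega^{\beta_k}})\circ\ell_{\gamma''}$. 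Applied to $f=\ell_\sigma^r$, the inner composition is computed by (\ref{e:BigMonomials}) (legitimate since $\sigma<\omega^{\beta_k+1}=\mu$ for that step), producing the single monomial $\ell_{\sigma'}^r$ with $\sigma':=\omega^{\beta_k}+\sigma$. Since $\omega^{\beta_k+1}$ is additively indecomposable and both $\omega^{\beta_k}$ and $\sigma$ are $<\omega^{\beta_k+1}$, we get $\sigma'<\omega^{\beta_k+1}\le\omega^{\beta_{k-1}+1}$; thus the inductive hypothesis applies to $\gamma''$ (whose last exponent is $\beta_{k-1}$) and gives $\ell_{\sigma'}^r\circ\ell_{\gamma''}=\ell_{\gamma''+\sigma'}^r=\ell_{\gamma+\sigma}^r$, using $\gamma''+\omega^{\beta_k}=\gamma$ and associativity of ordinal addition.

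It then remains to verify that $\fm\mapsto\fm\circ\ell_\gamma:\mathfrak{L}_{<\nu}\to\mathfrak{L}_{[\gamma,\gamma+\nu)}$ is an isomorphism of ordered groups. It is a group homomorphism by the explicit product formula just obtained. It is bijective because every ordinal in $[\gamma,\gamma+\nu)$ is uniquely of the form $\gamma+\rho$ with $\rho<\nu$ (left-cancellativity of ordinal addition), so the assignment $\prod_{\rho<\nu}\ell_{\gamma+\rho}^{s_\rho}\mapsto\prod_{\rho<\nu}\ell_\rho^{s_\rho}$ is a two-sided inverse at the level of exponent sequences. Order preservation comes for free: composition with $\ell_\gamma$ preserves $\preceq$ (as recorded among the consequences of Lemma~\ref{l:CompIsLinear}) and sends the monomials in question to monomials, so $\fm\prec\fn$ iff $\fm\circ\ell_\gamma\prec\fn\circ\ell_\gamma$. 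Finally, the displayed special case is the instance $\nu=\omega$, which is permitted because $\beta_k\ge0$ forces $\omega=\omega^{0+1}\le\omega^{\beta_k+1}$.

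The main obstacle is the careful bookkeeping with real powers. Composition with a single $\ell_{\omega^\beta}$ is guaranteed to respect real powers only on the ``low'' part $\mathfrak{L}_{<\mu}$ through its defining formula (\ref{e:BigMonomials}), and not a priori on the Taylor-defined part $\mathfrak{L}_{[\mu,\alpha)}$; no general power-respecting property of $\circ$ is available this early (Lemma~\ref{lemuc} presupposes the finished composition on $\LL$). The key observation that rescues the induction is that at every stage the running index $\sigma'$ stays strictly below the threshold $\omega^{\beta_{k-1}+1}$ governing the next step, so each composition step is covered by (\ref{e:BigMonomials}), where the identity $\ell_\sigma^r\circ\ell_{\omega^\beta}=(\ell_\sigma\circ\ell_{\omega^\beta})^r$ holds by construction. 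This is precisely why I prove the real-power statement directly by induction on $k$, rather than attempting to split off multiplicativity and a separate power-respecting lemma.
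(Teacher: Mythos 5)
Your reduction to the single-factor case rests on the claim that a logarithmic hypermonomial $\fm=\prod_{\rho<\nu}\ell_\rho^{r_\rho}$ has finite support and is therefore a \emph{finite} product of the factors $\ell_\rho^{r_\rho}$. That is false in this paper's setting: an exponent sequence is only required to vanish from some ordinal onward, so $\sigma(\fm)$ is a bounded \emph{set} of ordinals that may be infinite. Such monomials are not marginal here --- $\ell_\omega'=\prod_{n<\omega}\ell_n^{-1}$, and more generally $\ell_\gamma'$ and $\ell_\gamma^\dagger$ for infinite $\gamma$, all have infinite support, and Lemma~\ref{upmon} is applied to precisely such monomials later (e.g.\ to the set $\mathfrak D$ in Lemma~\ref{suppTg}). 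The multiplicativity clause of a $K$-composition (via Corollary~\ref{c:HyperlogComp}) covers only finite products, so your argument establishes the identity only on the subgroup of finite-support monomials. Nor can the gap be closed by appealing to multiplicativity for infinite products: that is Lemma~\ref{lemuc}/Corollary~\ref{c:products}, which concern the completed composition and are proved in Section~\ref{sec:pc} using results that themselves depend on Lemma~\ref{upmon}, so invoking them here would be circular. Strong linearity does not help either, since a monomial with infinite support is not an infinite sum of monomials.

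The repair is to run your induction on $k$ in (\ref{e:LongCantor}) with the \emph{whole} monomial $\fm$ rather than factor by factor; this is what the paper intends (compare the proof of Lemma~\ref{uptrans}). The worry that drove you to single factors --- real powers on the Taylor-defined part --- never arises, because the defining formula (\ref{e:BigMonomials}) assigns $\big(\prod_{\rho<\mu}\ell_\rho^{r_\rho}\big)\circ\ell_{\omega^\beta}=\prod_{\rho<\mu}\ell_{\omega^\beta+\rho}^{r_\rho}$ to \emph{every} monomial of $\mathfrak{L}_{<\mu}$, arbitrary (possibly infinite) support and arbitrary real exponents included. So for $k=1$ the lemma is immediate from (\ref{e:BigMonomials}); for $k>1$ write $\gamma=\gamma''+\omega^{\beta_k}$ with $\gamma''=\omega^{\beta_1}+\cdots+\omega^{\beta_{k-1}}$, apply (\ref{e:BigMonomials}) once to get $\fm\circ\ell_{\omega^{\beta_k}}=\prod_{\rho<\nu}\ell_{\omega^{\beta_k}+\rho}^{r_\rho}\in\mathfrak{L}_{<\nu'}$ where $\nu':=\omega^{\beta_k}+\nu\le\omega^{\beta_{k-1}+1}$ (the inequality noted in the proof of Lemma~\ref{uptrans}, using $\nu\le\omega^{\beta_k+1}$, $\beta_k\le\beta_{k-1}$, and additive indecomposability of $\omega^{\beta_k+1}$), and then apply the inductive hypothesis to $\gamma''$ and $\nu'$. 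With the product formula established for all of $\mathfrak{L}_{<\nu}$, your remaining verifications (group homomorphism, bijectivity via $\rho\mapsto\gamma+\rho$, order preservation, and the special case $\nu=\omega$) go through exactly as written.
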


\begin{lemma}\label{uptrans} Let $\gamma\ne 0$, $\nu\le \omega^{\beta_k+1}$. Then $\LL_{[\nu,\alpha)}\circ \ell_{\gamma}= \LL_{[\gamma+\nu, \alpha)}$. 
\end{lemma}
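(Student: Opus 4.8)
The plan is to induct on the length $k$ of the representation $\gamma = \omega^{\beta_1} + \cdots + \omega^{\beta_k}$ from (\ref{e:LongCantor}). Since $\gamma\ne 0$ we have $k\ge 1$. For the base case $k=1$ we have $\gamma = \omega^{\beta_1}$ and $\omega^{\beta_k+1} = \omega^{\beta_1+1}$, so the assertion is exactly Lemma~\ref{lemsp} applied with $\beta := \beta_1$ and $\mu = \omega^{\beta_1+1}$, the hypothesis $\nu\le \omega^{\beta_1+1}$ matching the hypothesis $\nu\le\mu$ of that lemma.

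For the inductive step I would assume $k\ge 2$ and set $\gamma'' := \omega^{\beta_1} + \cdots + \omega^{\beta_{k-1}}$, which is a representation of length $k-1$ whose smallest exponent is $\beta_{k-1}$, and which satisfies $\gamma = \gamma'' + \omega^{\beta_k}$. The key (and essentially the only nonformal) observation is that peeling off the innermost composition in (\ref{circLongCantor}) gives $f \circ \ell_\gamma = (f \circ \ell_{\omega^{\beta_k}}) \circ \ell_{\gamma''}$ for all $f\in\LL_{<\alpha}$; this is immediate from the definition (\ref{circLongCantor}) once one notes that $\beta_1\ge\cdots\ge\beta_{k-1}$ is exactly the required decreasing representation of $\gamma''$. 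Passing to the level of sets, this yields $\LL_{[\nu,\alpha)}\circ \ell_\gamma = (\LL_{[\nu,\alpha)}\circ \ell_{\omega^{\beta_k}})\circ \ell_{\gamma''}$.

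Now I would apply the two available tools in turn. Since $\nu\le \omega^{\beta_k+1}$, Lemma~\ref{lemsp} with $\beta := \beta_k$ gives $\LL_{[\nu,\alpha)}\circ \ell_{\omega^{\beta_k}} = \LL_{[\omega^{\beta_k}+\nu,\alpha)}$. To invoke the inductive hypothesis for $\gamma''$ with lower index $\omega^{\beta_k}+\nu$, I must verify that $\omega^{\beta_k}+\nu\le \omega^{\beta_{k-1}+1}$: from $\nu\le \omega^{\beta_k+1}$ and $\beta_k\le\beta_{k-1}$ we obtain $\omega^{\beta_k}+\nu \le \omega^{\beta_k}+\omega^{\beta_k+1} = \omega^{\beta_k+1}\le \omega^{\beta_{k-1}+1}$. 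The inductive hypothesis applied to $\gamma''$ then yields $\LL_{[\omega^{\beta_k}+\nu,\alpha)}\circ \ell_{\gamma''} = \LL_{[\gamma''+\omega^{\beta_k}+\nu,\alpha)}$, and by associativity of ordinal addition $\gamma''+\omega^{\beta_k}+\nu = \gamma+\nu$, giving $\LL_{[\nu,\alpha)}\circ \ell_\gamma = \LL_{[\gamma+\nu,\alpha)}$ as desired.

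I do not expect a genuine obstacle here: the argument is a clean induction whose only substantive ingredient is the decomposition $f\circ\ell_\gamma = (f\circ\ell_{\omega^{\beta_k}})\circ\ell_{\gamma''}$ read off from (\ref{circLongCantor}), with everything else reducing to the single ordinal inequality $\omega^{\beta_k}+\nu\le \omega^{\beta_{k-1}+1}$ and one application each of Lemma~\ref{lemsp} and the inductive hypothesis. The mildest point of care is simply keeping the roles of the smallest exponents $\beta_k$ (for $\gamma$) and $\beta_{k-1}$ (for $\gamma''$) straight when checking that the hypotheses of the cited lemma and of the hypothesis line up.
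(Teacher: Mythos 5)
Your proof is correct and is essentially the paper's own argument: induction on the length $k$ of the representation in (\ref{e:LongCantor}), with the base case $k=1$ given by Lemma~\ref{lemsp} and the inductive step resting on the ordinal inequality $\omega^{\beta_k}+\nu\le\omega^{\beta_{k-1}+1}$. Your write-up simply makes explicit the peeling-off of the innermost composition and the verification of that inequality, which the paper's two-line proof leaves to the reader.
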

\begin{proof} By
induction on $k\ge 1$. The case $k=1$ is Lemma~\ref{lemsp}. The inductive step from $k-1$ to $k$ uses that for $k>1$ we have 
$\omega^{\beta_k}+\nu\le \omega^{\beta_{k-1}+1}$.
\end{proof} 

\noindent
The proof of associativity in Section~\ref{sec:pc} will depend on the next two lemmas. In the first one we assume 
$\beta < \lambda$ and set $\mu:=\omega^{\beta+1}$.

\begin{lemma} Let $n\ge 1$. Then $\ell_{\gamma}\circ \ell_{\omega^\beta n}$ takes the following values:
$$\ell_{\omega^\beta n + \gamma} \text{ for }\gamma<\mu, \quad \ell_{\mu}-n \text{ for }\gamma=\mu, \quad \ell_{\gamma}-\epsilon \text{ with }0<\epsilon\preceq \ell_{\mu}^{-1} \text{ for } \gamma>\mu.$$
\end{lemma}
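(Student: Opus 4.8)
The plan is to fix $\beta<\lambda$, $\mu=\omega^{\beta+1}$, and write $C$ for the operator $f\mapsto f\circ\ell_{\omega^\beta}\colon\LL_{<\alpha}\to\LL_{<\alpha}$ of~(\ref{e:HyperlogComp}). Since the Cantor normal form of $\omega^\beta n$ consists of $n$ copies of $\omega^\beta$, definition~(\ref{circLongCantor}) gives $\ell_\gamma\circ\ell_{\omega^\beta n}=C^n(\ell_\gamma)$, and $C$ is a strongly $\R$-linear field embedding with $C(1)=1$ by Lemmas~\ref{l:CompIsLinear} and~\ref{l:PartialSpecialComp}. I would then induct on $n\ge 1$, the base case $n=1$ being exactly~(\ref{e:BigMonomials}) (for $\gamma<\mu$) and Lemma~\ref{comgam} (for $\gamma=\mu$ and for $\gamma>\mu$), and the inductive step using $C^{n+1}(\ell_\gamma)=C\big(C^n(\ell_\gamma)\big)$ split into the same three cases.

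For $\gamma<\mu$ the hypothesis gives $C^n(\ell_\gamma)=\ell_{\omega^\beta n+\gamma}$, and a short ordinal computation (writing $\gamma=\omega^\beta q+r$ with $q<\omega$, $r<\omega^\beta$, so $\omega^\beta n+\gamma=\omega^\beta(n+q)+r<\omega^{\beta+1}=\mu$) lets me apply~(\ref{e:BigMonomials}) once more to obtain $C(\ell_{\omega^\beta n+\gamma})=\ell_{\omega^\beta+(\omega^\beta n+\gamma)}=\ell_{\omega^\beta(n+1)+\gamma}$. For $\gamma=\mu$ the hypothesis gives $C^n(\ell_\mu)=\ell_\mu-n$, and linearity together with $C(\ell_\mu)=\ell_\mu-1$ (Lemma~\ref{comgam}) gives $C(\ell_\mu-n)=(\ell_\mu-1)-n=\ell_\mu-(n+1)$.

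The case $\gamma>\mu$ is where I expect the real work. Here the hypothesis reads $C^n(\ell_\gamma)=\ell_\gamma-\epsilon_n$ with $0<\epsilon_n\preceq\ell_\mu^{-1}$, and Lemma~\ref{comgam} gives $C(\ell_\gamma)=\ell_\gamma-\epsilon_\gamma$ with $0<\epsilon_\gamma\preceq\ell_\mu^{-1}$; by linearity $C^{n+1}(\ell_\gamma)=\ell_\gamma-(\epsilon_\gamma+C(\epsilon_n))$, so I must show $\epsilon_{n+1}:=\epsilon_\gamma+C(\epsilon_n)$ again satisfies $0<\epsilon_{n+1}\preceq\ell_\mu^{-1}$. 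The crux is that $C$ does not inflate asymptotic size: Lemma~\ref{car} gives $C(f)\sim f$ for nonzero $f\in\LL_{[\mu,\alpha)}$, so $C(\epsilon_n)\sim\epsilon_n$, whence $C(\epsilon_n)>0$ and $C(\epsilon_n)\asymp\epsilon_n\preceq\ell_\mu^{-1}$. Then $\epsilon_{n+1}$ is a sum of two positive elements, each $\preceq\ell_\mu^{-1}$, and so $0<\epsilon_{n+1}\preceq\ell_\mu^{-1}$. The only delicate point is ensuring the accumulated error stays $\preceq\ell_\mu^{-1}$ across all $n$ steps rather than growing, and this is exactly what the estimate $C(f)\sim f$ provides.

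Alternatively, for the cases $\gamma\ge\mu$ one can bypass the induction by working on $\LL_{[\mu,\alpha)}$, where $C=\ex^{-\derdelta}$ and the group law $\ex^{s\derdelta}\ex^{t\derdelta}=\ex^{(s+t)\derdelta}$ yield $C^n=\ex^{-n\derdelta}$, so that $\ell_\gamma\circ\ell_{\omega^\beta n}=\sum_{k\ge 0}\frac{(-n)^k}{k!}\derdelta^k\ell_\gamma$; the claimed values then follow directly from $\derdelta\ell_\mu=1$ with $\derdelta^k\ell_\mu=0$ for $k\ge2$, and from $\derdelta\ell_\gamma\preceq\ell_\mu^{-1}$ with $\derdelta^k\ell_\gamma\preceq\ell_\mu^{-2}$ for $k\ge2$ (all recorded in the proof of Lemma~\ref{comgam}).
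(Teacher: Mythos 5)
Your proof is correct and follows essentially the same route as the paper's: the paper likewise inducts on $n$, with base case Lemma~\ref{comgam} and inductive step $\ell_\gamma\circ\ell_{\omega^\beta(n+1)}=(\ell_\gamma\circ\ell_{\omega^\beta n})\circ\ell_{\omega^\beta}$, the only (immaterial) difference being that the paper controls the error term via Corollary~\ref{corineq} while you invoke Lemma~\ref{car} directly, which additionally requires the easy observation that each $\epsilon_n$ lies in $\LL_{[\mu,\alpha)}$ (true, since composition with $\ell_{\omega^\beta}$ maps that subfield into itself). Your alternative $C^n=\ex^{-n\derdelta}$ computation for $\gamma\ge\mu$ is a valid bonus, but the main induction is exactly the paper's proof.
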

\begin{proof} For $n=1$ this is Lemma~\ref{comgam}. Assuming inductively that
the lemma holds for a certain $n$ we use 
$$\ell_{\gamma}\circ \ell_{\omega^{\beta}(n+1)}\ =\ (\ell_{\gamma}\circ \ell_{\omega^{\beta}n})\circ \ell_{\omega^\beta}$$
and Corollary~\ref{corineq} to show it holds for $n+1$ instead of $n$.
\end{proof}

\noindent
Let $g\in \LL^{>\R}$. Set $\lambda_g:= \min \sigma(\fd g)$ (an ordinal) 
and call it the {\em logarithmicity of $g$}. Thus $\lambda_{\ell_{\nu}}=\nu$ for any ordinal $\nu$. Consider the Cantor normal form of $\lambda_g$:
$$\lambda_g\ =\ \omega^{\beta_1}n_1 + \cdots + \omega^{\beta_k}n_k\qquad (k\in \N,\ \beta_1> \cdots > \beta_k,\ n_1,\dots, n_k\in \N^{\ge 1}).$$
For any ordinal $\nu$ we set $$\lambda_{g;\nu}\ :=\ n_i\ 
\text{ if }\ \nu=\omega^{\beta_i +1}, \qquad \lambda_{g;\nu}\ :=\ 0\ \text{ if }\ \nu\notin \{\omega^{\beta_1+1},\dots, \omega^{\beta_k+1}\}.$$

\begin{lemma}\label{bg}  Let $\nu<\alpha$. Then $\ell_{\nu}\circ \ell_{\gamma}=\ell_{\gamma+\nu}-\lambda_{\ell_\gamma;\nu}-\epsilon$ with $0\le \epsilon\prec 1$.
\end{lemma}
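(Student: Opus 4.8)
The plan is to induct on the number $k$ of distinct terms in the Cantor normal form $\gamma = \omega^{\beta_1}n_1 + \cdots + \omega^{\beta_k}n_k$ (with $\beta_1 > \cdots > \beta_k$ and each $n_i \geq 1$), the driving engine being the unlabelled lemma immediately preceding this one, which evaluates $\ell_\delta \circ \ell_{\omega^\beta n}$. The base case $\gamma = 0$ is immediate: $\ell_\nu \circ \ell_0 = \ell_\nu = \ell_{0+\nu}$, while both $\lambda_{\ell_0;\nu}$ and the error term vanish. Note throughout that $\gamma,\nu < \alpha = \omega^\lambda$ with $\lambda$ a limit ordinal, so that $\gamma+\nu < \alpha$ and $\mu := \omega^{\beta_1+1} < \alpha$, keeping all monomials inside $\LL_{<\alpha}$.

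For the inductive step I would write $\gamma = \omega^{\beta_1}n_1 + \rho$ with $\rho = \omega^{\beta_2}n_2 + \cdots + \omega^{\beta_k}n_k < \omega^{\beta_1}$. Reassociating the finite iterated composition of operators defining $f\circ\ell_\gamma$ in $(\ref{circLongCantor})$ — legitimate because composition of maps $\LL_{<\alpha}\to\LL_{<\alpha}$ is associative, and the innermost block is exactly the operator $f\mapsto f\circ\ell_\rho$ while the outermost $n_1$ copies of $\ell_{\omega^{\beta_1}}$ are exactly $f\mapsto f\circ\ell_{\omega^{\beta_1}n_1}$, both by definition — gives $f\circ\ell_\gamma = (f\circ\ell_\rho)\circ\ell_{\omega^{\beta_1}n_1}$. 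Applying the induction hypothesis to $\rho$ yields $\ell_\nu\circ\ell_\rho = \ell_{\rho+\nu} - \lambda_{\ell_\rho;\nu} - \epsilon'$ with $0\le\epsilon'\prec 1$. Composing this with $\ell_{\omega^{\beta_1}n_1}$, and using that $f\mapsto f\circ\ell_{\omega^{\beta_1}n_1}$ is an $\R$-linear ordered-field embedding fixing $\R$ and preserving both $\ge 0$ and $\prec 1$ (consequences of Lemma~\ref{l:CompIsLinear}), I obtain $\ell_\nu\circ\ell_\gamma = (\ell_{\rho+\nu}\circ\ell_{\omega^{\beta_1}n_1}) - \lambda_{\ell_\rho;\nu} - (\epsilon'\circ\ell_{\omega^{\beta_1}n_1})$, where the last remainder is again $\ge 0$ and $\prec 1$.

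It then remains to evaluate $\ell_{\rho+\nu}\circ\ell_{\omega^{\beta_1}n_1}$ through the preceding lemma, splitting on the trichotomy $\nu < \mu$, $\nu = \mu$, $\nu > \mu$. The crucial ingredient is additive indecomposability of $\mu = \omega^{\beta_1+1}$ combined with $\rho < \omega^{\beta_1}$ and $\omega^{\beta_1}n_1 < \mu$: when $\nu < \mu$ one gets $\rho+\nu < \mu$ and $\omega^{\beta_1}n_1 + (\rho+\nu) = \gamma+\nu$, landing in the first case with value $\ell_{\gamma+\nu}$ and no new error; when $\nu = \mu$ absorption gives $\rho+\nu = \mu$ and $\gamma+\nu = \mu$, landing in the second case with value $\ell_\mu - n_1 = \ell_{\gamma+\nu} - n_1$; when $\nu > \mu$ absorption gives $\rho+\nu = \nu = \gamma+\nu$, landing in the third case with $\ell_{\gamma+\nu}$ minus an error $\preceq \ell_\mu^{-1}\prec 1$. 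In each subcase I would check that $\lambda_{\ell_\gamma;\nu} = \lambda_{\ell_\rho;\nu}$ except precisely when $\nu = \mu$, where $\lambda_{\ell_\gamma;\nu}$ exceeds $\lambda_{\ell_\rho;\nu}$ by exactly $n_1$ — matching the $-n_1$ produced by the second case — and that the leftover infinitesimals combine into a single $\epsilon$ with $0\le\epsilon\prec 1$.

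I expect the genuine obstacle to be the ordinal-arithmetic bookkeeping rather than any analytic subtlety: one must track the absorption identities for $\rho+\nu$ and $\gamma+\nu$ forced by additive indecomposability of $\mu$, and verify the three-way consistency between the case of the preceding lemma, the value of $\lambda_{\ell_\gamma;\nu}$, and the comparison with $\lambda_{\ell_\rho;\nu}$ coming from the induction hypothesis. The handling of the infinitesimal remainders is routine once one knows composition with $\ell_{\omega^{\beta_1}n_1}$ preserves sign and the relation $\prec 1$.
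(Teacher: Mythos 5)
Your proof is correct, and it takes a genuinely different route from the paper's. The paper also inducts on the Cantor normal form of $\gamma$, but in the opposite direction: it appends a new term at the \emph{small} end, writing $\ell_\nu\circ\ell_{\gamma+\omega^{\beta_{k+1}}n_{k+1}}=(\ell_\nu\circ\ell_{\omega^{\beta_{k+1}}n_{k+1}})\circ\ell_\gamma$, so the single-term composition sits on the inside and the inductive part on the outside. That forces a weaker induction hypothesis: the subscript surviving the inner composition must still be $\nu$, which happens only when $\nu\ge\omega^{\beta_1+1}$ (absorption), so the paper proves by induction only two restricted claims ($\ell_\nu\circ\ell_\gamma=\ell_\nu-n_1-\epsilon$ for $\nu=\omega^{\beta_1+1}$, and $\ell_\nu\circ\ell_\gamma=\ell_\nu-\epsilon$ for $\nu>\omega^{\beta_1+1}$) and then settles the remaining case $\nu<\omega^{\beta_1+1}$ by a separate, non-inductive argument: split $\gamma=\gamma_1+\gamma_2$ at the exponent comparable to $\nu$, apply the two claims to $\gamma_2$, and apply Corollary~\ref{circadd} to $\gamma_1$. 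Your head-peeling decomposition $\ell_\nu\circ\ell_\gamma=(\ell_\nu\circ\ell_\rho)\circ\ell_{\omega^{\beta_1}n_1}$ puts the single Cantor term on the \emph{outside}, where the unlabelled lemma evaluates it for every subscript $\rho+\nu$ via the trichotomy against $\mu=\omega^{\beta_1+1}$; that is exactly what lets you carry the full statement of Lemma~\ref{bg} as the induction hypothesis and treat all $\nu$ uniformly in one pass. Your supporting details check out: the re-association of (\ref{circLongCantor}) is legitimate; composition with $\ell_{\omega^{\beta_1}n_1}$ fixes reals and preserves sign and $\prec 1$ (Lemma~\ref{l:CompIsLinear}, applicable by Corollary~\ref{c:HyperlogComp}); the absorption identities for $\rho+\nu$ and $\gamma+\nu$ are as you state; and $\lambda_{\ell_\gamma;\nu}-\lambda_{\ell_\rho;\nu}$ vanishes except for the value $n_1$ at $\nu=\omega^{\beta_1+1}$, matching the $-n_1$ produced by the middle case of the unlabelled lemma. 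The trade-off is mild: the paper's route isolates intermediate statements whose proof makes the role of absorption explicit and reuses Corollary~\ref{circadd}, while yours is a single self-contained induction whose only external inputs are the unlabelled lemma and the order-theoretic properties of composition; if anything, yours streamlines the case analysis.
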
 
\begin{proof} This is clear for $\gamma=0$. Assume $\gamma>0$ has Cantor normal form $$\gamma\ =\ \omega^{\beta_1}n_1+\cdots + \omega^{\beta_k}n_k \qquad(\beta_1>\cdots > \beta_k,\ k, n_1,\dots, n_k\ge 1).$$ 
We first show by induction on $k$: \begin{enumerate}
\item if $\nu=\omega^{\beta_1+1}$, then $\ell_{\nu}\circ \ell_{\gamma}=\ell_{\nu}-n_1-\epsilon$ with $0\le \epsilon\prec 1$;
\item if $\nu>\omega^{\beta_1+1}$, then $\ell_{\nu}\circ \ell_{\gamma}=\ell_{\nu}-\epsilon$ with $0\le \epsilon\prec 1$.
\end{enumerate}
The previous lemma gives this for $k=1$. Assume it holds for a certain $\gamma$ as above. Then with $\beta_k > \beta_{k+1}$ and $n_{k+1}\ge 1$ the definitions easily yield
$$\ell_{\nu}\circ \ell_{\gamma+\omega^{\beta_{k+1}}n_{k+1}}\ =\ 
(\ell_\nu\circ \ell_{\omega^{\beta_{k+1}}n_{k+1}})\circ \ell_{\gamma}.$$
Let $\nu\ge \omega^{\beta_1+1}$. Then $\ell_{\nu}\circ \ell_{\omega^{\beta_{k+1}}n_{k+1}}=\ell_{\nu}-\epsilon$ with $0<\epsilon\prec 1$ by the previous lemma. If $\nu=\omega^{\beta_1+1}$, we get
$\ell_{\nu}\circ \ell_{\gamma+\omega^{\beta_{k+1}}n_{k+1}}=
(\ell_{\nu}-\epsilon)\circ \ell_{\gamma}=\ell_{\nu}-n_1-\epsilon^*$ with
$0<\epsilon^*\prec 1$ by
(1). If $\nu> \omega^{\beta_1+1}$, then we get 
$\ell_{\nu}\circ \ell_{\gamma+\omega^{\beta_{k+1}}n_{k+1}}=
(\ell_{\nu}-\epsilon)\circ \ell_{\gamma}=\ell_{\nu}-\epsilon^*$ with $0<\epsilon^*\prec 1$ by (2). This concludes the proof of (1) and (2) and shows that the lemma holds for $\nu\ge \omega^{\beta_1+1}$.

Now assume that $\nu<\omega^{\beta_1+1}$. We first consider the subcase that $\nu=\omega^{\beta_i+1}$ where $1< i\le k$. Then $\gamma=\gamma_1+\gamma_2$ with $\gamma_1=\omega^{\beta_1}n_1+\cdots + \omega^{\beta_{i-1}}n_{i-1}$
and $\gamma_2=\omega^{\beta_i}n_i+\cdots + \omega^{\beta_k}n_k$, hence
$\ell_{\nu}\circ \ell_{\gamma}=(\ell_{\nu}\circ \ell_{\gamma_2})\circ \ell_{\gamma_1}$. By (1) above with $\gamma_2$ in the role of $\gamma$ we
have $\ell_{\nu}\circ \ell_{\gamma_2}=\ell_{\nu}-n_i-\epsilon$ with
$0\le \epsilon\prec 1$, so
\[\ell_{\nu}\circ \ell_{\gamma}\ =\ (\ell_{\nu}-n_i-\epsilon)\circ \ell_{\gamma_1}\ =\ \ell_{\nu}\circ\ell_{\gamma_1}-n_i-\epsilon^*,\qquad 0\le\epsilon^*\prec 1,\] 
and $\ell_{\nu}\circ \ell_{\gamma_1}=\ell_{\gamma_1+\nu}$ by Corollary~\ref{circadd}. Now $\gamma_2+\nu=\nu$, so $\gamma+\nu=\gamma_1+\nu$, and thus  $\ell_{\nu}\circ \ell_{\gamma}= \ell_{\gamma+\nu}-n_i-\epsilon^*=\ell_{\gamma+\nu}-\lambda_{\ell_{\gamma};\nu}-\epsilon^*$, so the lemma holds in this case.
Next assume we are in the subcase $\omega^{\beta_{i-1}+1}> \nu>\omega^{\beta_i+1}$ where $1<i\le k$. With $\gamma=\gamma_1+\gamma_2$ as before we argue as in the previous subcase, using (2) with $\gamma_2$
in the role of $\gamma$, and obtain that the lemma holds in this case as well. The remaining subcase $\nu<\omega^{\beta_k+1}$ is taken care of by Corollary~\ref{circadd}. 
\end{proof}

\section{Composition with Arbitrary Elements}\label{sec:comp2}

\noindent
As in the previous section, $\alpha=\omega^\lambda$, where $\lambda$ is an infinite limit ordinal.
We now fix $g \in \LL_{<\alpha}^{>\R}$. We shall define $\fm \circ g$ for $\fm \in \mfL_{<\omega}$ and $f \circ g$ for $f \in \LL_{[\omega,\alpha)}$ and then use this to define the map $f\mapsto f \circ g: \LL_{<\alpha} \to \LL_{<\alpha}$.

\medskip\noindent 
Note that for $\fm = \prod_{n} \ell_n^{r_n}\in \mfL_{<\omega}$ we have $\log_n(g)^{r_n}\in \LL_{<\alpha}$ for all $n$ and that  the family $\big(\log_n(g)^{r_n}\big)$ is multipliable by Lemma \ref{l:SumOfLogs}.
Therefore  we may define
\begin{equation}\label{e:ProductOfLogs}
\fm \circ g\ :=\ \prod_{n}\log_n(g)^{r_n}\ \in\ \LL_{<\alpha}.
\end{equation}
Thus $1\circ g= 1$ and $\ell_n\circ g=\log_n(g)$. Also for $\fm, \fn\in \mfL_{<\omega}$, $t\in \R$,
$$(\fm\fn)\circ g\ =\ (\fm\circ g)(\fn\circ g),\quad 
\fm^t\circ g\ =\ (\fm\circ g)^t.$$

\begin{lemma}\label{l:SmallCompForm}
There exists a well-based set $\fS=\fS(g)\subseteq \mathfrak{L}_{<\alpha}$
such that for all $\fm= \prod_{n} \ell_n^{r_n}\in\mfL_{<\omega}$ we have 
\[
\supp(\fm \circ g)\ \subseteq\  \fd(g)^{r_0}\left(\prod_{n\ge 1}\ell_{\lambda_g+n}^{r_n}\right)\cdot\fS.
\]
\end{lemma}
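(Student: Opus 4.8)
The plan is to factor $\fm\circ g=\prod_n\log_n(g)^{r_n}$ from (\ref{e:ProductOfLogs}) into a positive real scalar, the monomial displayed in the statement, and an infinitesimally perturbed unit whose support can be bounded uniformly in $\fm$. Write $\lambda:=\lambda_g$ and $g=c\,\fd(g)(1+\delta)$ with $c\in\R^{>}$ and $\delta\prec 1$. Since $\fd(\log_n g)=\ell_{\lambda+n}$ for $n\ge 1$ and each $\log_n(g)>0$, we may write $\log_n(g)=c_n\ell_{\lambda+n}(1+\eta_n)$ with $c_n\in\R^{>}$ and $\eta_n\prec 1$; moreover $c_n=1$ and $\eta_n=\epsilon_n\ell_{\lambda+n}^{-1}$ for $n\ge 2$, where $\log_n(g)=\ell_{\lambda+n}+\epsilon_n$ with $\epsilon_n\preceq 1$ as in the proof of Lemma~\ref{l:SumOfLogs}. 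The family $\big(\log_n(g)^{r_n}\big)$ is multipliable, and so are its subfamilies $\big(c_n^{r_n}\big)$, $\big(\ell_{\lambda+n}^{r_n}\big)$ (with the $n=0$ factors $c^{r_0}$ and $\fd(g)^{r_0}$), and $\big((1+\eta_n)^{r_n}\big)$ (with $(1+\delta)^{r_0}$ for $n=0$): the first two are clear, and the third follows since $\log(1+\eta_n)^{r_n}=\log\log_n(g)^{r_n}-\log\big(c_n^{r_n}\ell_{\lambda+n}^{r_n}\big)$ is a termwise difference of summable families, hence summable. Distributing the product over this factorization (using $\prod_i f_ig_i=\prod_i f_i\cdot\prod_i g_i$ for multipliable families) gives
\[
\fm\circ g\ =\ \Big(c^{r_0}\prod_{n\ge 1}c_n^{r_n}\Big)\Big(\fd(g)^{r_0}\prod_{n\ge 1}\ell_{\lambda+n}^{r_n}\Big)\,w,\qquad w:=(1+\delta)^{r_0}\prod_{n\ge 1}(1+\eta_n)^{r_n},
\]
where the first factor lies in $\R^{>}$ and the second is exactly the monomial in the statement (consistent with $\fd(\fm\circ g)=\fd(g)^{r_0}\prod_{n\ge 1}\ell_{\lambda+n}^{r_n}$, cf.\ Lemma~\ref{fdprod}). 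It thus suffices to bound $\supp w$ by a well-based set independent of $\fm$.

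Set $\mathfrak{T}:=\supp\delta\cup\bigcup_{n\ge 1}\supp\eta_n$ and $\fS:=\mathfrak{T}^\infty$, noting $\mathfrak{T}\subseteq\mathfrak{L}_{<\alpha}$ and hence $\fS\subseteq\mathfrak{L}_{<\alpha}$. Each $(1+\eta_n)^{r_n}=1+\zeta_n$ with $\supp\zeta_n\subseteq(\supp\eta_n)^\infty$, and likewise $\supp\big((1+\delta)^{r_0}\big)\subseteq(\supp\delta)^\infty$, so $\supp w\subseteq\big(\supp\delta\cup\bigcup_{n\ge 1}\supp\eta_n\big)^\infty=\mathfrak{T}^\infty=\fS$ uniformly in $\fm$. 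Everything therefore reduces to showing that $\mathfrak{T}$ is well-based (it is clearly $\prec 1$); then $\fS=\mathfrak{T}^\infty$ is well-based by Neumann's Lemma, and we are done.

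Since a finite union of well-based sets is well-based and $\supp\delta$, $\supp\eta_1$ are well-based, the crux is well-basedness of $\bigcup_{n\ge 2}\supp\eta_n$. Here I would reuse the structural description from the proof of Lemma~\ref{l:SumOfLogs}: every element of $\supp\epsilon_n$ $(n\ge 2)$ lies in $(\supp\epsilon_2)^\infty\cdot\mathfrak{S}_n$ with $\mathfrak{S}_n=\{\prod_{2\le j<n}\ell_{\lambda+j}^{-d_j}:d_2\ge\cdots\ge d_{n-1}\ge 1\}$, whence $\supp\eta_n=(\supp\epsilon_n)\,\ell_{\lambda+n}^{-1}\subseteq(\supp\epsilon_2)^\infty\cdot\big(\mathfrak{S}_n\ell_{\lambda+n}^{-1}\big)$. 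The set $\mathfrak{U}:=\bigcup_{n\ge 2}\mathfrak{S}_n\ell_{\lambda+n}^{-1}$ consists of monomials $\prod_{2\le j\le n}\ell_{\lambda+j}^{-e_j}$ with $e_2\ge\cdots\ge e_{n-1}\ge e_n=1$, i.e.\ with weakly decreasing positive exponents; via the order-reversing bijection $\prod_j\ell_{\lambda+j}^{-e_j}\mapsto(e_2,e_3,\dots)$ onto a subset of the set $D_\infty$ of Corollary~\ref{morewell}, a strictly increasing sequence in $\mathfrak{U}$ would yield a strictly decreasing sequence in $(D_\infty,<_{\mathrm{lex}})$, which is impossible; so $\mathfrak{U}$ is well-based. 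Together with $(\supp\epsilon_2)^\infty$ being well-based (Neumann), this makes $\bigcup_{n\ge 2}\supp\eta_n$ well-based, completing the argument.

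I expect this last step to be the main obstacle. The raw shifts $\ell_{\lambda+n}^{-1}$ increase toward $1$ and so form a non-well-based sequence on their own; it is only their coupling with the decreasing multiplicities $d_j$ in $\supp\epsilon_n$ — controlled by the well-ordering of weakly decreasing integer sequences (Lemma~\ref{well} and Corollary~\ref{morewell}) — that restores well-basedness. Consequently the bare assertion that $\bigcup_{n\ge 2}\supp\epsilon_n$ is well-based does \emph{not} suffice, and one must carry through the explicit monomial bookkeeping from the proof of Lemma~\ref{l:SumOfLogs}.
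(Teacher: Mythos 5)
Your proof is correct, but it takes a genuinely different route from the paper's, and a costlier one at the key step. The paper works additively with logarithms: by (\ref{e:FormOfLog}), $\log(\fm\circ g)=\sum_n r_n\log_{n+1}(g)$ decomposes as (purely infinite part) $+\,c+\epsilon$ with $\supp\epsilon\subseteq\bigcup_{n\ge 1}(\supp\log_n(g))^{\prec 1}$, and this union is well-based \emph{directly from the statement} of Lemma~\ref{l:SumOfLogs} (multipliability of $(\log_n(g))$ means summability of the logs); so the paper takes $\fS:=\big(\bigcup_{n\ge 1}(\supp\log_n(g))^{\prec 1}\big)^\infty$, writes $\fm\circ g=\fd(\fm\circ g)\,\mathe^c\exp(\epsilon)$, and invokes Lemma~\ref{fdprod} to identify $\fd(\fm\circ g)$ — four lines, with Lemma~\ref{l:SumOfLogs} used as a black box. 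Your multiplicative normalization $\log_n(g)=c_n\ell_{\lambda_g+n}(1+\eta_n)$ instead produces the \emph{shifted} supports $(\supp\epsilon_n)\ell_{\lambda_g+n}^{-1}$, and your closing diagnosis is exactly right: since the shifts $\ell_{\lambda_g+n}^{-1}$ increase toward $1$, neither well-basedness nor even summability of $(\epsilon_n)$ suffices to control their union, so you are forced to re-open the \emph{proof} of Lemma~\ref{l:SumOfLogs} and redo the lexicographic combinatorics (your set $\mathfrak{U}$ and Corollary~\ref{morewell}); the paper never meets this obstacle because truncating each $\supp\log_n(g)$ to its $\prec 1$ part involves no division by dominant monomials. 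What your version buys is a sharper conclusion — an explicit factorization of $\fm\circ g$ into a positive real, the exact dominant monomial, and a unit $w$ with $\supp w\subseteq\fS$ uniformly in $\fm$ — together with a correct account of why the naive argument fails; what it costs is duplicating combinatorial work that the multipliability statement already encapsulates. All the individual steps (multipliability of the factor families via termwise differences of summable logs, the order-reversing map into $D_\infty$, Neumann's Lemma for $\mathfrak{T}^\infty$) check out.
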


\begin{proof}
Set $\fS := \left(\bigcup_{n\geq 1}(\supp \log_n(g))^{\prec 1}\right)^\infty$. By Lemma \ref{l:SumOfLogs}, $\bigcup_{n\geq 1}(\supp \log_n(g))^{\prec 1}$ is well-based, and so is $\fS$ by Neumann's Lemma. By (\ref{e:FormOfLog}), we have 
\[
\sum_{n=0}^\infty r_n\log_{n+1}(g)\ =\ \sum_{\beta<\alpha} s_\beta \ell_{\beta+1}+c+\epsilon
\]
where the $s_\beta$ and $c$ are reals and $\epsilon\prec 1$, so $\supp \epsilon \subseteq \bigcup_{n\geq 1}(\supp \log_n(g))^{\prec 1}$. Now
\[
\fm \circ g\ =\ \fd(\fm\circ g)\cdot e^c \cdot \sum_{m=0}^\infty \frac{1}{m!}\epsilon^m,
\]
and so $\supp(\fm \circ g) \subseteq \fd(\fm\circ g)\cdot\fS$.  It remains to note that by Lemma~\ref{fdprod}, 
\[
\fd\left(\fm \circ g\right)\ =\ \fd(g)^{r_0}\left(\prod_{n\ge 1}\ell_{\lambda_g+n}^{r_n}\right).\qedhere
\]
\end{proof}

\medskip\noindent Let $f \in \LL_{[\omega,\alpha)}$; we shall define $f \circ g$ by introducing $f^{\uparrow _3}\in \LL_{[\omega,\alpha)}$, to be thought of as $f\circ(\exp \circ \exp\circ \exp)$, and then setting $f\circ g := f^{\uparrow_3} \circ \log_3(g)$, exploiting that $\log_3(g)_{\succeq}$ is a hyperlogarithm.  Lemma~\ref{car} for $\beta=0$ gives $f \circ \ell_1\sim f$ if $f\ne 0$, so
\[
f\circ \ell_3\ =\ ((f\circ \ell_1)\circ \ell_1)\circ \ell_1\ =\ f+R(f),
\]
where $R(f) \prec f$ for $f\ne 0$ and $R(0)=0$. The map $f \mapsto f \circ \ell_1$ is a strongly $\R$-linear field automorphism of $\LL_{[\omega,\alpha)}$ by Lemma~\ref{car}, and so is
$f \mapsto f \circ \ell_3$, and the latter has
inverse $f \mapsto f^{\uparrow_3}:=\sum_{n=0}^\infty (-1)^nR^n(f)$ by Lemma~\ref{inverse}. Thus
\begin{equation}\label{compup3} f^{\uparrow_3}\circ \ell_3\ =\ (f\circ \ell_3)^{\uparrow_3}\ =\ f .\end{equation}
Now $\log_3(g) = \ell_{\lambda_g+3} + \varepsilon$ where $\varepsilon  \preceq \ell_{\lambda_g+2}^{-1}\prec 1$. By Corollary~\ref{c:HyperlogComp} we have the $\LL_{<\alpha}$-composition 
$\phi\mapsto \phi\circ\ell_{\lambda_g+3}: \LL_{<\alpha}\to \LL_{<\alpha}$
with $\ell_{\lambda_g+3}$. Then Lemma~\ref{taexp1} yields a deformation of it to
an $\LL_{<\alpha}$-composition $T_g: \LL_{<\alpha}\to \LL_{<\alpha}$ with $\log_3(g)$  by
\begin{equation}\label{e:TaylorO}
T_g(\phi)\ :=\  \sum_{n=0}^\infty \frac{\phi^{(n)}\circ \ell_{\lambda_g+3}}{n!}\varepsilon^n.
\end{equation}
Thus $T_g(\ell_0)=\log_3(g)$, and Lemma~\ref{taexp2} gives for $\phi\in \LL_{<\alpha}$,
\begin{equation}\label{e:chaintg} T_g(\phi)\ =\ T_g(\phi')\log_3(g)'.
\end{equation} Composing $T_g$ with 
$f\mapsto f^{\uparrow_3}$ yields the strongly $\R$-linear embedding 
\begin{equation}\label{e:TaylorOverOmega}
f\ \mapsto\  f \circ g\ :=\  \sum_{n=0}^\infty \frac{(f^{\uparrow_3})^{(n)}\circ \ell_{\lambda_g+3}}{n!}\varepsilon^n\ \colon\ \LL_{[\omega,\alpha)}\to \LL_{<\alpha}
\end{equation} 
of ordered and valued fields.
Towards extending this to $\LL_{<\alpha}$, let $f\in \LL_{<\alpha}$. Using $\LL_{<\alpha}=\LL_{[\omega,\alpha)}[[\mfL_{<\omega}]]$ we have $f= \sum_{\fm \in \mfL_{<\omega}}f_{[\fm]}\fm$
where all $f_{[\fm]}\in \LL_{[\omega,\alpha)}$ and the set  $\{\fm \in \mfL_{<\omega}:f_{[\fm]} \neq 0\}$ is well-based. To justify defining
$$f\circ g\ :=\ \sum_{\fm \in \mfL_{<\omega}}\left(f_{[\fm]}\circ g\right)\left(\fm\circ g\right)$$ we need the following:

\begin{prop}\label{p:WellDef}
The sum $\sum_{\fm \in \mfL_{<\omega}}\left(f_{[\fm]}\circ g\right)\left(\fm\circ g\right)$ exists.
\end{prop}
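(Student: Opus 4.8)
The plan is to prove that the family $\big(F_\fm\big)_{\fm\in\mfL_{<\omega}}$, with $F_\fm:=(f_{[\fm]}\circ g)(\fm\circ g)$, is summable, i.e.\ that $\bigcup_\fm\supp F_\fm$ is well-based and that each monomial lies in only finitely many $\supp F_\fm$. The organizing device is the monomial map $\Theta\colon\mfL_{<\alpha}\to\mfL_{<\alpha}$ defined, for $\fq=\fq'\fm$ with $\fq'\in\mfL_{[\omega,\alpha)}$ and $\fm\in\mfL_{<\omega}$, by
\[
\Theta(\fq)\ :=\ \fd(\fq'\circ g)\cdot\fd(\fm\circ g)\ =\ \fd(\fq'\circ g)\cdot\Psi(\fm),\qquad \Psi(\fm):=\fd(g)^{r_0}\prod_{n\ge 1}\ell_{\lambda_g+n}^{r_n},
\]
where $\fm=\prod_n\ell_n^{r_n}$ and $\Psi$ is the multiplicative map computed in Lemma~\ref{fdprod} and Lemma~\ref{l:SmallCompForm}. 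Since $\fd$ is multiplicative, $\Theta$ is a group homomorphism, and $\Theta(\fq)=\fd(\fq\circ g)$ once $\circ g$ is in place.

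First I would show that \emph{$\Theta$ is an order-embedding}. On the factor $\mfL_{<\omega}$ the map $\Psi$ is an order-embedding: the map $\fm\mapsto\fm\circ g$ is an order-preserving group homomorphism into $\LL_{<\alpha}^{>}$ by \eqref{e:ProductOfLogs}, and $\fd$ preserves $\prec$. The crucial point is an \emph{index separation}: for $\fm_1\neq\fm_2$ the quotient $\Psi(\fm_1)/\Psi(\fm_2)$ has least nontrivial index in $[\lambda_g,\lambda_g+\omega)$, whereas for $\fq'\in\mfL_{[\omega,\alpha)}$ one has $\fd(\fq'\circ g)\in\mfL_{[\lambda_g+\omega,\alpha)}$. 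The latter follows because $\fq'\circ g=(\fq')^{\uparrow_3}\circ\log_3(g)$, $\log_3(g)$ has dominant monomial $\ell_{\lambda_g+3}$, and $\LL_{[\omega,\alpha)}\circ\ell_{\lambda_g+3}=\LL_{[\lambda_g+\omega,\alpha)}$ by Lemma~\ref{uptrans} (the $\varepsilon$-deformation in \eqref{e:TaylorOverOmega} only contributes strictly smaller terms, so it does not affect the dominant monomial). Thus, in comparing $\Theta(\fq_1)$ and $\Theta(\fq_2)$, the $\mfL_{<\omega}$-part dominates whenever the $\fm$'s differ, matching the order of $\mfL_{<\alpha}=\mfL_{[\omega,\alpha)}[[\mfL_{<\omega}]]$; together with the order-embedding \eqref{e:TaylorOverOmega} on the $\mfL_{[\omega,\alpha)}$-factor this gives that $\Theta$ is an order-embedding, hence injective and well-based-set preserving.

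Second I would establish a \emph{uniform relative support bound}: a well-based $\fS'\subseteq\mfL_{<\alpha}$ with $\supp(\fq\circ g)\subseteq\Theta(\fq)\,\fS'$ for every monomial $\fq$. For the $\mfL_{<\omega}$-factor this is exactly Lemma~\ref{l:SmallCompForm}. For the $\mfL_{[\omega,\alpha)}$-factor I would prove the analogous statement $\supp(\fq'\circ g)\subseteq\fd(\fq'\circ g)\,\fS''$ with a single well-based $\fS''$, by the same method as Lemma~\ref{l:SumOfLogs} and Lemma~\ref{l:SmallCompForm}: write $\fq'\circ g=\prod_\gamma(\ell_\gamma\circ g)^{s_\gamma}$, factor out dominant monomials, and control the remaining infinitesimal parts by Neumann's Lemma and the well-ordering of descending-exponent tails via Lemma~\ref{well} and Corollary~\ref{morewell}. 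Taking $\fS':=\fS''\fS$, strong $\R$-linearity of $\circ g$ on $\LL_{[\omega,\alpha)}$ then yields $\supp F_\fm\subseteq\Theta\big(\{\fq'\fm:\fq'\in\supp f_{[\fm]}\}\big)\fS'$, so $\bigcup_\fm\supp F_\fm\subseteq\Theta(\supp f)\,\fS'$. As $\Theta(\supp f)$ is well-based (Step one) and $\fS'$ is well-based, the product is well-based. For finite multiplicity, fix a monomial $\fn$: if $\fn\in\supp F_\fm$ then $\Theta(\fq'\fm)\in\fn(\fS')^{-1}$ for some $\fq'\fm\in\supp f$; since $\Theta(\supp f)$ is well-based and $\fn(\fS')^{-1}$ is reverse-well-based their intersection is finite, and $\Theta$ is injective, so only finitely many $\fm$ occur. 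This proves summability and hence the existence of the sum.

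The main obstacle is the pair of technical inputs in Steps one and two. The order-embedding property hinges entirely on the index-separation estimate $\fd(\fq'\circ g)\in\mfL_{[\lambda_g+\omega,\alpha)}$, which is where the hypotheses $\alpha=\omega^\lambda$ (so that ordinal addition stabilizes) and the logarithmicity $\lambda_g$ enter decisively; without it $\Theta$ need not preserve well-basedness. The uniform relative support bound for monomials of $\mfL_{[\omega,\alpha)}$ is the genuinely new computation, parallel to but somewhat heavier than Lemma~\ref{l:SmallCompForm}, and it is here that the well-ordering lemmas \ref{well} and \ref{morewell} must be invoked to tame the iterated-logarithm tails produced by composing a general element $g$ on the right.
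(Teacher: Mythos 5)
Your overall architecture is sound, and it is genuinely different from the paper's proof. Where you work with full supports, the dominant-monomial map $\Theta$, and \emph{relative} support bounds, the paper introduces $\beta$-summability for $\beta=\lambda_g+\omega$ (well-basedness and finite multiplicity of the projections $\supp_\beta$ onto $\mfL_{<\beta}$) and proves the stronger statement that the family is $\beta$-summable, by contradiction: an increasing sequence $\fn_i=\fp_i\fq_i$ in the $\beta$-supports forces, via Corollary~\ref{c:SmallComp} (essentially your index-separation from Step one), a strictly increasing sequence of $\fp_i\in\supp_\beta(f_{[\fm_i]}\circ g)$, contradicting Lemma~\ref{suppTg}, which places all these $\beta$-supports in one fixed well-based subset of $\mfL_{<\beta}$ \emph{independently of} $f_{[\fm_i]}$. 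Both proofs pivot on the same separation of indices at $\lambda_g+\omega$, and your Step one (the order-embedding $\Theta$, including the justification that $\fd(\fq'\circ g)\in\mfL_{[\lambda_g+\omega,\alpha)}$) and your deduction of summability in Step three are correct.

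The gap is in Step two, and it is twofold. First, the identity $\fq'\circ g=\prod_\gamma(\ell_\gamma\circ g)^{s_\gamma}$ is not available at this point: for $\fq'\in\mfL_{[\omega,\alpha)}$, $\fq'\circ g$ is \emph{defined} by (\ref{e:TaylorOverOmega}), and the product identity is essentially Lemma~\ref{l:logassocsmall} and Corollary~\ref{c:products}, proved only in Section~\ref{sec:pc}; their proofs happen not to use Proposition~\ref{p:WellDef}, so the appeal can be made non-circular, but this must be verified and said, not assumed. Second, and more seriously, your uniform bound $\supp(\fq'\circ g)\subseteq\fd(\fq'\circ g)\,\fS''$ with a single well-based $\fS''$ for \emph{all} monomials $\fq'\in\mfL_{[\omega,\alpha)}$ amounts, after factoring out dominant monomials, to the claim that $\bigcup_{\omega\le\gamma<\alpha}\supp\big[(\ell_\gamma\circ g)\,\ell_{\lambda_g+\gamma}^{-1}\big]$ is well-based, i.e.\ a simultaneous statement about \emph{all} hyperlogarithms $\ell_\gamma$ with $\omega\le\gamma<\alpha$. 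Neumann's Lemma together with Lemma~\ref{well} and Corollary~\ref{morewell} controls only the $\mfL_{<\lambda_g+\omega}$-components of these supports---that is precisely what the paper's Lemma~\ref{suppTg} extracts from the descending-exponent structure of the derivatives---whereas your relative bound also requires controlling the components in $\mfL_{[\lambda_g+\omega,\alpha)}$ relative to the dominant monomial, uniformly in $\gamma$. Doing so means propagating the operator-support bounds $\mathfrak{S}^\infty$ of Section~\ref{sec:comp1} (Lemma~\ref{car} and (\ref{e:PartialTaylor})) through the Cantor normal form of $\ell_{\lambda_g+3}$, through $f\mapsto f^{\uparrow_3}$, through the derivatives $\phi^{(n)}$, and through multiplication by $\varepsilon^n$. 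I believe the claim is true and your plan can be completed along these lines, but this is a genuinely new lemma going beyond Lemma~\ref{suppTg} (which deliberately confines itself to $\beta$-supports exactly so as to avoid it); as written, your crucial step is asserted rather than proved.
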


\noindent Towards establishing this proposition we define ``$\beta$-summability'' and prove some lemmas about it. Let $\beta < \alpha$. Then for $h\in \LL_{<\alpha}$ we have $h = \sum_{\fn \in \mfL_{<\beta}} h_{[\fn]}\fn$ with all $h_{[\fn]}\in \LL_{[\beta,\alpha)}$, and well-based $\supp_\beta(h) := \{\fn\in \mfL_{<\beta}:\ h_{[\fn]} \neq 0\}$. 

Let $(h_i)_{i \in I}$ be a family of elements in $\LL_{<\alpha}$.
We say that $(h_i)$ is \emph{$\beta$-summable} if $\bigcup_{i \in I}\supp_\beta(h_i)$ is well-based and $\{i \in I:\fn \in \supp_\beta(h_i)\}$ is finite for every $\fn \in \mfL_{<\beta}$. If $(h_i)$ is a $\beta$-summable, then $\sum_{i\in I}h_i$ exists as an element of the Hahn field $\LL_{[\beta,\alpha)}[[\mfL_{<\beta}]]$ over $\LL_{[\beta,\alpha)}$, and therefore also as an
element of the Hahn field $\LL_{<\alpha}$ over $\R$ with the same value under the usual identification of $\LL_{[\beta,\alpha)}[[\mfL_{<\beta}]]$ with $\LL_{<\alpha}$. If $(h_i)$ is summable, then $\bigcup_{i \in I}\supp_\beta(h_i)$ is well-based, but $(h_i)$ is not necessarily $\beta$-summable. If $(h_i)$ is $\beta$-summable, then $(c_ih_i)$ is also $\beta$-summable for any family $(c_i)$ from $\LL_{[\beta,\alpha)}$.
For $\fm\in \mfL_{<\alpha}$ we have
$\fm = \fm_{< \beta}\fm_{\geq \beta}$ with $\fm_{< \beta}\in \mfL_{<\beta},\ \fm_{\geq \beta}\in \mfL_{[\beta, \alpha)}$, and then $\supp_\beta(\fm)=\{\fm_{<\beta}\}$.
Here is a consequence of Lemma \ref{l:SmallCompForm}:

\begin{cor}\label{c:SmallComp}
Suppose the family $(\fm_i)_{i \in I}$ in $\mfL_{<\omega}$ is summable. Then the family $(\fm_i \circ g)$ is $(\lambda_g+\omega)$-summable.
\end{cor}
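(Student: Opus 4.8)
The plan is to set $\beta:=\lambda_g+\omega$, noting $\beta<\alpha$ since $\lambda_g<\alpha$ and $\omega<\alpha$ while $\alpha=\omega^\lambda$ is closed under addition of ordinals below it; then I would verify the two defining clauses of $\beta$-summability for $(\fm_i\circ g)$. The starting point is Lemma~\ref{l:SmallCompForm}: for $\fm=\prod_n\ell_n^{r_n}\in\mfL_{<\omega}$ it gives $\supp(\fm\circ g)\subseteq\fd(\fm\circ g)\cdot\fS$ with $\fS=\fS(g)$ well-based and \emph{independent of $\fm$}, and with $\fd(\fm\circ g)=\fd(g)^{r_0}\prod_{n\ge1}\ell_{\lambda_g+n}^{r_n}$. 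Since the projection $\fw\mapsto\fw_{<\beta}$ onto the coordinates below $\beta$ is a group homomorphism, applying it yields
$$\supp_\beta(\fm\circ g)\ \subseteq\ \Theta(\fm)\cdot\fS_{<\beta},\qquad \Theta(\fm):=(\fd(\fm\circ g))_{<\beta}=\fd(g)_{<\beta}^{r_0}\prod_{n\ge1}\ell_{\lambda_g+n}^{r_n},$$
where $\fS_{<\beta}:=\{\fs_{<\beta}:\fs\in\fS\}$. Because $g>\R$, the monomial $\fd(g)$ has support in $[\lambda_g,\alpha)$ with strictly positive leading exponent $t_{\lambda_g}$ at $\lambda_g$, so $\Theta(\fm)$ has support in $[\lambda_g,\beta)$.

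The two order-theoretic facts I would then establish are: (a) the projection $\fs\mapsto\fs_{<\beta}$ preserves well-basedness, so $\fS_{<\beta}$ is well-based; and (b) $\Theta$ is a strictly increasing group homomorphism $\mfL_{<\omega}\to\mfL_{<\beta}$, hence an embedding of ordered groups (in particular injective). Both rest on the same remark about the lexicographic order: the coordinates below $\beta$ come lexicographically first, so if two hypermonomials agree on all coordinates $<\rho$ and their exponents first differ, among coordinates $<\beta$, at $\rho$, then that difference already decides their $\prec$-comparison. For (a) this shows $\fs_{<\beta}\prec\ft_{<\beta}\Rightarrow\fs\prec\ft$, whence an infinite strictly increasing sequence in $\fS_{<\beta}$ would lift to one in $\fS$. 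For (b), writing the exponent sequence of $\Theta(\fm)$ (value $r_0t_{\lambda_g}$ at $\lambda_g$ and $r_0t_{\lambda_g+n}+r_n$ at $\lambda_g+n$), the transformation of $(r_n)$ is triangular with top diagonal entry $t_{\lambda_g}>0$, so the least index at which $(r_n)$ increases coincides with the least index at which the exponent sequence of $\Theta(\fm)$ increases; here the positivity $t_{\lambda_g}>0$ is exactly what handles the leading coordinate.

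With (a) and (b) in hand I would conclude as follows. Since $(\fm_i)$ is summable, $\{\fm_i\}$ is well-based, so by (b) $\Theta(\{\fm_i\})$ is well-based, and by (a) so is $\fS_{<\beta}$; their product is then well-based by Neumann's Lemma and contains $\bigcup_i\supp_\beta(\fm_i\circ g)$, giving the first clause of $\beta$-summability. For the second, fix $\fn\in\mfL_{<\beta}$. By the finiteness half of Neumann's Lemma there are only finitely many pairs $(\fp,\ft)\in\Theta(\{\fm_i\})\times\fS_{<\beta}$ with $\fn=\fp\ft$; since $\fn\in\supp_\beta(\fm_i\circ g)$ forces $\fn=\Theta(\fm_i)\fs_{<\beta}$ for some $\fs\in\fS$, only finitely many values of $\Theta(\fm_i)$ can occur, hence by injectivity of $\Theta$ only finitely many values of $\fm_i$, each attained by finitely many indices by summability. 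Thus $\{i:\fn\in\supp_\beta(\fm_i\circ g)\}$ is finite.

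The main obstacle is step (b): checking that the projected dominant-monomial map $\Theta$ is genuinely order-preserving. This is where the hypothesis $g>\R$ enters decisively, through the positivity of the leading exponent of $\fd(g)$, and where the triangular mixing of $\fd(g)^{r_0}$ with $\prod_{n\ge1}\ell_{\lambda_g+n}^{r_n}$ must be analyzed carefully against the lexicographic order.
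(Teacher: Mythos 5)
Your proposal is correct and follows essentially the same route as the paper's proof: the same projection onto coordinates below $\beta=\lambda_g+\omega$, the same well-based set $\fS_{<\beta}$, and the same ordered-group embedding (your $\Theta$ is exactly the paper's $\Phi$, with the positivity of the leading exponent of $\fd(g)$ at $\lambda_g$ playing the identical role). The only difference is in the finish — the paper argues by contradiction, extracting a strictly decreasing subsequence of the $\fm_{i_n}$ and forcing a strictly increasing sequence in $\fS_{<\beta}$, while you verify the two clauses of $\beta$-summability directly from the well-basedness and finite-factorization properties of products of well-based sets (which, a minor quibble, the paper states as a standalone fact in the preliminaries rather than as part of Neumann's Lemma).
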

\begin{proof}
Set $\beta:= \lambda_g+\omega$. Lemma \ref{l:SmallCompForm} gives a well-based set $\fS\subseteq \mathfrak{L}_{<\alpha}$ such that for every monomial $\fm = \prod_{n}\ell_n^{r_n} \in \mfL_{<\omega}$, we have
\[
\supp(\fm \circ g)\ \subseteq\  \fd(g)^{r_0}\left(\prod_{n\ge 1}\ell_{\lambda_g+n}^{r_n}\right)\cdot\fS.
\]
Set $\fS_{<\beta} := \{\fn_{<\beta}:\ \fn \in \fS\}$. This set is still well-based and we have for $\fm\in \mathfrak{L}_{<\omega}$:
\[
\supp_\beta(\fm \circ g)\ \subseteq\  \fg^{r_0}\left(\prod_{n\ge 1}\ell_{\lambda_g+n}^{r_n}\right)\cdot\fS_{<\beta} , \qquad \fg\ :=\ \fd(g)_{<\beta}.
\]
Now $\fg=\prod_n \ell_{\lambda_g +n}^{s_n}$ with reals $s_n$ and $s_0>0$, so we have
an embedding 
$$\Phi\ :\  \mathfrak{L}_{<\omega} \to \mathfrak{L}_{[\lambda_g, \beta)}, \qquad \fm=\prod_n \ell_n^{r_n}\ \mapsto\ \fg^{r_0}\prod_{n\ge 1} \ell_{\lambda_g +n}^{r_n},$$
of ordered groups. Suppose towards a contradiction that $(\fm_i \circ g)$ is not $\beta$-summable. Then we have a sequence $(i_n)$ of distinct indices and an increasing sequence $(\fn_n)$ in $\mfL_{<\beta}$ with
$\fn_n\in \supp_{\beta}(\fm_{i_n}\circ g)$ for all $n$. By passing to a subsequence we arrange that $(\fm_{i_n})$ is strictly
decreasing.
Now $\fn_n=\Phi(\fm_{i_n})\fv_n$ with $\fv_n\in \fS_{<\beta}$, and $\Phi$ is order-preserving, so
$(\fv_n)$ is strictly increasing, contradicting that $\fS_{<\beta}$  is well-based. \end{proof}

\begin{lemma}\label{f:ReductionToOmega} Let $0 < \gamma < \alpha$ and $f\in \LL_{<\alpha}$. Then
\[
\supp_{\gamma+\omega}(f \circ \ell_\gamma)\ =\ \{\fm \circ \ell_\gamma:\ \fm \in \supp_\omega(f)\}.
\]
\end{lemma}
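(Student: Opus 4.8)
The plan is to exploit that the map $f\mapsto f\circ\ell_\gamma$ is an $\LL_{<\alpha}$-composition with $\ell_\gamma$ (Corollary~\ref{c:HyperlogComp}), hence a strongly $\R$-linear field embedding (Lemma~\ref{l:CompIsLinear}), and to track where the two factors of each decomposed term land relative to the factorization $\mfL_{<\alpha}=\mfL_{[\gamma+\omega,\alpha)}\cdot\mfL_{<\gamma+\omega}$; note that $\gamma+\omega<\alpha$ since $\alpha$ is closed under ordinal addition. First I would write $f=\sum_{\fm\in\supp_\omega(f)}f_{[\fm]}\fm$ with all $f_{[\fm]}\in\LL_{[\omega,\alpha)}$, and apply strong linearity together with multiplicativity to obtain
\[
f\circ\ell_\gamma\ =\ \sum_{\fm\in\supp_\omega(f)}\bigl(f_{[\fm]}\circ\ell_\gamma\bigr)\bigl(\fm\circ\ell_\gamma\bigr).
\]

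Next I would locate the two factors. By Lemma~\ref{uptrans} with $\nu=\omega$ we have $f_{[\fm]}\circ\ell_\gamma\in\LL_{[\gamma+\omega,\alpha)}$, and by Lemma~\ref{upmon} with $\nu=\omega$ we have $\fm\circ\ell_\gamma\in\mfL_{[\gamma,\gamma+\omega)}\subseteq\mfL_{<\gamma+\omega}$, with $\fm\mapsto\fm\circ\ell_\gamma$ an order isomorphism of $\mfL_{<\omega}$ onto $\mfL_{[\gamma,\gamma+\omega)}$; moreover $f_{[\fm]}\circ\ell_\gamma\neq0$ for $\fm\in\supp_\omega(f)$ since the composition is a field embedding and $f_{[\fm]}\neq0$. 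Thus each term $(f_{[\fm]}\circ\ell_\gamma)(\fm\circ\ell_\gamma)$ is a nonzero element of $\LL_{[\gamma+\omega,\alpha)}$ times the single monomial $\fm\circ\ell_\gamma\in\mfL_{<\gamma+\omega}$. Relative to the above factorization its $\supp_{\gamma+\omega}$ is therefore exactly $\{\fm\circ\ell_\gamma\}$, and its coefficient at $\fm\circ\ell_\gamma$ is $f_{[\fm]}\circ\ell_\gamma$.

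Finally I would assemble the pieces. The monomials $\fm\circ\ell_\gamma$ for $\fm\in\supp_\omega(f)$ are pairwise distinct by injectivity, and their set is well-based as the order-isomorphic image of the well-based set $\supp_\omega(f)$; hence the displayed family is $(\gamma+\omega)$-summable and its coefficients $(\,\cdot\,)_{[\fn]}$ may be read off term by term. This gives $(f\circ\ell_\gamma)_{[\fn]}=f_{[\fm]}\circ\ell_\gamma\neq0$ precisely when $\fn=\fm\circ\ell_\gamma$ for some $\fm\in\supp_\omega(f)$, and $(f\circ\ell_\gamma)_{[\fn]}=0$ otherwise, which is exactly the asserted equality $\supp_{\gamma+\omega}(f\circ\ell_\gamma)=\{\fm\circ\ell_\gamma:\fm\in\supp_\omega(f)\}$.

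The main obstacle is the bookkeeping in this last step: guaranteeing that no two terms contribute to the same $\fn$ and that the coefficient of the infinite sum at a fixed $\fn$ is genuinely computed termwise. Both points hinge on the injectivity and order-preservation of $\fm\mapsto\fm\circ\ell_\gamma$ from Lemma~\ref{upmon}, which rule out cancellation and secure the $(\gamma+\omega)$-summability; once these are in hand, everything else is a direct application of Lemmas~\ref{uptrans} and~\ref{upmon} and the strong linearity from Lemma~\ref{l:CompIsLinear}.
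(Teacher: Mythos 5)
Your proof is correct and follows essentially the same route as the paper: decompose $f=\sum_{\fm\in\supp_\omega(f)}f_{[\fm]}\fm$, apply strong linearity and multiplicativity, and invoke Lemmas~\ref{uptrans} and~\ref{upmon} with $\nu=\omega$ to locate the two factors in $\LL_{[\gamma+\omega,\alpha)}$ and $\mfL_{<\gamma+\omega}$ respectively. The only difference is that you spell out the final bookkeeping (injectivity, well-basedness, termwise reading of coefficients) that the paper leaves implicit.
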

\begin{proof} We have 
$f = \sum_{\fm \in \supp_{\omega}(f)}f_{[\fm]}\fm$
where all $f_{[\fm]}\in \LL_{[\omega,\alpha)}$. Then 
\[f\circ \ell_{\gamma}\ =\  \sum_{\fm \in \supp_{\omega}(f)}(f_{[\fm]}\circ \ell_{\gamma})(\fm\circ \ell_{\gamma}).\]
It remains to note that by Lemmas~\ref{uptrans} and ~\ref{upmon} we have $f_{[\fm]}\circ \ell_{\gamma}\in \LL_{[\gamma+\omega,\alpha)}$ and $\fm\circ \ell_{\gamma}\in \mathfrak{L}_{<\gamma+\omega}$, for all $\fm\in \supp_{\omega}(f)$. 
\end{proof}

\noindent
Next a result about the map $T_g$ introduced in (\ref{e:TaylorO}). It involves the set
\[\mathfrak{D}\ :=\ \left\{\prod_{m}\ell_m^{-d_m}:\ d_0, d_1,d_2,\dots\in \N,\ d_0\ge d_1\geq d_2\geq \cdots\right\}\ \subseteq\ \mathfrak{L}_{<\omega}.\]
Note that $\mathfrak{D}$ is well-based by Corollary~\ref{morewell}.

\begin{lemma}\label{suppTg} Let $\phi\in \LL_{[\omega,\alpha)}$ and set $\beta:= \lambda_g+\omega$. Then 
\[ \supp_{\beta} T_g(\phi)\ \subseteq\  (\mathfrak{D}\circ\ell_{\lambda_g+3})\cdot (\supp_{\beta}\varepsilon)^\infty,\] 
and the right hand side is a well-based subset of $\mathfrak{L}_{<\beta}$ independent of $\phi$. 
\end{lemma}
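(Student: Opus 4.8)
The plan is to expand $T_g(\phi)$ by its definition (\ref{e:TaylorO}) and control $\supp_\beta$ term by term, reducing everything to the inclusion $\supp_\omega(\phi^{(n)})\subseteq\mathfrak{D}$. Throughout set $\gamma:=\lambda_g+3$, so $0<\gamma<\alpha$ and $\gamma+\omega=\beta$. First I would record two elementary facts about $\supp_\beta$: writing $a=\sum_{\fg}a_{[\fg]}\fg$ and $b=\sum_{\fg}b_{[\fg]}\fg$ with $\fg$ ranging over $\mathfrak{L}_{<\beta}$ and coefficients in $\LL_{[\beta,\alpha)}$, the identity $(ab)_{[\fg]}=\sum_{\fm\fn=\fg}a_{[\fm]}b_{[\fn]}$ gives $\supp_\beta(ab)\subseteq(\supp_\beta a)(\supp_\beta b)$, and hence $\supp_\beta(\varepsilon^n)\subseteq(\supp_\beta\varepsilon)^n\subseteq(\supp_\beta\varepsilon)^\infty$; moreover $\supp_\beta$ of a summable sum is contained in the union of the $\supp_\beta$'s of its terms. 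Since $T_g(\phi)=\sum_n\frac{1}{n!}(\phi^{(n)}\circ\ell_\gamma)\varepsilon^n$ (a sum that exists by the Taylor-deformation construction), this reduces the problem to bounding $\supp_\beta(\phi^{(n)}\circ\ell_\gamma)$ by $\mathfrak{D}\circ\ell_\gamma$ for each $n$.

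For that bound I would apply Lemma~\ref{f:ReductionToOmega} with this $\gamma$ and $f=\phi^{(n)}$: since $\gamma+\omega=\beta$, it yields $\supp_\beta(\phi^{(n)}\circ\ell_\gamma)=\{\fm\circ\ell_\gamma:\ \fm\in\supp_\omega(\phi^{(n)})\}$. Thus the entire statement comes down to the key claim $\supp_\omega(\phi^{(n)})\subseteq\mathfrak{D}$ for all $\phi\in\LL_{[\omega,\alpha)}$ and all $n$.

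The crux is this key claim, and I expect it to be the main obstacle. By strong $\R$-linearity of the derivation it suffices to treat a single monomial $\fm\in\mathfrak{L}_{[\omega,\alpha)}$. Here I would use that the derivation on $\LL_{<\alpha}$ has operator support $\{\ell_\sigma^\dagger:\ \sigma<\alpha\}$, together with $\supp\der^n\subseteq(\supp\der)^n$ and Lemma~\ref{supp1}, so that every monomial of $\der^n\fm$ has the form $\ell_{\sigma_1}^\dagger\cdots\ell_{\sigma_n}^\dagger\fm$ with $\sigma_1,\dots,\sigma_n<\alpha$. As $\fm_{<\omega}=1$, its $<\omega$-part equals $\prod_{j=1}^n(\ell_{\sigma_j}^\dagger)_{<\omega}$. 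Now $(\ell_\sigma^\dagger)_{<\omega}=\prod_{m<\omega,\,m\le\sigma}\ell_m^{-1}$ has a non-increasing $\{0,1\}$-valued exponent sequence, so the product has exponent sequence $(-c_m)_m$ with $c_m=\#\{j:\ \sigma_j\ge m\}$; this $c_m$ is non-increasing in $m$ and bounded by $n$, whence $\prod_m\ell_m^{-c_m}\in\mathfrak{D}$ by the definition of $\mathfrak{D}$ via $D_\infty$. This gives $\supp_\omega(\der^n\fm)\subseteq\mathfrak{D}$, and the claim follows by strong linearity. Combining with the previous paragraph, $\supp_\beta(\phi^{(n)}\circ\ell_\gamma)\subseteq\mathfrak{D}\circ\ell_\gamma$, and therefore $\supp_\beta T_g(\phi)\subseteq(\mathfrak{D}\circ\ell_\gamma)(\supp_\beta\varepsilon)^\infty$.

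Finally I would verify the well-basedness and $\phi$-independence of the right-hand side. The set $\mathfrak{D}$ is well-based by Corollary~\ref{morewell}, and $\fm\mapsto\fm\circ\ell_\gamma$ is an isomorphism of ordered groups onto $\mathfrak{L}_{[\gamma,\gamma+\omega)}=\mathfrak{L}_{[\gamma,\beta)}$ by Lemma~\ref{upmon}, so $\mathfrak{D}\circ\ell_\gamma\subseteq\mathfrak{L}_{<\beta}$ is well-based. From $\varepsilon\prec1$ one gets $\fn_{<\beta}\preceq1$ for every $\fn\in\supp\varepsilon$, so $\supp_\beta\varepsilon\subseteq\mathfrak{L}_{<\beta}^{\preceq1}$ is well-based, and Neumann's Lemma makes $(\supp_\beta\varepsilon)^\infty$ well-based; the product of two well-based sets is well-based and contained in $\mathfrak{L}_{<\beta}$. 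Both factors depend only on $g$, so the bound is independent of $\phi$, as required.
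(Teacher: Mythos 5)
Your proposal is correct and follows essentially the same route as the paper: bound $\supp_\omega(\phi^{(n)})$ inside $\mathfrak{D}$ via the operator support of $\der_\alpha$ (the paper states this in one line, you work it out via the $\ell_{\sigma_j}^\dagger$ factors), transport it with Lemma~\ref{f:ReductionToOmega} and Lemma~\ref{upmon} using $\lambda_g+3+\omega=\beta$, and finish with Neumann's Lemma applied to $\supp_\beta\varepsilon$. The only cosmetic difference is that the paper gets $\supp_\beta\varepsilon\prec 1$ from $\varepsilon\preceq\ell_{\lambda_g+2}^{-1}$, whereas you derive the (equally sufficient) $\supp_\beta\varepsilon\preceq 1$ from $\varepsilon\prec 1$.
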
 
\begin{proof} The operator support of the derivation $\der_{\alpha}$ on $\LL_{<\alpha}$
and $\supp_\omega(\phi) \subseteq \{1\}$ give
\[
\supp_\omega(\phi^{(n)})\ \subseteq\ \left\{\prod_{m}\ell_m^{-d_m}:\ d_0, d_1,d_2,\dots\in \N,\ n= d_0\ge d_1\geq d_2\geq \cdots\right\}.
\]
Thus by Lemmas~\ref{upmon} and ~\ref{f:ReductionToOmega}, 
and $\lambda_g+3 +\omega= \lambda_g+\omega=\beta$, 
\[
\bigcup_{n}\supp_\beta\left(\phi^{(n)}\circ \ell_{\lambda_g+3}\right)\ \subseteq\ \mathfrak{D}\circ \ell_{\lambda_g+3}\ \subseteq\ \mathfrak{L}_{<\beta}. \]
Now $\mathfrak{D}\circ \ell_{\lambda_g+3}$ is well-based by
Lemma~\ref{upmon}, and $\varepsilon \preceq \ell_{\lambda_g+2}^{-1} $ gives $\supp_{\beta}\varepsilon\prec 1$. 
\end{proof} 

\begin{proof}[Proof of Proposition \ref{p:WellDef}]
We shall prove the stronger result that the family $$\left((f_{[\fm]}\circ g)(\fm \circ g)\right)_{\fm \in \supp_\omega(f)}$$ is $\beta$-summable for $\beta := \lambda_g+\omega$.
Suppose towards a contradiction that $(\fm_i)_{i<\omega}$ is a sequence of distinct elements of $\supp_\omega(f)$ and $(\fn_i)_{i<\omega}$ is an
increasing sequence in $\mfL_{<\beta}$ such that $\fn_i \in \supp_\beta\left((f_{[\fm_i]}\circ g)(\fm_i \circ g)\right)$ for all $i$. We have $\fn_i = \fp_i\fq_i$ where $\fp_i \in \supp_\beta(f_{[\fm_i]}\circ g)$ and 
$\fq_i\in \supp_\beta(\fm_i \circ g)$. By Corollary \ref{c:SmallComp}, the family $(\fm_i \circ g)$ is $\beta$-summable and so $(\fq_i)$ has a strictly decreasing subsequence. Thus $(\fp_i)$ has a strictly increasing subsequence. 
This contradicts the fact that Lemma~\ref{suppTg} gives a well-based set $\mathfrak{S}_g\subseteq \mathfrak{L}_{<\beta}$ such that $\supp_{\beta}\phi\circ g\subseteq \mathfrak{S}_g$ for all $\phi\in \LL_{[\omega,\alpha)}$.
\end{proof}

\noindent
We can now define
\begin{equation}
\label{e:FullComp}
f \circ g\  :=\ \sum_{\fm \in \mfL_{<\omega}}\left(f_{[\fm]}\circ g\right)\left(\fm\circ g\right).
\end{equation}
This agrees for $f=\fm\in \mathfrak{L}_{<\omega}$ and $f\in\LL_{[\omega,\alpha)}$ with $f\circ g$ as defined in (\ref{e:ProductOfLogs}) and (\ref{e:TaylorOverOmega}). It also agrees for $g=\ell_{\gamma}$ ($\gamma< \alpha$) with $f\circ \ell_{\gamma}$ as defined at the end of the previous section, but this requires an argument: For such $g$ we have $\lambda_g=\gamma$ and 
$\log_3(g)=\ell_{\gamma+3}$, $\varepsilon=0$, and so for $f\in \LL_{[\omega,\alpha)}$ and with $f\circ g$ as defined by (\ref{e:TaylorOverOmega}), 
$$f\circ g\ =\ f^{\uparrow_3}\circ \ell_{\gamma+3}\ =\ (f^{\uparrow_3}\circ \ell_3)\circ \ell_{\gamma}\ =\ f\circ \ell_{\gamma}$$ 
where the second equality uses Lemma~\ref{circadd}. For $f=\fm=\prod_n \ell_n^{r_n}\in \mathfrak{L}_{<\omega}$, we have
$$f\circ g\ =\ \prod_n \log_n(g)^{r_n}\ =\ \prod_n \ell_{\gamma+n}^{r_n}\ =\ \fm\circ \ell_{\gamma}$$
where for the last equality we use the first part of Lemma~\ref{upmon}. For arbitrary $f\in \LL_{<\alpha}$ we have $f=\sum_{\fm\in \supp_{\omega}(f)}f_{[\fm]}\fm$
with all $f_{[\fm]}\in \LL_{[\omega,\alpha)}$, and then
$$f\circ g\ =\ \sum_{\fm}(f_{[\fm]}\circ g)(\fm\circ g)\ =\ \sum_{\fm} (f_{[\fm]}\circ \ell_{\gamma})(\fm\circ \ell_{\gamma})\ =\ f\circ \ell_{\gamma}
$$
where the last equality uses Corollary~\ref{c:HyperlogComp} and $f\circ \ell_{\gamma}$ is defined as in (\ref{circLongCantor}).

\begin{lemma}\label{l:FullProto}
The map $f \mapsto f \circ g:\LL_{<\alpha} \to \LL_{<\alpha}$ is an $\LL_{<\alpha}$-composition with $g$.
\end{lemma}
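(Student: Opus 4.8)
The plan is to verify the three defining clauses of ``$\LL_{<\alpha}$-composition with $g$'' for the map $f\mapsto f\circ g$ given by (\ref{e:FullComp}), in the same spirit as Lemma~\ref{l:PartialSpecialComp}. Clause (1) is immediate from the definitions: $1\circ g=1$ by (\ref{e:ProductOfLogs}), and $x\circ g=\ell_0\circ g=\log_0(g)=g$. Throughout I would use the factorization $\LL_{<\alpha}=\LL_{[\omega,\alpha)}[[\mfL_{<\omega}]]$, under which each $\fp\in\mfL_{<\alpha}$ splits uniquely as $\fp=\fn\fm$ with $\fn\in\mfL_{[\omega,\alpha)}$ and $\fm\in\mfL_{<\omega}$, and the coefficient of $\fp$ in $f$ is $f_{\fp}=(f_{[\fm]})_{\fn}$.

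For clause (2), I would split $\fp_1=\fn_1\fm_1$ and $\fp_2=\fn_2\fm_2$ as above. Applying (\ref{e:FullComp}) to the single monomial $\fp_i$ gives $\fp_i\circ g=(\fn_i\circ g)(\fm_i\circ g)$. The multiplicativity of $\fm\mapsto\fm\circ g$ on $\mfL_{<\omega}$ noted after (\ref{e:ProductOfLogs}) handles the $\mfL_{<\omega}$-factors, while the map (\ref{e:TaylorOverOmega}) being a field embedding of $\LL_{[\omega,\alpha)}$ handles the $\mfL_{[\omega,\alpha)}$-factors; combining the two products exactly as in Lemma~\ref{l:PartialSpecialComp} yields $(\fp_1\fp_2)\circ g=(\fp_1\circ g)(\fp_2\circ g)$.

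For clause (3) --- which asks that $(\fp\circ g)_{\fp\in\supp f}$ be summable with $\sum_{\fp}f_{\fp}(\fp\circ g)=f\circ g$, i.e.\ that $f\mapsto f\circ g$ be strongly $\R$-linear --- I would proceed in two moves. First, granting summability of $(\fp\circ g)_{\fp\in\supp f}$, the identity follows by regrouping (legitimate for any summable family): grouping by the $\mfL_{<\omega}$-part $\fm$ and using strong $\R$-linearity of (\ref{e:TaylorOverOmega}) for the inner sum together with clause (2), one gets $\sum_{\fp}f_{\fp}(\fp\circ g)=\sum_{\fm}\bigl(\sum_{\fn}f_{\fn\fm}(\fn\circ g)\bigr)(\fm\circ g)=\sum_{\fm}(f_{[\fm]}\circ g)(\fm\circ g)$, which is $f\circ g$ by (\ref{e:FullComp}). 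Second, I must prove the summability itself, i.e.\ that $\bigcup_{\fp\in\supp f}\supp(\fp\circ g)$ is well-based and meets each monomial finitely often; for this I would combine the full support bound $\supp(\fm\circ g)\subseteq\fd(\fm\circ g)\cdot\fS$ from Lemma~\ref{l:SmallCompForm} with the uniform, $\phi$-independent support estimate of Lemma~\ref{suppTg}, running the same contradiction argument (via Corollary~\ref{c:SmallComp}) that established Proposition~\ref{p:WellDef}. Alternatively one can verify the hypothesis of Lemma~\ref{vdh} directly for the monomial map $\fp\mapsto\fp\circ g$ and then identify the extension with (\ref{e:FullComp}).

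The main obstacle is precisely this summability in clause (3). It is not a formal consequence of the strong linearity already available for the two pieces, because cancellation could in principle occur across the groups indexed by $\fm$, so that summability of the grouped sums does not by itself force summability of the ungrouped family. The resolution is exactly the support bookkeeping behind Proposition~\ref{p:WellDef}: the dominant monomial $\fd(\fm\circ g)=\fd(g)^{r_0}\prod_{n\ge1}\ell_{\lambda_g+n}^{r_n}$ behaves like a clean monomial, and the remaining supports are confined to fixed well-based sets by Lemmas~\ref{l:SmallCompForm} and~\ref{suppTg}, which together rule out such cancellation and give the required well-basedness and finite multiplicity.
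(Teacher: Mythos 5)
Your plan follows the paper's own route: clause (2) via the splitting $\fp=\fm\fn$ with $\fm\in\mfL_{<\omega}$, $\fn\in\mfL_{[\omega,\alpha)}$, and clause (3) by first proving summability of the pair-indexed family and then regrouping along the $\mfL_{<\omega}$-parts. The gap is in your summability step. The contradiction argument of Proposition~\ref{p:WellDef} begins with \emph{distinct} monomials $\fm_i\in\supp_\omega(f)$, which can then be taken strictly decreasing since $\supp_\omega(f)$ is well-based; only this strict decrease, fed through Lemma~\ref{l:SmallCompForm} and the order-embedding in the proof of Corollary~\ref{c:SmallComp}, produces the strictly monotone $\mfL_{<\beta}$-parts that collide with Lemma~\ref{suppTg} (where $\beta:=\lambda_g+\omega$). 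In the present lemma the index set is the set of \emph{pairs} $(\fm,\fn)$ with $\fm\fn\in\supp f$, and distinctness of pairs does not yield distinctness of the $\fm$-components: after passing to a subsequence, either $(\fm_i)$ is strictly decreasing --- the case your argument (and the paper's Case 2) handles --- or $(\fm_i)$ is constant. Your cited toolkit does not cover the constant case: Lemma~\ref{suppTg} confines only the $\mfL_{<\beta}$-parts of $\supp(\fn\circ g)$, not the full supports, so a putative increasing sequence $\fg_i\in\supp\big((\fm\fn_i)\circ g\big)$ with $\fm$ fixed, $\fn_i$ distinct, and the increase occurring entirely in the $\mfL_{[\beta,\alpha)}$-parts is perfectly consistent with both Lemmas~\ref{l:SmallCompForm} and~\ref{suppTg}. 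Hence your concluding claim that these two lemmas ``rule out such cancellation and give the required well-basedness and finite multiplicity'' is not correct as it stands.

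What excludes the constant-$\fm$ case (the paper's Case 1) is a different ingredient, one you do invoke in your regrouping step but not in the summability step: strong $\R$-linearity of the map (\ref{e:TaylorOverOmega}) on $\LL_{[\omega,\alpha)}$. Since $P(\fm):=\{\fn\in\mfL_{[\omega,\alpha)}:\ \fm\fn\in\supp f\}$ is well-based, the family $(\fn\circ g)_{\fn\in P(\fm)}$ is summable; by Lemma~\ref{prfam}, multiplication by the fixed element $\fm\circ g$ preserves summability, and since $(\fm\fn)\circ g=(\fm\circ g)(\fn\circ g)$ this summability directly contradicts the existence of the increasing sequence $(\fg_i)$. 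With this case added, your argument coincides with the paper's proof. Note finally that your proposed alternative via Lemma~\ref{vdh} does not sidestep the problem, since its hypothesis is exactly the summability statement at issue.
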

\begin{proof}
Let $\fm,\fn \in \mfL_{<\alpha}$; to get $(\fm\fn)\circ g=(\fm\circ g)\cdot (\fn\circ g)$, we use the decomposition $\fm = \fm_{<\omega}\fm_{\geq \omega}$ where $\fm_{<\omega} \in \mfL_{<\omega}$ and $\fm_{\ge \omega} \in \mfL_{[\omega,\alpha)}$, and likewise for
$\fn$. The desired equality then follows from the relevant definitions and properties we already stated.  

 To verify that clause (3) in the definition of
``$K$-composition with $h$'' is satisfied, with $\fM=\fN=\mathfrak{L}_{<\alpha}$ and $g$ in the role of $h$,
let $f\in \LL_{<\alpha}$ and let $P$ be the set of pairs $(\fm,\fn)$ with
$\fm\in \mathfrak{L}_{<\omega}$, $\fn\in \mathfrak{L}_{[\omega,\alpha)}$,
and $\fm\fn\in \supp(f)$. Then $f=\sum_{(\fm,\fn)\in P}f_{\fm\fn}\fm\fn$, with all $f_{\fm\fn}\in \R^\times$; our job is to show
that then $\big((\fm\fn)\circ g\big)_{(\fm,\fn)\in P}$ is summable and $\sum_{(\fm,\fn)\in P}f_{\fm\fn}\big((\fm\fn)\circ g\big)=f\circ g$.
Note that 
$$\supp_{\omega}f\ =\ \{\fm\in \mathfrak{L}_{<\omega}:\ (\fm, \fn)\in P \text{ for some }\fn\in \mathfrak{L}_{[\omega,\alpha)}\}.$$
For $\fm\in \supp_{\omega}f$ we set $P(\fm):=\{\fn\in \mathfrak{L}_{[\omega,\alpha)}:\ (\fm, \fn)\in P\}$ and
$$f_{[\fm]}\ :=\ \sum_{\fn\in P(\fm)}f_{\fm\fn}\fn\ \in\ \LL_{[\omega,\alpha)}.$$
Then $f=\sum_{\fm\in \supp_{\omega}f}f_{[\fm]} \fm$, so $f\circ g=\sum_{\fm\in \supp_{\omega}f}(f_{[\fm]}\circ g)(\fm\circ g)$; we shall need this equality at the end of the proof and first address the summability issue.   

Suppose towards a contradiction that $\big((\fm\fn)\circ g\big)_{(\fm,\fn)\in P}$ is not summable. Then we have a sequence $(\fm_i, \fn_i)_{i<\omega}$ of distinct elements in $P$ and an increasing sequence $(\fg_i)$ in $\mathfrak{L}_{<\alpha}$ with $\fg_i\in \supp \big((\fm_i\fn_i)\circ g\big) $ for all $i$. 
Now $\fm_i\in \supp_{\omega} f$ for all $i$; by passing to a subsequence we arrange that either $(\fm_i)$ is constant, or $(\fm_i)$ is strictly decreasing, and so we now consider these two cases.  

\medskip\noindent
{\bf Case 1}: $(\fm_i)$ is constant, say $\fm_i=\fm$ for all $i$. Then
$\fn_i\in P(\fm)$ for all $i$, and
$$ f_{[\fm]}\circ g\ =\ \sum_{\fn\in P(\fm)}f_{\fm\fn}(\fn\circ g), \qquad (\fm\circ g)(f_{[\fm]}\circ g)\ =\ \sum_{\fn\in P(\fm)}f_{\fm\fn}\big((\fm\fn)\circ g\big),$$
using the first part of the proof. In particular, the sum $\sum_{\fn\in P(\fm)}f_{\fm\fn}((\fm\fn)\circ g)$ exists, contradicting
that $(\fg_i)$ is increasing. So we must be in the next case: 

\medskip\noindent
{\bf Case 2}: $(\fm_i)$ is strictly decreasing. Now $$\fg_i\in \supp\big((\fm_i\fn_i)\circ g\big)\ \subseteq\ \supp\big((\fm_i\circ g)(\fn_i\circ g)\big),$$ so $\fg_i=\fp_i\fq_i$ with $\fp_i\in \supp(\fm_i\circ g)$ and $\fq_i\in \supp(\fn_i\circ g)$. Set $\beta:= \lambda_g+\omega$. 
Since $(\fm_i)$ is strictly decreasing, we can arrange by passing to a subsequence that
$((\fp_i)_{<\beta})$ is strictly decreasing, in view of Lemma~\ref{l:SmallCompForm} and
the proof of Corollary~\ref{c:SmallComp}. Since $(\fg_i)$ is increasing, 
$((\fq_i)_{<\beta})$ must be strictly increasing, contradicting Lemma~\ref{suppTg}. We have now shown that $\big((\fm\fn)\circ g\big)_{(\fm,\fn)\in P}$ is summable. Its sum is $f\circ g$:
\begin{align*}  \sum_{(\fm,\fn)\in P}f_{\fm\fn}\big((\fm\fn)\circ g\big)\ &=\ \sum_{\fm\in \supp_{\omega}f}\big(\sum_{\fn\in P(\fm)}f_{\fm\fn}(\fm\fn)\circ g\big)\\
&=\ \sum_{\fm\in \supp_{\omega}f}\big(\sum_{\fn\in P(\fm)}f_{\fm\fn}(\fn\circ g)(\fm\circ g)\big)\\
&=\  
\sum_{\fm\in \supp_{\omega}f}\big(\sum_{\fn\in P(\fm)}f_{\fm\fn}(\fn\circ g)\big)\cdot\fm\circ g\\
&=\ \sum_{\fm\in \supp_{\omega}f}(f_{[\fm]}\circ g)(\fm\circ g)\ 
=\ f\circ g. \qedhere
\end{align*}    
\end{proof}

\subsection*{Extending to $\LL$} For $f,g\in \LL$ with $g>\R$
there are of course many $\alpha$ for which $f,g\in \LL_{<\alpha}$, and for each of those we have a value $f\circ g\in \LL_{<\alpha}$;
it is easy to check that this value $f\circ g$ is independent of $\alpha$.
Thus we have constructed a map 
$$(f,g)\mapsto f\circ g: \LL\times \LL^{>\R}\to \LL.$$ 
In the next section we show that this map is a composition on $\LL$ as defined in the introduction. {\bf Given $f,g\in \LL$ with $g>\R$ we let from now on
 $f\circ g$ denote the value of the above map at $(f,g)$.} We continue nevertheless our work in the setting of
 a Hahn field $\LL_{<\alpha}=\R[[\mathfrak{L}_{<\alpha}]]$ with $\alpha$ as before.

\section{Properties of Composition}\label{sec:pc}

\noindent
As before, $\alpha = \omega^\lambda$, where $\lambda$ is an infinite limit ordinal. Our job is to show that the map $(f,g)\mapsto f\circ g: \LL_{<\alpha}\times \LL_{<\alpha}^{>\R}\to \LL_{<\alpha}$
defined in the previous section is a composition on $\LL_{<\alpha}$ in the sense of Section~\ref{comp}. We
first prove the chain rule, and then use this to derive associativity.  But our proof of the chain rule requires a special case of associativity,
namely $\log(f\circ g) = (\log f)\circ g$ for $f \in \LL_{<\alpha}^>$ and $g \in \LL_{<\alpha}^{>\R}$. This is our starting point: 

\subsection*{Compatibility of taking logarithms and composition}
The first lemma treats the case that $g$ in $f\circ g$ is a hyperlogarithm.

\begin{lemma}\label{l:logassoclog}
Let $f \in \LL_{<\alpha}^>$ and $\gamma<\alpha$. Then $\log(f \circ \ell_\gamma) = (\log f)\circ \ell_\gamma$.
\end{lemma}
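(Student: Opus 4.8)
The plan is to show that the two sides have the same derivative and then that they have the same constant term. Set $\delta(f):=\log(f\circ\ell_\gamma)-(\log f)\circ\ell_\gamma$ for $f\in\LL_{<\alpha}^{>}$; this is well-defined since $f\circ\ell_\gamma>0$ (the map $\phi\mapsto\phi\circ\ell_\gamma$ is an ordered field embedding by Corollary~\ref{c:HyperlogComp} and Lemma~\ref{l:CompIsLinear}) and since $\log\LL_{<\alpha}^{>}\subseteq\LL_{<\alpha}$ because $\alpha$ is an infinite limit ordinal. First I would check $\delta(f)'=0$. Using that the derivation respects logarithms, so $(\log F)'=F'/F$ for $F\in\LL_{<\alpha}^{>}$, together with the chain rule of Corollary~\ref{c:HyperlogChain}, we get
\[
\bigl(\log(f\circ\ell_\gamma)\bigr)'=\frac{(f\circ\ell_\gamma)'}{f\circ\ell_\gamma}=\frac{(f'\circ\ell_\gamma)\,\ell_\gamma'}{f\circ\ell_\gamma},
\]
while, again by Corollary~\ref{c:HyperlogChain} applied to $\log f$ and the fact that $\phi\mapsto\phi\circ\ell_\gamma$ is a field homomorphism,
\[
\bigl((\log f)\circ\ell_\gamma\bigr)'=\bigl((\log f)'\circ\ell_\gamma\bigr)\ell_\gamma'=\bigl((f'/f)\circ\ell_\gamma\bigr)\ell_\gamma'=\frac{(f'\circ\ell_\gamma)\,\ell_\gamma'}{f\circ\ell_\gamma}.
\]
Hence $\delta(f)'=0$, and since the constant field of $\LL_{<\alpha}$ is $\R$ (Lemma~\ref{l4}) we conclude $\delta(f)\in\R$.

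It then remains to show that the constant $\delta(f)$ is $0$, i.e. that both sides have the same constant term. Writing $f=c\,\fd(f)(1+\epsilon)$ with $c\in\R^{>}$, $\fm:=\fd(f)$ and $\epsilon\prec1$, I would use two auxiliary facts: (a) for every monomial $\fm$, the series $\fm\circ\ell_\gamma$ has leading coefficient $1$; and (b) $(\log\fm)\circ\ell_\gamma$ has constant term $0$. Granting these, $f\circ\ell_\gamma=c\,(\fm\circ\ell_\gamma)\,(1+\epsilon\circ\ell_\gamma)$ has leading coefficient $c$, so $\log(f\circ\ell_\gamma)$ has constant term $\log c$; on the other hand $(\log f)\circ\ell_\gamma=(\log\fm)\circ\ell_\gamma+\log c+\log(1+\epsilon\circ\ell_\gamma)$ has constant term $0+\log c+0=\log c$. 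Thus $\delta(f)=\log c-\log c=0$, as desired.

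To establish (a), by the inductive definition (\ref{circLongCantor}) it suffices to treat a single $\ell_{\omega^\beta}$: decomposing $\fm=\fm_{<\mu}\fm_{\ge\mu}$, formula (\ref{e:BigMonomials}) shows $\fm_{<\mu}\circ\ell_{\omega^\beta}$ is again a monomial, and Lemma~\ref{car} gives $\fm_{\ge\mu}\circ\ell_{\omega^\beta}\sim\fm_{\ge\mu}$, so the product has leading coefficient $1$; leading coefficient $1$ is then preserved under each further composition with an $\ell_{\omega^{\beta_j}}$ by the same decomposition. For (b), strong $\R$-linearity (Lemma~\ref{l:CompIsLinear}) gives $(\log\fm)\circ\ell_\gamma=\sum_\beta r_\beta(\ell_{\beta+1}\circ\ell_\gamma)$ where $\fm=\ell^r$, and by Lemma~\ref{bg} each $\ell_{\beta+1}\circ\ell_\gamma=\ell_{\gamma+\beta+1}-\lambda_{\ell_\gamma;\beta+1}-\epsilon$ (with $0\le\epsilon\prec1$) has constant term $-\lambda_{\ell_\gamma;\beta+1}$, so the constant term of the whole sum is $-\sum_\beta r_\beta\lambda_{\ell_\gamma;\beta+1}$.

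The crux, and the step I expect to be the main obstacle, is precisely the vanishing needed in (b): one must observe that $\lambda_{\ell_\gamma;\beta+1}=0$ for \emph{every} $\beta$, because the possible ``defects'' $\lambda_{\ell_\gamma;\nu}$ are nonzero only when $\nu=\omega^{\beta_i+1}$ for an exponent $\beta_i$ occurring in the Cantor normal form of $\gamma$, and such $\nu$ are limit ordinals, whereas the indices $\beta+1$ fed into the composition by $\log\fm$ are all successor ordinals. This is exactly the point where the nonadditive phenomenon $\ell_\mu\circ\ell_{\omega^\beta}=\ell_\mu-1$ threatens compatibility of $\log$ with composition but fails to break it: the $-1$-type defects sit only at limit-ordinal indices, which $\log\fm$ never reaches.
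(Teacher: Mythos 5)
Your proof is correct, and it follows the same two-step skeleton as the paper's: show both sides have the same derivative via the chain rule, then show they have the same constant term by checking that $\fd(f)\circ\ell_\gamma$ has leading coefficient $1$ and that $\big(\log\fd(f)\big)\circ\ell_\gamma$ has constant term $0$. The difference is in how the general $\gamma$ is reached, and it is a real one: the paper first proves the case $\gamma=\omega^\beta$, using Lemma~\ref{l:PartialChain} for the derivative step and Lemma~\ref{comgam} for the constant terms, and then obtains arbitrary $\gamma$ by induction on the length $k$ of the representation (\ref{circLongCantor}); you instead handle arbitrary $\gamma<\alpha$ in a single pass by invoking the general chain rule (Corollary~\ref{c:HyperlogChain}) together with Lemma~\ref{bg}, which already packages the induction on the Cantor normal form of $\gamma$. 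Both routes are legitimate, since Corollary~\ref{c:HyperlogChain} and Lemma~\ref{bg} are established in Section~\ref{sec:comp1}, well before this lemma. Your version also makes explicit what the paper leaves implicit: the constant-term defects $\lambda_{\ell_\gamma;\nu}$ can be nonzero only at $\nu=\omega^{\beta_i+1}$, which are limit ordinals, whereas $\log\fd(f)$ feeds only successor indices $\beta+1$ into the composition---this is precisely why the identity $\ell_{\omega^{\beta+1}}\circ\ell_{\omega^\beta}=\ell_{\omega^{\beta+1}}-1$ never contaminates the constant term of $(\log f)\circ\ell_\gamma$. What the paper's route buys is economy of inputs (only the $\omega^\beta$ cases of the chain rule and of the constant-term computation are needed, with the induction done once, locally); what yours buys is a shorter, induction-free argument at this point, and a cleaner isolation of the successor-versus-limit phenomenon, at the cost of leaning on the heavier Lemma~\ref{bg}.
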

\begin{proof}
Suppose $\gamma=\omega^\beta$, and set $\mu := \omega^{\beta+1}$.
By Lemma \ref{l:PartialChain}, 
\[
\big(\log(f\circ \ell_\gamma)\big)'\ =\ \frac{(f\circ \ell_\gamma)'}{f\circ \ell_\gamma}\ =\ \frac{f'\circ \ell_\gamma}{f\circ \ell_\gamma}\ell_\gamma'\ =\ \big((\log f)'\circ \ell_\gamma\big)\ell_\gamma'\ =\ \big((\log f)\circ \ell_\gamma\big)',
\]
So it remains to show that $\log(f \circ \ell_\gamma)$ and $(\log f)\circ \ell_\gamma$ have the same constant term. We have $f = c\fm(1+\epsilon)$, where $c\in \R^{>}$, $\fm = \prod_{\rho<\alpha}\ell_\gamma^{r_\rho}$, and $\epsilon \prec 1$. Then
\[
\log f\ =\ \log c+\sum_{\rho<\alpha}r_\rho\ell_{\rho+1} + \log(1+\epsilon)
\]
As $\log(1+\epsilon)$ is infinitesimal and  $\ell_{\rho+1}\circ \ell_\gamma$ has constant term $0$ by Corollary \ref{comgam} and its proof, the constant term of $(\log f)\circ \ell_\gamma$ is $\log c$.
Note that $\fm \circ \ell_\gamma$ has leading coefficient 1: if $\fm \in  \mfL_{<\mu}$, then this follows from (\ref{e:BigMonomials}); if $\fm \in \LL_{[\mu,\alpha)}$, then it follows from $\fm \circ \ell_\gamma \sim \fm$,
which holds by Lemma \ref{car}. Since
\[
f \circ \ell_\gamma\ =\ c(\fm \circ \ell_\gamma)\big(1+(\epsilon\circ \ell_\gamma)\big)
\]
and $\epsilon\circ \ell_\gamma \prec 1$, the leading coefficient of $f \circ \ell_\gamma$ is $c$,  so the constant term of $\log(f \circ \ell_\gamma)$ is $\log c$ as well. The general case now follows by induction on $k$ in (\ref{circLongCantor}).
\end{proof}

\noindent We now turn our attention to $\LL_{[\omega,\alpha)}$ and the map $f \mapsto f^{\uparrow_3}$. Note that if $f \in  \LL_{[\omega,\alpha)}^>$, then also $\log f\in \LL_{[\omega,\alpha)}$. Thus the statement of the following lemma makes sense:

\begin{lemma}\label{logup3} Let $f\in \LL_{[\omega,\alpha)}^{>}$. Then
$\log(f^{\uparrow_3}) = (\log f)^{\uparrow_3}$.
\end{lemma}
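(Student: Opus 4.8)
The plan is to reduce the identity to the already-proved compatibility of $\log$ with composition by a hyperlogarithm, namely Lemma~\ref{l:logassoclog} in the instance $\gamma=3$ (note that $3<\alpha$), together with the defining property \eqref{compup3} that $f\mapsto f^{\uparrow_3}$ inverts $f\mapsto f\circ\ell_3$ on $\LL_{[\omega,\alpha)}$. First I would record that $\uparrow_3$, being the inverse of the ordered field automorphism $f\mapsto f\circ\ell_3$ of $\LL_{[\omega,\alpha)}$ (Lemma~\ref{car} and the discussion preceding \eqref{compup3}), is itself an ordered field automorphism of $\LL_{[\omega,\alpha)}$. Hence $f>0$ gives $f^{\uparrow_3}\in\LL_{[\omega,\alpha)}^{>}$, so $\log(f^{\uparrow_3})$ is defined and, as noted just before the lemma, again lies in $\LL_{[\omega,\alpha)}$; likewise $\log f\in\LL_{[\omega,\alpha)}$, so both sides of the claimed identity are meaningful.

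The key step is to apply Lemma~\ref{l:logassoclog} with $\gamma=3$ to the positive element $f^{\uparrow_3}\in\LL_{<\alpha}^{>}$ (here the composition $f^{\uparrow_3}\circ\ell_3$ of that lemma coincides, by \eqref{circLongCantor} and the definition of $\uparrow_3$, with the automorphism $\cdot\circ\ell_3$ of $\LL_{[\omega,\alpha)}$). This yields
\[
\log\bigl(f^{\uparrow_3}\circ\ell_3\bigr)\ =\ \bigl(\log(f^{\uparrow_3})\bigr)\circ\ell_3.
\]
By \eqref{compup3} the left-hand side equals $\log f$, so
\[
\bigl(\log(f^{\uparrow_3})\bigr)\circ\ell_3\ =\ \log f.
\]

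Finally I would apply $\uparrow_3$ to this last identity. Since $\log(f^{\uparrow_3})\in\LL_{[\omega,\alpha)}$ and $\uparrow_3$ inverts $f\mapsto f\circ\ell_3$ on $\LL_{[\omega,\alpha)}$, the left-hand side becomes $\log(f^{\uparrow_3})$, while the right-hand side is by definition $(\log f)^{\uparrow_3}$, which is meaningful because $\log f\in\LL_{[\omega,\alpha)}$. This gives $\log(f^{\uparrow_3})=(\log f)^{\uparrow_3}$, as desired. The proof is essentially immediate once Lemma~\ref{l:logassoclog} is available; the only points needing care are the bookkeeping that $f^{\uparrow_3}$ is positive and that $\log$ maps $\LL_{[\omega,\alpha)}^{>}$ into $\LL_{[\omega,\alpha)}$, so that $\log$ and $\uparrow_3$ may be composed and their order interchanged. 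I do not expect any substantial obstacle beyond invoking the hyperlogarithm case correctly.
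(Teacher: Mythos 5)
Your proof is correct and is essentially the paper's own argument: both apply Lemma~\ref{l:logassoclog} with $\gamma=3$ to $f^{\uparrow_3}$, use \eqref{compup3} to identify $\log(f^{\uparrow_3})\circ\ell_3$ with $\log f=(\log f)^{\uparrow_3}\circ\ell_3$, and then cancel the automorphism $\cdot\circ\ell_3$ (your phrasing ``apply $\uparrow_3$ to both sides'' is the same step). The bookkeeping you add about positivity of $f^{\uparrow_3}$ and $\log$ mapping $\LL_{[\omega,\alpha)}^{>}$ into $\LL_{[\omega,\alpha)}$ is accurate and consistent with the remarks preceding the lemma in the paper.
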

\begin{proof}
Using Lemma \ref{l:logassoclog} we have
\[
\log(f^{\uparrow_3})\circ \ell_3\ =\ \log(f^{\uparrow_3}\circ \ell_3)\ =\ \log f\ =\ (\log f)^{\uparrow_3}\circ \ell_3
\]
and so $\log(f^{\uparrow_3}) = (\log f)^{\uparrow_3}$.
\end{proof}

\begin{lemma}\label{l:logassocsmall} Let $f \in \LL_{[\omega,\alpha)}^>$ and $g \in \LL_{<\alpha}^{>\R}$. Then
$\log(f \circ g) = (\log f)\circ g$. \end{lemma}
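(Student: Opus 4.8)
The plan is to read the definition (\ref{e:TaylorOverOmega}) of $f\circ g$ for $f\in\LL_{[\omega,\alpha)}$ as an application of the Taylor deformation $T_g$ of (\ref{e:TaylorO}) to $f^{\uparrow_3}$, and then to push $\log$ through $T_g$ by means of the abstract Lemma~\ref{l:EqualLogs}. Indeed, $T_g$ is exactly the Taylor deformation (in the sense of the subsection on Taylor deformations in Section~\ref{prelim}) of the strongly $\R$-linear ordered-and-valued field embedding $\Phi\colon\phi\mapsto\phi\circ\ell_{\lambda_g+3}$ of $\LL_{<\alpha}$ into itself---this is an $\LL_{<\alpha}$-composition with $\ell_{\lambda_g+3}$ by Corollary~\ref{c:HyperlogComp} and Lemma~\ref{l:CompIsLinear}---with respect to the derivation $\der_\alpha$ and the deformation parameter $\varepsilon$ given by $\log_3(g)=\ell_{\lambda_g+3}+\varepsilon$. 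Since $\alpha=\omega^\lambda$ is an infinite limit ordinal, we have $\log\LL_{<\alpha}^{>}\subseteq\LL_{<\alpha}$, $\supp\der_\alpha\prec 1$ is well-based, and $\lambda_g+3<\alpha$, so all hypotheses needed to run the Taylor-deformation machinery inside $\LL_{<\alpha}=\R[[\mathfrak{L}_{<\alpha}]]$ are in place.

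I would first verify the three hypotheses of Lemma~\ref{l:EqualLogs} for the element $y:=f^{\uparrow_3}$. We have $y>0$ because $f\mapsto f^{\uparrow_3}$ is a field automorphism of $\LL_{[\omega,\alpha)}$ (the inverse of the automorphism $f\mapsto f\circ\ell_3$ from Lemma~\ref{car}) and hence order-preserving; the identity $(\log y)'=y^\dagger$ holds for every positive element of $\LL$; and $\log\Phi(y)=\Phi(\log y)$ reads $\log(y\circ\ell_{\lambda_g+3})=(\log y)\circ\ell_{\lambda_g+3}$, which is precisely Lemma~\ref{l:logassoclog} with $\gamma=\lambda_g+3$. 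Lemma~\ref{l:EqualLogs} then delivers $\log T_g(y)=T_g(\log y)$.

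It remains to chain the identities. Using (\ref{e:TaylorOverOmega}) to write $f\circ g=T_g(f^{\uparrow_3})$, then the conclusion just obtained, then Lemma~\ref{logup3} (which gives $\log(f^{\uparrow_3})=(\log f)^{\uparrow_3}$, valid because $\log f\in\LL_{[\omega,\alpha)}$ when $f\in\LL_{[\omega,\alpha)}^{>}$), and finally (\ref{e:TaylorOverOmega}) again applied to $\log f$, I obtain
\[
\log(f\circ g)\ =\ \log T_g\big(f^{\uparrow_3}\big)\ =\ T_g\big(\log(f^{\uparrow_3})\big)\ =\ T_g\big((\log f)^{\uparrow_3}\big)\ =\ (\log f)\circ g .
\]
The substantive input has already been isolated in Lemma~\ref{l:logassoclog} (the hyperlogarithm case) and Lemma~\ref{logup3}, with Lemma~\ref{l:EqualLogs} handling the passage from $\Phi$ to its $\varepsilon$-deformation $T_g$; so I expect no serious obstacle. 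The only bookkeeping requiring care is the identification of $T_g$ with the abstract deformation $T$ and the checks that $\lambda_g+3<\alpha$ and that $f^{\uparrow_3}$ and $(\log f)^{\uparrow_3}$ stay in $\LL_{[\omega,\alpha)}$, so that both occurrences of (\ref{e:TaylorOverOmega}) are legitimate.
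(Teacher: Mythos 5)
Your proposal is correct and follows essentially the same route as the paper's own proof: write $f\circ g=T_g(f^{\uparrow_3})$, use Lemma~\ref{l:logassoclog} (with $\gamma=\lambda_g+3$) to verify the hypothesis of Lemma~\ref{l:EqualLogs}, and then chain $\log T_g(f^{\uparrow_3})=T_g(\log f^{\uparrow_3})=T_g((\log f)^{\uparrow_3})=(\log f)\circ g$ via Lemma~\ref{logup3}. Your additional checks (positivity of $f^{\uparrow_3}$, $(\log y)'=y^\dagger$, membership of $(\log f)^{\uparrow_3}$ in $\LL_{[\omega,\alpha)}$) are exactly the bookkeeping the paper leaves implicit.
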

\begin{proof}
Let $T_g$ be the Taylor deformation in (\ref{e:TaylorO}), so $f \circ g = T_g(f^{\uparrow_3})$. By Lemma \ref{l:logassoclog} we have $\log(f^{\uparrow_3} \circ \ell_{\lambda_g+3}) =\log(f^{\uparrow_3}) \circ \ell_{\lambda_g+3}$. Then Lemmas~\ref{l:EqualLogs} and ~\ref{logup3} give 
\[
\log(f\circ g)\ =\  \log T_g(f^{\uparrow_3})\ =\ T_g\big(\log(f^{\uparrow_3})\big)\ =\ T_g\big((\log f)^{\uparrow_3}\big)\ =\ (\log f)\circ g.\qedhere
\] 
\end{proof}

\noindent
We can now prove the main result of this subsection:
 
\begin{prop}\label{l:logassoc} Let $f \in \LL_{<\alpha}^>$ and $g \in \LL_{<\alpha}^{>\R}$. Then $\log(f\circ g) = (\log f) \circ g$.
\end{prop}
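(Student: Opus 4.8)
The plan is to reduce the general statement to the case where $f$ is a logarithmic hypermonomial, exploiting that both $\log\colon\LL_{<\alpha}^{>}\to\LL_{<\alpha}$ and the map $f\mapsto f\circ g$ are additive. Concretely, write $f=c\,\fd(f)(1+\epsilon)$ with $c\in\R^{>}$, $\fd(f)\in\mfL_{<\alpha}$, and $\epsilon\prec 1$, so that $\log f=\log c+\log\fd(f)+\log(1+\epsilon)$. Since $f\mapsto f\circ g$ is a strongly $\R$-linear ordered field embedding (Lemma~\ref{l:CompIsLinear}) and, by Lemma~\ref{l:FullProto}, an $\LL_{<\alpha}$-composition with $g$, it fixes $\R$, is multiplicative, and satisfies $\bigl(\log(1+\epsilon)\bigr)\circ g=\log\bigl(1+(\epsilon\circ g)\bigr)$. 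Hence $f\circ g=c\,(\fd(f)\circ g)\bigl(1+(\epsilon\circ g)\bigr)$ with $\epsilon\circ g\prec 1$, and taking $\log$ gives $\log(f\circ g)=\log c+\log(\fd(f)\circ g)+\log\bigl(1+(\epsilon\circ g)\bigr)$. Comparing with $(\log f)\circ g=(\log c)\circ g+(\log\fd(f))\circ g+(\log(1+\epsilon))\circ g$ and using $(\log c)\circ g=\log c$ together with the compatibility just recalled, the whole statement reduces to the single identity $\log(\fm\circ g)=(\log\fm)\circ g$ for the monomial $\fm=\fd(f)\in\mfL_{<\alpha}$.

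Next I would prove this monomial identity by splitting $\fm=\fm_{<\omega}\,\fm_{\geq\omega}$ with $\fm_{<\omega}\in\mfL_{<\omega}$ and $\fm_{\geq\omega}\in\mfL_{[\omega,\alpha)}$. Multiplicativity of $f\mapsto f\circ g$ and of $\log$ on $\LL_{<\alpha}^{>}$ reduce the identity to the two factors separately. Since $\fm_{\geq\omega}\in\mfL_{[\omega,\alpha)}\subseteq\LL_{[\omega,\alpha)}^{>}$, Lemma~\ref{l:logassocsmall} applies directly and gives $\log(\fm_{\geq\omega}\circ g)=(\log\fm_{\geq\omega})\circ g$. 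For $\fm_{<\omega}=\prod_n\ell_n^{r_n}$, the defining formula (\ref{e:ProductOfLogs}) gives $\fm_{<\omega}\circ g=\prod_n\log_n(g)^{r_n}$; taking $\log$ and using $\log\prod_i f_i=\sum_i\log f_i$ for multipliable families together with $\log(\log_n(g))=\log_{n+1}(g)$ yields $\log(\fm_{<\omega}\circ g)=\sum_n r_n\log_{n+1}(g)$. On the other side $\log\fm_{<\omega}=\sum_n r_n\ell_{n+1}$, and strong $\R$-linearity of $f\mapsto f\circ g$ together with $\ell_{n+1}\circ g=\log_{n+1}(g)$ gives $(\log\fm_{<\omega})\circ g=\sum_n r_n\log_{n+1}(g)$ as well. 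Combining the two factors proves the monomial identity, and hence the proposition.

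The genuine mathematical content sits in Lemmas~\ref{l:logassoclog} and \ref{l:logassocsmall}, which already handle the compatibility when $g$ is a hyperlogarithm and when $f$ lives in $\LL_{[\omega,\alpha)}$; the present argument is essentially a gluing step built on the structural identity $\LL_{<\alpha}=\LL_{[\omega,\alpha)}[[\mfL_{<\omega}]]$. The one point requiring a little care is the $\fm_{<\omega}$ factor, since it falls under neither earlier lemma directly: there one must lean on the explicit product-of-logarithms definition (\ref{e:ProductOfLogs}) and the identity $\log\prod_i f_i=\sum_i\log f_i$, and then recognize the resulting series as $(\log\fm_{<\omega})\circ g$ via strong linearity of composition. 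I expect no serious obstacle beyond keeping careful track of these routine but numerous appeals to additivity, multiplicativity, and strong $\R$-linearity.
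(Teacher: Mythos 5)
Your proposal is correct and follows essentially the same route as the paper's proof: the decomposition $f=c\,\fd(f)(1+\epsilon)$, the use of strong linearity for the $\log(1+\epsilon)$ term, the splitting $\fd(f)=\fm_{<\omega}\fm_{\geq\omega}$, Lemma~\ref{l:logassocsmall} for the $\mfL_{[\omega,\alpha)}$ factor, and the product-of-logarithms formula~(\ref{e:ProductOfLogs}) together with multipliability (Lemma~\ref{l:SumOfLogs}) for the $\mfL_{<\omega}$ factor. The only difference is cosmetic ordering of which side of the identity is expanded first.
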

\begin{proof}
We have $f = c\fd(f)(1+\epsilon)$ where $c \in \R^>$ and $\epsilon \prec 1$.
Then
\[
(\log f) \circ g\ =\ \log c +  \big(\log \fd(f)\big)\circ g + 
\big(\log(1+\epsilon)\big)\circ g.
\]
The strong linearity of composition with $g$ gives $\big(\log(1+\epsilon)\big)\circ g = \log\big((1+\epsilon)\circ g\big)$. Thus, it remains to show that $\big(\log \fd(f)\big)\circ g = \log\big(\fd(f) \circ g\big)$. Now $\fd(f) = \fm\fn$ where $\fm \in \mfL_{<\omega}$ and $\fn \in \mfL_{[\omega,\alpha)}$, so $\log \fd(f) = \log \fm + \log \fn$. By Lemma \ref{l:logassocsmall}, $(\log \fn)\circ g  =\log(\fn\circ g)$. We have $\fm = \prod_{n}\ell_n^{r_n}$, so 
\[
(\log \fm)\circ g\ =\ \left(\sum_{n}r_n\ell_{n+1}\right)\circ g\ =\ \sum_{n}r_n\log_{n+1}(g)\ =\ \log(\fm\circ g),
\]
where the last equality uses Lemma~\ref{l:SumOfLogs}. Thus $\big(\log \fd(f)\big)\circ g\ =\ \log\big(\fd(f)\circ g\big)$.
\end{proof}

\begin{cor}\label{c:FullPowers}
Let $f \in \LL_{<\alpha}^{>}$, $g \in \LL_{<\alpha}^{>\R}$, and $t \in \R$. Then $(f \circ g)^t = f^t\circ g$.
\end{cor}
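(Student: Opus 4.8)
The plan is to reduce the real-power statement $(f\circ g)^t=f^t\circ g$ to the additive, logarithmic statement already established in Proposition~\ref{l:logassoc}, exactly as was done for composition with a single hyperlogarithm in Lemma~\ref{powercomp} and for the product/power compatibility in Lemma~\ref{lemuc}. The key observation is that on $\LL_{<\alpha}^{>}$ the map $u\mapsto \log u$ into the additive group of $\LL_{<\alpha}$ is injective, so it suffices to check that the logarithms of the two sides agree.

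First I would apply $\log$ to the left-hand side. Since $f\in \LL_{<\alpha}^{>}$ and $g\in \LL_{<\alpha}^{>\R}$, Lemma~\ref{l:CompIsLinear} gives $f\circ g\in \LL_{<\alpha}^{>}$, so $(f\circ g)^t$ is a well-defined positive element and $\log\big((f\circ g)^t\big)=t\log(f\circ g)$ by the power rule for $\log$ recorded in the subsection on monomial groups with real powers. Then Proposition~\ref{l:logassoc} converts this into $t\,\big((\log f)\circ g\big)$.

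Next I would treat the right-hand side. Here I use that $f^t\in \LL_{<\alpha}^{>}$, so $\log(f^t\circ g)=(\log f^t)\circ g$ by Proposition~\ref{l:logassoc} again, and $\log f^t=t\log f$ by the same power rule for $\log$. By strong $\R$-linearity of the map $\phi\mapsto \phi\circ g$ (Lemma~\ref{l:CompIsLinear}), $(t\log f)\circ g=t\,\big((\log f)\circ g\big)$. Thus both $\log\big((f\circ g)^t\big)$ and $\log\big(f^t\circ g\big)$ equal $t\,\big((\log f)\circ g\big)$, and injectivity of $\log$ on $\LL_{<\alpha}^{>}$ yields $(f\circ g)^t=f^t\circ g$. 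Compactly:
\[
\log\big((f\circ g)^t\big)\ =\ t\log(f\circ g)\ =\ t\,\big((\log f)\circ g\big)\ =\ (t\log f)\circ g\ =\ (\log f^t)\circ g\ =\ \log(f^t\circ g).
\]

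I do not expect a genuine obstacle here: every ingredient is already in place, and the only point requiring a moment's care is the bookkeeping of which positivity and power-of-$\log$ facts apply to $f$, to $f^t$, and to $f\circ g$, together with the fact that $\log$ is injective on $\LL_{<\alpha}^{>}$ (which follows since $\log$ is a strictly increasing group morphism from $\LL_{<\alpha}^{>}$ into $(\LL_{<\alpha},+)$, as noted for $\LL$ in the introduction and valid in the Hahn subfield $\LL_{<\alpha}$). The mild subtlety is simply that the result is phrased inside $\LL_{<\alpha}$ rather than $\LL$, so one invokes Proposition~\ref{l:logassoc} and Lemma~\ref{l:CompIsLinear} in their $\LL_{<\alpha}$ form, which is exactly how they are stated.
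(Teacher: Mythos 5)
Your proof is correct and follows exactly the paper's route: the paper's proof of Corollary~\ref{c:FullPowers} is literally ``Take logarithms and use Proposition~\ref{l:logassoc},'' and your argument fills in precisely those details (the power rule for $\log$, strong $\R$-linearity of $\phi\mapsto\phi\circ g$, and injectivity of $\log$ on $\LL_{<\alpha}^{>}$). No gaps; this matches the intended argument.
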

\begin{proof} Take logarithms and use Proposition~\ref{l:logassoc}.
\end{proof}

\begin{cor}\label{c:products} Suppose the family $(f_i)$ in $\LL_{<\alpha}^{>}$ is multipliable and $g\in \LL_{<\alpha}^{>\R}$. Then the family 
$(f_i\circ g)$ is multipliable and $(\prod_i f_i)\circ g=\prod_i f_i\circ g$.
\end{cor}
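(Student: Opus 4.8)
The plan is to reduce everything to the compatibility of $\log$ with composition established in Proposition~\ref{l:logassoc}, in exactly the way that Lemma~\ref{lemuc} reduces the analogous statement for the composition $\circ$ on $\LL$ to (CL3). Recall from the subsection ``Revisiting multipliability'' that a family in $\LL_{<\alpha}^{>}$ is multipliable precisely when the family of its logarithms is summable, and that $\log\prod_i h_i=\sum_i\log h_i$ for any multipliable family $(h_i)$.

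First I would record two standing facts about the map $h\mapsto h\circ g\colon\LL_{<\alpha}\to\LL_{<\alpha}$. It is strongly additive: this is immediate from Lemma~\ref{l:FullProto} together with Lemma~\ref{l:CompIsLinear}, which says that any $\LL_{<\alpha}$-composition with $g$ is strongly $\R$-linear. Moreover it is an ordered field embedding, hence positivity-preserving, so $f_i\circ g\in\LL_{<\alpha}^{>}$ for each $i$ and $(\prod_i f_i)\circ g\in\LL_{<\alpha}^{>}$. Finally, $\log\colon\LL^{>}\to\LL$ is strictly increasing and therefore injective.

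Next, to see that $(f_i\circ g)$ is multipliable, I would apply Proposition~\ref{l:logassoc} to get $\log(f_i\circ g)=(\log f_i)\circ g$ for each $i$. Since $(f_i)$ is multipliable, $(\log f_i)$ is summable, and strong additivity of $h\mapsto h\circ g$ turns the summable family $(\log f_i)$ into the summable family $\bigl((\log f_i)\circ g\bigr)=\bigl(\log(f_i\circ g)\bigr)$. By definition this means $(f_i\circ g)$ is multipliable, so $\prod_i(f_i\circ g)$ exists in $\LL_{<\alpha}^{>}$.

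Finally, for the product identity I would compare logarithms. Both sides being positive, it suffices by injectivity of $\log$ to check the chain
\[
\log\Bigl(\bigl(\textstyle\prod_i f_i\bigr)\circ g\Bigr)\ =\ \bigl(\log\textstyle\prod_i f_i\bigr)\circ g\ =\ \bigl(\textstyle\sum_i\log f_i\bigr)\circ g\ =\ \textstyle\sum_i\bigl((\log f_i)\circ g\bigr)\ =\ \textstyle\sum_i\log(f_i\circ g)\ =\ \log\textstyle\prod_i(f_i\circ g),
\]
where the first and fourth equalities use Proposition~\ref{l:logassoc}, the third uses strong additivity, and the second and fifth use $\log\prod=\sum\log$. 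This gives $(\prod_i f_i)\circ g=\prod_i(f_i\circ g)$. I do not expect any genuine obstacle here: all the real difficulty was already absorbed into Proposition~\ref{l:logassoc}, and what remains is the same bookkeeping with strong additivity and injectivity of $\log$ that appears in Lemma~\ref{lemuc}.
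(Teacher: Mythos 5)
Your proof is correct and is exactly the argument the paper intends: its entire proof of Corollary~\ref{c:products} reads ``Take logarithms and use Proposition~\ref{l:logassoc},'' and your write-up is the faithful expansion of that sentence, supplying the strong additivity of $h\mapsto h\circ g$ (via Lemmas~\ref{l:FullProto} and~\ref{l:CompIsLinear}), the definition of multipliability, and injectivity of $\log$. No gaps; nothing to change.
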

\begin{proof} Take logarithms and use Proposition~\ref{l:logassoc}.
\end{proof}

\subsection*{The chain rule} Let $g\in \LL_{<\alpha}^{>\R}$. Recall that $T_g$ is a strongly $\R$-linear endomorphism of the ordered field $\LL_{<\alpha}$. We show that $T_g$ coincides with
$\phi\mapsto (\phi\circ \ell_3)\circ g$: 

\begin{lemma}\label{ltg} For all $\phi\in \LL_{<\alpha}$ we have $T_g(\phi)= (\phi\circ \ell_3)\circ g$.
\end{lemma}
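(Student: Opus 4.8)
The plan is to prove the identity by comparing two strongly $\R$-linear field embeddings of $\LL_{<\alpha}$ and checking that they agree on monomials. Both $T_g$ and the map $\phi\mapsto(\phi\circ\ell_3)\circ g$ are strongly $\R$-linear field embeddings: for $T_g$ this is Lemma~\ref{taexp1} together with Lemma~\ref{l:CompIsLinear}, while the second map is the composite of composition with $\ell_3$ (Corollary~\ref{c:HyperlogComp}) and composition with $g$ (Lemma~\ref{l:FullProto}), each of which is a strongly $\R$-linear field embedding by Lemma~\ref{l:CompIsLinear}. Since such a map is determined by its restriction to the monomial group $\mfL_{<\alpha}$, it suffices to show $T_g(\fm)=(\fm\circ\ell_3)\circ g$ for all $\fm\in\mfL_{<\alpha}$. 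Writing $\fm=\fm_{<\omega}\fm_{\ge\omega}$ with $\fm_{<\omega}\in\mfL_{<\omega}$ and $\fm_{\ge\omega}\in\mfL_{[\omega,\alpha)}$ and using multiplicativity of both maps, this reduces to the two cases $\fm\in\mfL_{[\omega,\alpha)}$ and $\fm\in\mfL_{<\omega}$.

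The case $\fm\in\mfL_{[\omega,\alpha)}$ requires no computation. More generally, for any $\phi\in\LL_{[\omega,\alpha)}$ the automorphism $f\mapsto f\circ\ell_3$ of $\LL_{[\omega,\alpha)}$ sends $\phi$ into $\LL_{[\omega,\alpha)}$, so by the definition (\ref{e:TaylorOverOmega}) of composition with $g$ on $\LL_{[\omega,\alpha)}$ and by (\ref{compup3}) we get $(\phi\circ\ell_3)\circ g=T_g\big((\phi\circ\ell_3)^{\uparrow_3}\big)=T_g(\phi)$. Taking $\phi=\fm_{\ge\omega}$ settles this case.

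For the case $\fm\in\mfL_{<\omega}$ I would first record that $T_g$ commutes with $\log$. Since $T_g$ is the Taylor deformation (\ref{e:TaylorO}) of composition with $\ell_{\lambda_g+3}$, and Lemma~\ref{l:logassoclog} gives $\log(h\circ\ell_{\lambda_g+3})=(\log h)\circ\ell_{\lambda_g+3}$ for \emph{every} $h\in\LL_{<\alpha}^{>}$, Lemma~\ref{l:EqualLogs} applies to each such $h$ and yields $\log T_g(h)=T_g(\log h)$ for all $h\in\LL_{<\alpha}^{>}$. Because $T_g(\ell_0)=\log_3(g)$ (noted just after (\ref{e:TaylorO})), induction on $n$ using $\log\ell_n=\ell_{n+1}$ then gives $T_g(\ell_n)=\log_{3+n}(g)$. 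Now let $\fm=\prod_n\ell_n^{r_n}\in\mfL_{<\omega}$. By Lemma~\ref{upmon} we have $\fm\circ\ell_3=\prod_n\ell_{3+n}^{r_n}\in\mfL_{<\omega}$, so (\ref{e:ProductOfLogs}) gives $(\fm\circ\ell_3)\circ g=\prod_n\log_{3+n}(g)^{r_n}$. Comparing logarithms, $\log T_g(\fm)=T_g(\log\fm)=T_g\big(\sum_n r_n\ell_{n+1}\big)=\sum_n r_n\log_{3+n+1}(g)=\log\prod_n\log_{3+n}(g)^{r_n}$, where I use strong linearity of $T_g$ and $T_g(\ell_{n+1})=\log_{3+n+1}(g)$. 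Since both $T_g(\fm)$ and $\prod_n\log_{3+n}(g)^{r_n}$ are positive, injectivity of $\log$ gives $T_g(\fm)=(\fm\circ\ell_3)\circ g$.

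The main obstacle is the case $\fm\in\mfL_{<\omega}$, and within it the identification $T_g(\ell_n)=\log_{3+n}(g)$: this rests on upgrading the pointwise log-compatibility of Lemma~\ref{l:EqualLogs} to all of $\LL_{<\alpha}^{>}$ (legitimate precisely because Lemma~\ref{l:logassoclog} holds for every positive argument) and on handling the possibly infinite product $\fm=\prod_n\ell_n^{r_n}$ through logarithms rather than directly. The reduction to monomials and the case $\fm\in\mfL_{[\omega,\alpha)}$ are essentially formal.
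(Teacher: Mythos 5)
Your proof is correct, and its skeleton is the same as the paper's: both reduce by strong $\R$-linearity to monomials, split $\fm=\fm_{<\omega}\fm_{\ge\omega}$, dispose of the $\mfL_{[\omega,\alpha)}$ factor via (\ref{compup3}) and the definition (\ref{e:TaylorOverOmega}), and settle the $\mfL_{<\omega}$ factor by taking logarithms and inducting on the $\ell_n$. The one substantive difference lies in how the right-hand side $(\fm\circ\ell_3)\circ g$ is handled in that last case: the paper repeatedly invokes Proposition~\ref{l:logassoc} (log-compatibility of $\circ$ with an arbitrary $g$, proved just before Lemma~\ref{ltg}) together with Lemma~\ref{l:EqualLogs} at each inductive step, whereas you bypass Proposition~\ref{l:logassoc} entirely: you first upgrade Lemma~\ref{l:EqualLogs} (legitimately, since Lemma~\ref{l:logassoclog} supplies its hypothesis for \emph{every} positive element) to the global identity $\log T_g=T_g\log$ on $\LL_{<\alpha}^{>}$, deduce $T_g(\ell_n)=\log_{3+n}(g)$ by induction, and then compute $(\fm\circ\ell_3)\circ g=\prod_n\log_{3+n}(g)^{r_n}$ directly from Lemma~\ref{upmon} and the defining formula (\ref{e:ProductOfLogs}), comparing logarithms via the identity $\log\prod_i f_i=\sum_i\log f_i$ for multipliable families. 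What this buys is a proof of Lemma~\ref{ltg} that is independent of Proposition~\ref{l:logassoc} and more explicit about what both maps do to small monomials; what it costs is nothing logically (Proposition~\ref{l:logassoc} is available at this point anyway), and the paper's version is slightly shorter precisely because it is willing to quote that proposition rather than unwind the definitions.
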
 
\begin{proof} The map  $\phi\mapsto (\phi\circ \ell_3)\circ g: \LL_{<\alpha}\to \LL_{<\alpha}$ is also
a strongly $\R$-linear endomorphism of the ordered field $\LL_{<\alpha}$; it agrees with $T_g$ on $\LL_{[\omega,\alpha)}$, since for $\phi\in \LL_{[\omega,\alpha)}$ we have
$\phi\circ \ell_3\in \LL_{[\omega,\alpha)}$ and $\phi=(\phi\circ \ell_3)^{\uparrow 3}$. By the strong linearity of both maps it is enough to prove the lemma for $\phi\in \mathfrak{L}_{<\alpha}$, and for such
$\phi$ we have $\phi=\fm\fn$ with $\fm\in \mathfrak{L}_{<\omega}$ and
$\fn\in \mathfrak{L}_{[\omega,\alpha)}$. Thus it is enough to show that
$T_g(\fm)= (\fm\circ \ell_3)\circ g$ for $\fm\in \mathfrak{L}_{<\omega}$. 
Taking logarithms this reduces to showing for such $\fm$ that
 $\log T_g(\fm)= \log\big((\fm\circ \ell_3)\circ g\big)$; by Lemma~\ref{l:EqualLogs} and Proposition~\ref{l:logassoc}, this is equivalent to $T_g(\log \fm)=\big(\log(\fm\circ \ell_3)\big)\circ g$, and thus by Proposition~\ref{l:logassoc} to $T_g(\log \fm)=\big((\log \fm)\circ \ell_3\big)\circ g$.  Since for $\fm\in \mathfrak{L}_{<\omega}$ we have
 $\log \fm=\sum_n r_n\ell_{n+1}$ this reduces further to showing that
 $T_g(\ell_n)=(\ell_n\circ \ell_3)\circ g$ for all $n$. 
 This holds for $n=0$ by earlier remarks, and for arbitrary $n$ it follows by induction on $n$, using
 again at each step Lemma~\ref{l:EqualLogs} and Proposition~\ref{l:logassoc}.
\end{proof}

\begin{prop}\label{l:FullChain} Let $f \in \LL_{<\alpha}$. Then
$(f \circ g)' =  (f'\circ g) g'$.
\end{prop}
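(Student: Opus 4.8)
The plan is to show that the map $C\colon \LL_{<\alpha}\to\LL_{<\alpha}$ defined by $C(f):=(f\circ g)'-(f'\circ g)\,g'$ is identically zero. First I would note that $C$ is strongly $\R$-linear: composition with the fixed $g$ is strongly $\R$-linear by Lemma~\ref{l:CompIsLinear}, the derivation $\der_\alpha$ is strongly $\R$-linear with well-based support, and multiplication by $g'$ is strongly $\R$-linear; hence both $f\mapsto(f\circ g)'$ and $f\mapsto(f'\circ g)\,g'$ are strongly $\R$-linear. Since a strongly $\R$-linear map is determined by its restriction to $\mathfrak{L}_{<\alpha}$, it suffices to prove $C(\fm)=0$ for every monomial $\fm$. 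Writing $\fm=\fm_{<\omega}\fm_{\geq\omega}$ with $\fm_{<\omega}\in\mathfrak{L}_{<\omega}$ and $\fm_{\geq\omega}\in\mathfrak{L}_{[\omega,\alpha)}$, the multiplicativity of composition with $g$ (Lemma~\ref{l:FullProto}), the Leibniz rule for $\der_\alpha$, and Lemma~\ref{6things}(ii) reduce the chain rule for $\fm$ to the chain rule for the two factors. So I am left with two cases.

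For the case $\fm\in\mathfrak{L}_{<\omega}$, the same multiplicativity argument reduces the claim to the generators $\ell_n$, that is, to the identity $\log_n(g)'=(\ell_n'\circ g)\,g'$ (recall $\ell_n\circ g=\log_n(g)$). Since $\ell_n'=\prod_{\rho<n}\ell_\rho^{-1}$ and composition with $g$ is multiplicative, $\ell_n'\circ g=\prod_{\rho<n}\log_\rho(g)^{-1}$, so the target becomes $\log_n(g)'=\bigl(\prod_{\rho<n}\log_\rho(g)^{-1}\bigr)g'$. This I would prove by a straightforward induction on $n$, using only $(\log h)'=h'/h$ for $h\in\LL_{<\alpha}^{>}$ together with $\log_{n+1}(g)=\log\log_n(g)$.

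For the case $f\in\LL_{[\omega,\alpha)}$ I would exploit the representation $f\circ g=T_g(f^{\uparrow_3})$. Applying the chain rule (\ref{e:chaintg}) for the deformation $T_g$ and then Lemma~\ref{ltg} gives $(f\circ g)'=T_g\bigl((f^{\uparrow_3})'\bigr)\log_3(g)'=\bigl((f^{\uparrow_3})'\circ\ell_3\bigr)\circ g\cdot\log_3(g)'$. Differentiating the identity $f^{\uparrow_3}\circ\ell_3=f$ from (\ref{compup3}) with the chain rule for $\ell_3$ (Corollary~\ref{c:HyperlogChain}) yields $(f^{\uparrow_3})'\circ\ell_3=f'(\ell_3')^{-1}$, whence, composition with $g$ being a ring embedding, $(f\circ g)'=(f'\circ g)(\ell_3'\circ g)^{-1}\log_3(g)'$. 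Finally I would invoke the already-proved case $\fm=\ell_3$ of the first case, namely $\log_3(g)'=(\ell_3\circ g)'=(\ell_3'\circ g)\,g'$, to conclude that $(\ell_3'\circ g)^{-1}\log_3(g)'=g'$, and therefore $(f\circ g)'=(f'\circ g)\,g'$.

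I expect the main obstacle to be this last case: the genuine content is to connect the chain rule that $T_g$ satisfies---which is a chain rule for composition with $\log_3(g)$, not with $g$---to the desired chain rule for composition with $g$, and to verify that the correction factor $(\ell_3'\circ g)^{-1}\log_3(g)'$ collapses to exactly $g'$. The reductions in the first paragraph and the induction of the second are routine; the care lies in tracking the $\uparrow_3$ device and in making sure the logarithmic case is fully in hand before it is reused to finish the $\LL_{[\omega,\alpha)}$ case.
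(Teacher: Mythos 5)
Your proposal is correct and follows essentially the same route as the paper: reduction via strong $\R$-linearity and multiplicativity to the two cases $\fm\in\mathfrak{L}_{<\omega}$ (where the identity $\log_n(g)'=(\ell_n'\circ g)\,g'$ is proved by induction on $n$) and $f\in\LL_{[\omega,\alpha)}$ (where (\ref{e:chaintg}), Lemma~\ref{ltg}, (\ref{compup3}), Corollary~\ref{c:HyperlogChain}, and the already-settled $\ell_3$ case combine exactly as in the paper's equations (\ref{cr1}) and (\ref{cr2})). The one step to tighten is your reduction of $\fm=\prod_n\ell_n^{r_n}\in\mathfrak{L}_{<\omega}$ to the generators $\ell_n$: this is an infinite product with real exponents, so bare multiplicativity of $f\mapsto f\circ g$ does not suffice, and one should instead pass to logarithmic derivatives and use $\big(\prod_i f_i\big)^\dagger=\sum_i f_i^\dagger$ for multipliable families together with strong linearity---which is precisely how the paper carries out this step.
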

\begin{proof} Note that for $f\ne 0$ we have: $(f\circ g)'= (f'\circ g)g'\Leftrightarrow (f\circ g)^\dagger=(f^\dagger\circ g)g'$.
An easy induction gives $\log_n(g)^\dagger = (\ell_n^\dagger\circ g)g'$. Thus for $\fm = \prod_{n}\ell_n^{r_n}\in \mfL_{<\omega}$,  
\begin{align*}
(\fm \circ g)^\dagger&= \big(\prod_{n}\log_n(g)^{r_n}\big)^\dagger\ =\ \sum_n r_n\log_n(g)^\dagger\ =\ \sum_n r_n(\ell_n^\dagger \circ g)g'\\
&=\ \ \big(\sum_n r_n (\ell_{n}^\dagger\circ g)\big)g'\ 
 =\ \big((\sum_{n}r_n\ell_n^\dagger)\circ g\big)g'\ 
=\ (\fm^\dagger\circ g)g',
\end{align*}
so $(\fm \circ g)'=(\fm'\circ g)g'$.
Let $f \in \LL_{[\omega,\alpha)}$. Then $f\circ g=T_g(f^{\uparrow_3})$, so
by (\ref{e:chaintg}),
\begin{equation}
\label{cr1} (f\circ g)'\ =\ \big(T_g(f^{\uparrow_3})\big)'\ =\ T_g\big((f^{\uparrow_3})'\big)\cdot \log_3(g)'\ =\ T_g\big((f^{\uparrow 3})'\big)\cdot (\ell_3'\circ g)\cdot g'.
\end{equation}
We have $f'=(f^{\uparrow_3}\circ \ell_3)'=\big((f^{\uparrow_3})'\circ \ell_3\big)\ell_3'$ by (\ref{compup3}) and Corollary \ref{c:HyperlogChain}, 
so $(f^{\uparrow_3})'\circ \ell_3=(f'/\ell_3')$. Applying Lemma~\ref{ltg} to
$\phi:= (f^{\uparrow_3})'$, this gives 
\begin{equation}
\label{cr2} T_g\big((f^{\uparrow_3})'\big)\  =\ (f'/\ell_3')\circ g\ =\ (f'\circ g)/(\ell_3'\circ g).
\end{equation}
Combining (\ref{cr1}) and (\ref{cr2}) gives $(f\circ g)'=(f'\circ g)g'$. 
Finally, for arbitrary $f \in \LL_{<\alpha}$ we have $f = \sum_{\fm \in \mfL_{<\omega}}f_{[\fm]}\fm$ where all $f_{[\fm]}\in \LL_{[\omega,\alpha)}$. Then
\begin{align*}
(f\circ g)'\  &=\  \sum_{\fm}\big((f_{[\fm]}\circ g)( \fm\circ g)\big)'\\
&=\ \sum_{\fm}\big((f_{[\fm]}\circ g)'( \fm\circ g)+(f_{[\fm]}\circ g)( \fm\circ g)'\big)\\
&=\  g'\sum_{\fm}\big((f_{[\fm]}'\circ g)( \fm\circ g)+(f_{[\fm]}\circ g)( \fm'\circ g)\big) \\
&=\  g'\sum_{\fm}\big((f_{[\fm]} \fm)'\circ g\big)= g'(f' \circ g).\qedhere
\end{align*}
\end{proof}

\subsection*{Associativity} Towards proving associativity we use the next lemma to get a handle on the infinite part and constant term of  $f\circ g$ for various $f,g$.  {\em Throughout this subsection we fix  $g,h \in \LL_{<\alpha}^{>\R}$}.

\begin{lemma}\label{l:infcomp}
Assume $\omega\le\gamma < \alpha$. Then $\ell_\gamma\circ g  -\ell_{\gamma}\circ \ell_{\lambda_g}\prec 1$.
\end{lemma}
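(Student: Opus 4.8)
The plan is to expand $\ell_\gamma\circ g$ by the Taylor--deformation formula (\ref{e:TaylorOverOmega}) and to peel off its $n=0$ term, which will turn out to be exactly $\ell_\gamma\circ\ell_{\lambda_g}$. Since $\gamma\ge\omega$ we have $\ell_\gamma\in\LL_{[\omega,\alpha)}$, so writing $\delta:=\lambda_g+3$ and $\log_3(g)=\ell_\delta+\varepsilon$ with $\varepsilon\prec 1$, formula (\ref{e:TaylorOverOmega}) gives
\[
\ell_\gamma\circ g\ =\ \sum_{n=0}^\infty\frac{(\ell_\gamma^{\uparrow_3})^{(n)}\circ\ell_\delta}{n!}\,\varepsilon^n .
\]
First I would identify the $n=0$ term $\ell_\gamma^{\uparrow_3}\circ\ell_\delta$ with $\ell_\gamma\circ\ell_{\lambda_g}$. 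When $\lambda_g\ne 0$ this follows from $\ell_\gamma^{\uparrow_3}\circ\ell_3=\ell_\gamma$ (see (\ref{compup3})) together with Corollary~\ref{circadd}, which gives $(\ell_\gamma^{\uparrow_3}\circ\ell_3)\circ\ell_{\lambda_g}=\ell_\gamma^{\uparrow_3}\circ\ell_{\lambda_g+3}$ (here $3<\omega\le\omega^{\beta_k+1}$ for the last exponent $\beta_k$ of $\lambda_g$); when $\lambda_g=0$ it follows from $\ell_\gamma\circ\ell_0=\ell_\gamma$ and $\ell_\gamma^{\uparrow_3}\circ\ell_3=\ell_\gamma$. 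Hence
\[
\ell_\gamma\circ g-\ell_\gamma\circ\ell_{\lambda_g}\ =\ \sum_{n=1}^\infty\frac{(\ell_\gamma^{\uparrow_3})^{(n)}\circ\ell_\delta}{n!}\,\varepsilon^n ,
\]
and it remains to show this tail is $\prec 1$.

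Second, I would reduce this to a pointwise statement. The displayed sum exists, being a difference of two existing series, so it is enough to show that each summand with $n\ge 1$ has support contained in $\mathfrak{L}^{\prec 1}$: a summable family all of whose terms lie in $\R[[\mathfrak{L}^{\prec 1}]]$ has its sum in $\R[[\mathfrak{L}^{\prec 1}]]$, i.e.\ $\prec 1$. Fix $n\ge 1$. Since $\varepsilon\prec 1$ we have $\varepsilon^n\prec 1$, so it suffices that $(\ell_\gamma^{\uparrow_3})^{(n)}\circ\ell_\delta\preceq 1$. Composition with the hyperlogarithm $\ell_\delta$ is an ordered field embedding preserving $\preceq$ and $\prec$ (a consequence of Lemma~\ref{l:CompIsLinear}), so this reduces to the claim that $(\ell_\gamma^{\uparrow_3})^{(n)}\prec 1$ for every $n\ge 1$.

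Third, I would prove this claim. For the base case $n=1$, apply the chain rule (Corollary~\ref{c:HyperlogChain}) to $\ell_\gamma^{\uparrow_3}\circ\ell_3=\ell_\gamma$: differentiating gives $\big((\ell_\gamma^{\uparrow_3})'\circ\ell_3\big)\,\ell_3'=\ell_\gamma'$, whence
\[
(\ell_\gamma^{\uparrow_3})'\circ\ell_3\ =\ \ell_\gamma'/\ell_3'\ =\ \prod_{3\le\beta<\gamma}\ell_\beta^{-1}\ \prec\ 1 ,
\]
using $\gamma>3$. Since $\circ\,\ell_3$ preserves $\prec 1$, this gives $(\ell_\gamma^{\uparrow_3})'\prec 1$. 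For the inductive step I would use that $\der_\alpha$ maps $\LL_{<\alpha}^{\preceq 1}$ into $\LL_{<\alpha}^{\prec 1}$, which is immediate from $\supp\der_\alpha(f)\subseteq\{\ell_\beta^\dagger:\beta<\alpha\}\cdot\supp f$ and $\ell_\beta^\dagger\preceq x^{-1}\prec 1$; thus $(\ell_\gamma^{\uparrow_3})^{(n)}\prec 1$ forces $(\ell_\gamma^{\uparrow_3})^{(n+1)}\prec 1$.

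The main obstacle is the base case $(\ell_\gamma^{\uparrow_3})'\prec 1$ and, throughout, the disciplined use of the fact that composing with a hyperlogarithm preserves the asymptotic relations $\prec$, $\preceq$, $\asymp$; granting these, the argument is bookkeeping. The one point demanding care is the case $\lambda_g=0$ (where $\ell_{\lambda_g}=x$) in the identification of the $n=0$ term.
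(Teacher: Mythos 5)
Your proof is correct and follows essentially the same route as the paper: expand $\ell_\gamma\circ g$ via (\ref{e:TaylorOverOmega}), identify the $n=0$ term with $\ell_\gamma\circ\ell_{\lambda_g}$, and kill the tail by showing $(\ell_\gamma^{\uparrow_3})^{(n)}\prec 1$ for $n\ge 1$. The only (harmless) variations are in execution: you get the base case $(\ell_\gamma^{\uparrow_3})'\prec 1$ from the chain rule applied to $\ell_\gamma^{\uparrow_3}\circ\ell_3=\ell_\gamma$, where the paper instead uses $\ell_\gamma^{\uparrow_3}\prec\ell_n$ and compares derivatives, and you identify the $n=0$ term via Corollary~\ref{circadd} (with the $\lambda_g=0$ case treated separately), where the paper applies (\ref{e:TaylorOverOmega}) to $\ell_{\lambda_g}$ with $\varepsilon=0$.
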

\begin{proof}
For $n\ge 1$ we have $\ell_{\gamma}^{\uparrow_3}\circ \ell_3=\ell_{\gamma}\prec \ell_{n+3}=\ell_n\circ \ell_3$, hence $\ell_{\gamma}^{\uparrow_3}\prec \ell_n$,
and thus $(\ell_{\gamma}^{\uparrow_3})'\prec \ell_n'\prec 1$. 
Thus by (\ref{e:TaylorOverOmega}), 
\[
\ell_\gamma \circ g-\left(\ell_\gamma^{\uparrow_3}\circ \ell_{\lambda_{g+3}}\right)\ =\ \sum_{n=1}^\infty \frac{(\ell_\gamma^{\uparrow_3})^{(n)}\circ \ell_{\lambda_g+3}}{n!}\left(\log_3(g)-\ell_{\lambda_{g+3}}\right)^n\ \prec\ 1.
\]
Now $\ell_{\lambda_g+3} = \log_3(\ell_{\lambda_g})$, so by (\ref{e:TaylorOverOmega}) with $\ell_{\lambda_g}$ in the role of $g$ and $\epsilon=0$,  
\[
\ell_\gamma \circ \ell_{\lambda_g}\ =\ \ell_\gamma^{\uparrow_3}\circ \ell_{\lambda_g+3}.\qedhere
\]
\end{proof}

\noindent
Combining Lemmas~\ref{bg} and ~\ref{l:infcomp} gives:

\begin{cor}\label{domconstant}
Assume $\omega\le\gamma<\alpha$. Then 
\[\ell_\gamma\circ g\  =\ \ell_{\lambda_g+\gamma}-\lambda_{g;\gamma}+\epsilon, \qquad \epsilon \prec 1.\]
\end{cor}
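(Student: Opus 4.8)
The plan is to combine the two immediately preceding lemmas; the only real content is to match up the coefficients $\lambda_{g;\gamma}$ produced by each.

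First I would apply Lemma~\ref{l:infcomp}. Since $\omega \le \gamma < \alpha$, it gives $\ell_\gamma \circ g = \ell_\gamma \circ \ell_{\lambda_g} + \delta$ for some $\delta \prec 1$, reducing the problem to computing $\ell_\gamma \circ \ell_{\lambda_g}$. Observe that $\lambda_g < \alpha$, because $g \in \LL_{<\alpha}^{>\R}$ forces $\fd(g) \in \mathfrak{L}_{<\alpha}$, so $\lambda_g = \min \sigma(\fd g) < \alpha$.

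Next I would invoke Lemma~\ref{bg} with $\nu := \gamma$ and with $\lambda_g$ playing the role of the lemma's $\gamma$ (legitimate since $\gamma < \alpha$). This yields $\ell_\gamma \circ \ell_{\lambda_g} = \ell_{\lambda_g + \gamma} - \lambda_{\ell_{\lambda_g};\gamma} - \epsilon'$ with $0 \le \epsilon' \prec 1$; here $\ell_{\lambda_g + \gamma} \in \LL_{<\alpha}$ because $\lambda_g + \gamma < \alpha$ (using $\xi + \eta < \alpha$ for $\xi,\eta < \alpha$, which holds as $\alpha = \omega^\lambda$). The one step requiring care, and essentially the only obstacle, is the identification $\lambda_{\ell_{\lambda_g};\gamma} = \lambda_{g;\gamma}$: by definition $\lambda_{\ell_{\lambda_g}} = \lambda_g$, and the quantity $\lambda_{h;\gamma}$ depends on $h$ only through the Cantor normal form of $\lambda_h$; since $\lambda_{\ell_{\lambda_g}} = \lambda_g$, the two normal forms coincide, and hence so do the coefficients read off from them.

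Finally I would assemble the pieces. Substituting gives $\ell_\gamma \circ g = \ell_{\lambda_g + \gamma} - \lambda_{g;\gamma} + (\delta - \epsilon')$, and since $\delta - \epsilon' \prec 1$ I set $\epsilon := \delta - \epsilon'$ to obtain the claimed form $\ell_\gamma \circ g = \ell_{\lambda_g + \gamma} - \lambda_{g;\gamma} + \epsilon$ with $\epsilon \prec 1$ (the sign of $\epsilon$ is immaterial here, which is why the conclusion only asserts $\epsilon \prec 1$ rather than $\epsilon \ge 0$).
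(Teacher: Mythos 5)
Your proof is correct and is exactly the paper's argument: the paper derives this corollary by combining Lemma~\ref{l:infcomp} (to replace $g$ by $\ell_{\lambda_g}$ up to an infinitesimal error) with Lemma~\ref{bg} applied to $\nu:=\gamma$ and $\ell_{\lambda_g}$. Your additional checks --- that $\lambda_g<\alpha$, that $\lambda_g+\gamma<\alpha$, and that $\lambda_{\ell_{\lambda_g};\gamma}=\lambda_{g;\gamma}$ since $\lambda_{\ell_{\lambda_g}}=\lambda_g$ --- are precisely the routine details the paper leaves implicit.
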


\begin{lemma}\label{logofcomp} We have $\lambda_{(g\circ h)}=\lambda_h+\lambda_g$ $($ordinal sum$)$.
\end{lemma}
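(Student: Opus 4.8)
The plan is to reduce the computation of $\lambda_{g\circ h}=\min\sigma\big(\fd(g\circ h)\big)$ to the dominant monomial of $\fd(g)\circ h$, and then to evaluate that monomial factor by factor. First I would write $g=c\,\fd(g)(1+\epsilon)$ with $c\in\R^{>}$ and $\epsilon\prec 1$; since $f\mapsto f\circ h$ is a strongly $\R$-linear ordered field embedding that fixes $\R$ and preserves $\preceq$, we get $g\circ h=c\,(\fd(g)\circ h)\,(1+\epsilon\circ h)$ with $\epsilon\circ h\prec 1$, hence $\fd(g\circ h)=\fd\big(\fd(g)\circ h\big)$. So it suffices to compute $\min\sigma\big(\fd(\fm\circ h)\big)$ for the single monomial $\fm:=\fd(g)=\prod_{\rho}\ell_\rho^{r_\rho}$, where $\lambda_g=\min\sigma(\fm)$ and $r_{\lambda_g}>0$ (as $\fm\succ 1$).

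Next I would exploit multipliability. The family $(\ell_\rho^{r_\rho})_\rho$ is multipliable, so by Corollary~\ref{c:products} the family $\big((\ell_\rho\circ h)^{r_\rho}\big)=(\ell_\rho^{r_\rho}\circ h)$ (using Corollary~\ref{c:FullPowers}) is multipliable with $\fm\circ h=\prod_\rho(\ell_\rho\circ h)^{r_\rho}$, and Lemma~\ref{fdprod} then gives
\[\fd(\fm\circ h)\ =\ \prod_\rho \fd(\ell_\rho\circ h)^{r_\rho}.\]
The key step is to determine $\fd(\ell_\rho\circ h)$ for each $\rho\in\sigma(\fm)$. For $\rho=0$ we have $\ell_0\circ h=h$, so $\fd(\ell_0\circ h)=\fd(h)$, whose least support index is $\lambda_h$, with positive coefficient since $\fd(h)\succ 1$. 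For $1\le\rho<\omega$ we have $\ell_\rho\circ h=\log_\rho(h)$, and the remarks preceding Lemma~\ref{l:SumOfLogs} give $\fd(\log_\rho h)=\ell_{\lambda_h+\rho}$. For $\omega\le\rho<\alpha$, Corollary~\ref{domconstant} gives $\ell_\rho\circ h=\ell_{\lambda_h+\rho}-\lambda_{h;\rho}+\epsilon$ with $\epsilon\prec 1$, so again $\fd(\ell_\rho\circ h)=\ell_{\lambda_h+\rho}$. Thus in every case $\min\sigma\big(\fd(\ell_\rho\circ h)\big)=\lambda_h+\rho$, with positive leading coefficient.

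Finally I would assemble these contributions. Since $\rho\mapsto\lambda_h+\rho$ is strictly increasing, the factor indexed by $\rho=\lambda_g$ contributes the least index $\lambda_h+\lambda_g$ in the product $\prod_\rho\fd(\ell_\rho\circ h)^{r_\rho}$, while every factor with $\rho>\lambda_g$ (necessarily $\rho\ge 1$, so $\fd(\ell_\rho\circ h)=\ell_{\lambda_h+\rho}$ is a single hyperlogarithm) has its entire support at the index $\lambda_h+\rho>\lambda_h+\lambda_g$. Hence only the factor $\rho=\lambda_g$ reaches index $\lambda_h+\lambda_g$, contributing the coefficient $r_{\lambda_g}$ (if $\lambda_g\ge 1$) or $r_0\,s_{\lambda_h}$ (if $\lambda_g=0$, where $\fd(h)=\prod_\beta\ell_\beta^{s_\beta}$); both are positive, so no cancellation occurs there. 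Therefore $\min\sigma\big(\fd(\fm\circ h)\big)=\lambda_h+\lambda_g$, which gives $\lambda_{g\circ h}=\lambda_h+\lambda_g$ as claimed. The only delicate point is this last no-cancellation bookkeeping at the minimal index, together with the separate treatment of the case $\rho=0$ where $\fd(\ell_0\circ h)=\fd(h)$ is not a single hyperlogarithm; everything else is a direct application of the cited results.
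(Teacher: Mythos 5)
Your proof is correct, but it takes a genuinely different route from the paper's. The paper's argument is a three-liner built on the observation that $\fd(\log f)=\ell_{\lambda_f+1}$ for $f\in\LL^{>\R}$: by Proposition~\ref{l:logassoc}, $\log(g\circ h)=(\log g)\circ h\asymp\ell_{\lambda_g+1}\circ h$ (composition preserves $\asymp$), so a single evaluation $\ell_{\lambda_g+1}\circ h\asymp\ell_{\lambda_h+\lambda_g+1}$ --- via the remarks after Lemma~\ref{fdprod} when $\lambda_g$ is finite, via Corollary~\ref{domconstant} when $\lambda_g\ge\omega$ --- finishes the job; taking the logarithm collapses all of $\fd(g)$ to its leading hyperlogarithm, so no factor-by-factor analysis and no cancellation bookkeeping are needed. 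You instead work multiplicatively: you reduce to $\fd(g\circ h)=\fd\big(\fd(g)\circ h\big)$, factor $\fd(g)=\prod_\rho\ell_\rho^{r_\rho}$, and use Corollaries~\ref{c:FullPowers} and~\ref{c:products} together with Lemma~\ref{fdprod} to get $\fd(g\circ h)=\prod_\rho\fd(\ell_\rho\circ h)^{r_\rho}$, after which the same two inputs (the $\log_n$ dominant-monomial facts for finite $\rho$, Corollary~\ref{domconstant} for $\rho\ge\omega$) identify every factor, and injectivity of $\rho\mapsto\lambda_h+\rho$ rules out cancellation at the minimal index. Your handling of the two delicate points is right: the $\rho=0$ factor, whose support is all of $\sigma(\fd(h))$ rather than a singleton, and the positivity of the coefficient at index $\lambda_h+\lambda_g$; and you correctly note that cancellation at indices \emph{above} the minimum (which can genuinely occur) is harmless. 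What your version buys is more information --- an explicit product formula for the full dominant monomial $\fd(g\circ h)$, not just its least support index $\lambda_{g\circ h}$; what the paper's version buys is brevity, since one application of $\log$ replaces your entire assembly step.
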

\begin{proof}
Recall  that $\fd(\log f) = \ell_{\lambda_f+1}$ for $f  \in \LL^{>\R}$, so it suffices to show:
\[
\log(g \circ h)\ \asymp\ \ell_{\lambda_h+\lambda_g+1}.
\]
By Proposition~\ref{l:logassoc}, we have 
\[
\log(g \circ h)\ =\  (\log g)\circ h\ \asymp\ \ell_{\lambda_g+1}\circ h.
\]
We end by noting that $\ell_{\lambda_g+1}\circ h \asymp \ell_{\lambda_h+\lambda_g+1}$; this is a consequence of the remarks after Lemma~\ref{fdprod} if $\lambda_g$
is finite, and  follows by Corollary \ref{domconstant} if $\lambda_g \geq \omega$.
\end{proof}

\noindent
To use this lemma we recall that for ordinals $\mu$ and $\nu$  with Cantor normal forms
$$\mu\ =\ \omega^{\beta_1}m_1 + \cdots + \omega^{\beta_k}m_k, \qquad  \nu\ =\ \omega^{\gamma_1}n_1 + \cdots + \omega^{\gamma_l}n_l,\qquad (k,l\ge 1),$$  
the Cantor normal form of $\nu+\mu$ is as follows: \begin{itemize}
\item if $\gamma_1<\beta_1$, then $\nu+\mu=\mu$;
\item if $1\le j \le l$ and $\gamma_j = \beta_1$, then
\[\qquad\qquad \nu + \mu\ =\ \omega^{\gamma_1}n_1 + \cdots + \omega^{\gamma_j}(n_j+m_1)+\omega^{\beta_2}m_2 + \cdots + \omega^{\beta_k}n_k;\] 
\item if $1\le j \le l$ and $\gamma_j > \beta_1$, $\gamma_{j'}< \beta_1$ for all $j'$ with $j < j'\le l$, then
\[\nu + \mu\ =\ \omega^{\gamma_1}n_1 + \cdots + \omega^{\gamma_j}n_j+\omega^{\beta_1}m_1 + \cdots + \omega^{\beta_k}n_k.\]
\end{itemize}
Here $\mu,\nu\ge 1$. Below we also need the trivial case where $\mu=0$ or $\nu=0$.

\begin{lemma}\label{logofcomp2}
Let $\gamma\ge \omega$. Then
\[
\lambda_{(g\circ h);\gamma}\ =\ \lambda_{g;\gamma}+ \lambda_{h;\lambda_g+\gamma}.
\]
\end{lemma}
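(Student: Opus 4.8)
The plan is to reduce the statement to bookkeeping with Cantor normal forms via Lemma~\ref{logofcomp}. By that lemma $\lambda_{g\circ h}=\lambda_h+\lambda_g$, so, unwinding the definition of the notation $\lambda_{\cdot\,;\cdot}$, the left-hand side $\lambda_{(g\circ h);\gamma}$ is the coefficient of $\omega^\delta$ in the Cantor normal form of $\lambda_h+\lambda_g$ when $\gamma=\omega^{\delta+1}$, and is $0$ when $\gamma$ is not of this form. Write $\lambda_g=\omega^{\beta_1}m_1+\cdots+\omega^{\beta_k}m_k$ and $\lambda_h=\omega^{\gamma_1}n_1+\cdots+\omega^{\gamma_l}n_l$ in Cantor normal form (the degenerate cases $\lambda_g=0$ or $\lambda_h=0$ being trivial), and for an ordinal $\rho$ let $[\rho]_\delta$ denote the coefficient of $\omega^\delta$ in the Cantor normal form of $\rho$, with the convention $[\rho]_\delta=0$ if $\omega^\delta$ does not occur. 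Then for $\gamma=\omega^{\delta+1}$ one has $\lambda_{g;\gamma}=[\lambda_g]_\delta$ and $\lambda_{(g\circ h);\gamma}=[\lambda_h+\lambda_g]_\delta$, so the lemma becomes a purely ordinal-arithmetic identity.

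First I would dispose of the case that $\gamma\ge\omega$ is \emph{not} of the form $\omega^{\delta+1}$. Then $\lambda_{g;\gamma}=\lambda_{(g\circ h);\gamma}=0$ straight from the definition, so it remains to see that $\lambda_{h;\lambda_g+\gamma}=0$, i.e.\ that $\lambda_g+\gamma$ is not a single power $\omega^{\delta'+1}$. This follows from the absorption rules for ordinal addition: the terms of $\lambda_g+\gamma$ with exponent below the leading exponent of $\gamma$ are exactly those of $\gamma$. Hence if $\gamma$ has at least two Cantor-normal-form terms, or a single term of coefficient $\ge 2$, or a single term with limit exponent, then $\lambda_g+\gamma$ likewise has either several terms, or a bottom term of coefficient $\ge 2$, or a single term with limit exponent, and in each case $\lambda_g+\gamma\neq\omega^{\delta'+1}$.

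The main case is $\gamma=\omega^{\delta+1}$, where the key point is to evaluate the cross term $\lambda_{h;\lambda_g+\gamma}$. Since $\omega^{\delta+1}$ is additively indecomposable, $\lambda_g+\omega^{\delta+1}=\omega^{\delta+1}$ precisely when $\lambda_g<\omega^{\delta+1}$, equivalently $\beta_1\le\delta$; whereas if $\lambda_g\ge\omega^{\delta+1}$ (so $\beta_1\ge\delta+1$) then $\lambda_g+\omega^{\delta+1}$ always carries a term of exponent $>\delta+1$ or a bottom term $\omega^{\delta+1}$ of coefficient $\ge 2$, hence is never of the form $\omega^{\delta'+1}$. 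Therefore $\lambda_{h;\lambda_g+\gamma}=[\lambda_h]_\delta$ if $\beta_1\le\delta$ and $\lambda_{h;\lambda_g+\gamma}=0$ if $\beta_1>\delta$. It then remains to compute $[\lambda_h+\lambda_g]_\delta$ from the Cantor-normal-form rules for $\lambda_h+\lambda_g$ displayed just before the lemma (with $\mu=\lambda_g$, $\nu=\lambda_h$), distinguishing $\delta>\beta_1$, $\delta=\beta_1$, and $\delta<\beta_1$: in the first regime $[\lambda_h+\lambda_g]_\delta=[\lambda_h]_\delta$ while $[\lambda_g]_\delta=0$; in the second $[\lambda_h+\lambda_g]_{\beta_1}=[\lambda_h]_{\beta_1}+m_1$; in the third $[\lambda_h+\lambda_g]_\delta=[\lambda_g]_\delta$, the low terms of $\lambda_h$ being absorbed. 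Matching each against $[\lambda_g]_\delta+\lambda_{h;\lambda_g+\gamma}$ yields equality throughout.

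The only genuine obstacle is organizational: keeping the absorption behaviour of ordinal addition straight across these regimes and confirming in each that the coefficient extracted from $\lambda_h+\lambda_g$ splits exactly as $[\lambda_g]_\delta$ plus the cross term $\lambda_{h;\lambda_g+\gamma}$. No input beyond Lemma~\ref{logofcomp} and elementary facts about Cantor normal forms is required.
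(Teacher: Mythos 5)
Your proof is correct and follows essentially the same route as the paper: both reduce the identity, via Lemma~\ref{logofcomp}, to a statement about Cantor normal form coefficients of $\lambda_h+\lambda_g$ and verify it using the absorption rules of ordinal addition. Your organization (the degenerate case where $\gamma$ is not of the form $\omega^{\delta+1}$, then three regimes comparing $\delta$ with $\beta_1$) is a tidier packaging of the paper's seven-case enumeration, but the underlying bookkeeping is the same.
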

\begin{proof}
We express $\lambda_g$ and $\lambda_h$ in Cantor normal form (allowing
$k=0$ or $l=0$):
 $$\lambda_g\ =\ \omega^{\beta_1}m_1 + \cdots + \omega^{\beta_k}m_k, \qquad \lambda_h\ =\ \omega^{\gamma_1}n_1 + \cdots + \omega^{\gamma_l}n_l.$$
 Using Lemma \ref{logofcomp} and the above remarks about Cantor normal forms, we have
\[
\lambda_{(g\circ h);\gamma}\ =\ \left\{
\begin{array}{ll}
0 &\mbox{if }  \gamma  \not\in \{\omega^{\beta_1+1},\dots, \omega^{\beta_k+1},\omega^{\gamma_1+1},\dots, \omega^{\gamma_l+1}\}\\
m_1 & \mbox{if } k\ge 1,\ \gamma  = \omega^{\beta_1+1}, \mbox{ and } \beta_1 \not\in \{\gamma_1,\dots,\gamma_l\}\\
m_i & \mbox{if } k>1,\ \gamma  = \omega^{\beta_i+1},\ 1<i\le k\\ 
n_j+m_1 &\mbox{if } k\ge 1,\ \gamma  = \omega^{\gamma_j+1},\ 1\le j\le l, \mbox{ and } \gamma_j = \beta_1\\
n_j  & \mbox{if } k\ge 1,\ \gamma  = \omega^{\gamma_j+1},\ 1\le j\le l, \mbox{ and } \gamma_j > \beta_1\\
0 & \mbox{if } k\ge 1,\ \gamma\notin\{\omega^{\beta_1+1},\dots, \omega^{\beta_k+1}\},\ \gamma=\omega^{\gamma_j+1},1\le j\le l, \gamma_j<\beta_1\\
n_j & \mbox{if }k=0,\ \gamma=\omega^{\gamma_j+1}, 1\le j\le l.
\end{array}
\right.
\]
It remains to calculate the values of $\lambda_{g;\gamma}$ and $\lambda_{h;\lambda_g+\gamma}$:
\begin{itemize}
\item If $\gamma\not\in \{\omega^{\beta_1+1},\dots, \omega^{\beta_k+1},\omega^{\gamma_1+1},\dots, \omega^{\gamma_l+1}\}$, then $\lambda_{g;\gamma} = 0$ and moreover $\lambda_g+\gamma\notin\{\omega^{\gamma_1+1},\dots, \omega^{\gamma_l+1}\}$, so $\lambda_{h;\lambda_g+\gamma} = 0$.
\item If $k\ge 1$, $\gamma  = \omega^{\beta_1+1}$ and $\beta_1 \not\in \{\gamma_1,\dots,\gamma_l\}$, then $\lambda_{g;\gamma} = m_1$ and $\lambda_g+\gamma = \gamma$, so $\lambda_{h;\lambda_g+\gamma} = 0$. 
\item If $k>1$, $\gamma  = \omega^{\beta_i+1}$, $1<i\le k$, then $\lambda_{g;\gamma} = m_i$, $\lambda_g+\gamma\notin\{\omega^{\gamma_1+1},\dots, \omega^{\gamma_l+1}\}$, so $\lambda_{h;\lambda_g+\gamma} = 0$.
\item If $k\ge 1$, $\gamma  = \omega^{\gamma_j+1}$, $1\le j\le l$, and $\gamma_j =\beta_1$, then $\lambda_{g;\gamma} = m_1$ and $\lambda_g+\gamma = \gamma$, so $\lambda_{h;\lambda_g+\gamma} = n_j$.
\item If $k\ge 1$, $\gamma  = \omega^{\gamma_j+1}$, $1\le j\le l$, and $\gamma_j > \beta_1$, then $\lambda_{g;\gamma} = 0$ and $\lambda_g+\gamma = \gamma$, so $\lambda_{h;\lambda_g+\gamma} = n_j$.
\item If $k\ge 1,\ \gamma\notin\{\omega^{\beta_1+1},\dots, \omega^{\beta_k+1}\},\ \gamma=\omega^{\gamma_j+1},\ \gamma_j<\beta_1$, then
$\lambda_{g;\gamma}=0$ and $\lambda_g+\gamma\notin\{\omega^{\gamma_1+1},\dots, \omega^{\gamma_l+1}\}$,   so $\lambda_{h;\lambda_g+\gamma} =0$.
\item If $k=0,\ \gamma=\omega^{\gamma_j+1},\ 1\le j\le l$, then $\lambda_{g;\gamma}=0$, $\lambda_g+\gamma=\gamma$, so $\lambda_{h;\lambda_g+\gamma}=n_j$.
\end{itemize}
Thus $\lambda_{(g\circ h);\gamma}\ =\ \lambda_{g;\gamma}+ \lambda_{h;\lambda_g+\gamma}$ in all cases. 
\end{proof}

\begin{lemma}\label{closeassoc}
Let $\omega\le \gamma< \alpha$. Then $\big(\ell_\gamma\circ (g\circ h)\big) - \big((\ell_\gamma\circ g)\circ h \big) \prec 1$. 
\end{lemma}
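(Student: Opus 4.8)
The plan is to compute both $\ell_\gamma\circ(g\circ h)$ and $(\ell_\gamma\circ g)\circ h$ modulo an infinitesimal, using Corollary~\ref{domconstant} as the main tool, and then to observe that the dominant monomials and the constant terms match thanks to Lemmas~\ref{logofcomp} and~\ref{logofcomp2}.

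First I would handle the left-hand side. Since $g,h>\R$ and composition preserves the relation $>\R$, we have $g\circ h\in\LL_{<\alpha}^{>\R}$, so Corollary~\ref{domconstant} applies with $g\circ h$ in the role of $g$:
$$\ell_\gamma\circ(g\circ h)\ =\ \ell_{\lambda_{(g\circ h)}+\gamma}-\lambda_{(g\circ h);\gamma}+\epsilon_1,\qquad \epsilon_1\prec 1.$$
Lemma~\ref{logofcomp} gives $\lambda_{(g\circ h)}=\lambda_h+\lambda_g$, hence $\lambda_{(g\circ h)}+\gamma=\lambda_h+\lambda_g+\gamma$, and Lemma~\ref{logofcomp2} gives $\lambda_{(g\circ h);\gamma}=\lambda_{g;\gamma}+\lambda_{h;\lambda_g+\gamma}$. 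Thus the left side equals $\ell_{\lambda_h+\lambda_g+\gamma}-\lambda_{g;\gamma}-\lambda_{h;\lambda_g+\gamma}+\epsilon_1$.

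For the right-hand side I would first apply Corollary~\ref{domconstant} to write $\ell_\gamma\circ g=\ell_{\lambda_g+\gamma}-\lambda_{g;\gamma}+\epsilon_2$ with $\epsilon_2\prec1$, and then compose with $h$. Since $f\mapsto f\circ h$ is a strongly $\R$-linear ordered field embedding by Lemma~\ref{l:CompIsLinear}, it distributes over this three-term sum, fixes the real constant $\lambda_{g;\gamma}$, and sends $\epsilon_2\prec1$ to $\epsilon_2\circ h\prec1$. Here $\lambda_g+\gamma\ge\gamma\ge\omega$ and $\lambda_g+\gamma<\alpha$ (using that $\alpha=\omega^\lambda$ is closed under ordinal addition), so Corollary~\ref{domconstant} applies once more, now to $\ell_{\lambda_g+\gamma}\circ h$, yielding $\ell_{\lambda_g+\gamma}\circ h=\ell_{\lambda_h+\lambda_g+\gamma}-\lambda_{h;\lambda_g+\gamma}+\epsilon_3$ with $\epsilon_3\prec1$. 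Collecting terms gives
$$(\ell_\gamma\circ g)\circ h\ =\ \ell_{\lambda_h+\lambda_g+\gamma}-\lambda_{g;\gamma}-\lambda_{h;\lambda_g+\gamma}+(\epsilon_3+\epsilon_2\circ h).$$

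Comparing the two displays, the identical dominant monomial $\ell_{\lambda_h+\lambda_g+\gamma}$ and the identical constant $-\lambda_{g;\gamma}-\lambda_{h;\lambda_g+\gamma}$ cancel, leaving $\ell_\gamma\circ(g\circ h)-(\ell_\gamma\circ g)\circ h=\epsilon_1-\epsilon_3-\epsilon_2\circ h\prec1$, as desired. There is no genuine obstacle here: the entire content sits in Lemmas~\ref{logofcomp} and~\ref{logofcomp2}, which were designed precisely so that the index $\lambda_h+\lambda_g+\gamma$ of the leading hyperlogarithm and the constant $\lambda_{g;\gamma}+\lambda_{h;\lambda_g+\gamma}$ agree on both sides. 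The only points needing a moment's care are checking that Corollary~\ref{domconstant} is applicable in each instance (that the relevant arguments lie in $\LL_{<\alpha}^{>\R}$ and that the relevant ordinal indices are $\ge\omega$ and $<\alpha$) and that composing with $h$ preserves $\prec1$.
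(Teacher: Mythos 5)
Your proof is correct and follows essentially the same route as the paper's: both expand $\ell_\gamma\circ(g\circ h)$ and $(\ell_\gamma\circ g)\circ h$ via Corollary~\ref{domconstant} (applied to $g\circ h$, to $g$, and then to $h$), rewrite the resulting indices and constants using Lemmas~\ref{logofcomp} and~\ref{logofcomp2}, and compare. Your extra care in checking the applicability hypotheses (that $g\circ h>\R$, that $\lambda_g+\gamma<\alpha$, and that composition with $h$ preserves $\prec 1$) is implicit in the paper's proof but entirely appropriate to spell out.
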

\begin{proof}
Corollary \ref{domconstant} gives $\epsilon, \epsilon^*\prec 1$ in $\LL_{<\alpha}$ such that
\begin{align*}
(\ell_\gamma\circ g)\circ h\  &=\ \ (\ell_{\lambda_g+\gamma} - \lambda_{g;\gamma}+\epsilon) \circ h\ =\  (\ell_{\lambda_g+\gamma} \circ h) -\lambda_{g;\gamma} + (\epsilon\circ h)\\
 &=\  \big(\ell_{\lambda_h+\lambda_g+\gamma} - \lambda_{h;\lambda_g+\gamma}+\epsilon^*\big)- \lambda_{g;\gamma} + (\epsilon\circ h).
\end{align*}
Corollary \ref{domconstant} and Lemmas \ref{logofcomp} and \ref{logofcomp2} give $\epsilon^{**}\prec 1$ in $\LL_{<\alpha}$ such that
\[
\ell_\gamma \circ (g\circ h)\ =\ \ell_{\lambda_{(g\circ h)}+\gamma} - \lambda_{(g\circ h);\gamma}+ \epsilon^{**}\ =\ \ell_{\lambda_h+\lambda_g+\gamma} -  \lambda_{h;\lambda_g+\gamma}- \lambda_{g;\gamma}+ \epsilon^{**}. \qedhere
\]
\end{proof}

\begin{prop}\label{fullassoc} For all $f\in \LL_{<\alpha}$ we have 
$f\circ(g\circ h)=(f\circ g)\circ h$.
\end{prop}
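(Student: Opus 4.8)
The plan is to reduce associativity in two stages---first to monomials, then to hyperlogarithms $\ell_\gamma$---and to settle the hyperlogarithm case by transfinite induction on $\gamma$, using the Chain Rule (Proposition~\ref{l:FullChain}) together with Lemma~\ref{closeassoc}. First I would invoke Lemma~\ref{l:ReductionToMonomials}: since $\fm\circ g\in\LL_{<\alpha}$ for all $\fm$, it suffices to prove $(\fm\circ g)\circ h=\fm\circ(g\circ h)$ for every $\fm\in\mathfrak{L}_{<\alpha}$. Writing $\fm=\prod_\gamma\ell_\gamma^{r_\gamma}$ and recalling that the family $(\ell_\gamma^{r_\gamma})$ is multipliable, Corollaries~\ref{c:products} and~\ref{c:FullPowers} let me factor both sides: $\fm\circ(g\circ h)=\prod_\gamma\bigl(\ell_\gamma\circ(g\circ h)\bigr)^{r_\gamma}$ and $(\fm\circ g)\circ h=\prod_\gamma\bigl((\ell_\gamma\circ g)\circ h\bigr)^{r_\gamma}$. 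Hence everything reduces to the identity
\[ \ell_\gamma\circ(g\circ h)\ =\ (\ell_\gamma\circ g)\circ h \qquad (\gamma<\alpha). \]

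I would prove this displayed identity by transfinite induction on $\gamma$. For $\gamma=n<\omega$ we have $\ell_n\circ k=\log_n(k)$, and a routine induction on $n$ using Proposition~\ref{l:logassoc} (for $n\ge1$, $\log_{n+1}(g\circ h)=\log\bigl(\log_n(g)\circ h\bigr)=(\log\log_n(g))\circ h=\log_{n+1}(g)\circ h$) gives $\log_n(g\circ h)=\log_n(g)\circ h$, which is the desired equality in this range. This base range does not use the inductive hypothesis.

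Now suppose $\gamma\ge\omega$ and the identity holds for all $\beta<\gamma$. Put $u:=\ell_\gamma\circ(g\circ h)$ and $w:=(\ell_\gamma\circ g)\circ h$. Since $\ell_\gamma'=\prod_{\beta<\gamma}\ell_\beta^{-1}$, the product formula gives $\ell_\gamma'\circ(g\circ h)=\prod_{\beta<\gamma}\bigl(\ell_\beta\circ(g\circ h)\bigr)^{-1}$ and $(\ell_\gamma'\circ g)\circ h=\prod_{\beta<\gamma}\bigl((\ell_\beta\circ g)\circ h\bigr)^{-1}$, and these two monomials coincide by the inductive hypothesis. Applying Proposition~\ref{l:FullChain} once to $u$ and twice to $w$, and using multiplicativity of composition together with $(g\circ h)'=(g'\circ h)h'$, I would check that both $u'$ and $w'$ equal $(g\circ h)'\cdot\prod_{\beta<\gamma}\bigl(\ell_\beta\circ(g\circ h)\bigr)^{-1}$. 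Thus $(u-w)'=0$, so $u-w\in\R$ by Theorem~\ref{ci}. On the other hand, Lemma~\ref{closeassoc} gives $u-w\prec1$; since a nonzero real constant is never $\prec1$, we conclude $u-w=0$. This finishes the induction, and the two reductions above then yield $f\circ(g\circ h)=(f\circ g)\circ h$ for all $f\in\LL_{<\alpha}$.

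The main obstacle is precisely the inductive step for $\gamma\ge\omega$: the argument hinges on converting the inductive hypothesis into the derivative identity $u'=w'$ via the Chain Rule, and then on playing off the two independent facts $u-w\in\R$ (from $\ker\der=\R$) and $u-w\prec1$ (from Lemma~\ref{closeassoc}), which together force $u=w$. The remaining steps---the reduction to monomials, the multiplicative factorization, and the $\gamma<\omega$ case---are bookkeeping with strong linearity and the already established compatibility of composition with logarithms and products.
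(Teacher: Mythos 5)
Your proposal is correct and is essentially the paper's proof: the same reduction (strong linearity to monomials, then via compatibility with logarithms and products to the hyperlogarithms $\ell_\gamma$), the same transfinite induction, and the same key step of combining the Chain Rule (Proposition~\ref{l:FullChain}) with Lemma~\ref{closeassoc} and Theorem~\ref{ci} to force the two sides to agree. The only minor difference is that you run the chain-rule/constant-term argument uniformly for every $\gamma\ge\omega$, successor or limit --- which is legitimate, since Lemma~\ref{closeassoc} is stated for all $\omega\le\gamma<\alpha$ --- whereas the paper handles successor steps by taking logarithms via Proposition~\ref{l:logassoc} and reserves that argument for limit ordinals.
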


\begin{proof}  By strong linearity this reduces to 
$\fm\circ(g\circ h)=(\fm\circ g)\circ h$ for $\fm\in \mfL_{<\alpha}$.
Taking logarithms and using Proposition~\ref{l:logassoc} this reduces further to showing
$\ell_{\gamma}\circ(g\circ h)=(\ell_{\gamma}\circ g)\circ h$ for $\gamma<\alpha$. This goes by induction on
$\gamma$. The case $\gamma=0$ is obvious, and for the step from $\gamma$ to
$\gamma+1$ we take logarithms and use Proposition~\ref{l:logassoc}. Let now $\gamma<\alpha$ be an infinite limit ordinal.
Proposition \ref{l:FullChain} (and an inductive assumption for the second equality below) give
\begin{align*}
\big(\ell_\gamma \circ (g\circ h)\big)'\ &=\ \big(\ell_\gamma' \circ (g\circ h)\big)(g'\circ h)h'\ =\
 \big((\ell_\gamma' \circ g)\circ h)\big)(g'\circ h)h'\\
 &=\ \big(\big((\ell_\gamma' \circ g)g'\big)\circ h\big)h'\ =\ \big((\ell_\gamma \circ g)' \circ h\big)h'\ =\ \big((\ell_\gamma \circ g)\circ h\big)'
\end{align*}
so it remains to check that $\ell_{\gamma}\circ(g\circ h)$ and $(\ell_{\gamma}\circ g)\circ h$ have the same constant terms. This follows from Corollary \ref{closeassoc}.
\end{proof}

\subsection*{Characterizing composition recursively} 
The above shows that $$(f,g)\mapsto f\circ g\ :\ \LL_{<\alpha}\times \LL_{<\alpha}^{>\R}\to \LL_{<\alpha}$$ is a composition on $\LL_{<\alpha}$ as defined in Section~\ref{comp}. We have also shown that 
$$(f,g)\mapsto f\circ g\ :\ \LL\times \LL^{>\R}\to \LL$$ 
is a composition on $\LL$ as defined in the Introduction: it satisfies (CL1)--(CL5). This composition satisfies the following recursion:

\begin{cor}\label{recu}
Let $\gamma\ge \omega$ and $g \in \LL^{>\R}$. Then 
\[
\ell_\gamma\circ g\ =\ \int \big[\left(\ell_\gamma' \circ g\right) g'\big] - \lambda_{g;\gamma}.
\]
\end{cor}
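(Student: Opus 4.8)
The plan is to read the asserted recursion as nothing more than the Chain Rule together with the defining property of the integration operator, with the constant-term bookkeeping supplied by Corollary~\ref{domconstant}. First I would fix $\alpha=\omega^\lambda$ with $\lambda$ an infinite limit ordinal large enough that $g\in\LL_{<\alpha}$ and $\gamma<\alpha$. Since $\xi+\eta<\alpha$ for all $\xi,\eta<\alpha$, this also gives $\lambda_g+\gamma<\alpha$, so $\ell_{\lambda_g+\gamma}\in\LL_{<\alpha}$ and every composite occurring below lies in $\LL_{<\alpha}$; as $f\circ g$ is independent of the chosen $\alpha$, it suffices to argue inside this one Hahn field.

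The crucial observation is that the integrand is literally a derivative. Applying Proposition~\ref{l:FullChain} to $f=\ell_\gamma$ yields $(\ell_\gamma\circ g)'=(\ell_\gamma'\circ g)\,g'$. Recall that $\int$ assigns to each element of $\LL$ the unique antiderivative with $1\notin\supp$, i.e. with constant term $0$; hence for any $F\in\LL$ with constant term $c$, the element $F-c$ has derivative $F'$ and zero constant term, so $\int(F')=F-c$ by uniqueness. Taking $F=\ell_\gamma\circ g$ gives
\[
\int\big[(\ell_\gamma'\circ g)\,g'\big]\ =\ \int\big[(\ell_\gamma\circ g)'\big]\ =\ (\ell_\gamma\circ g)-c,
\]
where $c$ denotes the constant term of $\ell_\gamma\circ g$.

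It then only remains to identify $c$, and this is exactly what Corollary~\ref{domconstant} provides: for $\omega\le\gamma<\alpha$ it gives $\ell_\gamma\circ g=\ell_{\lambda_g+\gamma}-\lambda_{g;\gamma}+\epsilon$ with $\epsilon\prec 1$. Here $\ell_{\lambda_g+\gamma}\succ 1$ and $\epsilon\prec 1$ each have zero constant term, so the constant term of $\ell_\gamma\circ g$ is precisely $-\lambda_{g;\gamma}$, that is, $c=-\lambda_{g;\gamma}$. Substituting this into the displayed identity yields $\int[(\ell_\gamma'\circ g)\,g']=(\ell_\gamma\circ g)+\lambda_{g;\gamma}$, which rearranges to the desired recursion.

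I do not anticipate a serious obstacle: the argument is a direct assembly of the Chain Rule (Proposition~\ref{l:FullChain}), the uniqueness built into the definition of $\int$, and the constant-term formula of Corollary~\ref{domconstant}. The only points that warrant a little care are the reduction to a fixed $\LL_{<\alpha}$ containing $\ell_{\lambda_g+\gamma}$, and the remark that neither $\ell_{\lambda_g+\gamma}$ nor $\epsilon$ contributes to the constant term, so that the real number $-\lambda_{g;\gamma}$ is read off correctly as that constant term.
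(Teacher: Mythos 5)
Your proof is correct and takes essentially the same approach as the paper: the paper's own two-line proof likewise combines Corollary~\ref{domconstant} (to read off the constant term $-\lambda_{g;\gamma}$ of $\ell_\gamma\circ g$) with the Chain Rule, Proposition~\ref{l:FullChain}. You have simply spelled out the details the paper leaves implicit, namely the uniqueness property defining $\int$ and the observation that neither $\ell_{\lambda_g+\gamma}$ nor $\epsilon\prec 1$ contributes to the constant term.
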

\begin{proof} By Corollary~\ref{domconstant} the constant term of 
$\ell_{\gamma}\circ g$ equals $-\lambda_{g;\gamma}$. It remains to use the
Chain Rule, Proposition \ref{l:FullChain}. 
\end{proof}

\noindent 
Note that 
$\ell_{\gamma}'\circ g=\big(\prod_{\beta<\gamma}\ell_{\beta}\circ g\big)^{-1}$
for $g\in \LL^{>\R}$. In combination with the identities
$\ell_{n+1} \circ g = \log (\ell_n \circ g)$ for such $g$, this gives us the right to speak of a recursion. 

\begin{cor}\label{coru} There is a unique composition $\ast$ on $\LL$, namely $\circ$, that satisfies the above recursion: $\ell_\gamma\ast g =\int \big[\left(\ell_\gamma' \circ g\right) g'\big] - \lambda_{g;\gamma}$
for all $\gamma\ge \omega$ and $g\in \LL^{>\R}$.
\end{cor}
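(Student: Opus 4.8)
The plan is to establish existence and uniqueness separately. Existence is immediate: Corollary~\ref{recu} says precisely that $\circ$ satisfies the displayed recursion, so the real content is uniqueness. Thus I must show that any composition $\ast$ on $\LL$ satisfying $\ell_\gamma\ast g=\int[(\ell_\gamma'\ast g)g']-\lambda_{g;\gamma}$ for all $\gamma\ge\omega$ and all $g\in\LL^{>\R}$ must coincide with $\circ$.

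First I would reduce the problem to the behaviour on hyperlogarithms. Since $\ast$ is a composition, for each fixed $g\in\LL^{>\R}$ the map $f\mapsto f\ast g$ is a strongly $\R$-linear $\R$-algebra endomorphism of $\LL$ by (CL1) and (CL4), hence is determined by its values on the monomials $\fm\in\mfL$. Writing $\fm=\prod_\beta\ell_\beta^{r_\beta}$ and applying Lemma~\ref{lemuc} to $\ast$, the family $(\ell_\beta^{r_\beta}\ast g)$ is multipliable with $\fm\ast g=\prod_\beta(\ell_\beta\ast g)^{r_\beta}$. Consequently $\ast$ is completely determined by the single-hyperlogarithm values $\ell_\gamma\ast g$ ($\gamma$ an ordinal, $g\in\LL^{>\R}$), and the same is true of $\circ$. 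It therefore suffices to prove $\ell_\gamma\ast g=\ell_\gamma\circ g$ for all $\gamma$ and all $g\in\LL^{>\R}$.

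This I would establish by induction on $\gamma$. For $\gamma=0$ both sides equal $g$ by (CL2). If $\gamma=\delta+1$, then $\ell_\gamma=\log\ell_\delta$, so (CL3) gives $\ell_\gamma\ast g=\log(\ell_\delta\ast g)$ and likewise for $\circ$; the inductive hypothesis $\ell_\delta\ast g=\ell_\delta\circ g$ then yields $\ell_\gamma\ast g=\ell_\gamma\circ g$. Finally, if $\gamma$ is a limit ordinal (so $\gamma\ge\omega$), I would invoke the recursion. By the inductive hypothesis $\ell_\beta\ast g=\ell_\beta\circ g$ for all $\beta<\gamma$, and since $\ell_\gamma'=\prod_{\beta<\gamma}\ell_\beta^{-1}$, the product formula from the reduction step (applied to both $\ast$ and $\circ$) gives $\ell_\gamma'\ast g=\prod_{\beta<\gamma}(\ell_\beta\ast g)^{-1}=\prod_{\beta<\gamma}(\ell_\beta\circ g)^{-1}=\ell_\gamma'\circ g$. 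Feeding this into the recursion and comparing with Corollary~\ref{recu} for $\circ$,
\[
\ell_\gamma\ast g=\int\big[(\ell_\gamma'\ast g)g'\big]-\lambda_{g;\gamma}=\int\big[(\ell_\gamma'\circ g)g'\big]-\lambda_{g;\gamma}=\ell_\gamma\circ g,
\]
which closes the induction and hence gives $\ast=\circ$.

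I expect the only genuinely delicate point to be the reduction step, namely verifying that a composition is pinned down by its values on the $\ell_\gamma$ alone. This rests on combining strong $\R$-linearity (so that the values on $\mfL$ already determine $\ast$) with the multipliability statement of Lemma~\ref{lemuc} (so that the values on the $\ell_\gamma$ in turn determine the values on all of $\mfL$). Once that is secured, the induction is routine: the zero and successor stages use only (CL2) and (CL3), while the limit stage is exactly where the recursion and Corollary~\ref{recu} enter, matched up through the already-established product formula for $\ell_\gamma'\ast g$.
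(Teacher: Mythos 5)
Your proposal is correct and follows essentially the same route as the paper: reduce via strong linearity (CL4) to monomials, via Lemma~\ref{lemuc} to the hyperlogarithms $\ell_\gamma$, and then argue by transfinite induction using the recursion together with Corollary~\ref{recu}. The paper's proof is exactly this, stated tersely ("the recursion and transfinite induction"); you merely spell out the zero, successor (via (CL2), (CL3)), and limit cases that it leaves implicit.
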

\begin{proof} Let $\ast$ be a composition on $\LL$ that satisfies the above recursion. By (CL4) (strong linearity) it suffices that 
$\fm\ast g=\fm\circ g$ for $\fm\in \mathfrak{L}$ and $g\in \LL^{>\R}$. By Lemma~\ref{lemuc} this reduces to $\ell_{\gamma}\ast g=\ell_{\gamma}\circ g$ for such $g$. This is taken care of by the recursion and transfinite induction. 
\end{proof}

\section{Taylor Expansion and Compositional Inversion}\label{teci}

\noindent
As before, $\alpha = \omega^\lambda$, where $\lambda$ is an infinite limit ordinal. In some proofs below we use the notation
$\supp S:=\bigcup_{f\in S} \supp f$ for $S\subseteq \LL_{<\alpha}$.

\subsection*{Taylor expansion}
In this subsection we fix $g,h\in \LL_{<\alpha}$ with $g>\R$ and $h\prec g$. Our goal here is the following Taylor identity:

\begin{prop}\label{tayexp}
If $f\in \LL_{<\alpha}$, then the sum $\sum_{n=0}^\infty\frac{f^{(n)}\circ g}{n!}h^n$ exists and \[f\circ (g+h)\ =\ \sum_{n=0}^\infty\frac{f^{(n)}\circ g}{n!}h^n.\]
\end{prop}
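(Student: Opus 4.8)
The plan is to introduce the operator $T\colon \LL_{<\alpha}\to\LL_{<\alpha}$, $T(f):=\sum_{n=0}^\infty\frac{f^{(n)}\circ g}{n!}h^n$ (once its existence is secured), and to prove $T(f)=f\circ(g+h)$ for all $f$. Writing $C(f):=f\circ(g+h)$, note that both $T$ and $C$ send $x=\ell_0$ to $g+h$: indeed $x'=1$ and $x^{(n)}=0$ for $n\ge 2$ give $T(x)=(x\circ g)+(x'\circ g)h=g+h$, while $C(x)=g+h$ by definition. Since $g+h\sim g>\R$, the map $C$ is a genuine $\LL_{<\alpha}$-composition with $g+h$. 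The strategy is then exactly that of the associativity proof (Proposition~\ref{fullassoc}): I would first show that $T$, like $C$, is a strongly $\R$-linear ring homomorphism that commutes with $\log$ on positive elements and obeys the chain rule $(Tf)'=T(f')\cdot(g+h)'$; then, since both maps send $x\mapsto g+h$ and share these structural properties, I reduce $T=C$ by strong linearity and logarithms to the hyperlogarithms $\ell_\gamma$ and settle those by transfinite induction on $\gamma<\alpha$.

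The main obstacle is the very existence of $T(f)$. The deformation machinery of Section~\ref{prelim} (Lemmas~\ref{tay1}--\ref{tay2}, \ref{l:EqualLogs}) requires the increment to be infinitesimal, whereas here we only have $h\prec g$, not $h\prec 1$. To circumvent this I would work monomial by monomial. For $\fm\in\mfL_{<\alpha}$ one has $\supp\fm^{(n)}\subseteq\{\ell_\beta^\dagger:\beta<\alpha\}^n\,\fm\preceq x^{-n}\fm$, and since $\circ g$ is an ordered valued field embedding with $x\circ g=g$ this yields the key bound $\fm^{(n)}\circ g\preceq g^{-n}(\fm\circ g)$, whence
\[(\fm^{(n)}\circ g)\,h^n\ \preceq\ (\fm\circ g)\,(h/g)^n,\qquad h/g\prec 1.\]
Because $\big((h/g)^n\big)_n$ is summable (Corollary~\ref{addsumm}), this comparison, combined with uniform support control for composition with $g$ in the style of Lemmas~\ref{l:SmallCompForm} and \ref{suppTg}, gives summability of $\big(\tfrac{1}{n!}(\fm^{(n)}\circ g)h^n\big)_n$ for each $\fm$ and a well-based bound on the support of $\fm\mapsto T(\fm)$; Lemma~\ref{vdh} then extends $T$ to a strongly $\R$-linear operator on $\LL_{<\alpha}$. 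The identities $T(1)=1$ and $T(fg)=T(f)T(g)$ follow by regrouping the double sum exactly as in Lemma~\ref{tay1}.

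With $T$ in hand, the chain rule and log-compatibility are routine transcriptions of the deformation lemmas. Taking $\Phi=\,\cdot\circ g$, $\varepsilon=h$, and $\phi=g'$ (legitimate by the Chain Rule, Proposition~\ref{l:FullChain}, which gives $\der(f\circ g)=(f'\circ g)g'$), the computation of Lemma~\ref{tay2} yields $\der(Tf)=T(f')\cdot(g'+h')=T(f')\cdot(g+h)'$. Likewise the argument of Lemma~\ref{l:EqualLogs}, via the logarithmic polynomials of Lemma~\ref{polyid}, gives $\log T(f)=T(\log f)$ for $f\in\LL_{<\alpha}^{>}$.

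It remains to prove $T=C$. By strong linearity it suffices to treat $f=\fm\in\mfL_{<\alpha}$, and on taking logarithms (both maps commute with $\log$, using Proposition~\ref{l:logassoc} for $C$) this reduces to $T(\ell_\gamma)=C(\ell_\gamma)$ for $\gamma<\alpha$. I would induct on $\gamma$. The case $\gamma=0$ is $T(x)=C(x)=g+h$; the successor step is immediate from $\ell_{\gamma+1}=\log\ell_\gamma$ and log-compatibility. For a limit $\gamma\ge\omega$, the shared chain rule reduces $(T\ell_\gamma)'=(C\ell_\gamma)'$ to the already-known equality on $\ell_\gamma'=\prod_{\beta<\gamma}\ell_\beta^{-1}$ (a monomial handled by the inductive hypothesis together with multiplicativity), so $T(\ell_\gamma)$ and $C(\ell_\gamma)$ differ by a constant; and they have equal constant terms. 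Indeed $h\prec g$ gives $\fd(g+h)=\fd(g)$, hence $\lambda_{g+h}=\lambda_g$ and $\lambda_{(g+h);\gamma}=\lambda_{g;\gamma}$, so by Corollary~\ref{domconstant} the constant term of $C(\ell_\gamma)$ is $-\lambda_{g;\gamma}$, which is also the constant term of the $n=0$ summand $\ell_\gamma\circ g$ of $T(\ell_\gamma)$; and the refined estimate $(\ell_\gamma'\circ g)\,h\prec 1$ (coming from $\ell_\gamma'\circ g\asymp \ell_{\lambda_g+\gamma}'/g'$ and $h\prec g$) shows that every summand with $n\ge 1$ is infinitesimal and so contributes no constant term. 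This completes the induction and the proof.
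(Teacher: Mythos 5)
Your overall architecture coincides with the paper's: define $T(f):=\sum_{n}\frac{f^{(n)}\circ g}{n!}h^n$, show it is a strongly $\R$-linear multiplicative map commuting with $\log$ and obeying the chain rule, reduce $T(f)=f\circ(g+h)$ by strong linearity and logarithms to the case $f=\ell_\gamma$, and induct on $\gamma$, matching derivatives via the chain rule and constant terms at limit stages. Your constant-term step (applying Corollary~\ref{domconstant} to both $g$ and $g+h$, using $\lambda_{g+h}=\lambda_g$, together with $(\ell_\gamma^{(n)}\circ g)h^n\prec1$ for $n\ge1$) is a harmless variant of the paper's Lemma~\ref{l:smalldif}, which instead routes both $\ell_\gamma\circ g$ and $\ell_\gamma\circ(g+h)$ through $\ell_\gamma\circ\ell_{\lambda_g}$ via Lemma~\ref{l:infcomp}; the mathematics is the same. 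One wording caution: in the limit step, $\ell_\gamma'=\prod_{\beta<\gamma}\ell_\beta^{-1}$ is an \emph{infinite} product, so ``the inductive hypothesis together with multiplicativity'' does not literally apply; you must pass through $\log$, writing $\log T(\ell_\gamma')=-\sum_{\beta<\gamma}T(\ell_{\beta+1})$ by strong linearity and log-compatibility (exactly as the paper does). Since you have both properties, this is only a matter of saying it.

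The step that genuinely needs repair is existence. The ``key bound'' $(\fm^{(n)}\circ g)h^n\preceq(\fm\circ g)(h/g)^n$ cannot by itself deliver summability: in a Hahn field, termwise asymptotic domination by a summable family controls only dominant monomials, whereas summability requires the union of the supports to be well-based with each monomial occurring in only finitely many supports. (Take $f_n:=g_n+\fv$ with a fixed monomial $\fv\prec$ all $g_n$: then $f_n\asymp g_n$, yet $(f_n)$ is never summable.) What actually does the work in the paper (Lemma~\ref{Taylorexists}) is a support containment: since $f\mapsto f\circ g$ is strongly additive and multiplicative, one gets
\[
\supp\big((f^{(n)}\circ g)h^n\big)\ \subseteq\ (\fS\cdot\supp h)^n\cdot\supp\big((\supp f)\circ g\big),\qquad \fS\ :=\ \supp\big((\supp\der_{\alpha})\circ g\big),
\]
where $\fS$ is well-based, being the union of supports of the image of the summable family $(\ell_\beta^\dagger)_{\beta<\alpha}$ under the strongly additive map $f\mapsto f\circ g$, and $\fS\preceq\fd(g)^{-1}$, so that $\fS\cdot\supp h\prec1$ and Neumann's Lemma applies. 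Your appeal to ``support control in the style of Lemmas~\ref{l:SmallCompForm} and~\ref{suppTg}'' points at the wrong tools: those lemmas control the coarse supports $\supp_\beta$ used to \emph{define} $f\circ g$ in Section~\ref{sec:comp2}, not the operator support of $\der_{\alpha}$ pushed through composition with $g$. Once the containment above replaces the asymptotic bound, your extension of $T$ (by Lemma~\ref{vdh} on monomials, or directly on summable families as the paper does) and the rest of your argument go through as described.
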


\noindent
Note that the assumption on $h$ is  weaker than $h\prec 1$; this weaker assumption works here because $\supp \der_{\alpha}\preceq x^{-1}$. We need three lemmas:

\begin{lemma}\label{Taylorexists}
 If $f\in \LL_{<\alpha}$, then $\sum_n\frac{f^{(n)}\circ g}{n!}h^n$ exists. The map $T: \LL_{<\alpha}\to \LL_{<\alpha}$
given by $T(f):= \sum_n\frac{f^{(n)}\circ g}{n!}h^n$ is an $\LL_{<\alpha}$-composition with $g+h$.
\end{lemma}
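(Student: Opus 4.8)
The plan is to recognize $T$ as a Taylor deformation of the strongly $\R$-linear field embedding $\Phi := (\,\cdot\,)\circ g : \LL_{<\alpha}\to\LL_{<\alpha}$ (strongly linear by Lemma~\ref{l:CompIsLinear}) along the derivation $\der_\alpha$, exactly as in the Taylor-deformation subsection of Section~\ref{prelim}. The one discrepancy is that there the increment is required to be $\prec 1$, whereas here we only have $h\prec g$. To repair this I would rescale: since $x\circ g=g$ and $\Phi$ is a ring morphism, for each $n$ one has $\frac{f^{(n)}\circ g}{n!}h^n=\frac{(x^n\der_\alpha^n f)\circ g}{n!}(h/g)^n$, and now $h/g\prec 1$. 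This is exactly where the hypothesis $\supp\der_\alpha\preceq x^{-1}$ (noted in the remark before the lemma) enters: it guarantees that the rescaled iterates $x^n\der_\alpha^n f$ stay bounded.

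For existence of the sum, set $\mathfrak{D}:=\supp\der_\alpha=\{\ell_\beta^\dagger:\beta<\alpha\}$, so $\mathfrak{D}\preceq x^{-1}\prec 1$ is well-based and $x\mathfrak{D}=\{\prod_{1\le\rho\le\beta}\ell_\rho^{-1}:\beta<\alpha\}\preceq 1$ is well-based as well. Iterating the support bound of Lemma~\ref{supp1} gives $\supp(x^n\der_\alpha^n f)\subseteq(x\mathfrak{D})^n\supp f\subseteq(x\mathfrak{D})^\infty\supp f=:\mathfrak{B}$, a fixed well-based set by Neumann's Lemma. Thus all the elements $x^n\der_\alpha^n f$ lie in the single Hahn subfield $\R[[\mathfrak{B}]]$; since $\Phi$ is strongly $\R$-linear, the family $(\Phi(\fm))_{\fm\in\mathfrak{B}}$ is summable, so $\mathfrak{C}:=\bigcup_{\fm\in\mathfrak{B}}\supp\Phi(\fm)$ is well-based and $\supp\big((x^n\der_\alpha^n f)\circ g\big)\subseteq\mathfrak{C}$ for every $n$. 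As $h/g\prec 1$, the supports of all terms lie in the well-based set $\mathfrak{C}\cdot(\supp(h/g))^\infty$, and Neumann's Lemma shows each monomial occurs in only finitely many terms; hence $\big(\frac{f^{(n)}\circ g}{n!}h^n\big)_n$ is summable and $T(f)$ exists.

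It remains to check that $T$ is an $\LL_{<\alpha}$-composition with $g+h$ (note $g+h\sim g$, so $g+h>\R$). The identities $T(1)=1$ and $T(x)=(x\circ g)+(x'\circ g)h=g+h$ are immediate from $1^{(n)}=0$ for $n\ge 1$ and $x'=1$, $x''=0$. Multiplicativity $T(f_1f_2)=T(f_1)T(f_2)$ follows, as in the proof of Lemma~\ref{tay1}, from the Leibniz rule $(f_1f_2)^{(k)}=\sum_{n+m=k}\binom{k}{n}f_1^{(n)}f_2^{(m)}$, the fact that $\circ g$ is a ring morphism, and regrouping the product of the two (summable) defining series via Lemma~\ref{prfam}. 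Finally, strong additivity, i.e.\ clause (3), is obtained by the same regrouping: for a summable family $(f_i)$ the family $(\der_\alpha^n f_i)_{i,n}$ is summable, and the rescaled triple-indexed family $\big(\frac{(x^n\der_\alpha^n f_i)\circ g}{n!}(h/g)^n\big)_{i,n}$ is summable by the argument above, whence $\sum_i T(f_i)=T(\sum_i f_i)$.

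The main obstacle is precisely the summability bookkeeping forced by $h\not\prec 1$. One cannot simply invoke Corollary~\ref{corprfam} or the black-box Lemma~\ref{tay1}, since those need an infinitesimal increment; the substitute is the global observation that $\Phi=\cdot\circ g$ maps the fixed Hahn subfield $\R[[\mathfrak{B}]]$ into the Hahn subfield spanned by the fixed well-based set $\mathfrak{C}$ (a feature of composition not visible from crude operator-support estimates, which in fact fail to yield $(\supp\partial)(\supp h)\prec 1$ for $\partial=\der_\alpha/g'$ once $\lambda_g\ge 1$). The delicate point in clause (3) is the finiteness condition for the family indexed by $(i,n)$: that each monomial $\fg$ lies in the support of only finitely many terms. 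I would prove this by separating the case of large $n$ (excluded because the factors $(h/g)^n$ shrink past any fixed monomial while $\mathfrak{C}$, being well-based, is bounded above) from the case of bounded $n$ (handled, for each fixed $n$, by summability of $(\der_\alpha^n f_i)_i$ together with summability of $(\Phi(\fm))_{\fm\in\mathfrak{B}}$).
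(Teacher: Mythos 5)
Your argument is correct in substance and proves exactly what the lemma asserts, but it organizes the key estimate differently from the paper. The paper works with $f^{(n)}\circ g$ and $h^n$ directly: each $\fn\in\supp f^{(n)}$ factors as $\fn_1\cdots\fn_n\fv$ with $\fn_j\in\supp\der_\alpha$, $\fv\in\supp f$, and multiplicativity of $\cdot\circ g$ on monomials gives $\supp\big((f^{(n)}\circ g)h^n\big)\subseteq(\fS\cdot\supp h)^n\cdot\fS_f$, where $\fS:=\supp\big((\supp\der_\alpha)\circ g\big)$ and $\fS_f:=\supp\big((\supp f)\circ g\big)$; the smallness needed for Neumann's Lemma comes from pairing $\fS\preceq g^{-1}$ (from $\supp\der_\alpha\preceq x^{-1}$ and $x^{-1}\circ g=g^{-1}$) with $\supp h\prec g$. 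You instead move the normalization to the left of $\circ\, g$: the identity $\frac{f^{(n)}\circ g}{n!}h^n=\frac{(x^n\der_\alpha^n f)\circ g}{n!}(h/g)^n$ puts all the smallness into $(h/g)^n\prec 1$, while the rescaled derivatives stay inside the single Hahn field $\R[[\mathfrak{B}]]$, $\mathfrak{B}=(x\cdot\supp\der_\alpha)^\infty\supp f$, so that strong linearity of $\Phi=\cdot\circ g$ bounds all supports $\supp\big((x^n\der_\alpha^n f)\circ g\big)$ by one fixed well-based set $\mathfrak{C}$. Both proofs then finish with Neumann's Lemma, and the verification of the composition clauses is the same routine use of Lemma~\ref{prfam} and strong linearity. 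The two estimates are the same cancellation ($x$ against $\ell_\beta^\dagger$, $g$ against $h$) performed on opposite sides of the composition; yours has the conceptual advantage of literally reducing to an infinitesimal-increment Taylor deformation, at the price of invoking strong linearity of $\circ\, g$ on all of $\R[[\mathfrak{B}]]$ rather than doing monomial-by-monomial bookkeeping.

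One justification must be repaired. In your treatment of clause (3) you dispose of large $n$ because ``the factors $(h/g)^n$ shrink past any fixed monomial''. In a non-Archimedean monomial group this is false: for instance $\ell_1^{-n}\succ\ell_0^{-1}$ for every $n$, so powers of an infinitesimal monomial need not eventually drop below a given monomial. The finiteness in $n$ is nevertheless true, by the argument you already use (correctly) for the single-$f$ existence statement: $\mathfrak{C}$ and $(\supp(h/g))^\infty$ are both well-based, so each monomial $\fg$ admits only finitely many factorizations $\fg=\fp\fq$ with $\fp\in\mathfrak{C}$ and $\fq\in(\supp(h/g))^\infty$, and by the finiteness part of Neumann's Lemma each such $\fq$ lies in $(\supp(h/g))^n$ for only finitely many $n$; the bounded-$n$ case is then handled, as you say, by summability of $\big((x^n\der_\alpha^n f_i)\circ g\big)_i$ for each fixed $n$. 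With this substitution your proof is complete.
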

\begin{proof} First note that 
$\mathfrak{S}:=\supp\big((\supp\der_{\alpha})\circ g\big)$ is well-based:
this is because $\supp \der_{\alpha}=\{\ell_{\beta}^\dagger:\ \beta< \alpha\}$ is well-based, and the map $f\mapsto f\circ g: \LL_{<\alpha}\to \LL_{<\alpha}$ is strongly additive. Thus $\mathfrak{S}\cdot \supp h$ is well-based, and we claim that $\mathfrak{S}\cdot \supp h\prec 1$: this is because for
$\fm\in \supp(\der_{\alpha})$ we have $\fm\preceq x^{-1}$, so
$\fm\circ g\preceq x^{-1}\circ g=g^{-1}$, so for $\fn\in \mathfrak{S}$ we have $\fn\preceq g^{-1}$, and thus $\fn h\preceq g^{-1}h\prec 1$. 

Next, let $f\in \LL_{<\alpha}$ and $\fm\in \supp(f^{(n)}\circ g)$.
Then $\fm\in \supp(\fn\circ g)$ with $\fn\in \supp f^{(n)}$, so
$\fn=\fn_1\cdots \fn_n\fv$ with $\fn_1,\dots, \fn_n\in \supp(\der_{\alpha})$
and $\fv\in \supp f$, hence 
$$\fn\circ g\ =\ (\fn_1\circ g)\cdots(\fn_n\circ g)\cdot(\fv\circ g),$$
which gives $\fm\in \fS^n\cdot \fS_f$, where $\fS_f:=\supp\big((\supp f)\circ g\big)$. Thus we have shown:
$$\supp\big((f^{(n)}\circ g)h^n\big)\ \subseteq\ 
\big(\fS\cdot\supp h\big)^n\cdot \fS_f.$$
Now $\fS_f$ is well-based, so by Neumann's Lemma and what
we proved about $\fS\cdot \supp h$ we may conclude that $\sum_n\frac{f^{(n)}\circ g}{n!}h^n$ does exist. The map $T$ is clearly $\R$-linear with $T(1)=1$ and $T(x)=g+h$. 
Let $(f_i)_{i\in I}$ in $\LL_{<\alpha}$ be a summable family. Then $\bigcup_i\fS_{f_i}$ is well-based and the set $\{i \in I:\fm \in \fS_{f_i}\}$ is finite for every $\fm\in \mfL_{<\alpha}$. It follows that $\sum_{n,i}\frac{f_i^{(n)}\circ g}{n!}h^n$ exists, and so $\sum_iT(f_i)$ exists as well, and both sums equal $T(\sum_if_i)$. Thus $T$ is strongly $\R$-linear. A routine computation using Lemma~\ref{prfam} also gives
$T(f_1f_2)=T(f_1)T(f_2)$ for $f_1,f_2\in \LL_{<\alpha}$. 
\end{proof}

\begin{lemma}\label{Taylorlog}
 For $f\in \LL_{<\alpha}^{>}$ we have $T(f)\sim f\circ g$
and $\log T(f)=T(\log f)$. 
\end{lemma}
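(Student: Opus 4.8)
The plan is to establish the two assertions in turn, deriving $T(f)\sim f\circ g$ first and then obtaining $\log T(f)=T(\log f)$ by imitating the proof of Lemma~\ref{l:EqualLogs}. The essential point is that although $h$ is only assumed $\prec g$ and so need not be infinitesimal, the damping supplied by $\der_\alpha$ (whose support is $\preceq x^{-1}$) makes the relevant correction term genuinely infinitesimal, which is exactly what both claims require.

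For $T(f)\sim f\circ g$, I would use that $f\mapsto f\circ g$ is an ordered field embedding (Lemma~\ref{l:CompIsLinear}), so that $(f^{(n)}\circ g)/(f\circ g)=(f^{(n)}/f)\circ g$ for $f\in\LL_{<\alpha}^{>}$. This gives the factorization
\[
T(f)\ =\ (f\circ g)\Big(1+\sum_{n\ge 1}\tfrac1{n!}\big((f^{(n)}/f)\circ g\big)h^n\Big),
\]
the inner sum existing by Lemma~\ref{Taylorexists}. Since $\supp\der_\alpha\preceq x^{-1}$, iterating the support bound for $\der_\alpha$ gives $f^{(n)}\preceq x^{-n}f$, hence $f^{(n)}/f\preceq x^{-n}$ and $(f^{(n)}/f)\circ g\preceq x^{-n}\circ g=g^{-n}$. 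As $h\prec g$, the $n$th summand is then $\preceq g^{-n}h^n=(h/g)^n\prec 1$ for $n\ge 1$, so $\delta:=\sum_{n\ge 1}\tfrac1{n!}\big((f^{(n)}/f)\circ g\big)h^n$ satisfies $\delta\prec 1$. Therefore $T(f)=(f\circ g)(1+\delta)\sim f\circ g$.

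For the logarithmic identity I would take logarithms of $T(f)=(f\circ g)(1+\delta)$, obtaining $\log T(f)=\log(f\circ g)+\log(1+\delta)$, where $\log(f\circ g)=(\log f)\circ g$ by Proposition~\ref{l:logassoc}. To handle $\log(1+\delta)$ I would invoke the logarithmic polynomials: applying the useful identity with $g_n:=(f^{(n)}/f)\circ g$ in $\LL_{<\alpha}$ and then substituting $t=h$ yields
\[
\log(1+\delta)\ =\ \sum_{n\ge 1}\tfrac1{n!}L_n\big((f'/f)\circ g,\dots,(f^{(n)}/f)\circ g\big)h^n.
\]
This substitution is legitimate because each monomial of $L_n$ has the form $X_{i_1}\cdots X_{i_m}$ with $i_1+\cdots+i_m=n$, whence $L_n(g_1,\dots,g_n)\preceq g^{-n}$ and $L_n(g_1,\dots,g_n)h^n\preceq (h/g)^n\prec 1$, with all supports confined to $(\fS\cdot\supp h)^\infty$ exactly as in Lemma~\ref{Taylorexists}. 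Strong $\R$-linearity of $f\mapsto f\circ g$ then pulls the composition outside each rational-coefficient polynomial $L_n$, and Lemma~\ref{polyid} together with $(\log f)'=f^\dagger$ gives $L_n(f'/f,\dots,f^{(n)}/f)=(\log f)^{(n)}$. Hence $\log(1+\delta)=\sum_{n\ge 1}\tfrac1{n!}\big((\log f)^{(n)}\circ g\big)h^n$, and adjoining the $n=0$ term $(\log f)\circ g=\log(f\circ g)$ produces exactly $T(\log f)$ (which is defined by Lemma~\ref{Taylorexists} since $\log f\in\LL_{<\alpha}$, as $\alpha$ is an infinite limit ordinal).

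The main obstacle is not a new idea but a bookkeeping point: because $h\prec g$ rather than $h\prec 1$, I cannot cite Lemma~\ref{l:EqualLogs} verbatim, since its Taylor-deformation hypothesis requires $\varepsilon\prec 1$. What rescues the argument is precisely the estimate $f^{(n)}/f\preceq x^{-n}$ forced by $\supp\der_\alpha\preceq x^{-1}$, which after composing with $g$ and multiplying by $h^n$ supplies the infinitesimal factors $(h/g)^n$. The care needed is therefore in confirming that the formal logarithmic-polynomial identity survives the substitution $t\mapsto h$, and this is guaranteed by the well-based support bounds already used in the proof of Lemma~\ref{Taylorexists}.
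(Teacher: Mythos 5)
Your proof is correct and takes essentially the same approach as the paper: the paper gets $T(f)\sim f\circ g$ from the support estimates in the proof of Lemma~\ref{Taylorexists}, and then states that the logarithmic identity ``follows as in the proof of Lemma~\ref{l:EqualLogs}, with $f\mapsto f\circ g$ in the role of $\Phi$ and $h$ instead of $\epsilon$'' --- which is exactly your argument via the factorization $T(f)=(f\circ g)(1+\delta)$, Proposition~\ref{l:logassoc}, the logarithmic polynomials, and Lemma~\ref{polyid}. Your explicit check that $h\prec g$ (rather than $h\prec 1$) suffices, using $f^{(n)}/f\preceq x^{-n}$ and the resulting infinitesimal factors $(h/g)^n$, is precisely the bookkeeping the paper leaves implicit in that citation.
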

\begin{proof} For nonzero $f\in \LL_{<\alpha}$ and $n\ge 1$ we have
$$\supp (f^{(n)}\circ g)h^n\ \prec\ \max \fS_f\ =\ \max \supp(f\circ g)$$ with notations from the
proof of Lemma~\ref{Taylorexists}, and thus $T(f)\sim f\circ g$.
In view of Proposition~\ref{l:logassoc}, the rest now follows as in the proof of Lemma~\ref{l:EqualLogs}, with $f\mapsto f\circ g$
in the role of $\Phi$ and $h$ instead of $\epsilon$.  
\end{proof}

\begin{lemma}\label{l:smalldif}
Let $\gamma\geq \omega$ and $\gamma < \alpha$. Then $T(\ell_\gamma) - \big(\ell_\gamma\circ (g+h)\big) \prec 1$. 
\end{lemma}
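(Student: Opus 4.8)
The plan is to write the difference as a sum of two manifestly infinitesimal pieces,
$$T(\ell_\gamma) - \ell_\gamma\circ(g+h)\ =\ \underbrace{\Big(T(\ell_\gamma) - \ell_\gamma\circ g\Big)}_{A}\ +\ \underbrace{\Big(\ell_\gamma\circ g - \ell_\gamma\circ(g+h)\Big)}_{B},$$
and to bound $A$ and $B$ separately.

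For $A$: note that $A = \sum_{n\ge 1}\frac{\ell_\gamma^{(n)}\circ g}{n!}h^n$, where the sum exists by Lemma~\ref{Taylorexists}. First I would estimate $\ell_\gamma^{(n)}$. Since $\ell_\gamma' = \prod_{\beta<\gamma}\ell_\beta^{-1}\preceq x^{-1}$ and $\supp\der_\alpha\preceq x^{-1}$, the support bound for iterates of $\der_\alpha$ yields $\supp\ell_\gamma^{(n)}\subseteq(\supp\der_\alpha)^{n-1}\{\ell_\gamma'\}\preceq x^{-n}$, so $\ell_\gamma^{(n)}\preceq x^{-n}$ for $n\ge 1$. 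The map $f\mapsto f\circ g$ is an $\LL_{<\alpha}$-composition with $g$ (Lemma~\ref{l:FullProto}), hence preserves $\preceq$ and sends $x^{-n}$ to $g^{-n}$ (via $x\circ g=g$ and the respect for real powers, Corollary~\ref{c:FullPowers}); therefore $\ell_\gamma^{(n)}\circ g\preceq g^{-n}$ and, using $h\prec g$,
$$\frac{\ell_\gamma^{(n)}\circ g}{n!}\,h^n\ \preceq\ g^{-n}h^n\ =\ (h/g)^n\ \prec\ 1 \qquad(n\ge 1).$$
Thus every monomial appearing in any term of the (existing) sum $A$ is $\prec 1$, so $A\prec 1$.

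For $B$: since $h\prec g$ we have $g+h\sim g$, so $\fd(g+h)=\fd(g)$ and hence $\lambda_{g+h}=\lambda_g$ and $\lambda_{(g+h);\gamma}=\lambda_{g;\gamma}$. Applying Corollary~\ref{domconstant} to both $g$ and $g+h$ (both in $\LL_{<\alpha}^{>\R}$), and using $\omega\le\gamma<\alpha$, gives
$$\ell_\gamma\circ g = \ell_{\lambda_g+\gamma}-\lambda_{g;\gamma}+\epsilon_1,\qquad \ell_\gamma\circ(g+h)=\ell_{\lambda_g+\gamma}-\lambda_{g;\gamma}+\epsilon_2$$
with $\epsilon_1,\epsilon_2\prec 1$, whence $B=\epsilon_1-\epsilon_2\prec 1$. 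Adding the two bounds gives the assertion.

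The only point needing care is the passage from ``each term of the infinite sum $A$ is infinitesimal'' to ``$A$ is infinitesimal'': this holds because each term being $\prec 1$ forces every monomial of its support to be $\prec 1$, so $\supp A\subseteq\{\fm:\fm\prec 1\}$. The sole computational ingredient is the estimate $\ell_\gamma^{(n)}\preceq x^{-n}$, which is routine given $\supp\der_\alpha\preceq x^{-1}$; I do not anticipate any genuine obstacle.
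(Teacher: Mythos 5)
Your proposal is correct, and its skeleton is the same as the paper's: both split the difference into $A=T(\ell_\gamma)-\ell_\gamma\circ g=\sum_{n\ge 1}\frac{1}{n!}(\ell_\gamma^{(n)}\circ g)h^n$ and $B=\ell_\gamma\circ g-\ell_\gamma\circ(g+h)$, and both handle $B$ through $\lambda_{g+h}=\lambda_g$ (you invoke Corollary~\ref{domconstant} twice; the paper applies Lemma~\ref{l:infcomp} to $g$ and to $g+h$ directly, which is the same content, since Corollary~\ref{domconstant} is just Lemma~\ref{l:infcomp} combined with Lemma~\ref{bg}). Where you genuinely diverge is the estimate of the terms of $A$: the paper re-enters the proof of Lemma~\ref{Taylorexists} and tracks monomial supports, showing $\supp\big((\ell_\gamma^{(n)}\circ g)h^n\big)\subseteq(\fS\cdot\supp h)^{n-1}\cdot\supp\big((\supp\ell_\gamma')\circ g\big)\cdot(\supp h)\prec 1$, whereas you bound $\ell_\gamma^{(n)}\preceq x^{-n}$ \emph{before} composing (via $\supp\der_\alpha\preceq x^{-1}$ and $\ell_\gamma'\preceq x^{-1}$) and then push this through the composition using the order/valuation-preserving property of $K$-compositions (Lemma~\ref{l:CompIsLinear} and $x^{-n}\circ g=g^{-n}$), getting each term $\preceq (h/g)^n\prec 1$. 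Your route is cleaner at this step: it uses Lemma~\ref{Taylorexists} only as a black box for existence of the sum, rather than reusing its internal support bookkeeping; the paper's support computation, on the other hand, is exactly the estimate already needed for summability, so it recycles work. Your final point of care---passing from termwise $\prec 1$ to $A\prec 1$ via the inclusion of supports in $\{\fm:\fm\prec 1\}$---is handled correctly and is indeed the step one must not wave away.
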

\begin{proof} We have $\lambda_{g+h} = \lambda_g$, so by  Lemma \ref{l:infcomp},
$$\ell_\gamma \circ g -\ell_\gamma \circ \ell_{\lambda_g}\prec 1,\qquad  \ell_\gamma \circ (g+h) -\ell_\gamma \circ \ell_{\lambda_g}\prec 1,$$
and hence $\ell_\gamma \circ g -\ell_\gamma \circ (g+h)\prec 1$.  Since $T(\ell_\gamma)=\ell_{\gamma}\circ g + \sum_{n=1}^\infty \frac{1}{n!}(\ell_\gamma^{(n)}\circ g)h^n$, it therefore suffices to show that  $(\ell_\gamma^{(n)}\circ g)h^n \prec 1$ for all $n \geq 1$.  Let $n\ge 1$ and let $\fS$ be as in the proof of Lemma \ref{Taylorexists}. That proof for $f=\ell_{\gamma}'$ gives
$$\supp\big((\ell_\gamma^{(n)}\circ g)h^n\big)\ \subseteq\ 
\big(\fS\cdot\supp h\big)^{n-1}\cdot  \supp\big((\supp \ell_{\gamma}') \circ g\big)\cdot (\supp h).$$
Now $\supp \ell_{\gamma}'=\{\ell_\gamma' \} \prec x^{-1}$, so $(\supp \ell_\gamma' )\circ g \prec g^{-1}$. In view of $h \prec g$, this yields
 $\supp\big((\supp \ell_{\gamma}') \circ g\big)\cdot (\supp h) \prec 1$. Also $\fS\cdot\supp h \prec 1$, and thus  $(\ell_\gamma^{(n)}\circ g)h^n \prec 1$. 
\end{proof}

\begin{proof}[Proof of Proposition~\ref{tayexp}] Our job is to show that the above maps $f\mapsto f\circ (g+h)$
and $T$ agree. By the strong linearity of these maps this reduces to $\fm\circ (g+h)=T(\fm)$ for $\fm\in \mathfrak{L}_{<\alpha}$.
Taking logarithms and using that these maps commute with taking logarithms (Lemma~\ref{Taylorlog} and Proposition~\ref{l:logassoc}) this reduces further to $\ell_{\gamma}\circ (g+h)=T(\ell_{\gamma})$ for $\gamma<\alpha$. We prove this by induction on
$\gamma$. The case $\gamma=0$ is obvious, and for the step from $\gamma$ to
$\gamma+1$ we use again that the two maps commute with
taking logarithms. Let now $\gamma<\alpha$ be an infinite limit ordinal.
A routine computation using Proposition \ref{l:FullChain} gives $T(\ell_\gamma)' = T(\ell_\gamma')(g+h)'$.
Moreover, by Lemma~\ref{Taylorlog},
\[ \log T(\ell_\gamma')\ =\ T(\log \ell_\gamma')\ =\ T(-\sum_{\beta<\gamma} \ell_{\beta+1})\ =\  -\sum_{\beta<\gamma}T(\ell_{\beta+1}),\]
and likewise $\log \big(\ell_{\gamma}'\circ(g+h)\big)=-\sum_{\beta<\gamma}\ell_{\beta+1}\circ(g+h)$, so 
$ \log T(\ell_\gamma')=\log(\ell_\gamma'\circ(g+h))$ by the natural inductive assumption, hence $T(\ell_{\gamma}')=\ell_{\gamma}'\circ (g+h)$, and
thus
\[
T(\ell_\gamma)'\ =\ T(\ell_\gamma')(g+h)'\ =\ \left(\ell_\gamma' \circ (g+h)\right)(g+h)'\ =\ \big(\ell_\gamma \circ (g+h)\big)'
\]
by the chain rule. It remains to check that $T(\ell_\gamma)$ and $\ell_\gamma \circ (g+h)$ have the same constant term. This follows from Lemma \ref{l:smalldif}.
\end{proof}

\noindent
This gives the existence part of Theorem~\ref{uc}: we just showed that our composition $\circ$ admits Taylor expansion as stated in that theorem, and the other three items 
are respectively contained in Corollary~\ref{circadd}, Lemma~\ref{comgam}, and Lemma~\ref{bg}.

\subsection*{Compositional Inversion} For $f,g\in \LL_{<\alpha}^{>\R}$ we have $f\circ g\in \LL_{<\alpha}^{>\R}$. Thus
$\LL_{<\alpha}^{>\R}$ is a monoid with respect to composition and with $x$ as its identity element. Let us say that $g\in \LL_{<\alpha}^{>\R}$ is {\em $($compositionally$)$ invertible\/} if $f\circ g=x$
for some $f\in\LL_{<\alpha}$; note that such $f$ is unique and
satisfies $f\in \LL_{<\alpha}^{>\R}$ and $g\circ f=x$, since
$(g\circ f)\circ g=g\circ(f\circ g)=g$; we denote this unique $f$ by
$g^{\inv}$. Note that if $f,g\in\LL_{<\alpha}^{>\R}$ are invertible, then so are $f^{\inv}$ and $f\circ g$, with $(f\circ g)^{\inv}=g^{\inv}\circ f^{\inv}$.
Thus  the invertible elements of $\LL_{<\alpha}^{>\R}$ are exactly the elements of a group $G_{\alpha}$ with the group operation
given by composition. Our goal here is to identify $G_{\alpha}$
as a subset of $\LL_{<\alpha}^{>\R}$. By $\lambda_x=0$ and Lemma~\ref{logofcomp}, $\lambda_f=0$ is necessary for $f\in \LL_{<\alpha}^{>\R}$ to belong to $G_{\alpha}$. It is also sufficient: 

\begin{prop}\label{inversion} For $f\in \LL_{<\alpha}^{>\R}$ we have: $f\in G_{\alpha}\Leftrightarrow \lambda_f=0$. 
\end{prop}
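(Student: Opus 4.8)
The implication $\Rightarrow$ is already recorded just before the statement, so the plan is to establish the converse: assuming $\lambda_f=0$, I would produce $g\in\LL_{<\alpha}^{>\R}$ with $f\circ g=x$, which by definition puts $f\in G_\alpha$. The first step is a normalization. Writing $\fd(f)=x^{r_0}\fn$ with $r_0>0$ (this is exactly what $\lambda_f=0$ and $f>\R$ say) and $\fn\in\mathfrak L_{[1,\alpha)}$, and letting $c\in\R^{>}$ be the leading coefficient of $f$, I note that $x^{1/r_0}$ and $c^{-1/r_0}x$ lie in $G_\alpha$ (with inverses $x^{r_0}$ and $c^{1/r_0}x$). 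Since $G_\alpha$ is a group, $f\in G_\alpha$ iff $c^{-1/r_0}x\circ\,x^{1/r_0}\circ f=c^{-1/r_0}f^{1/r_0}\in G_\alpha$, where I use $x^{1/r_0}\circ f=f^{1/r_0}$ from Corollary~\ref{c:FullPowers}. A short computation with Corollary~\ref{c:FullPowers} and Lemma~\ref{logofcomp} shows $c^{-1/r_0}f^{1/r_0}$ has dominant monomial $x\fm$, where $\fm:=\fn^{1/r_0}\in\mathfrak L_{[1,\alpha)}$, and leading coefficient $1$. Hence I may assume $f=x\fm(1+\epsilon)$ with $\fm\in\mathfrak L_{[1,\alpha)}$ and $\epsilon\prec1$.

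Next I would set up a Newton iteration. Take the monomial $g_0:=x\fm^{-1}\in\mathfrak L_{<\alpha}$, which has $\lambda_{g_0}=0$. Using the logarithmicity computations (Lemma~\ref{logofcomp}, Corollary~\ref{domconstant}) together with Lemma~\ref{fdprod}, one gets $\fm\circ g_0\sim\fm$ and $\epsilon\circ g_0\prec1$, so that $f\circ g_0=g_0\,(\fm\circ g_0)\,(1+\epsilon\circ g_0)\sim x$, say $f\circ g_0=x+d_0$ with $d_0\prec x$. I then iterate: given $g_k\in\LL_{<\alpha}^{>\R}$ with $f\circ g_k=x+d_k$ and $d_k\prec x$, I observe that $\lambda_f=0$ forces $f^\dagger\asymp x^{-1}$, hence $f'\asymp f/x$, and by the chain rule (Proposition~\ref{l:FullChain}) $f'\circ g_k\asymp(f\circ g_k)\,g_k^{-1}\asymp x\,g_k^{-1}\ne0$. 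I set
\[ h_k:=-\,\frac{d_k}{f'\circ g_k},\qquad g_{k+1}:=g_k+h_k, \]
and check $h_k\prec g_k$ (since $h_k/g_k\asymp d_k/x\prec1$). Taylor expansion (Proposition~\ref{tayexp}) then gives
\[ f\circ g_{k+1}\ =\ (x+d_k)+(f'\circ g_k)h_k+\sum_{n\ge2}\frac{f^{(n)}\circ g_k}{n!}\,h_k^{\,n}\ =\ x+d_{k+1}, \]
with $d_{k+1}=\sum_{n\ge2}\frac{f^{(n)}\circ g_k}{n!}h_k^{\,n}\prec d_k$, i.e.\ a quadratically shrinking error.

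The conclusion I am aiming for is that the corrections $(h_k)$ form a summable family, so that $g:=g_0+\sum_k h_k\in\LL_{<\alpha}^{>\R}$ exists and, in the limit, $f\circ g=x$. The hard part is exactly this convergence claim. Increasing valuations $v(d_k)$ alone do not guarantee summability in a Hahn field; I must show that the supports of all the $h_k$ and of the intermediate compositions remain inside one fixed well-based set. This is precisely where $\lambda_f=0$ enters essentially: it keeps $f'\circ g_k$ asymptotic to $x\,g_k^{-1}$, so that dividing by $f'\circ g_k$ amounts to applying an operator whose support is well-based and $\prec1$ (controllable via the operator-support estimates, Lemmas~\ref{supp2}–\ref{invsupp}, and Neumann's Lemma); when $\lambda_f>0$ this control fails, matching the necessity direction. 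Thus my plan is to reduce everything to this support bookkeeping, with the algebraic identities above being routine consequences of associativity, the chain rule, and Taylor expansion, and to treat the well-basedness of the cumulative support as the single technical obstacle of the argument.
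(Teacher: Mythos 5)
Your reduction is exactly the paper's: normalize using $G_{\R,x}$ so that $f=x\fm(1+\epsilon)$ with $\lambda_{\fm}>0$, then compose on the right with the monomial $x\fm^{-1}$ to reach something tangent to the identity, $f\circ(x\fm^{-1})=x+d_0$ with $d_0\prec x$. The genuine gap is in what you do next. You replace the decisive step by a Newton iteration and explicitly defer its convergence (``the single technical obstacle''), but this is not routine bookkeeping that Lemmas~\ref{supp2}--\ref{invsupp} dispose of: the composition operators $\phi\mapsto\phi\circ g_k$ change at every step, so you would need \emph{uniform} control of all their operator supports inside one fixed well-based set, and nothing in your sketch provides it. Worse, even granting summability of $(h_k)$, $\omega$ many Newton steps with strictly increasing $v(d_k)$ need not produce an \emph{exact} inverse in a Hahn field: Taylor expansion around each $g_k$ only yields $f\circ g-x\prec d_k$ for every $k$, and a nonzero element can satisfy all of these bounds simultaneously, since the $v(d_k)$ need not be cofinal in the relevant part of the value group. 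So the iteration would in general have to be continued transfinitely, with fresh summability problems at limit stages --- a difficulty your proposal never confronts.

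The paper avoids iteration altogether. For $h\in\LL_{<\alpha}^{\prec x}$ it writes the right-composition operator $\phi\mapsto\phi\circ(x+h)$ as $I+D$, where $D(\phi)=\sum_{n\ge 1}\frac{1}{n!}\phi^{(n)}h^n$; since $\supp\der_{\alpha}\preceq x^{-1}$ and $\supp h\prec x$, the operator support satisfies $\supp D\subseteq\bigcup_{n\ge1}(\supp\der_{\alpha})^n(\supp h)^n\prec 1$ and is well-based, so Lemma~\ref{invsupp} gives \emph{at once} that $I+D$ is bijective with strongly $\R$-linear inverse $I+E$, $E=\sum_{n\ge1}(-1)^nD^n$, and that this inverse maps $x+\LL_{<\alpha}^{\prec x}$ back into itself (Lemma~\ref{tangid}). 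Applying surjectivity to the target $x$ produces the compositional inverse exactly, with all well-basedness handled once and for all by Neumann's Lemma inside Lemma~\ref{invsupp}; the group property of $G_{\alpha,1}$ and then of $G_{\alpha}^1$ (Corollary~\ref{g1a}) follows, and your normalized $f$ lies in $G_{\alpha}^1$. To repair your argument, replace the Newton iteration by this one-shot strongly linear inversion: it is precisely the fixed-point theorem that your ``support bookkeeping'' would otherwise have to reprove from scratch.
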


\noindent
We begin by considering the series {\em tangent to the identity}. These are the
$x+h$ with $h\in \LL_{<\alpha}^{\prec x}$. Fix such $h$ and note that
then  
$$f\circ (x+h)\ =\ \sum_{n=0}^\infty \frac{1}{n!}f^{(n)}h^n\qquad (f\in \LL_{<\alpha})$$
and that the map $f\mapsto f\circ (x+h): \LL_{<\alpha} \to \LL_{<\alpha}$
equals $I+D$ where $I$ is the identity map on $\LL_{<\alpha}$ and the strongly $\R$-linear map
$D: \LL_{<\alpha} \to \LL_{<\alpha}$ is given by
$D(f) =\ \sum_{n=1}^{\infty}\frac{1}{n!}f^{(n)}h^n$.  Now $\supp \der_{\alpha}=\{\ell_{\beta}^\dagger:\ \beta<\alpha\}\preceq x^{-1}$, so $\supp \der_{\alpha}$
is well-based. Moreover, $\supp h\prec x$, so  $D$ has well-based support 
$$\supp D\ \subseteq\ \bigcup_{n=1}^\infty(\supp \der_{\alpha})^n\cdot (\supp h)^{n}\ \prec\ 1.$$ 
Thus by Lemma~\ref{invsupp} the map $I+D$ on $\LL_{<\alpha}$ is 
bijective with inverse $I+E$ where the strongly $\R$-linear map $E:\LL_{<\alpha} \to \LL_{<\alpha}$ is given by
$E(f)= \sum_{n=1}^{\infty} (-1)^nD^n(f)$ and has well-based support
$\supp E \subseteq \bigcup_{n=1}^\infty (\supp D)^n\prec 1$.

\begin{lemma}\label{tangid}  Let $h\in \LL_{<\alpha}^{\prec x}$. Then the operator $f\mapsto f\circ (x+h): \LL_{<\alpha}\to \LL_{<\alpha}$ maps $x+\LL_{<\alpha}^{\prec x}$
bijectively onto itself. In particular, $x+h$ is invertible.
\end{lemma}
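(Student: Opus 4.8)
The key is to build directly on what the paragraph preceding the lemma already establishes. There we learned that for $h\in\LL_{<\alpha}^{\prec x}$ the operator $f\mapsto f\circ(x+h)$ equals $I+D$ on $\LL_{<\alpha}$, that it is bijective with inverse $I+E$, and that $D$ and $E$ have well-based operator support satisfying $\supp D\prec 1$ and $\supp E\prec 1$. The plan is therefore to show that \emph{both} $I+D$ and its inverse $I+E$ map the coset $S:=x+\LL_{<\alpha}^{\prec x}$ into itself. Since $I+D$ and $I+E$ are mutually inverse bijections of $\LL_{<\alpha}$, this at once yields that $(I+D)\restrict S$ is a bijection of $S$ onto $S$ with inverse $(I+E)\restrict S$, which is exactly the assertion of the lemma.

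For the forward inclusion I would take $k\in\LL_{<\alpha}^{\prec x}$ and compute, using $x\circ(x+h)=x+h$ and $k\circ(x+h)=(I+D)(k)=k+D(k)$,
\[
(x+k)\circ(x+h)\ =\ (x+h)+\big(k+D(k)\big)\ =\ x+\big(h+k+D(k)\big).
\]
Here $h,k\prec x$ by hypothesis, and $D(k)\prec x$ because $\supp D(k)\subseteq(\supp D)(\supp k)$ by Lemma~\ref{supp1}, with every monomial of $\supp D$ lying $\prec 1$ and every monomial of $\supp k$ lying $\prec x$, so their products are $\prec x$. Hence $h+k+D(k)\prec x$ and $(x+k)\circ(x+h)\in S$. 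The analogous computation for the inverse gives $(I+E)(x+k)=x+\big(k+E(x)+E(k)\big)$, where $E(k)\prec x$ for the same reason, while $E(x)\prec x$ because $\supp E(x)\subseteq(\supp E)\{x\}$ with $\supp E\prec 1$; thus $I+E$ also maps $S$ into $S$.

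The one place deserving care—what I would flag as the genuine (if mild) obstacle—is precisely that the relevant threshold throughout is $\prec x$ rather than $\prec 1$, and in particular the estimate $E(x)\prec x$. Since $\supp x=\{x\}$ is not itself $\prec x$, this step forces us to use that $\supp E\prec 1$ \emph{strictly} (not merely $\preceq 1$), so that $(\supp E)\{x\}\prec x$; the same remark applies to $D(x)=h$ in checking $x\mapsto x+h$. Once this bookkeeping is in place, the bijectivity of $(I+D)\restrict S$ is immediate, and the final clause follows by observing that $x=x+0\in S$: surjectivity of $(I+D)\restrict S$ produces $f\in S$ with $f\circ(x+h)=x$, and by the discussion of compositional invertibility preceding Proposition~\ref{inversion} this $f$ witnesses that $x+h$ is invertible, with $(x+h)^{\inv}=f$.
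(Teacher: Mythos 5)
Your proof is correct and follows essentially the same route as the paper: both arguments run on the $I+D$, $I+E$ framework set up in the paragraph preceding the lemma, show that the operator and its inverse preserve the coset $x+\LL_{<\alpha}^{\prec x}$ via the well-based support estimates $\supp D\prec 1$, $\supp E\prec 1$, and deduce invertibility of $x+h$ from surjectivity at the point $x$. The only cosmetic differences are that the paper justifies the forward inclusion by order-preservation of composition ($g^*=g\circ(x+h)\prec x\circ(x+h)\asymp x$) where you use the support bound on $D$, and it bounds $\supp E(x+g)\subseteq(\supp E)\supp(x+g)\prec x$ in one step where you split off $E(x)$ separately.
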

\begin{proof} For $f=x+g$ with $g\in \LL_{<\alpha}^{\prec x}$ we have
$$f\circ (x+h)\ =\ (x+g)\circ(x+h)\ =\ x+h+g^*$$
with $g^*=g\circ (x+h)\prec x\circ (x+h)=x+h\asymp x$, so the above operator does map $x+\LL_{<\alpha}^{\prec x}$
injectively into itself. Conversely, with $g\in \LL_{<\alpha}^{\prec x}$ 
we use the above inverse $I+E$ of $I+D$ to get
$f:=(I+E)(x+g)$ with $f\circ (x+h)=x+g$. It remains to note that
$f=x+g + E(x+g)$ and $\supp E(x+g)\subseteq (\supp E)\supp (x+g)\prec x$, so
$E(x+g)\prec x$. 
\end{proof}

\noindent
Thus if $f,g\in \LL_{<\alpha}^{>\R}$ are tangent to the identity, then so
are $f\circ g$ and $g^{\inv}$: 
$$G_{\alpha,1}\ :=\ \{f\in \LL_{<\alpha}^{>\R}:\ f \text{ is tangent to the identity}\} \text{ is a subgroup of }G_{\alpha}.$$
Below we improve this by showing that $G_{\alpha,1}$ is a {\em normal\/} subgroup of $G_{\alpha}$.

\begin{lemma}\label{easyreduc} Let $f\in G_{\alpha}$ and $r\in \R^{>}$. Then $rf, f^r\in G_{\alpha}$.
\end{lemma}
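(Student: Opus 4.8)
The plan is to deduce both assertions from the fact, already recorded just before Proposition~\ref{inversion}, that the invertible elements form a group $G_\alpha$ under $\circ$; in particular $G_\alpha$ is closed under composition. So it suffices to write each of $rf$ and $f^r$ as a composite $g\circ f$ in which $g$ is an elementary element of $\LL_{<\alpha}^{>\R}$ that is manifestly invertible, and then invoke closure of $G_\alpha$ together with the hypothesis $f\in G_\alpha$. I would not use Proposition~\ref{inversion} here, since Lemma~\ref{easyreduc} is one of the steps toward its proof.

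First I would handle the scalar multiple. Set $g_r:=rx$; since $r\in\R^{>}$ and $x=\ell_0>\R$ we have $g_r\in\LL_{<\alpha}^{>\R}$. Recall that composition with a fixed right argument is $\R$-linear (Lemma~\ref{l:CompIsLinear}) and that $x\circ h=h$. Hence $(r^{-1}x)\circ(rx)=r^{-1}\bigl(x\circ(rx)\bigr)=r^{-1}(rx)=x$, which shows $g_r$ is invertible with inverse $r^{-1}x$, so $g_r\in G_\alpha$. The same $\R$-linearity and $x\circ f=f$ give $(rx)\circ f=r(x\circ f)=rf$. Therefore $rf=g_r\circ f$ is a composite of $g_r,f\in G_\alpha$, hence $rf\in G_\alpha$. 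For the power I would set $p_r:=x^r=\ell_0^{\,r}\in\mathfrak{L}_{<\alpha}$, noting $p_r\in\LL_{<\alpha}^{>\R}$ because $r>0$ forces $x^r\succ 1$. Its inverse is $x^{1/r}$: applying the power rule $(h\circ g)^t=h^t\circ g$ of Corollary~\ref{c:FullPowers} with $h=x$, $g=x^r$, $t=1/r$ gives $x^{1/r}\circ x^r=(x\circ x^r)^{1/r}=(x^r)^{1/r}=x$, so $p_r\in G_\alpha$. The same corollary with $h=x$, $g=f$, $t=r$ yields $x^r\circ f=(x\circ f)^r=f^r$, so $f^r=p_r\circ f$ is again a composite of elements of $G_\alpha$, whence $f^r\in G_\alpha$.

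I do not expect a genuine obstacle: once the factorizations $rf=(rx)\circ f$ and $f^r=x^r\circ f$ are spotted, everything collapses to closure of $G_\alpha$ under $\circ$, and membership of $rf,f^r$ in $\LL_{<\alpha}^{>\R}$ is automatic since composition sends $\LL_{<\alpha}^{>\R}\times\LL_{<\alpha}^{>\R}$ into $\LL_{<\alpha}^{>\R}$. The only mildly delicate points are the invertibility checks for the elementary factors $rx$ and $x^r$, and these are immediate from the $\R$-linearity of composition (for $rx$) and from the power rule of Corollary~\ref{c:FullPowers} (for $x^r$).
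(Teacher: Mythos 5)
Your proof is correct and takes essentially the same route as the paper's: both arguments come down to the identities $(rx)\circ(r^{-1}x)=x$ and $x^r\circ x^{1/r}=x$, the $\R$-linearity/power rule for right composition, and the group structure of $G_{\alpha}$. The only difference is presentational: you factor $rf=(rx)\circ f$ and $f^r=x^r\circ f$ and invoke closure of $G_{\alpha}$ under $\circ$, whereas the paper composes $f\circ f^{\inv}=x$ on the right to exhibit the explicit inverses $f^{\inv}\circ r^{-1}x$ and $f^{\inv}\circ x^{1/r}$ --- exactly the inverses your factorization produces implicitly.
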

\begin{proof} From $f\circ f^{\inv}=x$ we get
$rf\circ f^{\inv}=rx$. In view of $rx\circ r^{-1}x=x$, this gives $rf\circ (f^{\inv}\circ r^{-1}x)=x$. Likewise $f^r\circ (f^{\inv}\circ x^{1/r})=x$, using
$x^r\circ x^{1/r}=x$.
\end{proof}

\noindent
For $f,g,h\in \LL$ we use the notation $f=g+o(h)$ to mean $f-g\prec h$. So far we defined $\lambda_g$ only for $g\in \LL^{>\R}$. We now extend this to all $g\in \LL^\times$ in the obvious way:
$$\lambda_g\ := \min \sigma(\fd g)\ \text{ if }\ \fd g\ne 1, \qquad \lambda_g\ :=\ \infty\ \text{ if }\ \fd g=1, \text{ with }\infty>\alpha \text{ for all }\alpha.$$

\begin{lemma}\label{reduc} Let $\fm=\ell_1^{r_1}\ell_2^{r_2}\cdots=\prod_{1\le \beta<\alpha}\ell_{\beta}^{r_{\beta}}\in \mathfrak{L}_{<\alpha}$, and let $g\in \LL_{<\alpha}^{>}$, $\lambda_g>0$. Then 
$\fm\circ(xg)= \fm+o(\fm)$, and thus
$x\fm\circ (xg)=x\fm g+o(x\fm g)$.
\end{lemma}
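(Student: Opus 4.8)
The plan is to prove $\fm\circ(xg)=\fm+o(\fm)$ for $\fm=\prod_{1\le\beta<\alpha}\ell_\beta^{r_\beta}$ by first handling the basic building blocks $\ell_\gamma$ (where $\gamma\ge 1$) and then assembling the general case by taking logarithms, exploiting that composition with a fixed $xg$ is strongly $\R$-linear and commutes with $\log$ via Proposition~\ref{l:logassoc}. Write $h:=xg\in\LL_{<\alpha}^{>\R}$. The crucial observation is that $\lambda_h=\lambda_{xg}=\min\sigma(\fd(xg))$; since $\lambda_g>0$ forces $\fd(g)\prec\ell_0$, we get $\fd(xg)=\ell_0\fd(g)$ with $\min\sigma=0$, so $\lambda_h=0$ and $h=x+(\text{something}\prec x)$ only in dominant monomial—but in fact $h\succ\R$ with logarithmicity $0$. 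I would record that $\log h=\log x+\log g$ and, because $\lambda_g>0$ means $g\preceq 1$ (more precisely $\fd(g)\preceq 1$ as a monomial with $\min\sigma(\fd g)>0$, hence $g\prec\ell_0=x$ relative to $x$'s growth), one has $\log_n(h)=\ell_n+o(1)$ matching $\log_n(x)$ closely.

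The key step is the base case. For $\gamma=1$: $\ell_1\circ h=\log h=\log x+\log g=\ell_1+\log g$, and since $\lambda_g>0$ gives $\fd(g)\in\mathfrak{L}_{<\alpha}$ with least index $\ge 1$, we have $\log g=\sum_\beta r_\beta(g)\ell_{\beta+1}+\cdots\prec\ell_1$, whence $\ell_1\circ h=\ell_1+o(\ell_1)$. For higher $\gamma$ I would proceed by induction using $\ell_{\gamma+1}\circ h=\log(\ell_\gamma\circ h)$ in the successor case and the Chain Rule (Proposition~\ref{l:FullChain}) together with Corollary~\ref{domconstant} in the limit case, exactly mirroring the constant-term-plus-derivative arguments in Lemma~\ref{closeassoc} and Proposition~\ref{tayexp}. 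The point throughout is that $\ell_\gamma\circ h$ and $\ell_\gamma$ share the same dominant monomial $\ell_\gamma$ with leading coefficient $1$, the difference being $\prec\ell_\gamma$; for $\gamma\ge\omega$ this is controlled by Corollary~\ref{domconstant} since $\lambda_h=0$ gives $\lambda_h+\gamma=\gamma$ and $\lambda_{h;\gamma}=0$.

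Once each $\ell_\gamma\circ h=\ell_\gamma+o(\ell_\gamma)$ is established, the general monomial follows by logarithms: $\log(\fm\circ h)=(\log\fm)\circ h=\bigl(\sum_\beta r_\beta\ell_{\beta+1}\bigr)\circ h=\sum_\beta r_\beta(\ell_{\beta+1}\circ h)$ by strong linearity (Lemma~\ref{l:CompIsLinear} and clause (CL4)). Since each $\ell_{\beta+1}\circ h=\ell_{\beta+1}+o(\ell_{\beta+1})$ and $\log\fm=\sum_\beta r_\beta\ell_{\beta+1}$, the leading behaviour of $\log(\fm\circ h)$ agrees with that of $\log\fm$, and more precisely $\log(\fm\circ h)-\log\fm\prec 1$. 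Exponentiating, $\fm\circ h=\fm\cdot\exp\bigl(\log(\fm\circ h)-\log\fm\bigr)=\fm(1+o(1))=\fm+o(\fm)$, using that the argument of $\exp$ is infinitesimal. The second assertion is then immediate: $x\fm\circ(xg)=(x\circ(xg))(\fm\circ(xg))=(xg)(\fm+o(\fm))=x\fm g+o(x\fm g)$, where the first equality is the multiplicativity clause~(2) of a composition and $x\circ(xg)=xg$ by~(CL2).

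The main obstacle I anticipate is the limit-ordinal step in the base case: showing $\ell_\gamma\circ h-\ell_\gamma\prec\ell_\gamma$ for limit $\gamma$ when one only has derivative information. The clean route is to verify that $(\ell_\gamma\circ h)'$ and $\ell_\gamma'$ have comparable leading terms via the Chain Rule—$(\ell_\gamma\circ h)'=(\ell_\gamma'\circ h)h'$—and then pin down the constant term using Corollary~\ref{domconstant}, rather than attempting a direct series manipulation. The delicate point is confirming that the ``$o(1)$'' correction in $\log_n(h)$ propagates correctly through the hyperlogarithm tower without ever overtaking the dominant $\ell_\gamma$; this is precisely where $\lambda_g>0$ (equivalently $g\prec x$ in the relevant sense) is essential, since it guarantees $\lambda_h=0$ so that no shift in the ordinal index occurs.
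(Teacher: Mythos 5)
Your overall strategy---reduce to single hyperlogarithms $\ell_\gamma\circ(xg)$, handle $\gamma\ge\omega$ via Corollary~\ref{domconstant} (using $\lambda_{xg}=0$, hence $\lambda_{xg;\gamma}=0$) and finite $\gamma$ by a logarithm induction---is exactly the paper's reduction. The difference is in the assembly step: the paper assembles \emph{multiplicatively}, writing $\fm\circ(xg)=\prod_{1\le\beta<\alpha}\big(\ell_\beta\circ(xg)\big)^{r_\beta}$ (Corollary~\ref{c:products}, with Lemma~\ref{fdprod} to identify the dominant monomial), so that only the \emph{relative} estimates $\ell_\beta\circ(xg)=\ell_\beta(1+\epsilon_\beta)$, $\epsilon_\beta\prec 1$, are needed; these are precisely the $o(\ell_\beta)$ bounds you prove. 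Your assembly is instead additive on the logarithmic side, and that is where there is a genuine gap.

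You claim: since each $\ell_{\beta+1}\circ(xg)=\ell_{\beta+1}+o(\ell_{\beta+1})$, ``more precisely $\log(\fm\circ h)-\log\fm\prec 1$.'' That implication is false as a deduction: $o(\ell_{\beta+1})$ bounds the error only relative to $\ell_{\beta+1}$ and does not make it infinitesimal, while your exponentiation step $\fm\circ h=\fm\exp\big(\log(\fm\circ h)-\log\fm\big)=\fm(1+o(1))$ requires the argument of $\exp$ to be $\prec 1$. Your own base case shows the weaker bound cannot in general be upgraded: $\ell_1\circ(xg)-\ell_1=\log g$ can be $\asymp\ell_2$, which is $o(\ell_1)$ but certainly not $\prec 1$. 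What saves your argument is that $\log\fm=\sum_{1\le\beta<\alpha}r_\beta\ell_{\beta+1}$ involves only indices $\gamma=\beta+1\ge 2$, and for those the error really is $\prec 1$: Corollary~\ref{domconstant} gives $\ell_\gamma\circ(xg)=\ell_\gamma+\epsilon$ with $\epsilon\prec 1$ (not merely $o(\ell_\gamma)$) when $\gamma\ge\omega$, and for finite $\gamma\ge 2$ the successor computation $\log(\ell_\gamma+\epsilon)=\ell_{\gamma+1}+\log(1+\epsilon/\ell_\gamma)$ yields infinitesimal errors once the first logarithm has absorbed $\log g$. So you must state and carry this stronger estimate ($\prec 1$ error for all $\gamma\ge 2$) through your induction, and also note that the family $\big(r_\beta(\ell_{\beta+1}\circ(xg)-\ell_{\beta+1})\big)$ is summable (difference of two summable families, by strong linearity) with all terms $\prec 1$, hence has sum $\prec 1$; with those corrections your log-side assembly closes. (A small side error: $\lambda_g>0$ does not imply $g\preceq 1$ or $\fd(g)\preceq 1$---take $g=\ell_1^{100}$---but that parenthetical is never used; all you need is that $\fd(xg)$ has $\ell_0$-exponent $1$, giving $\lambda_{xg}=0$.)
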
 
\begin{proof} We have $\fm\circ(xg)=\prod_{1\le \beta<\alpha}\big(\ell_{\beta}\circ (xg)\big)^{r_{\beta}}$, so it suffices to show:
$$\ell_{\beta}\circ (xg)\ =\ \ell_{\beta}+o(\ell_{\beta})\ \text{ for }1\le \beta < \alpha.$$
For $\omega\le \beta<\alpha$ this holds by Corollary~\ref{domconstant}. It holds for
$\beta=1$ by observing $\ell_1\circ(xg)=\log(xg)=\ell_1+\log g$ and
$\log g\preceq \ell_2\prec \ell_1$. An easy induction then
gives $\ell_n\circ(xg)=\ell_n+o(\ell_n)$ for all $n\ge 1$.
\end{proof}

\begin{cor}\label{g1a} Let $G_{\alpha}^1:=\{x\fm(1+\epsilon):\ \fm\in \mathfrak{L}_{<\alpha},\ \lambda_{\fm}>0,\ \epsilon\in \LL_{<\alpha}^{\prec 1}\}$.
Then $G_{\alpha}^1$ is a subgroup of $G_{\alpha}$. 
\end{cor}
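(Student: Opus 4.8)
The plan is to check the three subgroup conditions for $G_\alpha^1$: that it contains the identity $x$, is closed under composition, and is closed under compositional inverse—having first observed that $G_\alpha^1\subseteq G_\alpha$. The inclusion is immediate from Proposition~\ref{inversion}: for $f=x\fm(1+\epsilon)$ with $\lambda_\fm>0$ and $\epsilon\prec 1$ we have $\fd(f)=x\fm$, whose least support index is $0$, so $\lambda_f=0$ and $f>\R$, whence $f\in G_\alpha$. The identity is handled by $x=x\cdot 1\cdot(1+0)$, taking $\fm=1$ (for which $\fd\fm=1$, so $\lambda_\fm=\infty>0$) and $\epsilon=0$.

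For closure under composition, I would take $f_1=x\fm_1(1+\epsilon_1)$ and $f_2=x\fm_2(1+\epsilon_2)$ in $G_\alpha^1$ and write $f_2=xg$ with $g:=\fm_2(1+\epsilon_2)\in\LL_{<\alpha}^{>}$; since $\fd(g)=\fm_2$ we get $\lambda_g=\lambda_{\fm_2}>0$. Using that $\phi\mapsto\phi\circ f_2$ is a field embedding fixing $\R$ with $x\circ f_2=f_2$, expand $f_1\circ f_2=f_2\cdot(\fm_1\circ f_2)\cdot\bigl(1+(\epsilon_1\circ f_2)\bigr)$. Lemma~\ref{reduc} gives $\fm_1\circ(xg)=\fm_1+o(\fm_1)=\fm_1(1+\delta)$ with $\delta\prec 1$, while $\epsilon_1\circ f_2\prec 1$ because $\epsilon_1\prec 1$ and $f_2>\R$. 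Substituting $f_2=x\fm_2(1+\epsilon_2)$ yields $f_1\circ f_2=x(\fm_1\fm_2)(1+\epsilon')$, where $\fm_1\fm_2\in\mathfrak{L}_{<\alpha}$ still carries no $\ell_0$-factor (so $\lambda_{\fm_1\fm_2}>0$) and $\epsilon'\prec 1$ arises from the product $(1+\epsilon_2)(1+\delta)(1+\epsilon_1\circ f_2)$. Hence $f_1\circ f_2\in G_\alpha^1$.

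The substantial part is closure under inverse. Let $f=x\fm(1+\epsilon)\in G_\alpha^1$ and set $k:=f^{\inv}\in G_\alpha$, so $\lambda_k=0$; write $k=c_k\,\fd(k)\,(1+\eta)$ with $c_k>0$, $\eta\prec 1$, and $\fd(k)=x^{s}\prod_{1\le\beta<\alpha}\ell_\beta^{s_\beta}$ with $s>0$. The target is $s=1$ and $c_k=1$, which forces $k\in G_\alpha^1$. Applying $\log$ to $f\circ k=x$ and invoking Proposition~\ref{l:logassoc} gives $(\log f)\circ k=\ell_1$. Now $\log f=\ell_1+u$ with $u:=\log\fm+\log(1+\epsilon)\preceq\ell_2\prec\ell_1$, so by strong linearity $\log k=\ell_1\circ k=\ell_1-(u\circ k)$, and $u\circ k\prec\ell_1$ since composition preserves $\preceq$ and $\ell_2\circ k\asymp\ell_2\prec\ell_1$. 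Thus $\fd(\log k)=\ell_1$ with coefficient $1$; comparing with $\log k=\log c_k+s\ell_1+\sum_{\beta\ge 1}s_\beta\ell_{\beta+1}+\log(1+\eta)$ reads off $s=1$. For $c_k$ I would compare constant terms: $\ell_1$ has none, and $u$ has none, so it suffices that $u\circ k$ has zero constant term. By strong linearity this reduces to checking that each $\fm'\in\supp u$ gives $\fm'\circ k$ with zero constant term—for $\fm'=\ell_{\beta+1}$ ($\beta\ge 1$) this holds by Corollary~\ref{domconstant} together with $\lambda_{k;\gamma}=0$ (because $\lambda_k=0$) in the infinite case and $\log_n(k)-\ell_n\prec 1$ in the finite case, while the monomials of $\log(1+\epsilon)$ are $\prec 1$ and map to series $\prec 1$. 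Hence the constant term of $\log k$ is $0=\log c_k$, so $c_k=1$, and $k=x\bigl(\prod_{\beta\ge 1}\ell_\beta^{s_\beta}\bigr)(1+\eta)\in G_\alpha^1$.

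I expect the constant-term bookkeeping in the inverse step to be the delicate point, as it is precisely where $\lambda_k=0$ is used to annihilate the potential constant contributions $-\lambda_{k;\gamma}$ produced by Corollary~\ref{domconstant}; the rest is a direct application of Lemma~\ref{reduc} and of the field-embedding and logarithm-compatibility properties (Proposition~\ref{l:logassoc}) already in hand.
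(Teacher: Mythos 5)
There is a genuine gap: your argument is circular relative to the paper's logical development. You invoke Proposition~\ref{inversion} twice---first to get the inclusion $G_{\alpha}^1\subseteq G_{\alpha}$, and again to assert that $k:=f^{\inv}$ exists---but in the paper that proposition is proved \emph{after} Corollary~\ref{g1a}, and its proof consists of exactly the reduction ``$f=x\fm+o(x\fm)\in G_{\alpha}^1\subseteq G_{\alpha}$''. So the entire content of the corollary is precisely the fact you are assuming: that an element $x\fm(1+\epsilon)$ with $\lambda_{\fm}>0$ actually \emph{has} a compositional inverse. Your closure-under-composition step is fine (it is essentially the paper's own argument, via Lemma~\ref{reduc}), and your constant-term analysis of $\log k$ is correct as far as it goes, but it only pins down the form of an inverse whose existence you never establish.

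The paper supplies existence constructively, and this is the step you are missing: given $f=x\fm(1+\epsilon_1)$, compose with $x\fm^{-1}$; Lemma~\ref{reduc} gives $f\circ x\fm^{-1}=x+o(x)\in G_{\alpha,1}$, and elements tangent to the identity are invertible by Lemma~\ref{tangid}, which is proved by strongly linear operator inversion with no appeal to Proposition~\ref{inversion}. Hence $f\circ x\fm^{-1}\circ h=x$ for some $h\in G_{\alpha,1}$, which simultaneously shows $f\in G_{\alpha}$ and, by one more application of Lemma~\ref{reduc}, that $f^{\inv}=x\fm^{-1}\circ h=x\fm^{-1}+o(x\fm^{-1})\in G_{\alpha}^1$. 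If you replace your appeal to Proposition~\ref{inversion} by this construction, your remaining analysis (identifying $s=1$ and $c_k=1$) becomes unnecessary, since the form of the inverse falls out of the construction itself.
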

\begin{proof} Note that $G_{\alpha}^1\supseteq G_{\alpha,1}$. Let $f,g\in G_{\alpha}^1$, so we have $\fm,\fn\in \mathfrak{L}_{<\alpha}$ with $\lambda_{\fm},\lambda_{\fn}>0$ and $\epsilon_1,\epsilon_2\in \LL_{<\alpha}^{\prec 1}$, such that
$$f\ =\ x\fm(1+\epsilon_1), \quad g\ =\ x\fn(1+\epsilon_2).$$ 
Now $x\fm\circ g=x\fm\circ x\fn(1+\epsilon_2)=x\fm\fn(1+\epsilon_2)+o(x\fm\fn)=x\fm\fn+o(x\fm\fn)$ by Lemma~\ref{reduc}, so $f\circ g=x\fm\fn+o(x\fm\fn)\in G_{\alpha}^1$. Given $f\in G_{\alpha}^1$ as before,
and taking $\fn:=\fm^{-1}$, the above shows that $f\circ x\fn=x+o(x)\in G_{\alpha,1}$, so $f\circ x\fn\circ h=x$ where $h\in G_{\alpha,1}$. Thus $f\in G_{\alpha}$ and $f^{\inv}=x\fn\circ h=x\fn+o(x\fn)\in G_{\alpha}^1$.  
\end{proof}

\noindent
Here is a  useful  way to summarize the proof of Corollary~\ref{g1a}: let $f,g\in \LL_{<\alpha}$ and $f=x\fm+o(x\fm)$,
$g=x\fn+o(x\fn)$ with $\fm,\fn\in \mathfrak{L}_{<\alpha},\ \lambda_{\fm},\lambda_{\fn}>0$. Then
$$ f\circ g\ =\ x\fm\fn+o(x\fm\fn), \qquad f^{\inv}\ =\ x/\fm + o(x/\fm).$$

\begin{proof}[Proof of Proposition~\ref{inversion}] Let $f\in \LL_{<\alpha}^{>\R}$.
The direction
$f\in G_{\alpha}\Rightarrow \lambda_f=0$ was already explained. For the converse, assume $\lambda_f=0$; our job is to derive 
$f\in G_{\alpha}$. By Lemma~\ref{easyreduc} we can arrange that
$f$ has leading coefficient $1$ with
$\fd f=x\fm$ and $\fm=\prod_{1\le \beta<\alpha} \ell_{\beta}^{r_{\beta}}$, so $f=x\fm+o(x\fm)\in G_{\alpha}^1\subseteq G_{\alpha}$.  
\end{proof}

\noindent
The proof shows that $G_{\alpha}= \{ax^b\circ g:\ a,b\in \R^{>},\ g\in G_{\alpha}^1\}$.
Note in this connection that $G_{\R,x}:=\{ax^b:\ a,b\in \R^{>}\}$ is a subgroup of 
$G_{\alpha}$: 
$$ax^b\circ sx^t\ =\ as^bx^{bt}\qquad (a,b,s,t\in \R^{>}).$$
It is easy to see that $G_{\alpha}^1$ is {\em not\/} a normal subgroup of $G_{\alpha}$. On the other hand:

\begin{cor} $G_{\alpha,1}$ is a normal subgroup of $G_{\alpha}$.
\end{cor}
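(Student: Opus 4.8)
The plan is to show that for every $\phi\in G_\alpha$ and every $f\in G_{\alpha,1}$ the conjugate $\phi\circ f\circ\phi^{\inv}$ again lies in $G_{\alpha,1}$. Set $\psi:=\phi^{\inv}$, which lies in $G_\alpha$ (so $\lambda_\psi=0$) because $G_\alpha$ is a group, and write $f=x+h$ with $h\in\LL_{<\alpha}^{\prec x}$. Since composition on the right with $\psi$ is an ordered field embedding preserving $\prec$ (Lemma~\ref{l:CompIsLinear} and the remarks after it), the element $k:=h\circ\psi$ satisfies $k\prec x\circ\psi=\psi$, and $f\circ\psi=\psi+k$. Applying the Taylor identity (Proposition~\ref{tayexp}) with $g=\psi$ and increment $k\prec\psi$, and using $\phi\circ\psi=\phi\circ\phi^{\inv}=x$, gives
\[
\phi\circ f\circ\psi\ =\ \phi\circ(\psi+k)\ =\ \sum_{n=0}^\infty\frac{\phi^{(n)}\circ\psi}{n!}k^n\ =\ x+R,\qquad R\ :=\ \sum_{n=1}^\infty\frac{\phi^{(n)}\circ\psi}{n!}k^n .
\]
Thus the whole problem reduces to proving $R\prec x$, for then $\phi\circ f\circ\psi=x+R$ is tangent to the identity, i.e. lies in $G_{\alpha,1}$.

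To estimate $R$ I would first record the asymptotics of $\psi$. As $\lambda_\psi=0$ and $\psi>\R$, the dominant monomial is $\fd(\psi)=x^{r_0}\prod_{\beta\geq1}\ell_\beta^{r_\beta}$ with $r_0>0$, so $\log\psi\sim r_0\ell_1$, and hence by the standard $H$-field comparison of derivatives (an element $\prec$, resp. $\asymp$, a quantity $\not\asymp 1$ has derivative $\prec$, resp. $\asymp$, the derivative of that quantity) we get $\psi^\dagger=(\log\psi)'\asymp\ell_1'=x^{-1}$, that is $\psi'\asymp\psi x^{-1}$. Writing $\phi_n:=\phi^{(n)}\circ\psi$, the chain rule gives $(\phi\circ\psi)'=(\phi'\circ\psi)\psi'=1$, so $\phi_1=1/\psi'\asymp x\psi^{-1}$, and differentiating $\phi_n=\phi^{(n)}\circ\psi$ yields the recursion $\phi_{n+1}=\phi_n'/\psi'$. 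I would then prove by induction the uniform bound $\phi_n\preceq x\psi^{-n}$ for all $n\geq1$: from $\phi_n\preceq x\psi^{-n}$ and $(x\psi^{-n})^\dagger=x^{-1}-n\psi^\dagger\preceq x^{-1}$ the same comparison principle gives $\phi_n'\preceq(x\psi^{-n})x^{-1}=\psi^{-n}$, whence $\phi_{n+1}=\phi_n'/\psi'\preceq\psi^{-n}/(\psi x^{-1})=x\psi^{-n-1}$.

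With the bound in hand, write $\phi_n=x\psi^{-n}u_n$ with $u_n\preceq1$ (and $u_1\asymp1$, since $\fd(u_1)=1$) and put $\delta:=k/\psi\prec1$, so that
\[
R\ =\ x\sum_{n=1}^\infty\frac{u_n}{n!}\delta^n\ =\ x\,\delta\sum_{n=1}^\infty\frac{u_n}{n!}\delta^{n-1}.
\]
The inner sum exists by Proposition~\ref{tayexp}; its $n=1$ term is $u_1$ with $\fd(u_1)=1$, while the remaining terms are $\preceq\delta\prec1$, so every monomial occurring is $\preceq1$ and the inner sum is $\preceq1$. Hence $R\preceq x\delta=x(k/\psi)\prec x$, which is what we needed. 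Finally, $\phi\circ f\circ\phi^{\inv}=x+R\in G_{\alpha,1}$, establishing normality.

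The one genuinely delicate point — and where I expect to spend the most care — is the uniform estimate $\phi_n\preceq x\psi^{-n}$. It rests on the $H$-field principle that $a\preceq b$ with $b\not\asymp1$ forces $a'\preceq b'$, valid here since each $\LL_{<\alpha}$ is an $H$-field (Lemma~\ref{l4}); and it requires handling separately the borderline values of $n$ for which $x\psi^{-n}\asymp1$, where the comparison principle does not apply. For those $n$ one argues directly from $\supp\der_\alpha\preceq x^{-1}$: if $\phi_n\preceq1$ then $\supp\phi_n'\subseteq\{\ell_\beta^\dagger:\beta<\alpha\}\cdot\supp\phi_n\preceq x^{-1}$, so $\phi_n'\preceq x^{-1}\asymp\psi^{-n}$, giving the same conclusion. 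Everything else is routine bookkeeping with the already-established chain rule (Proposition~\ref{l:FullChain}), Taylor expansion (Proposition~\ref{tayexp}), and strong linearity of composition.
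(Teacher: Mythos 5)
Your proof is correct, but it takes a genuinely different route from the paper's. The paper extracts from the proof of Proposition~\ref{inversion} the factorization $G_{\alpha}=\{ax^b\circ g:\ a,b\in\R^{>},\ g\in G_{\alpha}^1\}$ and then verifies normality only on these two kinds of elements: conjugation by $sx^t$ is an explicit computation with real powers, and conjugation by $g\in G_{\alpha}^1$ is handled by the asymptotic composition rules $f\circ g=x\fm\fn+o(x\fm\fn)$ and $f^{\inv}=x/\fm+o(x/\fm)$ coming from Lemma~\ref{reduc} and Corollary~\ref{g1a}. You instead conjugate by an arbitrary $\phi\in G_{\alpha}$ directly: writing the conjugate as $\phi\circ(\psi+k)$ with $\psi=\phi^{\inv}$ and $k=h\circ\psi\prec\psi$, you invoke Taylor expansion (Proposition~\ref{tayexp}) and reduce everything to the uniform bound $\phi^{(n)}\circ\psi\preceq x\psi^{-n}$, which you prove by induction using the chain rule, the comparison of derivatives valid in the $H$-field $\LL_{<\alpha}$ (Lemma~\ref{l4} together with standard asymptotic-couple facts), and, in the borderline case $x\psi^{-n}\asymp 1$ where that comparison is silent, the operator support bound $\supp\der_{\alpha}\preceq x^{-1}$ --- this is indeed the delicate point, and you handle it correctly. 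As for what each approach buys: the paper's argument is shorter because it reuses the structure theory of $G_{\alpha}$ already in place, whereas yours bypasses that description of $G_{\alpha}$ entirely and yields a quantitative refinement, namely $\phi\circ f\circ\phi^{\inv}=x+R$ with $R\preceq x\,(h\circ\psi)/\psi$, which the paper's computation does not produce. One small streamlining: the auxiliary factors $u_n$ are unnecessary at the end, since each term already satisfies $\frac{1}{n!}(\phi^{(n)}\circ\psi)k^n\preceq x\delta^n\preceq x\delta$ with $\delta=k/\psi\prec 1$, and in a Hahn field a summable family all of whose terms are $\preceq x\delta$ has sum $\preceq x\delta\prec x$.
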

\begin{proof} Let $f\in G_{\alpha,1}$. By the above description of
$G_{\alpha}$ it suffices to show that $g\circ f\circ g^{\inv}\in G_{\alpha,1}$, for all $g=sx^t$ with $s,t\in \R^{>}$, and for all $g\in G_{\alpha}^1$. 
For $s,t\in \R^{>}$ and $g=sx^t$ we have $g^{\inv}=ax^b$ with $a=s^{-1/t}$ and $b=1/t$, so with $f=x+o(x)$ we get $f\circ g^{\inv}=g^{\inv}+o(g^{\inv})=ax^b(1+\epsilon)$ with $\epsilon\prec 1$, and thus 
$$g\circ f\circ g^{\inv}\ =\ s(ax^b)^t(1+\epsilon)^t\ =\ x(1+\epsilon)^t\ =\ x+o(x)\in G_{\alpha,1}.$$
Next, let $g\in G_{\alpha}^1$, so $g=x\fm+o(x\fm)$ with $\fm\in \mathfrak{L}_{<\alpha},\ \lambda_{\fm}>0$. Then $g^{\inv}=x/\fm + o(x/\fm)$, so $f\circ g^{\inv}= (x+o(x))\circ (x/\fm +o(x/\fm))=x/\fm + o(x/\fm)$,
and thus $g\circ f\circ g^{\inv}=(x\fm+o(x\fm))\circ(x/\fm + o(x/\fm))=x+o(x)\in G_{\alpha,1}$. \end{proof}

\section{Uniqueness, Embedding $\LL$ into $\No$, and Final Remarks}\label{fs}

\noindent
We continue to let $\circ$ denote the composition on $\LL$ constructed in Sections~\ref{sec:comp1} and \ref{sec:comp2}. 
Corollary~\ref{coru} characterizes  this composition uniquely, but in the first subsection below we establish the
more elegant characterization given by Theorem~\ref{uc} from the introduction. Note that in Section~\ref{teci} (end of first subsection) we already observed that $\circ$ witnesses the existence part of Theorem~\ref{uc}.

 In the second subsection we indicate the natural embedding
of $\LL$ into $\No$, and in the last subsection we finish with some remarks.

\subsection*{Uniqueness} Let $\ast$  denote any composition on
$\LL$ and let $f,g,h$ range over $\LL$. 

\begin{lemma}\label{astcirc} Let $f\in \LL_{<\omega}$ and $g>\R$. Then $f\ast g=f\circ g$.
\end{lemma}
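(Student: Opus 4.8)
The plan is to reduce everything to the single identity $\ell_n\ast g=\ell_n\circ g$ for $n\in\N$, and then lift it first to monomials and then to all of $\LL_{<\omega}$ by strong linearity.

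First I would record that $\ell_n\circ g=\log_n(g)$ by the very definition of $\circ$ in (\ref{e:ProductOfLogs}). Then I would prove $\ell_n\ast g=\log_n(g)$ for all $n$ by induction on $n$. For $n=0$ this is $x\ast g=g$, which is (CL2). For the inductive step I use $\log\ell_n=\ell_{n+1}$ together with (CL3), which gives $\ell_{n+1}\ast g=(\log\ell_n)\ast g=\log(\ell_n\ast g)=\log(\log_n g)=\log_{n+1}(g)$. Since $\log_n(g)=\ell_n\circ g$, this already yields $\ell_n\ast g=\ell_n\circ g$ for all $n$.

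Next, for a monomial $\fm=\prod_n\ell_n^{r_n}\in\mathfrak{L}_{<\omega}$, I would compare $\fm\ast g$ and $\fm\circ g$ by taking logarithms. Both lie in $\LL^>$ because each of the maps $f\mapsto f\ast g$ and $f\mapsto f\circ g$ is an ordered field embedding (a consequence of (CL1)), so it suffices to check that they have equal logarithm, using that $\log$ is injective on $\LL^>$. Here the point to flag is that $\fm$ may have infinitely many factors, so (CL1) (finite multiplicativity) is not enough; instead I pass to $\log\fm=\sum_n r_n\ell_{n+1}$, which is a genuine summable sum because $\ell_1\succ\ell_2\succ\cdots$ makes its support well-based. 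Then (CL3) and strong additivity (CL4) give $\log(\fm\ast g)=(\log\fm)\ast g=\sum_n r_n(\ell_{n+1}\ast g)$ and likewise $\log(\fm\circ g)=\sum_n r_n(\ell_{n+1}\circ g)$; these agree by the previous step, so $\fm\ast g=\fm\circ g$. (Alternatively, one could invoke the multipliability statement of Lemma~\ref{lemuc} for $\ast$ and for $\circ$ to write $\fm\ast g=\prod_n(\ell_n\ast g)^{r_n}$ and $\fm\circ g=\prod_n(\ell_n\circ g)^{r_n}$ directly.)

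Finally, for arbitrary $f=\sum_\fm f_\fm\fm\in\LL_{<\omega}$ I would invoke strong linearity of both compositions: by (CL4) the families $(\fm\ast g)_{\fm\in\supp f}$ and $(\fm\circ g)_{\fm\in\supp f}$ are summable and $f\ast g=\sum_\fm f_\fm(\fm\ast g)$, $f\circ g=\sum_\fm f_\fm(\fm\circ g)$; the monomial case then gives $f\ast g=f\circ g$. The only genuinely delicate point in the whole argument is the reduction of the infinite monomial product to an infinite sum via $\log$, and this is exactly why I route through (CL3) and (CL4) rather than (CL1).
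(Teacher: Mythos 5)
Your proof is correct and takes essentially the same route as the paper: induction on $n$ with (CL2)/(CL3) to get $\ell_n\ast g=\log_n(g)=\ell_n\circ g$, then the monomial case, then (CL4) for general $f$. The only cosmetic difference is that where the paper handles the monomial $\fm=\prod_n\ell_n^{r_n}$ by citing Lemma~\ref{lemuc}, you inline that lemma's argument (taking logarithms and using (CL3), (CL4), and injectivity of $\log$ on $\LL^{>}$), and you even flag the citation as an alternative.
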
 
\begin{proof} By induction on $n$ and using (CL2), (CL3) we obtain 
$$\ell_n\ast g\ =\log_n(g)\ =\ \ell_n\circ g.$$ Hence for
$\fm=\prod_n\ell_n^{r_n}\in \mathfrak{L}_{<\omega}$ we have
$\fm\ast g=\prod_n \log_n(g)^{r_n}=\fm \circ g$ by Lemma~\ref{lemuc}.
The rest is an application of (CL4) (strong linearity).
\end{proof}

\noindent
We say that {\em $\ast$ obeys the Chain Rule\/} if $(f\ast g)'=(f'\ast g)\cdot g'$ for all $f,g$ with $g>\R$.
We say that {\em $\ast$ admits Taylor expansion\/} if for all $f,g,h$ with $g>\R$ and $h\prec g$ the sum
$\sum_n \frac{f^{(n)}\circ g}{n!}h^n$ exists and equals $f\ast(g+h)$. Note that  if $\ast$ admits Taylor expansion, then, with
$\epsilon$ ranging over (sufficiently small) nonzero elements of $\LL$,
$$ f'\ =\  \lim_{\epsilon\to 0} \frac{f\ast(x+\epsilon)-f}{\epsilon}.$$

\begin{lemma}\label{teocr} If $\ast$ admits Taylor expansion, then it obeys the chain rule.
\end{lemma}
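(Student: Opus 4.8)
The plan is to run the limit characterization of the derivative recorded in the remark immediately above, and push it through associativity together with a double application of Taylor expansion. Throughout, $\ast$ is a composition on $\LL$, so it satisfies (CL1)--(CL5); in particular it is associative, and by hypothesis it admits Taylor expansion, which I will use in the form $f\ast(g+h)=\sum_n\frac{f^{(n)}\ast g}{n!}h^n$ for $g>\R$ and $h\prec g$. Fix $f,g\in\LL$ with $g>\R$.

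First I would apply the remark to the element $F:=f\ast g\in\LL$, which gives
\[
(f\ast g)'\ =\ \lim_{\epsilon\to 0}\frac{(f\ast g)\ast(x+\epsilon)-(f\ast g)}{\epsilon},
\]
with $\epsilon$ ranging over sufficiently small nonzero elements of $\LL$. By associativity (CL5), $(f\ast g)\ast(x+\epsilon)=f\ast\big(g\ast(x+\epsilon)\big)$. Next I Taylor-expand in the inner slot at base $x$: using $g^{(n)}\ast x=g^{(n)}$ (from (CL2)) yields $g\ast(x+\epsilon)=g+h_\epsilon$ where $h_\epsilon=g'\epsilon+\frac{g''}{2}\epsilon^2+\cdots$, so $h_\epsilon\sim g'\epsilon$, hence $h_\epsilon/\epsilon\to g'$ and $h_\epsilon\to 0$ as $\epsilon\to 0$. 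Since the derivation is small, $g^\dagger\prec 1$, so $g'\prec g$; this forces $g'\epsilon\prec g$, whence $h_\epsilon\prec g$ for small $\epsilon$ and $g+h_\epsilon\sim g>\R$. I may therefore Taylor-expand a second time, now at base $g$, obtaining
\[
f\ast(g+h_\epsilon)-f\ast g\ =\ (f'\ast g)\,h_\epsilon+h_\epsilon^{2}\sum_{n\ge 2}\frac{f^{(n)}\ast g}{n!}h_\epsilon^{\,n-2}.
\]
Dividing by $\epsilon$ and letting $\epsilon\to 0$: the first term $(f'\ast g)\,h_\epsilon/\epsilon$ tends to $(f'\ast g)g'$, while the tail equals $\frac{h_\epsilon}{\epsilon}\cdot h_\epsilon\cdot\big(\sum_{n\ge 2}\frac{f^{(n)}\ast g}{n!}h_\epsilon^{\,n-2}\big)$, a product of the convergent factor $h_\epsilon/\epsilon\to g'$, the factor $h_\epsilon\to 0$, and a sum that tends to the fixed element $\frac{f''\ast g}{2}$; hence the tail tends to $0$. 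This yields $(f\ast g)'=(f'\ast g)g'$, which is the chain rule.

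The main obstacle is making the two limit passages rigorous in the valuation sense: one must verify $v\big(h_\epsilon/\epsilon-g'\big)\to\infty$ and that the order-$\ge 2$ tail, after division by $\epsilon$, has valuation tending to $+\infty$. Both reduce to the explicit shape $h_\epsilon\sim g'\epsilon$ and the estimate $h_\epsilon^{\,n}\preceq h_\epsilon^{\,2}$ for $n\ge 2$ (valid once $h_\epsilon\preceq 1$), together with the summability already furnished by the Taylor identity. A clean alternative that avoids limits altogether is to treat $F\ast(x+\epsilon)=\sum_n\frac{F^{(n)}}{n!}\epsilon^n$ as an identity in $\epsilon$ valid for all $\epsilon\prec x$, substitute the expansion of $g\ast(x+\epsilon)$ into $f\ast(g+h_\epsilon)$, reorganize into a power series in $\epsilon$, and match the coefficient of $\epsilon^{1}$ on both sides; an identity $\sum_k a_k\epsilon^k=\sum_k b_k\epsilon^k$ holding for all $\epsilon\prec x$ forces $a_1=b_1$ (set $\epsilon=0$ to match $a_0=b_0$, divide by $\epsilon$, then set $\epsilon=0$ again), and here the $\epsilon^1$-coefficients are exactly $(f\ast g)'$ and $(f'\ast g)g'$.
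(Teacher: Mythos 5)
Your proof is correct and follows essentially the same route as the paper: the paper likewise applies the limit characterization of the derivative from the preceding remark, uses associativity to rewrite $(f\ast g)\ast(x+\epsilon)$ as $f\ast\big(g\ast(x+\epsilon)\big)$, analyzes $g\ast(x+\epsilon)=g+\epsilon_g$ with $\epsilon_g\to 0$, and lets $\epsilon\to 0$. The only difference is bookkeeping: the paper evaluates the limit via the classical ratio decomposition $\frac{f\ast(g\ast(x+\epsilon))-f\ast g}{g\ast(x+\epsilon)-g}\cdot\frac{g\ast(x+\epsilon)-g}{\epsilon}$ (after noting $g\ast(x+\epsilon)\ne g$ for small $\epsilon\ne 0$), whereas you substitute the Taylor expansion of $g\ast(x+\epsilon)$ directly and bound the order-$\ge 2$ tail, which rests on the same estimates.
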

\begin{proof} Assume $\ast$ admits Taylor expansion, and let $g>\R$. Then 
$$f'\ast g\ =\ \lim_{\epsilon\to 0} \frac{f\ast(g+\epsilon)-f\ast g}{\epsilon}$$ as is easily verified. The usual argument shows that then $\ast$ obeys the chain rule: for all sufficiently small $\epsilon\ne 0$ we have $g\ast (x+\epsilon)\ne g$ and
$$\frac{(f\ast g)\ast(x+\epsilon)-f\ast g}{\epsilon}\ =\ \frac{f\ast (g\ast(x+\epsilon))-f\ast g}{g\ast (x+\epsilon)-g}\ \cdot\ \frac{g\ast (x+\epsilon)-g}{\epsilon}.$$
Now $g\ast(x+\epsilon)=g + \epsilon_g$ with $\epsilon_g\to 0$ as 
$\epsilon\to 0$; thus letting $\epsilon$ go to $0$ in the above displayed equality
yields $(f\ast g)'=(f'\ast g)\cdot g'$.  
\end{proof}

\noindent
{\em In the rest of this subsection we assume that $\ast$ admits Taylor expansion
and has the following property: for all $\beta,\gamma$, 
\begin{itemize}
\item $\ell_\gamma \ast \ell_{\omega^\beta}\ =\ \ell_{\omega^\beta+\gamma}$\ if $\gamma < \omega^{\beta+1}$;
\item $\ell_{\omega^{\beta+1}} \ast \ell_{\omega^\beta}\ =\ \ell_{\omega^{\beta+1}}-1$;
\item $\ell_{\omega^\gamma} \ast \ell_{\omega^\beta}$ has constant term $0$
if $\gamma>\beta$ is a limit ordinal.
\end{itemize}}

\noindent
We have to derive that then $f\ast g=f\circ g$, where $g>\R$.
Here is the main lemma:

\begin{lemma}\label{tecr} If $\rho>\omega^{\beta+1}$, then $\ell_{\rho}\ast \ell_{\omega^\beta}=\ell_{\rho} +\epsilon_{\rho}$ with $\epsilon_{\rho}\prec 1$.
\end{lemma}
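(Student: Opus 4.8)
\noindent
The plan is to argue by transfinite induction on $\rho>\mu:=\omega^{\beta+1}$, writing $w:=\ell_\rho\ast\ell_{\omega^\beta}$ throughout. Since $\ast$ admits Taylor expansion it obeys the Chain Rule by Lemma~\ref{teocr}, it respects logarithms by (CL3), it respects real powers and multipliable products by Lemma~\ref{lemuc}, and it is associative by (CL5). The claim $w=\ell_\rho+\epsilon_\rho$ with $\epsilon_\rho\prec1$ splits into two assertions: that the purely infinite part of $w$ equals $\ell_\rho$, and that the constant term of $w$ is $0$.

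\noindent
First I would dispose of the successor case $\rho=\sigma+1$, which is the easy one. By (CL3), $w=(\log\ell_\sigma)\ast\ell_{\omega^\beta}=\log(\ell_\sigma\ast\ell_{\omega^\beta})$. Either $\sigma=\mu$, where the second hypothesis on $\ast$ gives $\ell_\mu\ast\ell_{\omega^\beta}=\ell_\mu-1=\ell_\mu(1-\ell_\mu^{-1})$, or $\sigma>\mu$, where the inductive hypothesis gives $\ell_\sigma\ast\ell_{\omega^\beta}=\ell_\sigma(1+\eta)$ with $\eta\prec1$. In both cases $\ell_\sigma\ast\ell_{\omega^\beta}=\ell_\sigma(1+\eta)$ for some $\eta\prec1$, so $w=\ell_{\sigma+1}+\log(1+\eta)=\ell_\rho+o(1)$; here the constant term is automatically $0$.

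\noindent
For a limit ordinal $\rho$ I would first pin down the infinite part via the Chain Rule. Since $\ell_\rho'=\prod_{\tau<\rho}\ell_\tau^{-1}$ and $\ast\ell_{\omega^\beta}$ respects multipliable products,
\[
w'\ =\ (\ell_\rho'\ast\ell_{\omega^\beta})\,\ell_{\omega^\beta}'\ =\ \Big(\prod_{\tau<\rho}(\ell_\tau\ast\ell_{\omega^\beta})^{-1}\Big)\prod_{\tau<\omega^\beta}\ell_\tau^{-1}.
\]
Evaluating the factors by the first hypothesis for $\tau<\mu$ (so that $\prod_{\tau<\mu}(\ell_\tau\ast\ell_{\omega^\beta})^{-1}=\prod_{\omega^\beta\le\kappa<\mu}\ell_\kappa^{-1}$, using $\omega^\beta+\mu=\mu$), by the second hypothesis for $\tau=\mu$, and by the inductive hypothesis for $\mu<\tau<\rho$, the dominant monomials multiply to $\ell_\rho'$, yielding $w'=\ell_\rho'(1+\delta)$ with $\delta\prec1$. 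Every monomial of $\ell_\rho'\delta$ has the form $\ell_\rho'\fn$ with $\fn\prec1$ involving only $\ell_\sigma$ for $\sigma\ge\mu$, hence is $\prec\ell_\rho'\preceq x^{-1}$ with least index $0$; thus $(\ell_\rho'\fn)^\dagger\asymp x^{-1}$ and $\int(\ell_\rho'\fn)\asymp (\ell_\rho'\fn)\,x\prec1$, so by strong linearity and the zero–constant–term normalization of $\int$ we get $\int(\ell_\rho'\delta)\prec1$. Since $\int\ell_\rho'=\ell_\rho$, writing $w=\int w'+w_1$ with $w_1$ the constant term shows $w=\ell_\rho+\int(\ell_\rho'\delta)+w_1$, so the infinite part of $w$ is $\ell_\rho$ and the whole claim reduces to $w_1=0$.

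\noindent
The main obstacle is therefore to prove $w_1=0$ for limit $\rho$, and here the three hypotheses must be combined with associativity. If $\rho=\omega^\gamma$ with $\gamma>\beta$ a limit ordinal, this is precisely the third hypothesis. Otherwise I would peel off the leading Cantor term of $\rho$: if $\rho$ is additively decomposable I write $\rho=\omega^{\delta_1}+\nu$ with $\nu<\omega^{\delta_1+1}$, so that $\ell_\nu\ast\ell_{\omega^{\delta_1}}=\ell_\rho$ by the first hypothesis and $w=(\ell_\nu\ast\ell_{\omega^{\delta_1}})\ast\ell_{\omega^\beta}=\ell_\nu\ast(\ell_{\omega^{\delta_1}}\ast\ell_{\omega^\beta})$; if $\rho=\omega^{\delta+1}$ I instead use $\ell_\rho\ast\ell_{\omega^\delta}=\ell_\rho-1$ and associativity to get $w-1=\ell_\rho\ast(\ell_{\omega^\delta}\ast\ell_{\omega^\beta})$. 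In each case $\omega^{\delta_1},\omega^\delta\ge\mu$, so the inner composition has infinite part $\ell_{\omega^{\delta_1}}$ (resp. $\ell_{\omega^\delta}$) by the inductive hypothesis (or the second hypothesis when it equals $\mu$) and differs from it by something $\prec\ell_{\omega^{\delta_1}}$; hence Taylor expansion applies. Its $n=0$ term is a pure hyperlogarithm—$\ell_\rho$ (resp. $\ell_\rho-1$) by the first (resp. second) hypothesis—and all $n\ge1$ terms are infinitesimal because $(\ell_\nu)^{(n)},(\ell_\rho)^{(n)}\prec1$ for $n\ge1$, so $w_1=0$ follows. Organizing this Cantor-normal-form recursion cleanly—mirroring the bookkeeping of Lemma~\ref{bg} and Corollary~\ref{domconstant}, whose answer $-\lambda_{\ell_{\omega^\beta};\rho}$ vanishes for $\rho>\mu$—and verifying that no spurious constants leak in from the tails is the delicate point; the indecomposable limit-exponent case $\rho=\omega^\gamma$ with $\gamma$ limit is exactly the one that resists this reduction, which is why the third hypothesis on $\ast$ is indispensable.
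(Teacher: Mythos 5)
Your overall strategy mirrors the paper's: handle successors via (CL3), handle additively decomposable $\rho$ and $\rho=\omega^{\xi+1}$ via associativity plus Taylor expansion (where in fact you get the full statement, not just the constant term), and reserve the chain rule, integration, and the third hypothesis for the indecomposable case $\rho=\omega^\gamma$ with $\gamma$ a limit. The gap is in exactly that last case, in your integration estimate. You reduce to showing $\int(\ell_\rho'\delta)\prec 1$ where $w'=\ell_\rho'(1+\delta)$, and you argue this from ``$\delta\prec 1$ with support in indices $\ge\mu$'' via the principle $\int(\ell_\rho'\fn)\asymp(\ell_\rho'\fn)\,x\prec 1$ for monomials $\fn\prec 1$. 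That principle is false: take $\fn=\ell_\rho^{-1}$, which is $\prec 1$ and supported on the index $\rho\ge\mu$; then $\ell_\rho'\fn=\prod_{\tau\le\rho}\ell_\tau^{-1}=\ell_{\rho+1}'$, so $\int(\ell_\rho'\fn)=\ell_{\rho+1}\succ 1$, while your formula predicts something infinitesimal. The heuristic $\int\fm\asymp\fm/\fm^\dagger$ breaks down precisely at the monomials $\ell_\kappa'$ with $\kappa>\rho$, and nothing in your inductive hypothesis ($\epsilon_\nu\prec 1$ only) rules out such monomials occurring in $\delta$. (Your support claim itself is also not available: the hypothesis $\epsilon_\nu\prec 1$ says nothing about which $\ell_\sigma$ occur in $\epsilon_\nu$, so you would need to strengthen the induction to even assert it --- and, as the counterexample shows, it would not suffice anyway.)

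The repair is the paper's sharper bound: from the same product computation, every contributing factor satisfies $\ell_\mu^{-1}\prec\ell_\rho^{-2}$ and $h_\nu\prec\ell_\nu^{-1}\prec\ell_\rho^{-2}$ (for $\mu<\nu<\rho$, since the leading index of $\ell_\nu^{-1}\ell_\rho^{2}$ is $\nu<\rho$ with negative exponent), whence $\delta\prec\ell_\rho^{-2}$, not merely $\delta\prec 1$. This strict bound excludes all the dangerous monomials, because $\ell_\kappa'/\ell_\rho'=\prod_{\rho\le\tau<\kappa}\ell_\tau^{-1}\succ\ell_\rho^{-2}$ for every $\kappa>\rho$. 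One then concludes by comparing with $\int(\ell_\rho'\ell_\rho^{-2})=-\ell_\rho^{-1}$ and using that integration (with zero constant term) preserves the relation $\prec$, i.e., monotonicity of $\gamma\mapsto\gamma-\psi(\gamma)$ in the asymptotic couple, to get $\int(\ell_\rho'\delta)\prec\ell_\rho^{-1}\prec 1$. With that replacement your argument closes; everything else in your proposal (the successor case via logarithms, and the Cantor-normal-form peeling via associativity and Taylor expansion) is sound and matches the paper's proof.
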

\begin{proof} Set $\mu=\omega^{\beta +1}$.  For $\rho=\mu+1$ we have
\begin{align*} \ell_{\rho}* \ell_{\omega^\beta}\  &=\ \log(\ell_{\mu}* \ell_{\omega^\beta})\ =\ \log(\ell_{\mu}-1)\\  
&=\ \log\big(\ell_{\mu}(1-\ell_{\mu}^{-1})\big)\ =\ 
\log(\ell_{\mu}) +\log(1-\ell_{\mu}^{-1})\\
&=\ \ell_{\rho} + \epsilon_{\rho}, \qquad \epsilon_{\rho}\asymp \ell_{\mu}^{-1}\prec 1.
\end{align*}
Next, let $\rho>\mu+1$, and assume inductively that for every ordinal $\nu$ with $\mu < \nu< \rho$ we have 
$\ell_{\nu}*\ell_{\omega^\beta}= \ell_{\nu}+\epsilon_{\nu}$ with
$\epsilon_{\nu}\prec 1$, so $\ell_{\nu}*\ell_{\omega^\beta}= \ell_{\nu}(1+h_{\nu})$ with $h_{\nu}\prec \ell_{\nu}^{-1}$. Take $\gamma\ge \beta+1$ such that $\omega^\gamma\le \rho< \omega^{\gamma+1}$. We distinguish three cases; only in the second case do we use the full inductive assumption.

\medskip\noindent
{\bf Case $\rho=\omega^\gamma$ and $\gamma$ is a successor ordinal.} Then $\gamma=\xi+1$, $\xi\ge \beta+1$ and from $\ell_{\rho}= (\ell_{\rho}\ast \ell_{\omega^{\xi}})+1$ and $\ell_{\omega^\xi} \ast \ell_{\omega^\beta} = \ell_{\omega^\xi} +\epsilon$ with 
$\epsilon \preceq 1$,
 we obtain
\begin{align*}
\ell_{\rho} \ast \ell_{\omega^\beta}\ &=\ \big((\ell_{\rho}\ast \ell_{\omega^{\xi}})+1\big)\ast \ell_{\omega^\beta}\ =\ \big(\ell_{\rho} \ast(\ell_{\omega^{\xi}}\ast \ell_{\omega^\beta})\big)+1\\
&=\ \big((\ell_{\rho}\ast(\ell_{\omega^{\xi}}+\epsilon)\big) +1\
=\ \left(\sum_{n=0}^\infty \frac{\ell_{\rho} ^{(n)}\ast \ell_{\omega^{\xi}}}{n!}\epsilon^n \right)+1\\ 
&=\ \ell_{\rho} + \epsilon_{\rho}, \quad \epsilon_{\rho}\prec 1,
\end{align*}
since $\ell_{\rho}^{(n)}\prec 1$ for $n\ge 1$.

\medskip\noindent
{\bf Case $\rho=\omega^\gamma$ and $\gamma$ is a limit ordinal.}  By Lemma~\ref{teocr} the composition $\ast$ obeys the Chain Rule, so by our assumption that the constant term of $\ell_{\rho}\ast \ell_{\omega^\beta}$ is $0$:
$$\ell_{\rho}\ast \ell_{\omega^\beta}\ =\ \int[(\ell_{\rho}'\ast \ell_{\omega^\beta})\cdot \ell_{\omega^\beta}'].$$ 
Now $\ell_{\rho}'=\prod_{\nu<\rho} \ell_{\nu}^{-1}$ and likewise for $\ell_{\omega^\beta}'$, so
\begin{align*} (\ell_{\rho}'\ast \ell_{\omega^\beta})\cdot \ell_{\omega^\beta}'\ &=\ \big((\prod_{\nu<\rho} \ell_{\nu}^{-1})\ast \ell_{\omega^\beta}\big)\cdot \prod_{\nu< \omega^\beta}\ell_{\nu}^{-1}\\
&=\ \prod_{\nu<\mu}(\ell_{\nu}^{-1}\ast \ell_{\omega^\beta})\cdot \prod_{\mu\le \nu < \rho}(\ell_{\nu}^{-1}\ast \ell_{\omega^\beta})\cdot \prod_{\nu<\omega^\beta}\ell_{\nu}^{-1}\\
&=\ \prod_{\nu<\mu} \ell_{\omega^\beta +\nu}^{-1}\cdot \prod_{\mu\le \nu < \rho}(\ell_{\nu}^{-1}\ast \ell_{\omega^\beta})\cdot \prod_{\nu<\omega^\beta}\ell_{\nu}^{-1}\\
&=\ \prod_{\nu<\mu} \ell_{\nu}^{-1}\cdot \prod_{\mu\le \nu < \rho}(\ell_{\nu}^{-1}\ast \ell_{\omega^\beta})\\
&=\  \big(\prod_{\nu<\mu} \ell_{\nu}^{-1}\big)\cdot (\ell_{\mu}^{-1}\ast \ell_{\omega^\beta})\cdot \prod_{\mu< \nu < \rho}(\ell_{\nu}^{-1}\ast \ell_{\omega^\beta}).
\end{align*} 
Now  $\ell_{\mu} \ast \ell_{\omega^\beta}\ =\ \ell_{\mu}-1$ gives $\ell_{\mu}^{-1}\ast \ell_{\omega^\beta}=\ell_{\mu}^{-1}(1-\ell_{\mu}^{-1})^{-1}$. Together with the equality derived above and the inductive assumption this yields
\[ (\ell_{\rho}'* \ell_{\omega^\beta})\cdot \ell_{\omega^\beta}'\ =\ \ell_{\rho}'(1-\ell_{\mu}^{-1})^{-1}\prod_{\mu<\nu<\rho}(1+h_{\nu})^{-1}\ =\ \ell_{\rho}'(1+h)\]
with $h\prec \ell_{\rho}^{-2}$. 
Hence $\ell_{\rho}\ast \ell_{\omega^\beta}=\ell_{\rho} + \int \ell_{\rho}'h$. Now $\int \ell_{\rho}'\ell_{\rho}^{-2}=-\ell_{\rho}^{-1}$, so
$\int \ell_{\rho}'h =\epsilon_{\rho}$ with $\epsilon_{\rho}\prec \ell_{\rho}^{-1}\prec 1$.

\medskip\noindent
{\bf Case $\rho >\omega^\gamma$.} Then
$\rho = \omega^\gamma + \nu$ where $0<\nu < \omega^{\gamma+1}$, so $\ell_\rho = \ell_\nu\ast \ell_{\omega^\gamma}$. Now $\ell_{\omega^\gamma} \ast \ell_{\omega^\beta} = \ell_{\omega^\gamma} +\epsilon$ with $\epsilon=-1$ if $\gamma=\beta+1$ and $\epsilon \prec 1$
if $\gamma>\beta+1$. Thus
\begin{align*}
\ell_\rho \ast \ell_{\omega^\beta}\ &=\ \ell_{\nu}\ast (\ell_{\omega^\gamma} \ast \ell_{\omega^\beta})\ =\ \ell_{\nu} \ast( \ell_{\omega^\gamma} +\epsilon)\ =\ \sum_{n=0}^\infty \frac{\ell_{\nu}^{(n)}\ast \ell_{\omega^\gamma}}{n!}\epsilon^n\\
&=\  (\ell_{\nu}\ast \ell_{\omega^\gamma})+ \epsilon_{\rho}\ =\ 
\ell_\rho + \epsilon_{\rho}\quad \text{ where }\ \epsilon_{\rho} \prec 1. \qedhere
\end{align*}
\end{proof}

\begin{lemma}
$f \ast \ell_{\omega^\beta} = f \circ \ell_{\omega^\beta}$. 
\end{lemma}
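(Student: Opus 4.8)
The plan is to reduce the identity to the case of hyperlogarithms and then run a transfinite induction. Since $\ast$ and $\circ$ are compositions on $\LL$, for the fixed $g=\ell_{\omega^\beta}$ both maps $f\mapsto f\ast\ell_{\omega^\beta}$ and $f\mapsto f\circ\ell_{\omega^\beta}$ are strongly $\R$-linear field embeddings by (CL1) and (CL4) (cf.\ Lemma~\ref{l:CompIsLinear}), so it suffices to prove they agree on each monomial $\fm\in\mathfrak{L}$. For such $\fm=\prod_\gamma\ell_\gamma^{r_\gamma}$ we have $\log\fm=\sum_\gamma r_\gamma\ell_{\gamma+1}$, so applying (CL3) to both compositions together with strong linearity gives $\log(\fm\ast\ell_{\omega^\beta})=\sum_\gamma r_\gamma(\ell_{\gamma+1}\ast\ell_{\omega^\beta})$ and the analogous formula for $\circ$. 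As $\log$ is injective on $\LL^{>}$ and both composites are positive, the whole lemma reduces to proving $\ell_\gamma\ast\ell_{\omega^\beta}=\ell_\gamma\circ\ell_{\omega^\beta}$ for every ordinal $\gamma$.

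I would establish this last identity by induction on $\gamma$, writing $\mu:=\omega^{\beta+1}$. For $\gamma<\mu$ the first displayed hypothesis on $\ast$ gives $\ell_\gamma\ast\ell_{\omega^\beta}=\ell_{\omega^\beta+\gamma}$, which is exactly the value of $\ell_\gamma\circ\ell_{\omega^\beta}$ by (\ref{e:BigMonomials}); and for $\gamma=\mu$ the second hypothesis gives $\ell_\mu\ast\ell_{\omega^\beta}=\ell_\mu-1=\ell_\mu\circ\ell_{\omega^\beta}$ by Lemma~\ref{comgam}. When $\gamma=\delta+1>\mu$ is a successor we use $\ell_\gamma=\log\ell_\delta$, so that (CL3) for each composition and the inductive hypothesis at $\delta$ yield $\ell_\gamma\ast\ell_{\omega^\beta}=\log(\ell_\delta\ast\ell_{\omega^\beta})=\log(\ell_\delta\circ\ell_{\omega^\beta})=\ell_\gamma\circ\ell_{\omega^\beta}$.

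The heart of the argument is the limit case $\gamma>\mu$. Here $\ast$ obeys the chain rule (Lemma~\ref{teocr}, since it admits Taylor expansion) and so does $\circ$ (Proposition~\ref{l:FullChain}), whence $(\ell_\gamma\ast\ell_{\omega^\beta})'=(\ell_\gamma'\ast\ell_{\omega^\beta})\,\ell_{\omega^\beta}'$ and likewise for $\circ$. Now $\ell_\gamma'=\prod_{\nu<\gamma}\ell_\nu^{-1}$ satisfies $\log\ell_\gamma'=-\sum_{\nu<\gamma}\ell_{\nu+1}$, and since $\gamma$ is a limit every $\nu+1<\gamma$, so strong linearity, the inductive hypothesis, and injectivity of $\log$ give $\ell_\gamma'\ast\ell_{\omega^\beta}=\ell_\gamma'\circ\ell_{\omega^\beta}$. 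Hence $\ell_\gamma\ast\ell_{\omega^\beta}$ and $\ell_\gamma\circ\ell_{\omega^\beta}$ have equal derivative, and therefore, as the constant field of $\LL$ is $\R$ (Theorem~\ref{ci}), they differ by a real constant.

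The main obstacle I expect is pinning down that this constant is $0$, that is, matching the two constant terms; this is the one point where the bookkeeping is delicate. The resolution is clean, however. Lemma~\ref{tecr} gives $\ell_\gamma\ast\ell_{\omega^\beta}=\ell_\gamma+\epsilon$ with $\epsilon\prec 1$, while Corollary~\ref{domconstant} gives $\ell_\gamma\circ\ell_{\omega^\beta}=\ell_{\omega^\beta+\gamma}-\lambda_{\ell_{\omega^\beta};\gamma}+\epsilon^*$ with $\epsilon^*\prec 1$, where $\omega^\beta+\gamma=\gamma$ and $\lambda_{\ell_{\omega^\beta};\gamma}=0$ precisely because $\gamma>\mu=\omega^{\beta+1}$. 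Since any element that is $\prec 1$ has no constant term, both $\ell_\gamma\ast\ell_{\omega^\beta}$ and $\ell_\gamma\circ\ell_{\omega^\beta}$ have constant term $0$; hence the real constant by which they differ vanishes, completing the limit step and the induction, and with it the proof.
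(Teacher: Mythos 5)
Your proof is correct and follows essentially the same route as the paper: reduce to hyperlogarithms, dispose of $\gamma\le\omega^{\beta+1}$ by the hypotheses on $\ast$, and for larger $\gamma$ show $\ell_\gamma'\ast\ell_{\omega^\beta}=\ell_\gamma'\circ\ell_{\omega^\beta}$ from the inductive hypothesis, match derivatives via the chain rule, and match constant terms via Lemma~\ref{tecr} (and Corollary~\ref{domconstant}). The only, harmless, difference is that you treat successor ordinals separately by taking logarithms, whereas the paper's chain-rule-plus-constant-term argument handles all $\gamma>\omega^{\beta+1}$ uniformly.
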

\begin{proof} By the usual reductions
it suffices to verify the identity for hyperlogarithms $f=\ell_\rho$. For $\rho \leq \omega^{\beta+1}$ our assumptions on $\ast$ take care of this. Let $\rho > \omega^{\beta+1}$ and assume inductively that $\ell_{\nu}\ast \ell_{\omega^\beta}=\ell_{\nu}\circ \ell_{\omega^\beta}$ for all $\nu<\rho$.  Then by the chain rule, $(\ell_\rho\ast \ell_{\omega^\beta} )'= (\ell_\rho \circ \ell_{\omega^\beta})'$. By Lemma~\ref{tecr}, $\ell_\rho \ast \ell_{\omega^\beta}$ and $\ell_\rho \circ \ell_{\omega^\beta}$ both have the constant term $0$, so they are equal.
\end{proof}

\noindent
We now finish the proof that $f\ast g=f\circ g$ for all $f,g$ with $g>\R$.
First, for nonzero $\gamma$ we
have $\gamma=\omega^{\beta_1} + \cdots + \omega^{\beta_k}$
with $\beta_1\ge \beta_2\ge \dots \ge \beta_k$, $k\ge 1$.
For $k=1$ we have $f\ast \ell_{\gamma}=f\circ \ell_{\gamma}$ by the last lemma. For $k>1$ we have $\gamma=\omega^{\beta_1}+\nu$ with 
$\nu=\omega^{\beta_2}+\cdots +\omega^{\beta_k}<\omega^{{\beta_1}+1}$, and thus 
$$f\ast \ell_{\gamma}\ =\ f\ast(\ell_{\nu}\ast \ell_{\omega^{\beta_1}})\ =\ (f\ast \ell_{\nu})\ast \ell_{\omega^{\beta_1}}\ =\ (f\circ \ell_{\nu})\circ \ell_{\omega^{\beta_1}}\ =\ f\circ \ell_{\gamma},$$
where for the third equality we use an obvious induction assumption on $k$.
We have now shown that $f\ast \ell_{\gamma}=f\circ \ell_{\gamma}$ for all
$f$ and $\gamma$. 

Next, let $f\in \LL_{\ge \omega}$ and $g>\R$. In Section~\ref{sec:comp2} we defined
$f^{\uparrow 3}\in \LL_{\ge \omega}$ and observed that 
$\log_3(g)=\ell_{\gamma}+\epsilon$ with
$\gamma=\lambda_g+3$ and $\epsilon\prec 1$. Then 
$f=f^{\uparrow 3}\circ \ell_3=f^{\uparrow 3}\ast \ell_3$. Using also 
$\ell_3\circ g=\log_3(g)=\ell_3\ast g$ we obtain
\begin{align*} f\circ g\ &=\ (f^{\uparrow 3}\circ \ell_3)\circ g\ =\ f^{\uparrow 3}\circ(\ell_3\circ g)\ =\ f^{\uparrow 3}\circ (\ell_\gamma+\epsilon),\\  
f\ast g\ &=\ (f^{\uparrow 3}\ast \ell_3)\ast g\ =\ f^{\uparrow 3}\ast(\ell_3\ast g)\ =\ f^{\uparrow 3}\ast(\ell_{\gamma}+\epsilon),
\end{align*}
which by Taylor expansion yields $f\circ g=f\ast g$.

Finally, for arbitrary $f,g$ with $g>\R$ we have $f=\sum_{\fm\in \mathfrak{L}_{<\omega}}f_{[\fm]}\fm$ where all $f_{[\fm]}$ lie in $\LL_{\ge \omega}$. In view of Lemma~\ref{astcirc} this gives 
$$f\circ g\ =\ 
\sum_{\fm\in \mathfrak{L}_{<\omega}}(f_{[\fm]}\circ g)(\fm\circ g)\
=\ \sum_{\fm\in \mathfrak{L}_{<\omega}}(f_{[\fm]}\ast g)(\fm\ast g)\
=\ f\ast g.$$ 

\medskip\noindent
This concludes the proof of Theorem~\ref{uc}. Note that for the above proof of $\ast=\circ$ we only needed the Taylor  identity
for $f\ast(g+h)$ with $g>\R$ and $h\preceq 1$.

\subsection*{Embedding $\LL$ into $\No$}
We use here \cite{ADH1+} and its notations, viewing $\No$ as a logarithmic-exponential field extension of $\R$ in the usual way. In that paper we defined 
$\log_{\alpha}\omega:=\omega^{\omega^{-\alpha}}\in \No$, thinking of it as the ``$\alpha$ times iterated function $\log $ evaluated at $\omega$'' in view of 
$\log (\log_{\alpha}\omega)=\log_{\alpha+1}\omega$. Berarducci and Mantova \cite{BM} constructed a derivation on $\No$ which in \cite{ADH1+} and here
we denote by $\der_{\operatorname{BM}}$. 

\begin{prop}\label{embllno} There is a unique ordered field embedding $\iota: \LL \to \No$ such that: \begin{enumerate}
\item[(i)] $\iota$ is the identity on $\R$ and $\iota(\ell_{\alpha})=\log_{\alpha}\omega$ for all $\alpha$;
\item [(ii)] for every summable family $(f_i)_{i\in I}$ in $\LL$ the family $\iota(f_i)$ is summable in $\No$ and $\iota(\sum_i f_i)=\sum_i\iota(f_i)$;
\item[(iii)] $\iota(\log f)=\log \iota(f)$ for all $f\in \LL^{>}$.
\end{enumerate}
This embedding also preserves the derivation:   $\iota(f')=\der_{\operatorname{BM}}(\iota(f))$ for all $f\in \LL$.
\end{prop}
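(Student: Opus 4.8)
The plan is to construct $\iota$ first on the monomial group $\mfL$ and then extend it by strong linearity. For an exponent sequence $r=(r_\beta)$ set $\iota_0(\ell^r):=\omega^{a(r)}$, where $a(r):=\sum_\beta r_\beta\omega^{-\beta}\in\No$ is the surreal whose Conway normal form is $\sum_\beta r_\beta\omega^{-\beta}$; this is legitimate since $\beta\mapsto\omega^{-\beta}$ reverses order, so $\{\omega^{-\beta}:r_\beta\ne0\}$ is reverse-well-ordered. Using $\omega^{x+y}=\omega^x\omega^y$ and $(\omega^x)^t=\omega^{tx}$ for the Conway $\omega$-map, $\iota_0$ is a group embedding $\mfL\to\No^{>0}$, and since the dominant monomial of $a(r)-a(s)$ occurs at the least $\beta$ with $r_\beta\ne s_\beta$, $\iota_0$ is order-preserving; note $\iota_0(\ell_\alpha)=\omega^{\omega^{-\alpha}}=\log_\alpha\omega$. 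An order-preserving map sends a well-based $\mathfrak{S}\subseteq\mfL$ to a well-based set of pairwise distinct monomials of $\No$, which is summable in $\No$; hence the $\No$-analogue of Lemma~\ref{vdh} extends $\iota_0$ uniquely to a strongly $\R$-linear $\iota:\LL\to\No$. The convolution computation from the proof of Lemma~\ref{l:CompIsLinear} shows $\iota$ is multiplicative, so it is an ordered field embedding, and (i), (ii) hold by construction.

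For (iii) I would reduce to monomials. Writing $f=c\,\fd(f)(1+\epsilon)$ with $c\in\R^{>}$, $\epsilon\prec1$, strong linearity and multiplicativity give $\iota\bigl(\sum_n\tfrac{(-1)^{n-1}}{n}\epsilon^n\bigr)=\log(1+\iota\epsilon)$ (valid as $\iota\epsilon\prec1$ by order-preservation), while $\iota$ fixes reals so $\iota(\log c)=\log\iota(c)$; thus it suffices to show $\iota(\log\fm)=\log\iota(\fm)$ for $\fm=\ell^r$. Now $\iota(\log\fm)=\iota\bigl(\sum_\beta r_\beta\ell_{\beta+1}\bigr)=\sum_\beta r_\beta\log_{\beta+1}\omega$, and $\log\iota(\fm)=\log\omega^{a(r)}=\sum_\beta r_\beta\log_{\beta+1}\omega$ as well, the last equality being the value of the surreal logarithm on the monomial $\omega^{a(r)}$ recorded in \cite{ADH1+} (equivalently, $\log$ turns $\prod_\beta(\log_\beta\omega)^{r_\beta}=\omega^{a(r)}$ into $\sum_\beta r_\beta\log_{\beta+1}\omega$ via $\log(\log_\beta\omega)=\log_{\beta+1}\omega$). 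Uniqueness is then immediate: (i) pins $\iota$ on $\R$ and on each $\ell_\alpha$; order-preservation and real-closedness of $\No$ force $\iota(\ell_\alpha^t)=(\log_\alpha\omega)^t$; (iii) together with injectivity of $\log$ determines $\iota(\ell^r)$ through $\log\iota(\ell^r)=\sum_\beta r_\beta\log_{\beta+1}\omega$; and (ii) then determines $\iota$ on every well-based sum.

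For the derivation, both $f\mapsto\iota(f')$ and $f\mapsto\der_{\BM}(\iota f)$ are strongly additive---the first since $\der$ (Proposition~\ref{der}(ii)) and $\iota$ are, the second since $\der_{\BM}$ (by \cite{BM}, \cite{ADH1+}) and $\iota$ are---so it suffices to check equality on monomials $\fm=\ell^r$. From $\fm'=\fm\fm^\dagger$ and multiplicativity, $\iota(\fm')=\iota(\fm)\,\iota(\fm^\dagger)$, so matching $\iota(\fm')$ with $\der_{\BM}(\iota\fm)$ amounts to $\iota(\fm^\dagger)=\der_{\BM}(\iota\fm)/\iota\fm$. Since $\der_{\BM}$ is compatible with the surreal logarithm, $\der_{\BM}(\iota\fm)/\iota\fm=\der_{\BM}(\log\iota\fm)=\der_{\BM}\bigl(\iota(\log\fm)\bigr)=\sum_\beta r_\beta\,\der_{\BM}(\log_{\beta+1}\omega)$ by (iii), while $\iota(\fm^\dagger)=\sum_\beta r_\beta\,\iota(\ell_{\beta+1}')$ since $\ell_\beta^\dagger=\ell_{\beta+1}'$. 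Hence the whole claim reduces to the single family of identities $\der_{\BM}(\log_\alpha\omega)=\iota(\ell_\alpha')=\prod_{\beta<\alpha}(\log_\beta\omega)^{-1}$, which I would prove by transfinite induction on $\alpha$: the base case is $\der_{\BM}(\omega)=1$, and the successor step is purely formal, since $\der_{\BM}(\log_{\alpha+1}\omega)=\der_{\BM}(\log_\alpha\omega)/\log_\alpha\omega$ gives $\prod_{\beta<\alpha+1}(\log_\beta\omega)^{-1}$ from the inductive hypothesis.

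I expect the limit step of this induction to be the main obstacle: for limit $\alpha$ there is no formal recursion fixing $\der_{\BM}(\log_\alpha\omega)$, and one must evaluate it directly from the Berarducci--Mantova construction in \cite{BM}, checking that the value assigned to the surreal $\log_\alpha\omega$ (through its simplicity/path data) coincides with the monomial $\prod_{\beta<\alpha}(\log_\beta\omega)^{-1}$; this is precisely where the internal structure of $\der_{\BM}$, and the identification of its values on the hyperlogarithm surreals from \cite{ADH1+}, must be used. Everything else is bookkeeping with strong linearity and the logarithm.
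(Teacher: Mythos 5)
Your construction and reductions match the paper's proof essentially step for step: $\iota$ is defined on hypermonomials via $\omega^{\sum_\beta r_\beta\omega^{-\beta}}$ and extended by strong linearity, (iii) and uniqueness are settled through the surreal logarithm's value on monomials (the paper cites \cite[Lemma 2.3]{BM} for exactly the identity $\log\omega^{a(r)}=\sum_\beta r_\beta\log_{\beta+1}\omega$ you use), and the derivation claim is reduced to the single family of identities $\der_{\operatorname{BM}}(\log_\alpha\omega)=\prod_{\beta<\alpha}(\log_\beta\omega)^{-1}$. The ``main obstacle'' you anticipate at limit ordinals does not arise: that identity is already recorded for all $\alpha$ in \cite{ADH1+} (two lines before Lemma 2.10 there), and the paper simply cites it rather than re-deriving anything from the internals of the Berarducci--Mantova construction, so no transfinite induction is needed at all.
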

\begin{proof} In \cite[Section 2]{ADH1+} we defined for any summable family $(a_i)$ in $\No$ the product
$\prod_i\omega^{a_i}:=\omega^{\sum_i a_i}$.  In \cite[remarks preceding lemma 3.3] {ADH1+} we  defined $a^r:= \exp(r\log a)$ for $a\in \No^{>}$ and
$r\in \R$, and recorded the fact that $(\log_{\alpha}\omega)^r=\omega^{r\omega^{-\alpha}}$ for $r\in \R$.
Thus for any logarithmic hypermonomial $\prod_{\beta<\alpha}\ell_{\beta}^{r_{\beta}}$ of $\LL$
we have a  product 
$$\prod_{\beta<\alpha} (\log_{\beta}\omega)^{r_{\beta}}\ =\ \prod_{\beta<\alpha} \omega^{r_{\beta}\omega^{-\beta}}\ =\ \omega^{\sum_{\beta<\alpha}r_{\beta}\omega^{-\beta}}.$$
It is routine to check that this yields a unique $\R$-linear map 
$\iota: \LL\to \No$ such that for every logarithmic hypermonomial $\prod_{\beta<\alpha}\ell_{\beta}^{r_{\beta}}$ we have
$$\iota(\prod_{\beta<\alpha}\ell_{\beta}^{r_{\beta}})\ =\ \prod_{\beta<\alpha} (\log_{\beta}\omega)^{r_{\beta}}$$ and
for every summable family $(f_i)_{i\in I}$ in $\LL$ the family $\iota(f_i)$ is summable in $\No$ and $\iota(\sum_i f_i)=\sum_i\iota(f_i)$. It is easy to verify that this map $\iota$ is an ordered field embedding satisfying (i) and (ii). It also satisfies (iii) in view of \cite[Lemma 2.3]{BM}. As to uniqueness, let $i$ also be an ordered field embedding
satisfying (i), (ii), (iii) with $i$ instead of $\iota$. Then for 
$\fm=\prod_{\beta<\alpha}\ell_{\beta}^{r_{\beta}}$ we have $\log \fm=\sum_{\beta<\alpha}r_{\beta}\ell_{\beta+1}$. Therefore, using again \cite[Lemma 2.3]{BM},
$$\log i(\fm)\ =\ i(\log \fm)\ =\ \sum_{\beta<\alpha}r_{\beta}\log_{\beta+1}\omega\ =\ \log \prod_{\beta<\alpha}( \log_{\beta}\omega)^{r_{\beta}},$$
so $i(\fm)=\prod_{\beta<\alpha} (\log_{\beta}\omega)^{r_{\beta}}=\iota(\fm)$. Thus
$i=\iota$.
That $\iota$ is also an embedding of differential fields with the derivation
$\der_{\operatorname{BM}}$ on $\No$ uses the fact that 
$$\der_{\operatorname{BM}}(\log_{\alpha}\omega)\ =\ 1/\prod_{\beta<\alpha}\log_{\beta}\omega,$$ 
for which we refer to \cite[two lines before Lemma 2.10]{ADH1+}. 
\end{proof}

\noindent
In \cite{ADH1+} we also defined a canonical embedding of the differential
field $\T$ into $\No$, and we observe here that on $\T_{\log}=\LL^{\cup}_{<\omega}$
this embedding agrees with the embedding of Proposition~\ref{embllno}.

\subsection*{Final Remarks} One  issue we didn't touch is the monotonicity of composition on the right: for $f,g,h\in \LL^{>\R}$ with $g < h$, do we have
$f\circ g < f\circ h$? We believe this to be true, and it would reflect how composition behaves for germs of functions in Hardy fields. But we only have proofs
for special cases. It might be better to deal with this in the wider setting of the (conjectural) field $\H$ of all hyperseries where 
every  positive infinite element
should have a compositional inverse.  

\medskip\noindent
It would be interesting to represent right composition with various $g\in \LL^{>\R}$  on certain
subfields of $\LL$ in the form $\ex^{\phi\der}$ for suitable $\phi\in \LL$, as we did for $g=\ell_{\omega^\beta}$ in the remark following
the proof of Lemma~\ref{car}. 

\medskip\noindent
The identity $\ell_{\omega^{\beta+1}}\circ \ell_{\omega^{\beta}}=\ell_{\omega^{\beta+1}}-1$ reflects a choice of integration constant $-1$. It is surely the
most natural choice, but for any family $(c_{\beta})$ of real numbers there is a composition $\ast$ on $\LL$ such that instead for all
$\beta$,
$$\ell_{\omega^{\beta+1}}\ast\ell_{\omega^{\beta}}\ =\ \ell_{\omega^{\beta+1}}+ c_{\beta}. $$
Such a composition $\ast$ is obtained by replacing (\ref{e:PartialTaylor}) with 
$$f \ast \ell_{\omega^\beta}\ :=\ \sum_{n=0}^\infty \frac{c_{\beta}^n}{n!}\derdelta^n f \qquad (f\in \LL_{[\mu,\alpha)})$$ 
and following otherwise the definitions in Sections~\ref{sec:comp1} and \ref{sec:comp2}.  
Theorem~\ref{uc} goes through for $\ast$ in the role of $\circ$, except that the above identity involving the constants $c_{\beta}$ replaces 
``$\ell_{\omega^{\beta+1}}\circ\ell_{\omega^{\beta}}\ =\ \ell_{\omega^{\beta+1}}-1$''. The proofs for $\circ$ are easily adapted to $\ast$.  Note that any $c_{\beta}\ge 0$ would give a failure of monotonicity of $\ast$ on the right. 

\medskip\noindent
Another topic is the connection to Hardy fields. Kneser~\cite{K}  yields a real analytic function $\ell_{\text{K}}: \R \to \R^{>}$ with  
$ \ell_{\text{K}}(\log t ) = \ell_{\text{K}}(t)-1$ for $t>0$; its germ at $+\infty$,
also denoted by $\ell_{\text{K}}$ below, generates a Hardy field extension of $\R(x, \log x, \log_2 x,\dots)$ such that $\R < \ell_{\text{K}} < \log_n(x)$ for all $n$, with
$x$ here the germ of the identity function on $\R$. Clearly, $\ell_{\text{K}}$ has $\ell_{\omega}$ as a kind of formal counter part.
In the appendix to~\cite{Schm01}, Schmeling constructs likewise for all $n>1$ a real analytic function with $\ell_{\omega^n}$
as a formal counter part. Much remains to be done to strengthen this connection.  There is ongoing work along these lines
with partial results announced in \cite{ADH2} . 



\end{document}